 \theoremstyle{plain}    
 \newtheorem{theorem}{Theorem}[section]
 \numberwithin{equation}{section} 
 \numberwithin{figure}{section} 
 \theoremstyle{plain}
 \theoremstyle{plain}    
 \newtheorem{corollary}[theorem]{Corollary} 
 \theoremstyle{plain}    
 \newtheorem{prop}[theorem]{Proposition} 
 \theoremstyle{plain}    
 \newtheorem{lemma}[theorem]{Lemma} 
 \theoremstyle{remark}
 \newtheorem{rmk}[theorem]{Remark}
 \theoremstyle{definition}
\newtheorem{examp}[theorem]{Example}
\newtheorem{definition}[theorem]{Definition}
\theoremstyle{plain}
\newcommand{\E}{{\rm E}}
\newcommand{\N}{\mathbb{N}}
\newcommand{\R}{\mathbb{R}}
\newcommand{\Rinf}{\mathbb{R}\cup \{-\infty\}}
\newcommand{\C}{\mathbb{C}}
\newcommand{\Cc}{\mathcal{C}}
\newcommand{\f}{\varphi}
\newcommand{\p}{\psi}
\newcommand{\Ca}{{\rm Cap}}
\newcommand{\Ec}{\mathcal{E}}
\newcommand{\Fc}{\mathcal{F}}
\newcommand{\Eco}{\Ec_m(\Omega)}
\newcommand{\Fco}{\Fc_m(\Omega)}
\newcommand{\Ecto}{\Ec^1_m(\Omega)}
\newcommand{\Ecpo}{\Ec_m^p(\Omega)}
\newcommand{\Fcpo}{\Fc_m^p(\Omega)}
\newcommand{\Fcao}{\Fc_m^a(\Omega)}
\newcommand{\Ecoo}{\Ec_m^0(\Omega)}
\newcommand{\Mcc}{\mathcal{M}}
\newcommand{\SH}{\mathcal{SH}}
\newcommand{\um}{u_{m,E,\Omega}}
\newcommand{\ind}{1{\hskip -2.5 pt}\hbox{\textsc{I}}}
\newcommand{\pO}{\partial \Omega}
\newcommand{\weak}{\rightharpoonup}
\newcommand{\loc}{L^{\infty}_{\rm loc}}
\newcommand{\into}{\int_{\Omega}}
\newcommand{\Fcmu}{\mathcal{F}_{\mu}}
\newcommand{\Lmu}{_{L^1(\mu)}}
\newcommand{\Lcm}{\mathcal{L}_{\mu}}
\definecolor{violet}{rgb}{0.0,0.2,0.7}
\definecolor{rouge}{cmyk}{0.0,0.6,0.4,0.3}
\definecolor{rouge2}{rgb}{0.8,0.0,0.2}
\title[Variational approach to  Hessian equations]
 {A variational approach to complex Hessian equations in $\mathbb{C}^n$ } 
\author{Lu Hoang Chinh}
\begin{document}
\maketitle

\begin{abstract}
Let $\Omega$ be a $m$-hyperconvex domain of $\C^n$  and $\beta$ be the standard K\"{a}hler form in $\C^n$. We introduce finite energy classes of $m$-subharmonic functions of Cegrell type, $\Ecpo, p>0$ and $\Fco$. Using a variational method  we show that the degenerate complex Hessian equation  $(dd^c\varphi)^m\wedge \beta^{n-m}=\mu$ has a unique solution in $\Ecto$ if and only if every function in $\Ecto$ is integrable with respect to $\mu$. If $\mu$ has finite total mass and does not charge $m$-polar sets, then the equation  has a unique solution in $\Fco$. 
\end{abstract}

\section{Introduction}
Let $\Omega$ be a bounded domain of $\C^n$ and $m$ be an integer such that $1\leq m\leq n.$ We consider complex $m$-Hessian equations of the form 
\begin{equation}\label{eq: heq}
(dd^c \f)^m\wedge \beta^{n-m}=\mu,
\end{equation}
where $\beta:=dd^c \vert z\vert^2$ is the standard K\"{a}hler form in $\C^n$ and $\mu$ is a positive Radon measure. 

The border cases $m=1$ and $m=n$ correspond to the Laplace equation which is a classical subject and the complex Monge-Amp\`ere equation which was studied intensively in the recent years by many authors. 

\medskip

The complex $m$-Hessian equation was first studied by Li \cite{Li04}. He used the well-known continuity method to solve the non-degenerate Dirichlet problem for equation (\ref{eq: heq}) (where the data is smooth and we seek for smooth solutions) in strongly $m$-pseudoconvex domains. One of its  degenerate  counterparts was studied by B{\l}ocki \cite{Bl05}. More precisely, he solved the homogeneous equation with continuous boundary data and initiated a potential theory for this equation. Recently, Abdullaev and Sadullaev \cite{SA12} also considered  $m$-polar sets and $m$-capacity for $m$-subharmonic functions. When the right-hand side $\mu$ has density in $L^p(\Omega)$ ($p>n/m$) Dinew and Ko{\l}odziej proved in  \cite{DK11} that given a continuous boundary data, the Dirichlet problem of equation (\ref{eq: heq}) has a unique continuous solution. The H\"{o}lder regularity of the solution has been recently studied by Nguyen Ngoc Cuong \cite{Cuong2}. He also showed how to construct solutions from subsolutions \cite{Cuong1}. A viscosity approach to this equation has been developed in \cite{Chinh2} which generalize results in \cite{YW10} and \cite{EGZ11}.
\medskip

The \textit{real} Hessian equation is a classical subject which was studied  intensively in the recent years. The reader can find a survey for this in \cite{W09}. It was explained in \cite{DK11} that \textit{real} and \textit{complex} Hessian equations are very different and direct adaptations of the real methods to the complex setting often fails.
\medskip

The corresponding complex $m$-Hessian equation on compact K\"{a}hler manifolds has been studied by many authors. It has the following form
\begin{equation}\label{eq: heq kahler}
 (\omega+dd^c \f)^m\wedge \omega^{n-m}=\mu,
\end{equation}
where $(X,\omega)$ is a compact K\"{a}hler manifold of dimension $n$ and $1\leq m\leq n$ and $\mu$ is a positive Radon measure.

When $\mu=f\omega^n$, $f>0$ is a smooth function satisfying the compatibility condition  $\int_X f\omega^n=\int_X \omega^n$, this is a generalization of the well-known Calabi-Yau equation \cite{Y}.  In \cite{Kok10}, Kokarev gave some conditions on the measure $\mu$ and on the holomorphic sectional curvature of the metric so that equation (\ref{eq: heq kahler}) has a $\omega$-plurisubharmonic solution. In general, if $\varphi$ solves  equation (\ref{eq: heq kahler}) the  form $\omega+dd^c \varphi$ is not positive. This lack of positivity prevents one from copying the proof of Yau's Theorem without assuming a positivity condition on the holomorphic bisectional curvature.  Hou, Ma, Wu \cite{Hou09}, and Jbilou \cite{Jb10}  independently proved that equation (\ref{eq: heq kahler}) has a  smooth solution  provided this positivity condition. Another effort from Hou, Ma and Wu \cite{HMW10} showed that one can obtain a $\Cc^2$ estimate if a gradient estimate holds. As suggested by these authors,  this estimate can be used in some blow-up analysis. This blow-up analysis reduces the problem of solving equation (\ref{eq: heq kahler}) to a Liouville-type theorem for $m$-subharmonic functions in $\C^n$ which was recently proved by Dinew and Ko{\l}odziej \cite{DK12} and the solvability of equation (\ref{eq: heq kahler}) is thus confirmed on any compact K\"{a}hler manifold.
\medskip

When $0\leq f\in L^p(X,\omega^n)$ for some $p>n/m,$ Dinew and Ko{\l}odziej recently proved that  (\ref{eq: heq kahler}) admits a unique continuous weak solution. The result also holds when the right-hand side $f=f(x,\f)$ depends on $\varphi$ (see \cite{Chinh1}). 
\medskip

To deal with more singular measures (measures of finite energy), the variational method developed in \cite{BBGZ13} is a powerful method. However,  applying this method to the complex Hessian equation (\ref{eq: heq kahler}) need further studies on the local Dirichlet problem and on the regularizing process which are not yet available and seem to be very difficult. 

As a matter of fact, it is interesting to first develop this approach  for the complex Hessian equation in the flat case, i.e the case when the metric is $\beta$. This is the aim of this paper. 

The paper is organized as follows. In section 2, we recall basic facts about $m$-subharmonic functions and the complex $m$-Hessian operators. At the end of section 2 we give a connection between the m-polarity and the Hausdorff measure of a set. Using this one can find examples of $m$-polar sets ($m<n$) which are not pluripolar.  In section 3, we study finite energy classes of $m$-subharmonic functions inspired by \cite{Ceg98,Ceg04}.  An $m$-subharmonic function $\varphi$ belongs to the class $\Ecto$  if  the Hessian measure $H_m(\varphi)=(dd^c \varphi)^m\wedge \beta^{n-m}$ is well-defined and with respect to which $\varphi$ is integrable. The class $\Fc_m^a(\Omega)$ consists of non-positive $m$-subharmonic functions whose Hessian measures are well-defined, of finite total mass and do not charge $m$-polar sets.
In section 4, we develop a variational approach inspired by \cite{BBGZ13} (see also \cite{ACC10}) to solve equation (\ref{eq: heq}) with a "finite energy" right-hand side.

\medskip

The main results are the followings.
\medskip

\noindent{\bf Theorem 1.}  Let $\mu$ be a positive Radon measure in $\Omega$, an $m$-hyperconvex domain. Then $\Ecto\subset L^1(\Omega,\mu)$ if and only if there exists a unique $\varphi\in \Ecto$ such that $(dd^c \varphi)^m\wedge \beta^{n-m}=\mu.$
\medskip

To prove this result we use a variational method introduced in \cite{BBGZ13}. Our result generalizes the result in \cite{ACC10}. Using this and following \cite{Ceg04} we also get:

\medskip

\noindent{\bf Theorem 2.}  Let $\mu$ be a positive Radon measure in an $m$-hyperconvex domain $\Omega$ such that $\mu(\Omega)<+\infty$ and $\mu$ does not charge $m$-polar sets. Then there exists a unique $\varphi\in \Fc^a_m(\Omega)$ such that $(dd^c \varphi)^m\wedge \beta^{n-m}=\mu.$

\section{Preliminaries} 

\subsection{m-subharmonic functions and the Hessian operator}
In the whole paper, $\beta$ denotes the standard K\"{a}hler form in $\C^n.$ In this section we summarize basic facts about $m$-subharmonic functions and the Hessian operator which will be used in the next sections. Most of these results can be found in \cite{Cuong1,Cuong2}, \cite{SA12} or can be proved  similarly as in the case of plurisubharmonic functions (see for example \cite{Kl91}, \cite{Kol05}).

\begin{definition}
Let $\alpha$ be a real $(1,1)$-form in $\Omega$, a domain of $\C^n$. We say that $\alpha$ is $m$-positive in $\Omega$ if the following inequalities hold 
$$
\alpha^j\wedge\beta^{n-j}\geq 0, \ \ \forall j=1,...,m.
$$
Let $T$ be a current of bidegree $(n-k,n-k)$, with $k\leq m$. Then $T$ is called $m$-positive if for all $m$-positive $(1,1)$-forms $\alpha_1,...,\alpha_k,$ we have
$$
\alpha_1\wedge...\wedge \alpha_k\wedge T\geq 0.
$$
\end{definition}

 \begin{definition}
A function $u: \Omega\rightarrow \R\cup \{-\infty\}$ is called $m$-subharmonic if it is subharmonic and the current $dd^c u$
is $m$-positive.
The class of all $m$-subharmonic functions in $\Omega$ will be denoted by $\SH_m(\Omega).$
 \end{definition}
 
 \begin{definition}
Let $\Omega$ be a  bounded  domain in $\C^n$. Then $\Omega$ is called $m$-hyperconvex if there exists a continuous $m$-subharmonic function $\f:\Omega\rightarrow \R^-$ such that $\{\f<c\}\Subset \Omega$, for every $c<0.$ 
\end{definition}

From now on, we always assume that $\Omega$ is $m$-hyperconvex.\\
We list in the following proposition some elementary facts on $m$-subharmonicity.
\begin{prop}\label{prop: basic property of m subharmonic functions}
(i) If $u$ is $\mathcal{C}^2$ smooth then $u$ is $m$-subharmonic if and only if the form $dd^cu$ is $m$-positive in $\Omega.$

(ii) If $u,v \in \SH_m(\Omega)$ then $\lambda u+\mu v\in \SH_m(\Omega), \forall \lambda, \mu >0.$

(iii) If $u$ is $m$-subharmonic in $\Omega$ then the standard regularization $u * \chi_{\epsilon}$ are also $m$-subharmonic in $\Omega_{\epsilon}:=\{x\in \Omega \ / \ d(x,\partial \Omega)>\epsilon\}$.

(iv) If $(u_l)\subset \SH_m(\Omega)$ is  locally uniformly bounded from above then $(\sup u_l)^*\in \SH_m(\Omega)$, where $v^*$ is the upper semi continuous regularization of $v$. 

(v) $PSH(\Omega)=\SH_n(\Omega)\subset ...\subset \SH_m(\Omega)\subset...\subset \SH_1(\Omega)=\SH(\Omega).$

(vi)  Let $\emptyset \neq U\subset \Omega$ be a proper open subset such that $\partial U\cap \Omega$ is relatively compact in $\Omega$. If $u\in \SH_m(\Omega)$, $v\in \SH_m(U)$ and $\limsup_{x\to y}v(x)\leq u(y)$ for each $y\in \partial U\cap \Omega$ then the function $w$, defined by
$$
w(z)= \begin{cases} u(z)\ \ \ \ \ \ \ \ \ \ \ \ \ \ \ \  {\rm if }\ z\in \Omega \setminus U ,\\
\max (u(z), v(z)) \ \ {\rm if} \ z\in U 
\end{cases}
$$
is $m$-subharmonic in $\Omega.$
\end{prop}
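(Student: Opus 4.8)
The plan is to read these six facts as the $m$-subharmonic counterparts of the classical theory of (pluri)subharmonic functions, so that each mirrors a standard argument once two structural observations are in place. First, the closure properties that matter—stability of $m$-positivity under addition, positive scaling, and weak limits—are immediate from the definition, since the $m$-positivity of a current is tested against a \emph{fixed} family of $m$-positive $(1,1)$-forms wedged with powers of $\beta$, so these operations preserve the defining inequalities by linearity and by passing limits under the pairing. Second, for a smooth form this testing condition coincides with the pointwise eigenvalue condition $\sigma_1(\lambda)\ge 0,\dots,\sigma_m(\lambda)\ge 0$ (the G\r{a}rding symmetric functions of the eigenvalues of $dd^c u$ with respect to $\beta$), and in particular every positive $(1,1)$-form is $m$-positive because of the inclusion of the G\r{a}rding cones; this is the one genuinely algebraic input (see \cite{SA12,Cuong1}). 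Granting these I would prove the statements in the order (i), (v), (ii), (iii), then a max-stability lemma, and finally (iv) and (vi).

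Statement (i) reduces to its reverse implication, since $m$-subharmonicity is \emph{defined} to include $m$-positivity of $dd^c u$. If $u\in\mathcal{C}^2$ and $dd^c u$ is $m$-positive, then the $j=1$ inequality reads $dd^c u\wedge\beta^{n-1}\ge 0$; as $dd^c u\wedge\beta^{n-1}$ is a positive multiple of $(\Delta u)\,\beta^n$, this forces $\Delta u\ge 0$, so $u$ is automatically subharmonic and the two notions agree. Statement (v) is then bookkeeping: $m$-positivity imposes $\alpha^j\wedge\beta^{n-j}\ge 0$ for $j=1,\dots,m$, a strictly larger set of inequalities as $m$ grows, giving $\SH_m(\Omega)\subset\SH_{m'}(\Omega)$ whenever $m'\le m$. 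The endpoints follow by identification: the $j=1$ condition is $\{\Delta u\ge 0\}$, so $\SH_1(\Omega)=\SH(\Omega)$, while the full system for $m=n$ cuts out the positive-semidefinite cone $\{dd^c u\ge 0\}$, so $\SH_n(\Omega)=PSH(\Omega)$.

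For (ii), $\lambda u+\mu v$ is subharmonic and $dd^c(\lambda u+\mu v)=\lambda\,dd^c u+\mu\,dd^c v$ is $m$-positive because $m$-positivity of currents is preserved under positive combinations. For (iii), convolution commutes with $dd^c$, so $dd^c(u*\chi_\epsilon)=(dd^c u)*\chi_\epsilon$; since $\beta$ is translation invariant each translate of the $m$-positive current $dd^c u$ is again $m$-positive, and the mollification is a positive average of such translates, hence $m$-positive and smooth, so $u*\chi_\epsilon\in\SH_m(\Omega_\epsilon)$ by (i). The bridge to the last two statements is stability of $\SH_m(\Omega)$ under decreasing limits and finite maxima. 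For a decreasing limit $u_j\downarrow u\not\equiv-\infty$ one has $u_j\to u$ in $L^1_{\rm loc}$, hence $dd^c u_j\weak dd^c u$, and weak limits of $m$-positive currents are $m$-positive. For maxima I would first note that if $\chi:\R^2\to\R$ is smooth, convex and non-decreasing in each variable, then $dd^c\chi(u,v)=\chi_a\,dd^c u+\chi_b\,dd^c v+(\chi_{aa}\,du\wedge d^c u+\chi_{ab}(\cdots)+\chi_{bb}\,dv\wedge d^c v)$ is a sum of $m$-positive currents (the last bracket is a positive $(1,1)$-form, hence $m$-positive), so $\chi(u,v)\in\SH_m(\Omega)$ after smoothing $u,v$ by (iii); writing $\max$ as a decreasing limit of such $\chi$ and applying the decreasing-limit stability yields closure under maxima.

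With max-stability available, (vi) is the standard gluing argument: the boundary hypothesis $\limsup_{x\to y}v(x)\le u(y)$ makes $w$ upper semicontinuous, $w$ equals the $m$-subharmonic function $u$ on $\Omega\setminus\overline U$ and the $m$-subharmonic $\max(u,v)$ on $U$, and near $\partial U\cap\Omega$ the same inequality lets one verify $m$-subharmonicity locally exactly as for plurisubharmonic functions \cite{Kl91}. Finally (iv) follows the classical pattern: by Choquet's lemma reduce the family to a countable one, pass to the increasing sequence of finite maxima (each in $\SH_m(\Omega)$ by max-stability), and use that an increasing, locally bounded-above sequence has $m$-subharmonic upper regularization. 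I expect this last statement to be the main obstacle, precisely because $m$-positivity of the limit is what must be controlled: the subharmonicity of $(\sup u_l)^*$ is classical, but the cleanest way to handle $dd^c$ of the limit is to force monotonicity via Choquet's lemma and finite maxima so that only the already-established increasing- and decreasing-limit stabilities of the convex cone of $m$-positive currents are invoked, rather than a direct estimate on an uncountable supremum.
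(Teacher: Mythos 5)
The paper never proves this proposition: it is stated as a list of elementary facts, with the reader sent to \cite{SA12}, \cite{Cuong1,Cuong2} and to the plurisubharmonic theory in \cite{Kl91,Kol05}. So the comparison is with the standard literature route, and your proposal is essentially that route: isolate the G{\aa}rding-cone algebra (equivalence, for smooth forms, of the testing condition with $\sigma_1\geq 0,\dots,\sigma_m\geq 0$, plus the inclusions of the cones) as the only non-formal input, and propagate it through the formal stabilities of the testing condition --- positive combinations for (ii), translation invariance plus averaging for (iii), weak limits for decreasing sequences, max-stability via smooth convex $\chi(u,v)$ (your diagonalization of ${\rm Hess}\,\chi$, writing the second-order bracket as a sum of forms $df\wedge d^cf\geq 0$, is correct), and (iv) by Choquet, finite maxima, and the Brelot--Cartan fact that $\sup$ and $(\sup)^*$ agree almost everywhere. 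All of this is sound.

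Two points need repair. The minor one is (v): your ``bookkeeping'' compares the inequalities $\alpha^j\wedge\beta^{n-j}\geq 0$, which only make sense pointwise for forms, i.e.\ for smooth $u$; for a general $u\in\SH_m(\Omega)$ the powers of the current $dd^cu$ are not defined, and at the level of the testing definition the inclusion is not formal, because the test forms for $\SH_{m'}(\Omega)$, $m'<m$, are built from the \emph{larger} G{\aa}rding cone and need not be test forms for $\SH_m(\Omega)$. So (v) must be run after (iii): mollify, apply the smooth case, pass to the decreasing limit --- you have all the pieces, but in your stated order (v) is established only for smooth functions. The genuine gap is (vi): ``exactly as for plurisubharmonic functions'' does not go through, since the psh gluing proof in \cite{Kl91} rests on restriction to complex lines and one-variable sub-mean-value inequalities, and $m$-subharmonicity for $1<m<n$ admits no such characterization; nor can you reduce to the trivial overlapping-open-sets case by replacing $v$ with $v-\delta$, because $\limsup_{x\to y}v(x)\leq u(y)$ yields no neighborhood of $\partial U\cap\Omega$ on which $v-\delta\leq u$ (that would require lower semicontinuity of $u$). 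The clean fix stays inside your own framework: for constant-coefficient $\alpha_1,\dots,\alpha_{m-1}$ in the open G{\aa}rding cone, $u\mapsto dd^cu\wedge\alpha_1\wedge\dots\wedge\alpha_{m-1}\wedge\beta^{n-m}$ is a constant-coefficient elliptic operator (G{\aa}rding's inequality makes its coefficient matrix positive definite), hence the ordinary Laplacian after a linear change of coordinates, and $u\in\SH_m(\Omega)$ if and only if $u$ is subharmonic for every such operator, by your mollification and decreasing-limit arguments. The classical gluing theorem then applies operator by operator, which proves (vi); the same device also gives (iv) in one line, since the upper semicontinuous regularization does not depend on the operator.
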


\noindent For locally bounded $m$-subharmonic functions $u_1,...,u_p$ (with $p\leq m$) we can inductively define a closed $m$-positive current (following Bedford and Taylor \cite{BT76}).
\begin{lemma}\label{lem: symmetric}
Let $u_1,...,u_k$ (with $k \leq m$) be locally bounded $m$-subharmonic functions in $\Omega$ and let $T$ be a closed $m$-positive current of bidegree $(n-p,n-p)$ (with $p\geq k$). Then we can define inductively a closed $m$-positive current 
$$
dd^cu_1\wedge dd^cu_2\wedge ...\wedge dd^cu_k\wedge T,
$$
and the product is symmetric, i.e. 
$$
dd^cu_1\wedge dd^cu_2\wedge ...\wedge dd^cu_p\wedge T=
dd^cu_{\sigma(1)}\wedge dd^cu_{\sigma(2)}\wedge ...\wedge dd^cu_{\sigma(k)}\wedge T,
$$
for every permutation $\sigma: \{1,...,k\}\to \{1,...,k\}.$

In particular, the Hessian measure of $u\in \SH_m(\Omega)\cap\loc$ is defined to be
$$
H_m(u)=(dd^cu)^m\wedge\beta^{n-m}.
$$
\end{lemma}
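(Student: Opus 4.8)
The plan is to imitate the Bedford--Taylor inductive construction, replacing ``positive'' by ``$m$-positive'' throughout. The first ingredient I would record is that any closed $m$-positive current $S$ of bidegree $(n-q,n-q)$ has measure coefficients: since $\beta$ is $m$-positive, the trace $\beta^q\wedge S$ is a positive Radon measure, and because the $m$-positive $(1,1)$-forms fill an open cone around $\beta$ whose $q$-fold wedge products span the $(q,q)$-forms, every coefficient of $S$ is a signed measure with local mass controlled by $\int\beta^q\wedge S$. Consequently, for a locally bounded Borel function $u$ the product $uS$ is a well-defined current, and I would simply \emph{define} $dd^cu\wedge S:=dd^c(uS)$, a current of bidegree $(n-q+1,n-q+1)$. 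Since $S$ has pure bidegree and is closed we have $dS=d^cS=0$, so the Leibniz rule yields $dd^c(uS)=dd^cu\wedge S$ (matching the smooth case), while $d\,dd^c(uS)=0$ automatically, so the new current is closed.

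Starting from $S_0:=T$ I would then set $S_j:=dd^cu_j\wedge S_{j-1}=dd^c(u_jS_{j-1})$ for $j=1,\dots,k$; the hypothesis $p\geq k$ keeps every bidegree admissible. The only nontrivial point is that each $S_j$ is again $m$-positive, and this I would obtain by a \emph{single} decreasing limit with the current held fixed. Using Proposition~\ref{prop: basic property of m subharmonic functions}(iii) I regularize $u_j$ by smooth $m$-subharmonic functions $u_j^{(l)}\downarrow u_j$; for these $dd^cu_j^{(l)}$ is a genuine $m$-positive $(1,1)$-form, so $dd^cu_j^{(l)}\wedge S_{j-1}$ is $m$-positive by definition (it serves as one more admissible test form wedged against the $m$-positive $S_{j-1}$). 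Because $S_{j-1}$ is fixed with measure coefficients and $u_j^{(l)}\downarrow u_j$ boundedly, dominated convergence gives $u_j^{(l)}S_{j-1}\to u_jS_{j-1}$, hence $dd^cu_j^{(l)}\wedge S_{j-1}\to S_j$ by continuity of $dd^c$; and $m$-positivity passes to weak limits. Iterating up to $j=k$ exhibits the product as a closed $m$-positive current.

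Symmetry is the delicate part, and I expect it to be the main obstacle, because comparing two orderings forces one to vary \emph{both} a function and the current multiplying it, whereas weak convergence of currents need not commute with multiplication by a merely bounded Borel function. It suffices to prove the two-factor identity $dd^cu\wedge dd^cv\wedge S=dd^cv\wedge dd^cu\wedge S$ for a fixed closed $m$-positive $S$, since adjacent transpositions generate all permutations. The engine is the $m$-subharmonic analogue of the Bedford--Taylor convergence theorem: if $v^{(l)}\downarrow v$ are smooth $m$-subharmonic and $w$ is bounded $m$-subharmonic, then $w\,(dd^cv^{(l)}\wedge S)\to w\,(dd^cv\wedge S)$ weakly. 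Granting this, for smooth $u,v$ ordinary forms commute, and approximating first $v$ and then $u$ by decreasing smooth sequences and passing to the limit through this stronger convergence transfers the identity to the locally bounded case. Proving the convergence statement is where the real work lies: it rests on the quasi-continuity of $m$-subharmonic functions with respect to the $m$-capacity, which lets one replace $w$ by a continuous function off a set of small capacity, together with the uniform mass bounds on $dd^cv^{(l)}\wedge S$ and a monotonicity argument, exactly as in the pluripotential setting but with the $m$-capacity in place of the Monge--Amp\`ere capacity. Once the two-factor symmetry holds, full permutation symmetry and the independence of the product from the chosen order follow, and the Hessian measure $H_m(u)=(dd^cu)^m\wedge\beta^{n-m}$ is the special case $u_1=\dots=u_m=u$, $T=\beta^{n-m}$.
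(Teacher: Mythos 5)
Your proposal is correct in structure, but there is nothing in the paper to compare it against line by line: the paper's entire proof of this lemma is the citation ``See \cite{SA12}'', so what you have written is a reconstruction of the Bedford--Taylor program that the cited reference carries out for $m$-subharmonic functions. Your steps are the standard ones and they do work: the coefficients-are-measures argument via polarization over the open cone of $m$-positive $(1,1)$-forms around $\beta$ (monotonicity of wedging against an $m$-positive current gives the mass control); the inductive definition $dd^cu\wedge S:=dd^c(uS)$, which is closed automatically; the positivity step, where $dd^cu_j^{(l)}$ for a smooth decreasing regularization serves as one more admissible test form and $m$-positivity survives the weak limit; and the reduction of symmetry to a single adjacent transposition, in which exactly one of the four limits requires the strong convergence ``engine'' (multiplication of a weakly convergent sequence of currents by a fixed bounded, non-continuous, $m$-subharmonic function). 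Two caveats are worth making explicit. First, your engine is precisely Theorem \ref{thm: convergence decreasing} of the paper, and the quasicontinuity you invoke is Theorem \ref{thm: m-quasicontinuity}, both of which the paper states \emph{after} this lemma (and also attributes to \cite{SA12} or to arguments of \cite{Kol05}); your argument is nevertheless non-circular, because only the definitional half of the lemma is needed to define $\Ca_m$ and hence quasicontinuity, while the symmetry half can be proved afterwards --- but this silent reordering of Section 2 should be stated, not left implicit. Second, you leave the engine itself unproven (``exactly as in the pluripotential setting''), and that is where essentially all the analytic content lies; this matches the paper's own level of detail, which outsources the same content, but your text is an outline of the correct strategy rather than a self-contained proof.
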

\begin{proof}
See \cite{SA12}.
\end{proof}

\begin{theorem}\label{thm: convergence decreasing}
Let $(u_0^j),...(u_k^j)$ be decreasing sequences of  $m$-subharmonic functions in $\Omega$ converging to $u_0,...,u_k\in \SH_m(\Omega)\cap \loc(\Omega)$ respectively. Let $T$ be a closed $m$-positive current of bidegree $(n-p,n-p)$ (with $p\geq k$) on $\Omega$. Then 
$$
u_0^j.dd^cu_1^j\wedge...\wedge dd^cu_k^j\wedge T\weak u_0.dd^cu_1\wedge...\wedge dd^cu_k\wedge T,
$$
weakly in the sense of currents.
\end{theorem}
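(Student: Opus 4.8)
The plan is to argue by induction on the number $k$ of Hessian factors, proving simultaneously the two coupled statements: (A) $dd^cu_1^j\wedge\cdots\wedge dd^cu_k^j\wedge T\weak dd^cu_1\wedge\cdots\wedge dd^cu_k\wedge T$, and (B) the same currents carrying the leading factor, $u_0^j\,dd^cu_1^j\wedge\cdots\wedge dd^cu_k^j\wedge T\weak u_0\,dd^cu_1\wedge\cdots\wedge dd^cu_k\wedge T$. The base case $k=0$ is statement (B) in the form $u_0^j\,T\weak u_0\,T$: since $T$ is $m$-positive its coefficients are Radon measures, and as $u_0^j\downarrow u_0$ with all functions locally bounded, the dominated convergence theorem applied coefficientwise gives the claim. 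Throughout I will use Lemma \ref{lem: symmetric} (the intermediate products are closed and $m$-positive, hence have measure coefficients) and the fact, proved as in the Bedford--Taylor theory, that bounded $m$-subharmonic functions integrate against such currents and that Stokes' formula holds with no boundary contribution against a test form of compact support.

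For the inductive step, set $\Theta_j:=dd^cu_2^j\wedge\cdots\wedge dd^cu_k^j\wedge T$ and $\Theta:=dd^cu_2\wedge\cdots\wedge dd^cu_k\wedge T$. To obtain (A) at level $k$, fix a smooth test form $\chi$ of compact support and move $dd^c$ onto $\chi$ by integration by parts:
\begin{equation*}
\int\chi\,dd^cu_1^j\wedge\Theta_j=\int u_1^j\,dd^c\chi\wedge\Theta_j=\int dd^c\chi\wedge(u_1^j\,\Theta_j).
\end{equation*}
By the inductive hypothesis (B) at level $k-1$, applied with $u_1^j$ in the role of the leading factor, one has $u_1^j\,\Theta_j\weak u_1\,\Theta$; pairing this weak convergence against the fixed smooth form $dd^c\chi$ and integrating by parts back yields $\int\chi\,dd^cu_1\wedge\Theta$. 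This proves (A) at level $k$.

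The delicate point is (B) at level $k$. Writing $R_j:=dd^cu_1^j\wedge\Theta_j$ and $R:=dd^cu_1\wedge\Theta$, we have just shown $R_j\weak R$ (these are nonnegative against nonnegative test forms), and we must pass to the limit in $u_0^j\,R_j$ where $u_0^j$ is only upper semicontinuous. For a test function $\chi\geq0$ the \emph{upper} estimate follows from a monotonicity squeeze: for $j\geq i$ one has $u_0^j\leq u_0^i$, so $\int\chi\,u_0^j\,R_j\leq\int\chi\,u_0^i\,R_j$; approximating the usc function $u_0^i$ from above by continuous $g_l\downarrow u_0^i$ and using $R_j\weak R$ gives $\limsup_j\int\chi\,u_0^j\,R_j\leq\int\chi\,g_l\,R$, whence, letting first $l\to\infty$ and then $i\to\infty$ by monotone convergence against $R$, $\limsup_j\int\chi\,u_0^j\,R_j\leq\int\chi\,u_0\,R$.

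The main obstacle is the matching \emph{lower} estimate, which cannot follow from weak convergence alone since $u_0$ is not lower semicontinuous. Minorizing $u_0^j\geq u_0$ by continuous functions $g\leq u_0$ and using $R_j\weak R$ only gives $\liminf_j\int\chi\,u_0^j\,R_j\geq\int\chi\,u_{0,*}\,R$, where $u_{0,*}$ is the lower regularization, and the gap sits on the discontinuity set of $u_0$. To close it I would invoke the quasicontinuity of $m$-subharmonic functions with respect to the $m$-capacity together with the fact that the measure $R=dd^cu_1\wedge\Theta$ of locally bounded $m$-subharmonic functions puts no mass on $m$-polar sets and is small on sets of small $m$-capacity: given $\varepsilon>0$, choose an open $G$ with $\Ca(G)<\varepsilon$ off which $u_0$ is continuous, so $u_0=u_{0,*}$ on $\Omega\setminus G$ while $\int_G\chi\,|u_0|\,R$ is controlled by $R(G)$, which is small. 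This gives $\int\chi\,u_{0,*}\,R\geq\int\chi\,u_0\,R-C\varepsilon$ and hence the desired lower bound, completing (B). Alternatively, one may first establish the statement for continuous $u_i$, where the convergence is uniform on compacta by Dini's theorem, and then reduce the general bounded case by a diagonal argument against the decreasing smooth regularizations; in either route the genuine difficulty is precisely this semicontinuity defect produced by multiplying a weakly convergent sequence of Hessian measures by the non-continuous decreasing factor $u_0^j$.
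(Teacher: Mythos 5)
Your induction scheme, the base case, step (A), and the $\limsup$ half of (B) are all correct and standard (for context: the paper offers no argument of its own here, its proof being the citation to \cite{SA12}). The genuine gap is in the $\liminf$ half, and it is fatal to the strategy, not just to a detail. Your plan reduces everything to ``$R_j\weak R$ plus properties of the limit measure $R$'': you first deduce $\liminf_j\int\chi u_0^jR_j\geq\int\chi u_{0,*}\,R$ (correct), and then try to show $\int\chi u_{0,*}\,R\geq\int\chi u_0\,R-C\varepsilon$ via quasicontinuity. But that inequality is \emph{false}, and no refinement of the argument can save it, because the discrepancy between $u_{0,*}$ and $u_0$ can be macroscopic against $R$. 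Concretely, inside the paper's own setting take $u_i^j=|z|^2$ for $i=1,\dots,k$ and $T=\beta^{n-p}$, so that $R_j=R$ is a constant multiple of Lebesgue measure, and take $u_0=\max\bigl(\varepsilon_0\sum_k 2^{-k}\log|z-a_k|,-1\bigr)$ with $(a_k)$ dense and $\varepsilon_0$ small, $u_0^j$ the same expression with the sum truncated at $k\leq j$. Then $u_0(a_k)=-1$ for every $k$, hence $u_{0,*}\equiv-1$, while $u_0>-1$ on a set of positive Lebesgue measure; so $\int\chi u_{0,*}\,R$ sits a fixed positive distance below $\int\chi u_0\,R$, independently of $\varepsilon$. (The theorem itself holds here, by monotone convergence.) The specific false step is the claim ``$u_0=u_{0,*}$ on $\Omega\setminus G$'': quasicontinuity gives continuity of the \emph{restriction} $u_0|_{\Omega\setminus G}$, not continuity of $u_0$ at points of $\Omega\setminus G$, and $u_{0,*}$ sees the values of $u_0$ inside $G$ — in the example above $u_0\neq u_{0,*}$ on most of $\Omega\setminus G$. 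The moral is that the lower bound cannot be extracted from the limit $R$ alone; it must use the structure of the approximating measures $R_j$ themselves.

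This points to the second gap: a correct capacity-style proof (Bedford--Taylor/Kolodziej, \cite{BT76}, \cite{Kol05}) compares $\int\chi u_0\,R_j$ with $\int\chi u_0\,R$ by replacing $u_0$ with a continuous function agreeing with it off $G$, and the error terms then live on $G$ against \emph{every} $R_j$; so one needs a Chern--Levine--Nirenberg-type bound $\sup_j R_j(G\cap K)\leq C\,\Ca_m(G)$ uniform in $j$, together with quasicontinuity applied simultaneously to all the $u_0^j$ (countable subadditivity of $\Ca_m$) and Dini's theorem on the compact set $K\setminus G$ — none of which appears in your sketch. Moreover, such a capacity bound is available when $T$ is itself a wedge of Hessians of locally bounded $m$-subharmonic functions and powers of $\beta$, but it is simply false for a general closed $m$-positive current $T$ as in the statement: already for $m=n$, the current of integration $T=[H]$ along a hyperplane is closed and positive, and $dd^c|z|^2\wedge[H]\wedge\beta^{n-2}$ is a multiple of surface measure on $H$, which charges sets of zero capacity. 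For arbitrary $T$ one must instead argue as Demailly does for closed positive currents: localize to a small ball, replace $u_i^j$ by $\max(u_i^j,A|z|^2-B)$ so that all functions coincide near the boundary, and obtain the $\liminf$ inequality by integration by parts rather than by capacity. Your closing ``alternative'' (continuous case plus a diagonal argument) does not bypass any of this, since reducing bounded $m$-subharmonic functions to continuous ones is precisely the difficulty at stake.
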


\begin{proof}
See \cite{SA12}.
\end{proof}
\medskip

One of the most important properties of $m$-subharmonic functions is the quasicontinuity. Every $m$-subharmonic function is continuous outside an arbitrarily small open subset. The $m$-Capacity is used to measure the smallness of these sets. 
\begin{definition}\label{def: capacity}
Let $E\subset \Omega$ be a Borel subset. The $m$-capacity of $E$ with respect to $\Omega$ is defined to be
$$
\Ca_m(E,\Omega):=\sup\Big\{\int_E H_m(\f) \ / \ \f\in \SH_m(\Omega) , 0\leq \f\leq 1\Big\}.
$$
\end{definition}
\noindent The $m$-Capacity shares the same elementary properties as the Capacity introduced by Bedford and Taylor.
\begin{prop}\label{prop: properties of m-capacity}
i) $\Ca_m(E_1,\Omega)\leq \Ca_m(E_2,\Omega)$ if $E_1\subset E_2,$

ii) $\Ca_m(E,\Omega)= \lim_{j\to \infty}\Ca_m(E_j,\Omega)$ if $E_j\uparrow E$,

iii) $\Ca_m(E,\Omega)\leq \sum \Ca_m(E_j,\Omega)$  for $E=\cup E_j.$
\end{prop}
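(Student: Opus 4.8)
The plan is to deduce all three statements by transferring the corresponding set-function properties of each positive measure $H_m(\f)$ through the supremum in Definition \ref{def: capacity}. Throughout, let $\mathcal{A}$ denote the class of competitors $\f\in\SH_m(\Omega)$ with $0\leq\f\leq 1$; since each such $\f$ is locally bounded, Lemma \ref{lem: symmetric} guarantees that $H_m(\f)=(dd^c\f)^m\wedge\beta^{n-m}$ is a well-defined positive Borel measure on $\Omega$, to which the usual monotonicity, countable subadditivity, and continuity from below of measures apply. The only genuine input from potential theory is this well-definedness of $H_m(\f)$; everything else is bookkeeping at the level of a fixed measure.

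For (i), I would fix $\f\in\mathcal{A}$ and use $E_1\subset E_2$ together with positivity of $H_m(\f)$ to get $\int_{E_1}H_m(\f)\leq\int_{E_2}H_m(\f)\leq\Ca_m(E_2,\Omega)$; taking the supremum over $\f\in\mathcal{A}$ on the left-hand side yields the claim. Property (iii) is handled the same way: for fixed $\f\in\mathcal{A}$ and $E=\cup_j E_j$, countable subadditivity of the measure $H_m(\f)$ gives $\int_E H_m(\f)\leq\sum_j\int_{E_j}H_m(\f)\leq\sum_j\Ca_m(E_j,\Omega)$, and passing to the supremum over $\f$ completes the argument.

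Property (ii) is the only one requiring an interchange of limit and supremum, so it is the point to treat with a little care, and I expect it to be the sole (minor) obstacle. Assuming $E_j\uparrow E$, monotonicity (i) shows that $\big(\Ca_m(E_j,\Omega)\big)_j$ is nondecreasing and bounded above by $\Ca_m(E,\Omega)$, giving $\lim_j\Ca_m(E_j,\Omega)\leq\Ca_m(E,\Omega)$. For the reverse inequality I would fix $\f\in\mathcal{A}$; since $H_m(\f)$ is a positive measure and $E_j\uparrow E$, continuity from below of measures yields $\int_E H_m(\f)=\lim_j\int_{E_j}H_m(\f)\leq\lim_j\Ca_m(E_j,\Omega)$. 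Taking the supremum over all $\f\in\mathcal{A}$ then gives $\Ca_m(E,\Omega)\leq\lim_j\Ca_m(E_j,\Omega)$, and the two bounds combine to the desired equality. The interchange is legitimate precisely because the monotone-limit step is performed for each fixed competitor before the supremum is taken.
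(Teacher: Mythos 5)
Your proof is correct, and it is exactly the standard elementary argument that the paper invokes by reference (it gives no proof, stating only that these properties follow as for the Bedford--Taylor capacity by repeating the arguments in Kolodziej's memoir): monotonicity and countable subadditivity of each fixed measure $H_m(\f)$, plus continuity from below applied before taking the supremum over competitors, which is precisely how part (ii) is proved classically. Nothing is missing.
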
 

\noindent The following results can be proved by repeating the arguments in \cite{Kol05}.


\begin{theorem}\label{thm: m-quasicontinuity}
Every $m$-subharmonic function $u$ defined in $\Omega $ is quasi-continuous. This means that for any positive number $\epsilon$ one can find an open set $U\subset \Omega$ with $\Ca_m(U,\Omega)<\epsilon $ and such that $u$ restricted to $\Omega\setminus U$ is continuous.
\end{theorem}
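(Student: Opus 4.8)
The plan is to transplant the Bedford--Taylor quasicontinuity argument (\cite{BT76}, in the form of \cite{Kol05}) to the $m$-subharmonic category; every analytic ingredient it uses---the monotone convergence of mixed Hessian currents (Theorem~\ref{thm: convergence decreasing}), the Chern--Levine--Nirenberg inequalities, and integration by parts---is available for $m$-subharmonic functions. I first reduce to a local, bounded statement. Fix a compact $K\Subset\Omega$ and $\epsilon>0$: it suffices to find an open $U$ with $\Ca_m(U,\Omega)<\epsilon$ such that $u|_{K\setminus U}$ is continuous, because one then exhausts $\Omega=\bigcup_\ell K_\ell$ by compacts, runs the construction with $\epsilon\,2^{-\ell}$ on each $K_\ell$, and takes the union, the total capacity being controlled by subadditivity (Proposition~\ref{prop: properties of m-capacity}(iii)). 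Since $u$ is upper semicontinuous it is bounded above near $K$, so after subtracting a constant I may assume $u\le 0$ there; the truncations $u_N:=\max(u,-N)$ are then bounded $m$-subharmonic and decrease to $u$.

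The engine of the proof is a convergence-in-capacity statement: if $(g_j)\subset\SH_m(\Omega)$ are continuous and decrease to a bounded $g\in\SH_m(\Omega)$, then for every $\delta>0$,
\[
\Ca_m\big(\{z\in K:\,g_j(z)-g(z)>\delta\},\,\Omega\big)\longrightarrow 0,\qquad j\to\infty.
\]
To prove this I take an arbitrary competitor $v\in\SH_m(\Omega)$ with $0\le v\le 1$. As $g_j-g\ge 0$, Chebyshev's inequality on the (open) set $E_j:=\{g_j-g>\delta\}$ gives
\[
\delta\int_{E_j\cap K}H_m(v)\le \int_K (g_j-g)\,(dd^cv)^m\wedge\beta^{n-m},
\]
and taking the supremum over all such $v$ turns the left-hand side into $\delta\,\Ca_m(E_j\cap K,\Omega)$. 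It then remains to bound the right-hand side uniformly in $v$ and let it tend to $0$. Choosing $K\Subset\Omega'\Subset\Omega$ and a cut-off, I move the $dd^c$ factors off $v$ by repeated integration by parts; the boundary terms vanish, and because $0\le v\le 1$ the Chern--Levine--Nirenberg inequalities dominate every resulting term by an integral in which $g_j-g$ (or its differential) is paired against a current of mass bounded independently of $v$. Since $g_j-g\downarrow 0$ with $g_j-g\le g_1-g\in L^1_{\mathrm{loc}}$, dominated convergence together with Theorem~\ref{thm: convergence decreasing} forces these integrals to $0$ uniformly over the competitors.

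Granting convergence in capacity, I obtain quasicontinuity of a bounded function $g$ by an Egorov/Borel--Cantelli argument: with $g_j:=g*\chi_{1/j}\downarrow g$ (smooth, hence continuous), for each $k$ pick $j_k$ so that the open set $V_k:=\{g_{j_k}-g>1/k\}\cap K$ has $\Ca_m(V_k,\Omega)<\epsilon\,2^{-k}$; then $V:=\bigcup_k V_k$ is open with $\Ca_m(V,\Omega)<\epsilon$, and on $K\setminus V$ one has $g_{j_k}\to g$ uniformly, so $g|_{K\setminus V}$ is continuous. To pass to the unbounded $u$, I apply this to the bounded truncation $u_{N}$ and combine it with the standard sublevel-set capacity estimate---obtained from the comparison function $\max(u/N,-1)$ exactly as in \cite{Kol05}---which yields $\Ca_m(\{u<-N\}\cap K,\Omega)\to 0$. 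Fixing $N_0$ with $\Ca_m(\{u<-N_0\}\cap K,\Omega)<\epsilon/2$ and an open $W$ of capacity $<\epsilon/2$ outside which $u_{N_0}$ is continuous, I set $U:=\{u<-N_0\}\cup W$; then $\Ca_m(U,\Omega)<\epsilon$, and on $K\setminus U$ we have $u\equiv u_{N_0}$ with $u_{N_0}$ continuous, so $u|_{K\setminus U}$ is continuous.

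I expect the main obstacle to be the uniformity in the convergence-in-capacity lemma: the estimate of $\int_K (g_j-g)(dd^cv)^m\wedge\beta^{n-m}$ must hold simultaneously for all competitors $0\le v\le 1$, and producing such a uniform bound requires carrying out the integration by parts so that the only $v$-dependence sits inside currents of uniformly bounded mass, while the vanishing is driven solely by $g_j-g\downarrow 0$. This is precisely where the $m$-subharmonic Chern--Levine--Nirenberg inequalities and the symmetry of mixed Hessian products (Lemma~\ref{lem: symmetric}) are indispensable.
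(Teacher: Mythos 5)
Your proposal is correct and follows essentially the same route as the paper: the paper offers no independent argument for this theorem, stating only that it ``can be proved by repeating the arguments in \cite{Kol05}'', and your write-up is precisely that Bedford--Taylor/Ko{\l}odziej scheme (convergence in capacity for decreasing sequences via Chebyshev, integration by parts and Chern--Levine--Nirenberg bounds; an Egorov-type selection of open sets; truncation plus sublevel-set capacity estimates for the unbounded case) transplanted to the $m$-subharmonic setting, where all the needed ingredients (Lemma~\ref{lem: symmetric}, Theorem~\ref{thm: convergence decreasing}, Theorem~\ref{thm: comparison principle 1}) are available. Only one cosmetic fix is needed: $V_k=\{g_{j_k}-g>1/k\}\cap K$ is not open since $K$ is compact, so intersect with a relatively compact open set $\Omega''\supset K$ instead, which changes nothing in the estimates.
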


\begin{theorem}\label{thm: convergence theorem for increasing sequences} Let  $\{u_k^j\}_{j=1}^{\infty} $ be a locally uniformly bounded sequence of  $m$-subharmonic functions in $\Omega$ for $k=1,2,..., N\leq m$ and let $u_k^j\uparrow u_k\in \SH_m(\Omega)\cap L^{\infty}_{\rm loc}$ almost everywhere as $j\to \infty$ for $k=1,2,...,N.$ Then 
$$
dd^cu_1^j\wedge ...\wedge dd^cu_N^j\wedge \beta^{n-m}\rightharpoonup dd^cu_1\wedge ...\wedge dd^c u_N\wedge \beta^{n-m}. 
$$
\end{theorem}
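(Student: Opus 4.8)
The plan is to argue by induction on $N$, reducing the statement to a convergence of signed measures via integration by parts and then controlling that convergence with quasicontinuity and a Chern--Levine--Nirenberg type capacity estimate. Since the assertion is local and the functions are locally bounded, I would first fix a relatively compact $\Omega'\Subset\Omega$ containing the support of whatever test form I pair against and, after subtracting constants and rescaling, assume $0\le u_k^j\le 1$ on $\Omega'$ for all $j,k$ (this only multiplies the currents by a fixed constant, so it is harmless for weak convergence). For the base case $N=1$ the claim is immediate: an increasing, uniformly bounded sequence $u_1^j\uparrow u_1$ converges in $L^1_{\rm loc}(\Omega)$, and both $dd^c$ and $\wedge\,\beta^{n-m}$ are continuous for $L^1_{\rm loc}$ convergence, so $dd^cu_1^j\wedge\beta^{n-m}\weak dd^cu_1\wedge\beta^{n-m}$.

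For the inductive step I would assume the result for $N-1$ and set $T^j:=dd^cu_1^j\wedge\cdots\wedge dd^cu_{N-1}^j\wedge\beta^{n-m}$ and $T:=dd^cu_1\wedge\cdots\wedge dd^cu_{N-1}\wedge\beta^{n-m}$; by the inductive hypothesis $T^j\weak T$, and by Lemma~\ref{lem: symmetric} these are closed $m$-positive currents with locally uniformly bounded mass. Fix a smooth, compactly supported test form $\eta$ of complementary bidegree $(m-N,m-N)$. Since $T^j$ and $T$ are closed and $\eta$ has compact support, integration by parts gives $\langle dd^cu_N^j\wedge T^j,\eta\rangle=\into u_N^j\,T^j\wedge dd^c\eta$, and likewise for the limit. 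Writing $\nu^j:=T^j\wedge dd^c\eta$ and $\nu:=T\wedge dd^c\eta$, these are signed measures with $\nu^j\weak\nu$ (wedging with the fixed smooth form $dd^c\eta$ preserves weak convergence), so everything reduces to showing $\int u_N^j\,d\nu^j\to\int u_N\,d\nu$.

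I would then split $\int u_N^j\,d\nu^j-\int u_N\,d\nu=\int(u_N^j-u_N)\,d\nu^j+\bigl(\int u_N\,d\nu^j-\int u_N\,d\nu\bigr)$ and handle each piece by excising a set of small capacity. Given $\epsilon>0$, I would apply quasicontinuity (Theorem~\ref{thm: m-quasicontinuity}) to the countable family $\{u_N^j\}_j\cup\{u_N\}$ together with subadditivity of capacity (Proposition~\ref{prop: properties of m-capacity}) to find an open $U$ with $\Ca_m(U,\Omega)<\epsilon$ on whose complement every $u_N^j$ and $u_N$ is continuous. On the compact parts of $\Omega\setminus U$ the $u_N^j$ are continuous and increase to the continuous limit $u_N$, so by Dini's theorem the convergence is uniform there; on $U$ the integrands are bounded by $2$, so the error is controlled by $\int_U|\nu^j|$. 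The second, reverse term is treated in the same way, approximating the quasicontinuous $u_N$ off $U$ by a continuous function and using $\nu^j\weak\nu$ against it.

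The crux, and the step I expect to be the main obstacle, is the uniform smallness of $\int_U|\nu^j|$. Using $|dd^c\eta|\le C\beta^{m-N+1}$ pointwise and $0\le u_k^j\le1$, this is a Chern--Levine--Nirenberg / capacity comparison of the form $\int_U|\nu^j|\le C\int_U dd^cu_1^j\wedge\cdots\wedge dd^cu_{N-1}^j\wedge\beta^{n-N+1}\le C'\,\Ca_m(U,\Omega)$, with $C,C'$ independent of $j$ (note $N-1\le m-1$, so the mixed product is within the admissible range of Lemma~\ref{lem: symmetric}). It is precisely this \emph{uniform-in-$j$} bound that renders the contribution of the bad set negligible; granting it (provable by repeating the Bedford--Taylor arguments, cf. \cite{Kol05}), letting first $j\to\infty$ and then $\epsilon\to0$ yields $\int u_N^j\,d\nu^j\to\int u_N\,d\nu$ and completes the induction.
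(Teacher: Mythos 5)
Your proposal is correct and is essentially the argument the paper itself relies on: the paper offers no proof of this theorem, saying only that it ``can be proved by repeating the arguments in \cite{Kol05}'', and your scheme --- induction on the number of factors, integration by parts against a test form, quasicontinuity plus a uniform Chern--Levine--Nirenberg/capacity bound on the excised set --- is precisely that Bedford--Taylor argument adapted to $m$-subharmonic functions. One point to make explicit before invoking Dini's theorem: since $u_N^j\uparrow u_N$ only almost everywhere, the set where pointwise convergence fails is $m$-negligible, hence of zero outer capacity by Theorem \ref{theorem: m-negligible and m-polar}, and it must also be absorbed into the small-capacity open set $U$ so that the convergence is pointwise (and then uniform) on the compact parts of $\Omega\setminus U$.
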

\begin{theorem}[Integration by parts]\label{theorem: integration by parts} Let  $u,v\in \SH_m(\Omega)\cap \loc(\Omega)$ such that $\lim_{z\to\partial \Omega} u(z)=\lim_{z\to\partial \Omega} v(z)=0.$ Then 
$$
\into u dd^c v \wedge T=\into v dd^c u\wedge T, 
$$
where $T=dd^c \varphi_1\wedge ...\wedge dd^c \varphi_{m-1}\wedge \beta^{n-m}$ with $\varphi_1,...,\varphi_{m-1}\in \SH_m(\Omega)\cap L^{\infty}_{{\rm loc}}(\Omega)$. The equality is understood in the sense that if one of the two terms is finite then so is the other, and they are equal.
\end{theorem}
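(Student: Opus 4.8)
The plan is to build the identity from the pointwise relation
$$u\,dd^c v-v\,dd^c u=d\big(u\,d^c v-v\,d^c u\big),$$
which holds for smooth $u,v$ because $du\wedge d^c v=dv\wedge d^c u$. Since $T=dd^c\f_1\wedge\cdots\wedge dd^c\f_{m-1}\wedge\beta^{n-m}$ is a closed $m$-positive current of bidegree $(n-1,n-1)$ by Lemma \ref{lem: symmetric}, wedging with $T$ and using $dT=0$ gives
$$(u\,dd^c v-v\,dd^c u)\wedge T=d\big((u\,d^c v-v\,d^c u)\wedge T\big).$$
Thus, at least formally, everything reduces to showing that the integral over $\Omega$ of an exact form vanishes; equivalently, both sides equal $-\into du\wedge d^c v\wedge T$, which is manifestly symmetric in $u$ and $v$. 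Stokes' theorem is the engine, and the hypothesis $\lim_{z\to\pO}u=\lim_{z\to\pO}v=0$ is what must annihilate the boundary contribution.

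First I would treat the model case where $u,v,\f_1,\dots,\f_{m-1}$ are smooth up to $\overline\Omega$ and $u,v$ vanish on $\pO$. Exhausting $\Omega$ by smooth relatively compact subdomains $D_j\uparrow\Omega$, Stokes' theorem gives
$$\int_{D_j}(u\,dd^c v-v\,dd^c u)\wedge T=\int_{\partial D_j}\big(u\,d^c v-v\,d^c u\big)\wedge T.$$
As $j\to\infty$ the left-hand side converges to $\into(u\,dd^c v-v\,dd^c u)\wedge T$, so the point is that the boundary integral tends to $0$. Since every point of $\partial D_j$ leaves each fixed compact subset of $\Omega$ as $j\to\infty$, the boundary values force $\sup_{\partial D_j}|u|\to0$ and $\sup_{\partial D_j}|v|\to0$; meanwhile, $d^c v\wedge T$ and $d^c u\wedge T$ being smooth up to $\overline\Omega$, their total variations over $\partial D_j$ stay bounded. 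The product of a vanishing sup with a bounded mass gives the claim in this case.

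Next I would remove the regularity by approximation. Using Proposition \ref{prop: basic property of m subharmonic functions}(iii) I regularize $u,v,\f_1,\dots,\f_{m-1}$ by decreasing sequences of smooth $m$-subharmonic convolutions on exhausting subdomains, and pass to the limit in the model identity with the monotone convergence Theorem \ref{thm: convergence decreasing}, which yields $u_k\,dd^c v_k\wedge T_k\weak u\,dd^c v\wedge T$ and the symmetric statement; the increasing convergence Theorem \ref{thm: convergence theorem for increasing sequences} serves for the potentials of $T$. The delicate point, and the one I expect to be the real obstacle, is that weak convergence of these measures on the \emph{non-compact} domain $\Omega$ does not by itself give convergence of the total integrals $\into$: mass could escape to $\pO$. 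Ruling this out is exactly where the vanishing boundary values are essential, and in practice it is handled by first replacing $u,v$ with $m$-subharmonic functions that are continuous up to $\overline\Omega$ and vanish on $\pO$---available because $\Omega$ is $m$-hyperconvex---so that the approximants inherit uniform smallness near $\pO$.

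Finally, I would record that the boundary hypothesis forces $u$ and $v$ to be globally bounded, so each integrand is a bounded function against a positive measure; the equality of the two sides, together with the ``finite if and only if finite'' clause, then follows by the same monotone limits, the symmetry of the two members being preserved at every stage. The entire difficulty is concentrated in the control of the mass near $\pO$ under the weak limits; the algebraic identity, the Stokes computation and the bookkeeping with Theorems \ref{thm: convergence decreasing} and \ref{thm: convergence theorem for increasing sequences} are routine.
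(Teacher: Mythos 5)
The paper offers no proof of this theorem to compare against: it is listed in the preliminaries among the results that ``can be proved by repeating the arguments in \cite{Kol05}''. Judged on its own, your proposal has a genuine gap, located exactly at the step you yourself flag as the crux. Your engine for killing the boundary term is ``vanishing sup of $|u|$ on $\partial D_j$ times bounded total variation of $d^cv\wedge T$ on $\partial D_j$''. But nothing in the hypotheses bounds the mass of $d^cv\wedge T$ or $dd^cv\wedge T$ near $\pO$: the potentials $\varphi_1,\dots,\varphi_{m-1}$ are only in $\SH_m(\Omega)\cap\loc(\Omega)$, with no boundary normalization, and the theorem explicitly allows both integrals to equal $-\infty$ --- that is the whole point of the ``finite iff finite'' clause (already for $m=1$, $T=\beta^{n-1}$, bounded subharmonic $u=v$ tending to $0$ on $\pO$ can have infinite energy). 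So no argument whose engine is ``small boundary values times bounded boundary mass'' can prove the statement as formulated; the known proofs instead reduce to compactly supported data, e.g.\ via the truncation $u\mapsto\max(u,-\delta)$, for which $u-\max(u,-\delta)$ is a compactly supported difference of bounded $m$-subharmonic functions, Stokes requires no boundary estimate at all, and the limit $\delta\to0$ is handled by monotone convergence and quasicontinuity.

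The second half of the gap is that your proposed repair --- ``replacing $u,v$ with $m$-subharmonic functions that are continuous up to $\overline\Omega$ and vanish on $\pO$'' --- is circular. The given $u,v$ cannot be replaced, only approximated (Theorem \ref{thm: continuous global approximation}); and to transfer the identity from the approximants back to $u,v$ you need convergence of the \emph{total} integrals $\into$ over the noncompact domain, which is precisely the obstacle you concede, and which is false in general without uniform mass or energy bounds. This is why the paper, when it does prove integration by parts in $\Fco$ and $\Ecpo$ (Theorems \ref{thm: int by parts F} and \ref{thm: integration par parties Ep p>0}), works with approximating sequences of uniformly bounded mass or energy and must first establish the convergence results of Corollary \ref{cor: def hes E} and Corollary \ref{cor: def hes Ep}. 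Two smaller, fixable points: $du\wedge d^cv$ and $dv\wedge d^cu$ are not equal as $2$-forms (only their $(1,1)$-parts agree, which suffices after wedging with $T$, so state the identity after taking $\wedge\,T$); and the convolutions $u*\chi_\epsilon$ live only on $\Omega_\epsilon$ and do not vanish on $\partial\Omega_\epsilon$, so your model case does not apply to them as written.
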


\begin{theorem}[Maximum principle]\label{theorem: maximum principle} If  $u,v\in \SH_m(\Omega)\cap \loc(\Omega)$ then 
$$
\ind_{\{u>v\}}H_m( \max (u,v))=\ind_{\{u>v\}}H_m(u). 
$$
\end{theorem}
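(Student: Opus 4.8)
The statement is local and both sides are Radon measures, so I may assume $u,v$ are bounded and work on a fixed relatively compact subdomain. The engine of the whole argument is the \emph{locality} of the Hessian operator: if $\phi,\psi\in\SH_m(\Omega)\cap\loc(\Omega)$ coincide on an open set $U$, then $H_m(\phi)=H_m(\psi)$ on $U$. I would prove this by induction on the Bedford--Taylor construction recalled in Lemma~\ref{lem: symmetric} (see \cite{BT76}): one has $dd^c\phi=dd^c\psi$ on $U$, and for any closed $m$-positive current $S$ the factor $dd^c\phi\wedge S$ equals $dd^c(\phi\,S)$, where $\phi\,S$ is determined on $U$ by $\phi|_U$; since $dd^c$ is a local operator, each factor of the inductively defined product agrees on $U$, hence so does $H_m$.

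Granting locality, the continuous case is immediate: if $u,v$ are continuous then $u-v$ is continuous, so $\{u>v\}$ is \emph{open}; there $\max(u,v)=u$, and locality gives $\ind_{\{u>v\}}H_m(\max(u,v))=\ind_{\{u>v\}}H_m(u)$. For general bounded $u,v$ I would approximate by decreasing sequences of smooth $m$-subharmonic functions $u_j\downarrow u$, $v_j\downarrow v$ (Proposition~\ref{prop: basic property of m subharmonic functions}(iii) and standard regularization), set $w_j:=\max(u_j,v_j)\downarrow w:=\max(u,v)$ and $O_j:=\{u_j>v_j\}$. Each $O_j$ is open, so by the continuous case
$$\ind_{O_j}H_m(w_j)=\ind_{O_j}H_m(u_j),\qquad j\in\N,$$
and Theorem~\ref{thm: convergence decreasing} yields the weak convergences $H_m(w_j)\weak H_m(w)$ and $H_m(u_j)\weak H_m(u)$.

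It remains to transfer the displayed identity from the moving sets $O_j$ to $\{u>v\}$, and this is the \emph{main obstacle}: because $u,v$ are only upper semicontinuous, $\{u>v\}$ need not be open and the $O_j$ are not monotone, so one cannot simply invoke portmanteau. The tool I would use is the capacity estimate
$$\int_E H_m(\phi)\le C\,\Ca_m(E,\Omega)\qquad (E\subset\Omega\ \text{Borel}),$$
valid for every bounded $\phi\in\SH_m(\Omega)$ (immediate from Definition~\ref{def: capacity} after normalizing $\phi$ to take values in $[0,1]$), which shows that the Hessian measures of the uniformly bounded functions $u_j,w_j,u,w$ charge sets of small capacity by a uniformly small amount. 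Combined with the fact that uniformly bounded decreasing sequences converge in capacity (a consequence of the quasicontinuity Theorem~\ref{thm: m-quasicontinuity} together with Theorem~\ref{thm: convergence decreasing}, as in \cite{Kol05}), one gets for fixed $\delta>0$ that $\{u>v+\delta\}\setminus O_j\subset\{|v_j-v|\ge\delta\}$ and $\{v>u+\delta\}\cap O_j\subset\{|u_j-u|\ge\delta\}$, each of capacity tending to $0$.

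Testing the identity above against a fixed nonnegative $\varphi\in\Cc_0(\Omega)$ and using that the two integrands \emph{coincide} on $O_j$, the capacity estimate bounds the discrepancy coming from the symmetric difference of $O_j$ and $\{u>v+\delta\}$ uniformly in $j$; letting first $j\to\infty$ (weak convergence) and then $\delta\downarrow0$ (continuity of the measures $H_m(w)$ and $H_m(u)$ along $\{u>v+\delta\}\uparrow\{u>v\}$) produces $\ind_{\{u>v\}}H_m(w)=\ind_{\{u>v\}}H_m(u)$. The delicate point throughout is precisely this matching of the level sets of merely upper semicontinuous functions; everything else is formal once locality, the capacity estimate, and the convergence theorems are in hand, exactly as in the plurisubharmonic case treated in \cite{Kol05}.
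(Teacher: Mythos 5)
Your skeleton (locality of $H_m$, the continuous case via openness of $\{u>v\}$, decreasing regularization, capacity control of small sets) is the right one, and it is the route the paper itself points to when it states that this theorem can be proved by repeating the arguments of \cite{Kol05}. Two remarks on the set-up, though. First, you only need to regularize $u$: if $u_j\downarrow u$ is continuous and $v$ is merely upper semicontinuous, then $O_j=\{u_j>v\}$ is already open (since $u_j-v$ is lower semicontinuous), and, unlike in your double approximation, one has the clean inclusion $\{u>v\}\subset O_j$, which removes most of your bookkeeping. Second, your claim that the capacity estimate controls the \emph{symmetric} difference of $O_j$ and $\{u>v+\delta\}$ is false: $O_j\setminus\{u>v+\delta\}$ contains $O_j\cap\{|u-v|\le\delta\}$, which is not small in capacity (think of $u\equiv v$); only the half $\{u>v+\delta\}\setminus O_j$ is small, though fortunately that is the only half the argument actually needs.

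The genuine gap is the step ``letting first $j\to\infty$ (weak convergence)''. What you have at that stage is $\bigl|\int\varphi\,\ind_A\,H_m(w_j)-\int\varphi\,\ind_A\,H_m(u_j)\bigr|\to0$ for the fixed Borel set $A=\{u>v+\delta\}$; to conclude you need $\int\varphi\,\ind_A\,H_m(w_j)\to\int\varphi\,\ind_A\,H_m(w)$ and likewise for $u_j$, and weak convergence does not give this: $\ind_A$ is neither continuous nor quasicontinuous, and mass of $H_m(w_j)$ lying just outside $A$ may converge onto $A$ in the limit. (Already $\mu_j=\delta_{x_j}$, $\nu_j=0$, with $x_j\notin A$, $x_j\to x\in A$, shows that $\ind_A\mu_j=\ind_A\nu_j$, $\mu_j\weak\mu$, $\nu_j\weak\nu$ do not imply $\ind_A\mu=\ind_A\nu$.) This limit passage is the actual content of the theorem, and it is exactly what the quasicontinuity machinery is for. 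The standard repair: replace $\ind_A$ by the quasicontinuous cutoff $\theta_\delta:=\min\bigl(1,\max((u-v-\delta)/\delta,0)\bigr)$, which vanishes on $\{u\le v+\delta\}$ and hence (in the fixed-$v$ setup) is supported in $\{u>v\}\subset O_j$, so that $\theta_\delta\, H_m(\max(u_j,v))=\theta_\delta\, H_m(u_j)$ holds \emph{exactly} for every $j$; then prove $\int\varphi\,\theta_\delta\,H_m(\max(u_j,v))\to\int\varphi\,\theta_\delta\,H_m(w)$ by writing $\theta_\delta$ as a continuous function off an open set $G$ of small capacity (Theorem \ref{thm: m-quasicontinuity}), extending it continuously to $\Omega$, and using that all the measures $H_m(\max(u_j,v))$, $H_m(u_j)$, $H_m(w)$, $H_m(u)$ charge $G$ by at most $C\,\Ca_m(G,\Omega)$ uniformly in $j$ (your capacity estimate, valid because the functions are uniformly locally bounded); finally let $\delta\downarrow0$, using monotone convergence and $\theta_\delta\uparrow\ind_{\{u>v\}}$. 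Without this quasicontinuous-weight convergence (or an equivalent two-sided portmanteau argument on the relatively open set $\{u>v+\delta\}\setminus G$ and its closure), your proof does not close.
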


\begin{theorem}[Comparison principle]\label{thm: comparison principle 1} Let $u,v\in \SH_m(\Omega)\cap \loc(\Omega)$ such that 
$$
\liminf_{z\to \partial \Omega}(u(z)-v(z))\geq 0.
$$
 Then
$$
\int_{\{u<v\}}(dd^cv)^m\wedge \beta^{n-m} \leq \int_{\{u<v\}}(dd^cu)^m\wedge \beta^{n-m}.
$$
  \end{theorem}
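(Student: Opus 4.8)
The plan is to run the classical Bedford--Taylor argument, reducing the statement to a mass--conservation identity for the maximum of the two functions and then passing to a limit; throughout I abbreviate $H_m(\cdot)=(dd^c\cdot)^m\wedge\beta^{n-m}$. First I would fix $\epsilon>0$ and establish the sharper inequality
$$
\int_{\{u<v-\epsilon\}}H_m(v)\ \le\ \int_{\{u<v\}}H_m(u),
$$
after which the theorem follows by letting $\epsilon\downarrow 0$: the sublevel sets $\{u<v-\epsilon\}$ increase to $\{u<v\}$, so monotone convergence gives $\int_{\{u<v-\epsilon\}}H_m(v)\uparrow\int_{\{u<v\}}H_m(v)$, while the right-hand side is a fixed bound (finite or not, the inequality passes to the limit).

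Fix $\epsilon$ and set $w:=\max(u,v-\epsilon)\in\SH_m(\Omega)\cap\loc(\Omega)$. The boundary hypothesis $\liminf_{z\to\pO}(u-v)\ge 0$ produces a compact $K\Subset\Omega$ with $u>v-\epsilon$ on $\Omega\setminus K$; hence $\{u\le v-\epsilon\}\subset K$ and $w=u$ off $K$. I then partition $\Omega$ into the two open sets $A:=\{u<v-\epsilon\}$ and $\{u>v-\epsilon\}$ together with the contact set $B:=\{u=v-\epsilon\}$. Since the Hessian operator is local, $H_m(w)=H_m(v-\epsilon)=H_m(v)$ on $A$ (there $w\equiv v-\epsilon$) and $H_m(w)=H_m(u)$ on $\{u>v-\epsilon\}$ (there $w\equiv u$).

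The crux is the mass identity $\into\big(H_m(w)-H_m(u)\big)=0$, which I would derive from the telescoping
$$
H_m(w)-H_m(u)=dd^c(w-u)\wedge\sum_{j=0}^{m-1}(dd^cw)^j\wedge(dd^cu)^{m-1-j}\wedge\beta^{n-m},
$$
the current $T:=\sum_{j}(dd^cw)^j\wedge(dd^cu)^{m-1-j}\wedge\beta^{n-m}$ being closed and $m$-positive; as $w-u$ is bounded and compactly supported in $\Omega$, integration by parts (Theorem~\ref{theorem: integration by parts}) forces $\into dd^c(w-u)\wedge T=0$. The signed measure $H_m(w)-H_m(u)$ is supported in $K$, so all integrals are finite and unambiguous. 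Feeding the three-piece decomposition into this identity yields
$$
\int_A H_m(v)=\int_{A}H_m(u)+\int_{B}H_m(u)-\int_{B}H_m(w)=\int_{\{u\le v-\epsilon\}}H_m(u)-\int_{B}H_m(w).
$$
Because $H_m(w)\ge 0$ and $\{u\le v-\epsilon\}\subset\{u<v\}$, the right-hand side is $\le\int_{\{u<v\}}H_m(u)$, which is exactly the desired inequality. Note that this positivity step disposes of the contact set $B$ \emph{without} comparing $H_m(u)$ and $H_m(w)$ there, so the maximum principle is not even required.

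The one genuinely technical point is the mass identity: one must justify the integration by parts for the function $w-u$, which is only a difference of $m$-subharmonic functions but is compactly supported, against the closed current $T$. I expect to handle this as in the locally bounded Bedford--Taylor calculus (a cut-off/regularization argument reducing it to Theorem~\ref{theorem: integration by parts}), the boundary terms vanishing precisely because $w=u$ in a neighbourhood of $\pO$. The remaining ingredients---locality of $H_m$, positivity of Hessian measures, and the final monotone limit---are routine.
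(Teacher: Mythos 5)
Your skeleton---work with $w:=\max(u,v-\epsilon)$, prove the mass identity $\into\bigl(H_m(w)-H_m(u)\bigr)=0$ by telescoping against the closed $m$-positive current $T$, split $\Omega$ into $\{u<v-\epsilon\}$, $\{u=v-\epsilon\}$, $\{u>v-\epsilon\}$, and let $\epsilon\downarrow 0$---is the classical Bedford--Taylor argument, which is also what the paper implicitly relies on (it gives no proof of this theorem, deferring to Kolodziej's memoir). The mass-conservation step, the disposal of the contact set by positivity of $H_m(w)$, and the final monotone limit are all sound; and while the literal appeal to Theorem \ref{theorem: integration by parts} is not licit here (our $u,v$ have no boundary limits), the cutoff repair you sketch---pair against $\chi\in\Cc_0^\infty(\Omega)$ with $\chi\equiv 1$ near $K$, move $dd^c$ onto $\chi$, and use that $w-u$ vanishes on ${\rm supp}\, dd^c\chi$---is standard and correct.

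The genuine gap is the locality step. You assert that $A=\{u<v-\epsilon\}$ and $\{u>v-\epsilon\}$ are \emph{open} and deduce $H_m(w)=H_m(v)$ on $A$ and $H_m(w)=H_m(u)$ on $\{u>v-\epsilon\}$ from ordinary locality of $H_m$. But $m$-subharmonic functions are merely upper semicontinuous, so $u-(v-\epsilon)$ is neither usc nor lsc, and \emph{neither} of these sets is open in general: if $v$ is a bounded $m$-sh function that fails to be lower semicontinuous at some point (such functions exist; usc plus lsc would force continuity) and $u$ is a suitable constant, then $\{u<v-\epsilon\}=\{v>c\}$ has a non-interior point. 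Locality of $H_m$ in the ordinary topology applies only to open sets, so both identifications are unjustified as written. This is exactly what the maximum principle (Theorem \ref{theorem: maximum principle}) exists for: applied to the pair $(u,v-\epsilon)$ it gives $\ind_{\{u>v-\epsilon\}}H_m(w)=\ind_{\{u>v-\epsilon\}}H_m(u)$, and applied with the roles swapped, to $(v-\epsilon,u)$, it gives $\ind_{\{v-\epsilon>u\}}H_m(w)=\ind_{\{v-\epsilon>u\}}H_m(v-\epsilon)=\ind_{A}H_m(v)$. So your closing remark that ``the maximum principle is not even required'' is exactly backwards: it is indeed not needed on the contact set $B$ (there positivity suffices, as you observe), but it is indispensable on the two non-contact sets---short of reproving it by hand via quasicontinuity (Theorem \ref{thm: m-quasicontinuity}), which amounts to the same work. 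With those two applications of Theorem \ref{theorem: maximum principle} inserted, the rest of your computation goes through verbatim.
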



\begin{definition} For a subset $E$ of a domain $\Omega\subset \C^n$ we define the relative $m$-extremal function of $E$ by
$$
u_{m,E,\Omega}:=\sup \{u\in \SH_m(\Omega) \ / \ u<0,\ \text{and}\ u\leq -1\ \text{on}\ E\}.
$$
\end{definition}
It is easy to see that $\um^*$ is $m$-subharmonic in $\Omega.$ 

\begin{prop}\label{prop: properties of m-extremal functions}  i) If $E_1\subset E_2$ then $u_{E_2}\leq u_{E_1}.$

ii) If $E\subset \Omega_1\subset \Omega_2$ then $u_{m,E,\Omega_2}\leq u_{m,E,\Omega_1}.$

iii) If $K_j\downarrow K,$ with $K_j$ compact in $\Omega$ then $(\lim u_{m,K_j,\Omega}^*)^*=u_{m,K,\Omega}^*.$ 
\end{prop}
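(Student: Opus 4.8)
The plan is to treat the three parts in increasing order of difficulty: (i) and (ii) will follow directly from comparing the defining families of admissible functions, while (iii) is the genuine content and will require a compactness argument together with a rescaling trick. For (i), I would simply observe that if $E_1\subset E_2$ then any $u\in\SH_m(\Omega)$ with $u<0$ and $u\le -1$ on $E_2$ automatically satisfies $u\le -1$ on $E_1$; hence the family defining $u_{E_2}$ is contained in the one defining $u_{E_1}$, and taking suprema gives $u_{E_2}\le u_{E_1}$. For (ii), I would use that the restriction of an $m$-subharmonic function is again $m$-subharmonic: if $u\in\SH_m(\Omega_2)$ competes in the definition of $u_{m,E,\Omega_2}$, then $u|_{\Omega_1}\in\SH_m(\Omega_1)$ is negative and $\le -1$ on $E$, so it competes in the definition of $u_{m,E,\Omega_1}$ and is therefore $\le u_{m,E,\Omega_1}$ on $\Omega_1$; taking the supremum over all such $u$ yields $u_{m,E,\Omega_2}\le u_{m,E,\Omega_1}$ on $\Omega_1$.

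For (iii), write $u_j:=u_{m,K_j,\Omega}$ and $u:=u_{m,K,\Omega}$. Since $K\subset K_{j+1}\subset K_j$, part (i) gives $u_j\le u_{j+1}\le u$, so the regularizations $u_j^*$ increase to a limit $\tilde v$, and $\tilde v^*\in\SH_m(\Omega)$ by Proposition \ref{prop: basic property of m subharmonic functions}(iv). The inclusion $u_j\le u$ gives $u_j^*\le u^*$ and hence $\tilde v^*\le u^*$; this is the easy direction. For the reverse inequality, take an arbitrary competitor $w$ for $u$ (so $w\in\SH_m(\Omega)$, $w<0$, and $w\le -1$ on $K$) and fix $\epsilon\in(0,1)$. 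Since $w$ is upper semicontinuous the set $\{w<-1+\epsilon\}$ is open and contains $K=\bigcap_j K_j$; because the $K_j$ are compact and decreasing, the compact sets $K_j\setminus\{w<-1+\epsilon\}$ decrease to the empty set, so $K_{j_0}\subset\{w<-1+\epsilon\}$ for some $j_0$. Then $w/(1-\epsilon)$ is $m$-subharmonic, negative, and strictly below $-1$ on $K_{j_0}$, hence competes in the definition of $u_{j_0}$; thus $w/(1-\epsilon)\le u_{j_0}^*\le\tilde v^*$, i.e. $w\le(1-\epsilon)\tilde v^*$. Letting $\epsilon\to 0$ gives $w\le\tilde v^*$, and taking the supremum over all competitors $w$ yields $u\le\tilde v^*$; since $\tilde v^*$ is upper semicontinuous this gives $u^*\le\tilde v^*$, which combined with the easy direction proves the equality.

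The main obstacle is exactly the reverse inequality in (iii): one must upgrade the hypothesis that $w\le -1$ on the \emph{limit} set $K$ into admissibility of $w$ (up to the harmless factor $1-\epsilon$) for some fixed $K_{j_0}$. The two ideas that make this work are the compactness extraction $K_{j_0}\subset\{w<-1+\epsilon\}$ and the rescaling $w\mapsto w/(1-\epsilon)$, which converts the strict bound $w<-1+\epsilon$ into the required bound $\le -1$; everything else is routine bookkeeping with upper semicontinuous regularizations and the monotonicity already established in (i).
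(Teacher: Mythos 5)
Your proof is correct, and it coincides with the argument the paper implicitly relies on: the paper states this proposition without proof, deferring to the standard plurisubharmonic case (cf.\ \cite{Kl91}), and your reasoning --- inclusion of the defining families for (i) and (ii), then the compactness extraction $K_{j_0}\subset\{w<-1+\epsilon\}$ combined with the rescaling $w\mapsto w/(1-\epsilon)$ for (iii) --- is precisely that classical argument transposed to the $m$-subharmonic setting. No gaps: the monotone limit $\tilde v^*$ is $m$-subharmonic by Proposition \ref{prop: basic property of m subharmonic functions}(iv), and letting $\epsilon\to 0$ in $w\le(1-\epsilon)\tilde v^*$ legitimately yields $w\le\tilde v^*$ despite $\tilde v^*\le 0$.
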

As in \cite{Kl91} we have the following formula for the $m$-extremal functions of concentric balls.

\begin{lemma}\label{lem: m-extremal function of the ball} Let $0<r<R$ and set $a=\dfrac{n}{m}>1$. The $m$-extremal function of $B(r)$ with respect to $B(R)$ is given by
$$
u_{m,B(r), B(R)}= \max \Big(\dfrac{R^{2-2a}-\Vert z\Vert^{2-2a}}{r^{2-2a}-R^{2-2a}}, -1\Big).
$$
\end{lemma}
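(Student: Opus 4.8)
The plan is to recognize the right-hand side as $v:=\max(h,-1)$, where
$$
h(z):=\frac{R^{2-2a}-\Vert z\Vert^{2-2a}}{r^{2-2a}-R^{2-2a}},
$$
and to prove $u_{m,B(r),B(R)}=v$ by two matching inequalities. The computational heart is that $h$ is $m$-harmonic on the annulus exactly when $a=n/m$. To see this I would compute the complex Hessian of a radial function $\gamma(s)$, $s=\Vert z\Vert^2$: one finds $\partial^2\gamma/\partial z_j\partial\bar z_k=\gamma'(s)\delta_{jk}+\gamma''(s)z_k\bar z_j$, a Hermitian matrix with eigenvalues $\gamma'(s)+s\gamma''(s)$ (once) and $\gamma'(s)$ ($n-1$ times). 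Since $h$ is an affine function of $g(z)=\Vert z\Vert^{2-2a}=s^{1-a}$ with \emph{positive} slope (the denominator $r^{2-2a}-R^{2-2a}$ is positive because $2-2a<0$), the eigenvalues of $dd^c h$ are a single negative one $Q=-(a-1)P$ and $n-1$ equal positive ones $P=c(a-1)s^{-a}>0$. A short binomial computation then gives, for the elementary symmetric functions,
$$
S_j\big(\underbrace{P,\dots,P}_{n-1},Q\big)=P^{\,j}\binom{n-1}{j-1}\frac{n(m-j)}{jm}.
$$
Thus $S_j>0$ for $j<m$ and $S_m=0$, and by Proposition~\ref{prop: basic property of m subharmonic functions} (the $m$-positivity of $dd^c h$ is the condition $S_1,\dots,S_m\ge0$) this shows simultaneously that $h\in\SH_m(\C^n\setminus\{0\})$ and that $H_m(h)=0$ there. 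The vanishing $S_m=0$ is precisely the identity $a-1=(n-m)/m$, i.e. $a=n/m$, which is where the special value of $a$ enters.

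Next I would read off the boundary behaviour from $2-2a<0$: the map $\Vert z\Vert\mapsto\Vert z\Vert^{2-2a}$ is strictly decreasing, so $h=0$ on $\partial B(R)$, $h=-1$ on $\partial B(r)$, $h>-1$ on the open annulus $A:=B(R)\setminus\overline{B(r)}$, and $h<-1$ inside $B(r)$. Hence $v$ equals $-1$ on $\overline{B(r)}$, equals $h$ on $A$, is continuous, satisfies $-1\le v<0$ on $B(R)$, and is $m$-subharmonic on $B(R)$ (a maximum of two $m$-subharmonic functions away from the origin, and the constant $-1$ in a neighbourhood of the origin). In particular $v$ is an admissible competitor in the definition of the extremal function, which yields the easy inequality $v\le u_{m,B(r),B(R)}$.

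For the reverse inequality I would show $u\le v$ for every competitor $u$. On $\overline{B(r)}$ this is immediate, since there $v\equiv-1$ while $u\le-1$. On $A$ one has $v=h$ with $H_m(h)=0$, and a competitor satisfies $\limsup_{z\to\partial B(r)}u\le-1$ (upper semicontinuity) and $\limsup_{z\to\partial B(R)}u\le0$, so that $\liminf_{z\to\partial A}(h-u)\ge0$. I may assume $u$ bounded (replace it by $\max(u,-N)$, still a competitor, and let $N\to\infty$). Then for $\varepsilon>0$ set $u_\varepsilon:=u+\varepsilon(\Vert z\Vert^2-R^2)$; since $\Vert z\Vert^2-R^2$ is strictly $m$-subharmonic and all mixed wedge products of $m$-positive currents are nonnegative, $H_m(u_\varepsilon)\ge\varepsilon^m\beta^n$. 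The perturbation vanishes on $\partial B(R)$ and is negative on $\partial B(r)$, so $\liminf_{z\to\partial A}(h-u_\varepsilon)\ge0$ still holds, and the comparison principle (Theorem~\ref{thm: comparison principle 1}) applied to $h$ and $u_\varepsilon$ on $A$ gives
$$
\int_{\{h<u_\varepsilon\}}H_m(u_\varepsilon)\ \le\ \int_{\{h<u_\varepsilon\}}H_m(h)\ =\ 0 .
$$
As $H_m(u_\varepsilon)\ge\varepsilon^m\beta^n$, the set $\{h<u_\varepsilon\}$ is Lebesgue-null; since $u_\varepsilon$ is subharmonic (Proposition~\ref{prop: basic property of m subharmonic functions}(v)) and $h$ is continuous, the submean value identity $u_\varepsilon(z_0)=\lim_{\delta\to0}\fint_{B(z_0,\delta)}u_\varepsilon$ upgrades $u_\varepsilon\le h$ a.e. to $u_\varepsilon\le h$ everywhere. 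Letting $\varepsilon\to0$ gives $u\le h$ on $A$. Combining the two inequalities yields $u_{m,B(r),B(R)}=v$.

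The main obstacle is this last step: the comparison principle only produces an inequality between Hessian masses, whereas I need the pointwise domination $u\le h$. The device of perturbing by the strictly $m$-subharmonic function $\Vert z\Vert^2-R^2$, chosen to vanish on $\partial B(R)$ so as not to spoil the boundary comparison, together with the upgrade from ``almost everywhere'' to ``everywhere'' via submean values, is the delicate point; by contrast the symmetric-function identity and the bookkeeping of boundary values are routine.
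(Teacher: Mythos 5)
Your proof is correct in its essentials and, unlike the paper --- which states the lemma with only the remark ``as in \cite{Kl91}'' and no argument --- it is self-contained: verifying that $v=\max(h,-1)$ is a competitor and then dominating every competitor by $h$ on the annulus via $H_m(h)=0$ is exactly the $m$-Hessian adaptation of Klimek's proof for the case $m=n$ that the paper implicitly invokes. The computational core checks out: with $s=\Vert z\Vert^2$, the form $dd^c h$ has tangential eigenvalue $P=c(a-1)s^{-a}>0$ of multiplicity $n-1$ and radial eigenvalue $Q=-(a-1)P<0$, and indeed
$$
S_j(P,\dots,P,Q)=P^{\,j}\binom{n-1}{j-1}\frac{n(m-j)}{jm},
$$
so $S_1,\dots,S_{m-1}>0$ and $S_m=0$ precisely because $a-1=(n-m)/m$. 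One slip of wording: $h$ is a \emph{decreasing} affine function of $g=\Vert z\Vert^{2-2a}$ (the coefficient of $g$ is $-1/(r^{2-2a}-R^{2-2a})<0$), not an increasing one; had the slope really been positive, $dd^c h$ would be a positive multiple of $dd^c g$, whose tangential eigenvalues are negative, and for $m\ge 2$ one would have $S_1<0$. Since the eigenvalues you actually use are the correct ones, nothing downstream is affected, but the sign should be stated correctly.

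Two steps need repair. (1) The bound $\limsup_{z\to\partial B(r)}u(z)\le-1$ for a competitor $u$ does not follow from upper semicontinuity alone: usc gives $\limsup_{z\to z_0}u\le u(z_0)$, but nothing a priori controls $u(z_0)$ at $z_0\in\partial B(r)$, which is not a point of $B(r)$. You need the sub-mean value inequality: a fraction of each small ball $B(z_0,\delta)$ bounded away from zero lies inside $B(r)$, where $u\le-1$, so $u(z_0)\le\lambda(-1)+(1-\lambda)\sup_{B(z_0,\delta)}u$; letting $\delta\to0$ (usc makes the sup tend to $u(z_0)$) gives $u(z_0)\le-1$, and then usc yields the limsup bound. (2) Theorem \ref{thm: comparison principle 1} is stated under the paper's standing assumption that the ambient domain is $m$-hyperconvex, and the annulus $A$ is not $m$-hyperconvex when $m\ge2$: the constraints $S_1\ge0$ and $S_2\ge0$ force every radial $m$-subharmonic function on an annulus to be non-decreasing in $\Vert z\Vert$, so no negative $m$-sh exhaustion of $A$ tending to $0$ on the inner boundary exists. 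The cleanest fix keeps your perturbation $u_\varepsilon=u+\varepsilon(\Vert z\Vert^2-R^2)$ but runs the comparison on $B(R)$ itself, between $v$ and $u_\varepsilon$: by step (1), $u_\varepsilon<u\le-1=v$ on $\overline{B(r)}$, hence $\{v<u_\varepsilon\}\subset A$, where $H_m(v)=H_m(h)=0$, and Theorem \ref{thm: comparison principle 1} on the $m$-hyperconvex ball $B(R)$ gives that $\{v<u_\varepsilon\}$ has zero Lebesgue measure; your a.e.-to-everywhere upgrade and the limits $\varepsilon\to0$, $N\to\infty$ then conclude as before. (Alternatively, invoke the comparison principle for bounded $m$-sh functions on arbitrary bounded domains from \cite{SA12} or \cite{Bl05}.) With these two repairs the proof is complete and correct.
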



\begin{prop} If $E\Subset \Omega$ then  one has
$
\lim_{z\to w} u_{m,E,\Omega}(z)=0
$
for any $w\in \pO$. 
\end{prop}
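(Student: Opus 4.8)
The plan is to trap $\um$ between an explicit barrier coming from the $m$-hyperconvexity of $\Omega$ and the constant $0$, and then let $z\to w$. First I would invoke the definition of $m$-hyperconvexity to fix a continuous exhaustion function $\f\colon\Omega\to\R$ with $\f<0$, $\f\in\SH_m(\Omega)$, and $\{\f<c\}\Subset\Omega$ for every $c<0$. Since $\overline{E}$ is compact in $\Omega$ and $\f$ is continuous and strictly negative there, the number $c_0:=-\sup_{\overline E}\f$ is strictly positive, so that $\f\le -c_0$ on $E$.

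Next I would check that $\f/c_0$ is an admissible competitor in the supremum defining $\um$. Indeed $\f/c_0\in\SH_m(\Omega)$ by Proposition~\ref{prop: basic property of m subharmonic functions}(ii) (write $\tfrac{1}{c_0}\f=\tfrac{1}{2c_0}\f+\tfrac{1}{2c_0}\f$), it is strictly negative on $\Omega$, and on $E$ one has $\f/c_0\le -1$. Hence, by the very definition of the relative extremal function,
$$
\frac{\f}{c_0}\ \le\ \um\ \le\ 0 \qquad\text{on } \Omega,
$$
the upper bound being immediate since every competitor is negative.

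The remaining point, and the only step that is not purely formal, is to show that the exhaustion function vanishes at the boundary: $\lim_{z\to w}\f(z)=0$ for every $w\in\pO$. I would deduce this directly from the sublevel-set condition. For any $c<0$ the set $\{\f\le c\}$ is contained in $\{\f<c/2\}\Subset\Omega$ (note $c/2>c$), hence is a compact subset of $\Omega$ and therefore has positive distance to $\pO$. Consequently, if $z_k\to w\in\pO$, then $z_k\notin\{\f\le c\}$ for $k$ large, i.e. $\f(z_k)>c$; since $c<0$ is arbitrary and $\f<0$, this forces $\f(z_k)\to 0$.

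Combining the sandwich with this boundary behaviour finishes the argument: as $z\to w$ one has $\f(z)/c_0\to 0$ while $\f(z)/c_0\le \um(z)\le 0$, so the squeeze principle yields $\lim_{z\to w}\um(z)=0$. I would emphasize that both bounds hold \emph{pointwise} for the supremum $\um$ itself, so no upper semicontinuous regularization is needed and the statement is indeed about $\um$ rather than $\um^*$. I expect the verification that the hyperconvex exhaustion tends to $0$ at $\pO$ to be the main (though mild) obstacle; every other step is a direct consequence of the definition of the relative $m$-extremal function.
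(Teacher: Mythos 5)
Your proof is correct, and it is precisely the standard barrier argument that the paper (which states this proposition without proof) intends: use the $m$-hyperconvex exhaustion $\f$, rescale it by $c_0=-\sup_{\overline E}\f>0$ to get an admissible competitor, and squeeze $\um$ between $\f/c_0$ and $0$ after noting that the sublevel-set condition forces $\f\to 0$ at $\pO$. All steps, including the compactness of $\{\f\le c\}$ and the pointwise (unregularized) lower bound on $\um$, are verified correctly.
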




\begin{prop}\label{prop: union of balls} Let $K\subset \Omega$ be a compact subset which is the union of closed balls, then $u_K^*=u_K$ is continuous. In particular, if $K\subset \Omega$ is an arbitrary compact set and $\epsilon<{\rm dist}(K, \partial \Omega)$, then $u_{K_{\epsilon}}$  is continuous, where 
$
K_{\epsilon}=\{z\in \Omega \ / \ {\rm dist}(z,K)\leq \epsilon\}. 
$
\end{prop}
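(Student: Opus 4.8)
The plan is to realise $u:=u_{m,K,\Omega}$, outside $K$, as the solution of the homogeneous Hessian Dirichlet problem, and to extract the necessary boundary regularity from the explicit extremal functions of balls in Lemma~\ref{lem: m-extremal function of the ball}. First I record the trivial bounds: the constant $-1$ is admissible in the defining family, so $u\geq -1$; by definition $u\leq 0$ and $u\leq -1$ on $K$, hence $u=-1$ on $K$, and $u^*\leq 0$ is $m$-subharmonic. Since $K_{\epsilon}=\bigcup_{z\in K}\overline{B}(z,\epsilon)$ is a compact union of closed balls contained in $\Omega$ whenever $\epsilon<\mathrm{dist}(K,\partial\Omega)$, the ``in particular'' statement follows from the first, and it suffices to prove that $u$ is continuous when $K$ is a union of closed balls; continuity then forces $u=u^*$.

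The decisive step is a barrier at $\partial K$. Fix $\zeta\in\partial K$; it lies on the sphere $\partial B(a,r)$ of one of the constituent balls $\overline{B}(a,r)\subseteq K$. Because $\overline{B}(a,r)\Subset\Omega$, I may pick $R_0>r$ with $B(a,R_0)\subseteq\Omega$, and the two monotonicities of Proposition~\ref{prop: properties of m-extremal functions}(i)--(ii) give $u\leq u_{m,\overline{B}(a,r),\Omega}\leq u_{m,\overline{B}(a,r),B(a,R_0)}=:\psi$ near $\zeta$, where $\psi$ is the explicit, continuous function of Lemma~\ref{lem: m-extremal function of the ball} with $\psi(\zeta)=-1$. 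Thus $\limsup_{z\to\zeta}u(z)\leq -1$, while $u\geq -1$ everywhere, so every point of $\partial K$ is a regular boundary point of the open set $D:=\Omega\setminus K$; at $\partial\Omega$ the $m$-hyperconvexity of $\Omega$ supplies a barrier and the Proposition preceding this one gives $u\to 0$. By B{\l}ocki's continuous solvability of the homogeneous Hessian equation \cite{Bl05}, the Perron--Bremermann envelope $w$ on $D$ with boundary data $-1$ on $\partial K$ and $0$ on $\partial\Omega$ is continuous and maximal, $H_m(w)=0$, with $-1\leq w\leq 0$.

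I then glue: set $\tilde w:=w$ on $D$ and $\tilde w:=-1$ on $K$. Since $w\geq -1$ on $D$ and $w\to -1$ at $\partial K$, Proposition~\ref{prop: basic property of m subharmonic functions}(vi), applied with the outer function $-1$ and the inner function $w$ on $U=D$ (so that $\partial U\cap\Omega=\partial K\Subset\Omega$), shows $\tilde w\in\SH_m(\Omega)$; by construction $\tilde w$ is continuous and $-1\leq\tilde w<0$. Now $\tilde w$ is admissible in the defining family of $u$, so $\tilde w\leq u$. Conversely, any competitor $\f$ satisfies $\f\leq -1=\tilde w$ on $K$ and, on $D$, $\f\leq w$ either directly because $w$ is the Perron envelope of exactly such subfunctions or by the comparison principle~\ref{thm: comparison principle 1} against the maximal $w$ (the boundary inequalities $\limsup_{z\to\partial K}\f\leq -1$ and $\limsup_{z\to\partial\Omega}\f\leq 0$ hold). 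Hence $\f\leq\tilde w$, so $u\leq\tilde w$, and therefore $u=\tilde w=u^*$ is continuous.

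The main obstacle is the continuity up to and across $\partial K$ together with the continuity in the maximal region $D$; this is exactly where the explicit extremal function of a ball is indispensable, as it furnishes the one-sided barrier $\psi$ that both certifies the regularity of the points of $\partial K$ and pins the boundary value at $-1$. The gluing in Proposition~\ref{prop: basic property of m subharmonic functions}(vi) and the comparison principle~\ref{thm: comparison principle 1} then upgrade this local boundary control to global continuity and identify $u$ with $u^*$.
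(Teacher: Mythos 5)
Your proof is correct and, in substance, it is the argument the paper leaves implicit: the paper's own proof is only a sketch which derives $u^*\equiv -1$ on $K$ from the explicit formula of Lemma \ref{lem: m-extremal function of the ball} and then refers to ``the same arguments as in \cite[Proposition 4.5.3]{Kl91}''; your barrier-plus-Dirichlet-problem-plus-gluing construction is (modulo the Hessian translation) precisely the standard mechanism behind that reference. So you have not found a different route so much as usefully unpacked the one the paper compresses into a citation: the ball extremal functions certify that every point of $\partial K$ is regular, the complement $D=\Omega\setminus K$ carries the homogeneous Dirichlet problem, and Proposition \ref{prop: basic property of m subharmonic functions}(vi) glues the solution with $-1$ across $K$.

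One step should be tightened: the appeal to \cite{Bl05}. B{\l}ocki's solvability theorem for the homogeneous Hessian equation is proved for nice domains (balls, smoothly bounded strongly $m$-pseudoconvex domains), and $D=\Omega\setminus K$ is not such a domain --- its inner boundary $\partial K$ is concave seen from $D$ --- so the theorem does not apply as stated. What you actually need, and what your barriers are designed to feed, is the Perron--Bremermann--Walsh machinery itself: (a) the upper semicontinuous regularization $w^*$ of the envelope is $m$-subharmonic and, thanks to the barriers, still lies in the Perron family, so $w=w^*$; (b) boundary attainment: $w\geq -1$ trivially, while $w\leq \psi$ near $\zeta\in\partial K$ because any Perron competitor $v$, after replacing it by $\max(v,-1)+\epsilon\rho$ (with $\rho$ the hyperconvexity exhaustion) and gluing with $-1$ across $K$ via Proposition \ref{prop: basic property of m subharmonic functions}(vi), becomes admissible for $u_{m,K,\Omega}$, whence $w\leq u\leq\psi$ on $D\cap B(a,R_0)$; similarly $\max(A\rho,-1)\leq w$ for $A$ large gives $w\to 0$ at $\pO$; (c) Walsh's translation argument, which is valid on an arbitrary bounded domain once continuous boundary data is attained continuously, then gives continuity of $w$ inside $D$. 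With the citation replaced by (a)--(c), your gluing and the identification $u=\tilde w$ go through as written. A final micro-point: admissibility of $\tilde w$ requires $\tilde w<0$, which you assert without proof; it follows from the strong maximum principle for subharmonic functions, since every connected component of $D$ has boundary points on $\partial K$ where $w\to -1$, or one can simply work with $\tilde w+\epsilon\rho$ and let $\epsilon\to 0$.
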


\noindent{\bf Sketch of proof}: From Lemma \ref{lem: m-extremal function of the ball} we know that the $m$-extremal function of balls is continuous. Let $K$ be a compact set which is a union of closed balls, $K=\cup B_j$ and let $u$ be its $m$-extremal function. Since $u\leq u_{m,B_j,\Omega}$, it is easy to see that $u^*\equiv-1$ on $K$. The same arguments as in \cite[Proposition 4.5.3]{Kl91} show that $u$ is continuous in $\Omega.$

\begin{definition} The outer $m$-capacity of a Borel set $E\subset \Omega$ is defined by
$$
\Ca_m^*(E,G):=\inf \left\{ \Ca_m(G,\Omega)\  / \ E\subset G, G {\ \rm is \ an \ open\ subset \ of\  } \Omega \right\}.
$$
\end{definition}

\begin{theorem}\label{theorem: m-capacity and m-extremal functions} 
If $E\Subset \Omega$ is a Borel subset then
$$
\Ca^*_m(E,\Omega)=\int_{\Omega}H_m(u_{m,E,\Omega}^*),\ \ {\rm and }\ \Ca^*_m(E,\Omega)=\Ca_m(E,\Omega) \ \ {\rm if }\ E\ {\rm is\ compact}.
$$

\end{theorem}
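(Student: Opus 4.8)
The plan is to follow the classical Bedford--Taylor scheme as presented in \cite{Kl91}, adapting every step to the $m$-subharmonic calculus assembled above. I would prove the identity in three stages: compact sets, then relatively compact open sets, then arbitrary Borel sets $E\Subset\Omega$, the last stage delivering $\Ca_m^*=\Ca_m$ on compacta as a by-product.

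\emph{Compact case.} Fix a compact $K$ and write $u=u_{m,K,\Omega}^*$. The argument rests on three facts: (a) $u\to0$ at $\pO$ (the Proposition preceding \ref{prop: union of balls}); (b) $-1\le u\le0$ with $u=-1$ quasi-everywhere on $K$, the exceptional set being $m$-polar; and (c) $u$ is $m$-maximal on $\Omega\setminus K$, so that $H_m(u)$ is carried by $K$. Fact (c) is the usual envelope argument: on a ball $B\Subset\Omega\setminus K$ one solves the homogeneous Hessian problem with boundary data $u|_{\partial B}$ (B\l ocki \cite{Bl05}) and glues via Proposition \ref{prop: basic property of m subharmonic functions}(vi); the result is again admissible and $\ge u$, so maximality forces $H_m(u)=0$ on $B$. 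Granting (a)--(c), the bound $\into H_m(u)\le\Ca_m(K,\Omega)$ is immediate, since $\varphi:=u+1$ is an admissible competitor and
\[
\into H_m(u)=\int_K H_m(u)=\int_K H_m(\varphi)\le\Ca_m(K,\Omega).
\]
For the reverse inequality I take any competitor $\varphi$ ($0\le\varphi\le1$, $\varphi\in\SH_m(\Omega)$) and, for $\epsilon\in(0,1)$, apply the comparison principle (Theorem \ref{thm: comparison principle 1}) to $u$ and $w_\epsilon:=(1-\epsilon)\varphi+\epsilon-1$. Since $w_\epsilon\le0$ and $u\to0$ at $\pO$ one has $\liminf_{z\to\pO}(u-w_\epsilon)\ge0$, while $u=-1$ q.e.\ on $K$ and $w_\epsilon>-1$ everywhere give $K\subseteq\{u<w_\epsilon\}$ up to an $m$-polar set. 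As bounded $m$-subharmonic functions put no mass on $m$-polar sets, this yields $(1-\epsilon)^m\int_K H_m(\varphi)=\int_K H_m(w_\epsilon)\le\into H_m(u)$; letting $\epsilon\to0$ and taking the supremum over $\varphi$ gives $\Ca_m(K,\Omega)\le\into H_m(u)$.

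\emph{Open sets and outer regularity of compacta.} For open $G\Subset\Omega$ I exhaust $G$ by compacts $K_j\uparrow G$; by Proposition \ref{prop: properties of m-extremal functions}(i) the extremal functions decrease, $u_{m,K_j,\Omega}^*\downarrow u_{m,G,\Omega}^*$, and by Proposition \ref{prop: properties of m-capacity}(ii), $\Ca_m(K_j,\Omega)\uparrow\Ca_m(G,\Omega)$. By fact (c) all the measures $H_m(u_{m,K_j,\Omega}^*)$ are supported in the fixed compact $\overline G\Subset\Omega$, so the weak convergence furnished by Theorem \ref{thm: convergence decreasing} upgrades to convergence of total masses; with the compact case this gives $\Ca_m(G,\Omega)=\into H_m(u_{m,G,\Omega}^*)$. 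Next, for compact $K$ take the decreasing neighbourhoods $K_{1/j}$ of Proposition \ref{prop: union of balls}. Proposition \ref{prop: properties of m-extremal functions}(iii) gives $(\lim_j u_{m,K_{1/j},\Omega}^*)^*=u_{m,K,\Omega}^*$, and as the supports again lie in a fixed compact, Theorem \ref{thm: convergence theorem for increasing sequences} yields $\Ca_m(K_{1/j},\Omega)\to\into H_m(u_{m,K,\Omega}^*)=\Ca_m(K,\Omega)$. Since $\Ca_m(K,\Omega)\le\Ca_m^*(K,\Omega)\le\lim_j\Ca_m(K_{1/j},\Omega)$, we conclude $\Ca_m^*(K,\Omega)=\Ca_m(K,\Omega)$.

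\emph{Borel sets, and the main obstacle.} For Borel $E\Subset\Omega$ one inequality is easy: for every open $G\supseteq E$ one has $u_{m,G,\Omega}^*\le u_{m,E,\Omega}^*$ with both vanishing at $\pO$, and monotonicity of total Hessian mass under decrease of the potential (a standard consequence of integration by parts, Theorem \ref{theorem: integration by parts}) gives $\into H_m(u_{m,E,\Omega}^*)\le\into H_m(u_{m,G,\Omega}^*)=\Ca_m(G,\Omega)$, whence $\into H_m(u_{m,E,\Omega}^*)\le\Ca_m^*(E,\Omega)$. The reverse inequality is the heart of the matter: I would pick open $G_j\supseteq E$, arranged decreasing, with $\Ca_m(G_j,\Omega)\downarrow\Ca_m^*(E,\Omega)$, so that $u_{m,G_j,\Omega}^*\uparrow v$ with $v^*\le u_{m,E,\Omega}^*$ and, by the open case together with the compact supports and Theorem \ref{thm: convergence theorem for increasing sequences}, $\into H_m(v^*)=\lim_j\Ca_m(G_j,\Omega)=\Ca_m^*(E,\Omega)$. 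It remains to prove $v^*=u_{m,E,\Omega}^*$ off an $m$-polar set, for then the two functions carry the same Hessian measure and the proof closes. I expect this last step to be the genuine difficulty: it is the Choquet capacitability / fundamental convergence theorem of the theory, asserting that the regularized extremal functions of neighbourhoods shrinking to $E$ recapture $u_{m,E,\Omega}^*$ outside a negligible set. Establishing it requires the full force of quasicontinuity (Theorem \ref{thm: m-quasicontinuity}) together with the $m$-analogue of the Bedford--Taylor negligibility theorem (every $m$-negligible set is $m$-polar), which is precisely the ingredient not reducible to the comparison and convergence results already in hand.
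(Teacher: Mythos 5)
The paper itself offers no proof of this theorem: it is stated among the preliminaries that ``can be proved by repeating the arguments in \cite{Kol05}'', so the benchmark is the classical Bedford--Taylor/Ko{\l}odziej argument. Your first two stages reproduce that argument correctly: the compact case via maximality of $u^*_{m,K,\Omega}$ off $K$ and the comparison principle (using negligible $\Rightarrow$ $m$-polar to handle the exceptional set in $K$), then open sets by exhaustion and outer regularity of compacta via Proposition \ref{prop: properties of m-extremal functions}(iii) and the convergence theorems.

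The Borel stage, however, has a genuine gap, and it is worse than a deferred lemma: the route itself is doubtful. You take open $G_j\supseteq E$ with $\Ca_m(G_j,\Omega)\downarrow\Ca_m^*(E,\Omega)$, set $v=\lim_j u^*_{m,G_j,\Omega}$, and need $v^*=u^*_{m,E,\Omega}$ off an $m$-polar set. Theorem \ref{theorem: m-negligible and m-polar} controls only $\{v<v^*\}$; combined with $v\le -1$ on $E$ it yields $v^*\le u^*_{m,E,\Omega}$, which you already have, and gives no lower bound on $v^*$ whatsoever. Minimizing capacity does not force the $G_j$ to shrink to $E$ pointwise: they may carry persistent excess pieces, and converting the numerical constraint $\Ca_m(G_j,\Omega)\to\Ca_m^*(E,\Omega)$ into the q.e.\ identity is exactly the content you have not supplied. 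Nor can mass considerations close it: $v^*\le u^*_{m,E,\Omega}$, both tending to $0$ on $\pO$ and with equal total Hessian mass, does not imply equality --- compare $\max(g,-1)$ and $\max(g,-2)$ for a fundamental-solution-type $g$: nested, equal masses, distinct. The classical proof obtains the hard inequality $\Ca_m^*(E,\Omega)\le\into H_m(u^*_{m,E,\Omega})$ by a \emph{controlled} construction of neighbourhoods, and you should substitute it. Simplest fix: write $u=u^*_{m,E,\Omega}$, fix $\delta,\epsilon>0$, and note that $U_\delta:=\{u<-1+\delta\}$ is open (upper semicontinuity) and contains $E$ except for the set $N=E\cap\{u\ge-1+\delta\}\subset\{u_{m,E,\Omega}<u\}$, which is negligible, hence of outer capacity zero by Theorem \ref{theorem: m-negligible and m-polar}, hence contained in an open $W$ with $\Ca_m(W,\Omega)<\epsilon$. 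For any competitor $-1\le\f\le 0$ one has $U_\delta\subset\{u<(1-\delta)\f\}$, so the comparison principle (Theorem \ref{thm: comparison principle 1}) gives $(1-\delta)^m\Ca_m(U_\delta,\Omega)\le\into H_m(u)$; therefore $\Ca_m^*(E,\Omega)\le\Ca_m(U_\delta\cup W,\Omega)\le(1-\delta)^{-m}\into H_m(u)+\epsilon$, and letting $\delta,\epsilon\to0$ finishes the proof. (The alternative classical route --- prove $\Ca_m^*$ is a Choquet capacity, approximate $E$ from inside by compacts, and use monotonicity of total mass $\into H_m(u^*_{m,E,\Omega})\ge\Ca_m(K,\Omega)$ for $K\subset E$ --- works too, but requires the additional capacitability machinery.) Either device supplies precisely the control over the approximating open sets that your minimizing sequence lacks.
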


We compute the $m$-Capacity of the concentric balls.
\begin{examp} For every $0<r<R$ we have
\begin{equation*}\label{example: balls}
\Ca_m(B(r),B(R))= \dfrac{2^n(n-m)}{m.n!(r^{2-2a}-R^{2-2a})^m}.
\end{equation*}
\end{examp}

\begin{definition}\label{def: negligible sets}
Let $\Omega$ be an open set in $\C^n$, and let $\mathcal{U}\subset \SH_m(\Omega)$ be a family of functions which is locally bounded from above. Define 
$$
u(z)=\sup\{v(z) \ / \ v\in \mathcal{U}\}. 
$$
Sets of the form $\mathcal{N}=\{z\in \Omega \ / \ u(z)<u^*(z)\}$
and all their subsets are called $m$-negligible.
\end{definition}


\begin{definition}\label{def: m-polar sets}
A set $E\subset \C^n$ is called $m$-polar if  $E\subset \{v=-\infty\}$ for some $v\in \SH_m(\C^n)$ and $v\not \equiv -\infty.$
\end{definition}

\begin{theorem} \label{theorem: m-negligible and m-polar}
Let $E\Subset \Omega$. Then $E$ is $m$-negligible $\Leftrightarrow$ $E$ is $m$-polar $\Leftrightarrow$ $\Ca^*_m(E,\Omega)=0$.
 \end{theorem}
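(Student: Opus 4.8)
The plan is to prove the cycle of implications
$$
E\ m\text{-polar}\ \Longrightarrow\ E\ m\text{-negligible}\ \Longrightarrow\ \Ca_m^*(E,\Omega)=0\ \Longrightarrow\ E\ m\text{-polar},
$$
localising everywhere to a ball $B$ with $E\Subset B\Subset\Omega$ and normalising the competing functions to take values in $[-1,0]$. For the first implication, suppose $E\subset\{v=-\infty\}$ with $v\in\SH_m(\C^n)$, $v\not\equiv-\infty$; after subtracting a constant we may assume $v<0$ on $B$. Apply Definition \ref{def: negligible sets} to the family $\mathcal U=\{v/j:j\ge1\}\subset\SH_m(B)$, which is locally bounded above. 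Its upper envelope $u=\sup_j(v/j)$ equals $0$ wherever $v>-\infty$ and $-\infty$ on $\{v=-\infty\}$; since $v$ is subharmonic the latter set is Lebesgue-null, so $u=0$ a.e.\ and hence $u^*\equiv0$. Thus $E\subset\{v=-\infty\}=\{u<u^*\}$ is $m$-negligible.

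The heart of the matter is the second implication. Writing $E\subset\{u<u^*\}$ for a locally bounded-above family $\mathcal U$, I would first invoke Choquet's lemma to extract a countable subfamily whose successive maxima form an increasing sequence $u_j\uparrow\tilde u$ with $\tilde u^*=u^*$, so that $\{u<u^*\}\subset\{\tilde u<\tilde u^*\}$; after an affine normalisation and truncation by $\max(\cdot,-1)$, and restricting to $B$, we may assume $u_j\in\SH_m(B)$ and $-1\le u_j\le0$. By Theorem \ref{thm: convergence theorem for increasing sequences} applied with all factors equal to $u_j$ one has $H_m(u_j)\weak H_m(\tilde u^*)$. The key step, following Bedford--Taylor, is to upgrade the a.e.\ convergence $u_j\uparrow\tilde u^*$ to convergence \emph{in capacity}: for every $\delta>0$ and every $K\Subset B$,
$$
\lim_{j\to\infty}\Ca_m\big(\{\tilde u^*-u_j>\delta\}\cap K,B\big)=0 .
$$
Granting this, fix $\delta>0$: since $u_j\le\tilde u$ we have $\{\tilde u^*-\tilde u>\delta\}\subset\{\tilde u^*-u_j>\delta\}$ for all $j$, so $\Ca_m(\{\tilde u^*-\tilde u>\delta\}\cap K,B)=0$; letting $\delta=1/k\downarrow0$ and using subadditivity (Proposition \ref{prop: properties of m-capacity}) gives $\Ca_m(\{\tilde u<\tilde u^*\}\cap K,B)=0$, and a standard capacitability argument then yields $\Ca_m^*(E,\Omega)=0$. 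Proving the displayed convergence in capacity is where I expect the main difficulty: one combines the weak convergence of the Hessian measures with the quasicontinuity theorem (Theorem \ref{thm: m-quasicontinuity}) and an energy/comparison estimate obtained by integration by parts (Theorem \ref{theorem: integration by parts}), exactly paralleling the plurisubharmonic case.

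For the last implication, assume $\Ca_m^*(E,\Omega)=0$. By definition of the outer capacity choose open sets $G_j$ with $E\subset G_j\Subset B$ and $\Ca_m(G_j,\Omega)\le 2^{-j}$, and let $u_j:=u^*_{m,G_j,\Omega}$ be the relative extremal functions, so $-1\le u_j\le0$, $u_j\equiv-1$ on $G_j\supset E$, and by Theorem \ref{theorem: m-capacity and m-extremal functions} one has $\int_\Omega H_m(u_j)=\Ca_m(G_j,\Omega)\le 2^{-j}$. A Cauchy--Schwarz estimate for mixed Hessian measures, obtained from integration by parts (Theorem \ref{theorem: integration by parts}), gives $\int_\Omega(-u_j)\,\beta^n\le C\big(\int_\Omega H_m(u_j)\big)^{1/m}\le C\,2^{-j/m}$, so the partial sums of $v:=\sum_j u_j$ decrease to an $m$-subharmonic function on $\Omega$ that is not identically $-\infty$ and satisfies $v\equiv-\infty$ on $E$. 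This exhibits $E$ as locally $m$-polar, and the $m$-subharmonic analogue of Josefson's theorem (cf.\ \cite{SA12}) then produces a function in $\SH_m(\C^n)$, not $\equiv-\infty$, that is $-\infty$ on $E$; hence $E$ is $m$-polar.

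As a cross-check on the equivalence $m$-polar $\Leftrightarrow\Ca_m^*(E,\Omega)=0$, note that if $E$ is $m$-polar then the functions $\max(v/j,-1)$ are admissible competitors in the definition of $u_{m,E,\Omega}$ which increase to $0$ a.e., forcing $u^*_{m,E,\Omega}\equiv0$ and thus $\Ca_m^*(E,\Omega)=\int_\Omega H_m(u^*_{m,E,\Omega})=0$; conversely, $\Ca_m^*(E,\Omega)=0$ makes $\int_\Omega H_m(u^*_{m,E,\Omega})=0$, so $u^*_{m,E,\Omega}$ is maximal with vanishing boundary values, whence $u^*_{m,E,\Omega}\equiv0$ and $E\subset\{u_{m,E,\Omega}<u^*_{m,E,\Omega}\}$ is $m$-negligible. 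These observations, resting on Theorem \ref{theorem: m-capacity and m-extremal functions}, recover the easy arrows directly and isolate the single genuinely hard step, namely the convergence-in-capacity argument of the second implication.
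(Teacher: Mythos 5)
Your proposal is correct and takes essentially the same approach as the paper: the paper gives no argument for this theorem at all, deferring it to the standard pluripotential-theoretic proofs (Klimek, Ko{\l}odziej, Sadullaev--Abdullaev adapted to $m$-subharmonic functions), and your cycle --- Choquet's lemma plus convergence in capacity of increasing sequences for the hard implication, summed relative extremal functions plus an $m$-subharmonic Josefson theorem for the implication from zero outer capacity to $m$-polarity --- is precisely that classical Bedford--Taylor argument. The two ingredients you defer (the convergence-in-capacity lemma and the Josefson-type globalization) are exactly what the paper's cited references supply, so nothing in your outline would fail.
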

%
%


\subsection{$m$-polarity of sets with small Hausdorff measure}

In this section, following \cite{Labutin} we give a sufficient condition for a set being $m$-polar using Hausdorff measure. We then give examples of $m$-polar sets ($m<n$) which are not pluripolar. 
 
 \begin{definition}
 A function $h:[0,1)\rightarrow \R^+$ is called measuring function if it is increasing and $\lim_{r \to 0}h(r)=h(0)=0.$ 
 
 For a measuring function $h$, the $h$-Hausdorff measure of $E\subset \C^n$ is defined by (see \cite{Labutin}, \cite{Mattila}, \cite{Ziemer})
 $$
 \Lambda_h(E):=\lim_{\delta \to 0} \left( \inf \sum_{j}h(r_j)\right),
 $$
 where the infimum is taken over all coverings of $E$ by balls $B_j$ of radii $r_j\leq \delta.$
 \end{definition}
 
\begin{theorem}\label{thm:hausdorff measure}
 Let $H(r)=r^{2n-2m}$ ($n>m\geq 1$).  Then every subset $E\subset \C^n$ satisfying $\Lambda_H(E)<+\infty$ is $m$-polar.
 \end{theorem}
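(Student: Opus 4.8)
The plan is to translate the Hausdorff hypothesis into a statement about the nonlinear $m$-capacity and then exploit the exact scaling of the $m$-capacity of a ball. First I would reduce to the case of a compact set. Writing $E=\bigcup_k\big(E\cap\overline{B(0,k)}\big)$, each piece has finite $\Lambda_H$-measure, and since a countable union of $m$-polar sets is again $m$-polar, it suffices to treat a bounded $E$ with $E\Subset\Omega$ for $\Omega$ a large ball. By Theorem \ref{theorem: m-negligible and m-polar} it is then equivalent to prove that $\Ca^*_m(E,\Omega)=0$, so the whole problem is recast as a capacity estimate.

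The quantitative engine is the capacity of a ball. By the Example, $\Ca_m(B(z_0,r),B(z_0,\rho))$ is comparable to $C_{n,m}\,r^{m(2a-2)}=C_{n,m}\,r^{2n-2m}$ with $a=n/m$, since $m(2a-2)=2n-2m$. Using the domain-monotonicity of the extremal function (Proposition \ref{prop: properties of m-extremal functions}(ii)) together with $d_0:=\mathrm{dist}(E,\partial\Omega)>0$, I get a uniform bound $\Ca_m(B(z_0,r),\Omega)\le C_{n,m}\,r^{2n-2m}=C_{n,m}H(r)$ for all balls of radius $r<d_0/4$ centred near $E$. Given $\delta>0$, choose a covering $E\subset\bigcup_j B(z_j,r_j)$ with $r_j\le\delta$ and $\sum_j H(r_j)\le\Lambda_H^{\delta}(E)+1\le\Lambda_H(E)+1$; then monotonicity and countable subadditivity of $\Ca_m$ (Proposition \ref{prop: properties of m-capacity}) yield
\[
\Ca^*_m(E,\Omega)\le\sum_j\Ca_m(B(z_j,r_j),\Omega)\le C_{n,m}\sum_j H(r_j)\le C_{n,m}\big(\Lambda_H(E)+1\big).
\]

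Here lies the \textbf{main obstacle}. Because the ball estimate is sharp, of exact order $r^{2n-2m}$, subadditivity over a covering produces only \emph{finiteness} of $\Ca^*_m(E,\Omega)$, not its vanishing, and one cannot make $\sum_j H(r_j)$ small since $\Lambda_H(E)$ is merely assumed finite. This gap cannot be closed by the naive superposition $-\sum_j c_j\|z-z_j\|^{2-2a}$ of the fundamental solution $\|z\|^{2-2a}$ of $H_m$: that construction is \emph{linear}, its natural scale is $r^{2n/m-2}$, strictly below $2n-2m$ when $m\ge 2$, and a set of finite $\Lambda_H$-measure has infinite Hausdorff measure for the gauge $r^{2n/m-2}$, so such a potential would be identically $-\infty$. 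The vanishing of the capacity must therefore be extracted from the nonlinearity of $H_m$, which is precisely why one follows Labutin.

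To close the gap I would argue through an energy/nonlinear-potential estimate. The model is $m=1$: here $\Ca^*_1$ is the Newtonian capacity, and it suffices to show every probability measure $\mu$ on $E$ has infinite energy. Covering $E$ at scale $s$ by $N(s)$ balls with $N(s)\,s^{2n-2}\lesssim\Lambda_H(E)$ and applying Cauchy--Schwarz gives $(\mu\times\mu)\{\|x-y\|<2s\}\ge 1/N(s)\gtrsim s^{2n-2}/\Lambda_H(E)$; writing the energy by the layer-cake formula as a constant times $\int_0^{\infty}(\mu\times\mu)\{\|x-y\|<s\}\,s^{1-2n}\,ds$, the integrand is $\gtrsim s^{-1}/\Lambda_H(E)$ near $0$, so the integral diverges and $E$ is polar. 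For general $m$ the Riesz potential is replaced by the Wolff potential adapted to $H_m$, and Labutin's pointwise estimates provide the same logarithmic divergence, forcing every candidate bounded potential for $E$ to be unbounded; equivalently $\Ca^*_m(E,\Omega)=0$. Combined with the reduction of the first paragraph, $E$ is $m$-polar. I expect the verification of the Wolff-potential divergence (the nonlinear replacement of the elementary energy computation above) to be the delicate technical point.
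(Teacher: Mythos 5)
The first half of your proposal (reduction to a compact set, the ball estimate $\Ca_m(B(z_0,r),B(z_0,\rho))\le C_{n,m}r^{2n-2m}$, monotonicity and countable subadditivity) reproduces exactly the ``Claim'' inside the paper's proof, and your diagnosis that this only yields \emph{finiteness} of $\Ca_m^*(E,\Omega)$ is correct. The genuine gap is in how you try to upgrade finiteness to vanishing. Your model computation for $m=1$ rests on the covering claim $N(s)\,s^{2n-2}\lesssim \Lambda_H(E)$ for coverings by balls of a \emph{single} radius $s$; that is a Minkowski-content (box-counting) bound, not a Hausdorff-measure bound. Finite, even zero, $\Lambda_H$-measure only produces coverings by balls of varying radii $r_j\le s$ with $\sum_j r_j^{2n-2}$ controlled, and there exist compact sets of zero Hausdorff measure in every gauge (suitable countable compact sets) whose upper box dimension is maximal, so that $N(s)s^{2n-2}\to\infty$. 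Hence the pigeonhole/layer-cake divergence collapses already in the case $m=1$ (the classical proof there needs a different route: a measure with bounded potential has vanishing density $\mu(B(x,r))/r^{2n-2}\to 0$ at every point, which is then played against a varying-radius covering). Worse, for $m\ge 2$ you invoke ``Wolff potentials adapted to $H_m$'' and ``Labutin's pointwise estimates'' as a black box: no such estimates for the complex Hessian operator are established in this paper or constructed in your proposal (Labutin's cited paper treats the pluripolar/Monge--Amp\`ere case), so the heart of the theorem is assumed rather than proved.

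What the paper actually does is much more elementary, and it confirms your correct intuition that the vanishing must come from the nonlinearity of $H_m$ --- but the nonlinearity enters only through the homogeneity $H_m(\lambda u)=\lambda^m H_m(u)$, not through potential estimates. Argue by contradiction: if $E\Subset B$ is not $m$-polar, then by Theorem \ref{theorem: m-negligible and m-polar} the extremal function $u=u^*_{m,E,B}$ is nontrivial, $E\setminus\{u=-1\}$ is $m$-polar, and one finds a compact $K\subset E\cap\{u=-1\}$ with $\int_K H_m(u)>0$. Two facts then collide. First, your covering estimate holds with a constant \emph{independent of the ambient domain}: for every bounded open $\Omega\supset K$ one has $\Ca_m(K,\Omega)\le C(n,m)\,\Lambda_H(K)$, because each small ball of radius $r_j<\frac14{\rm dist}(K,\partial\Omega)$ can be compared with the concentric ball of radius $\frac12{\rm dist}(K,\partial\Omega)$ inside $\Omega$. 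Second, since $u\ge -1$ everywhere, $u=-1$ on $K$, and $u$ is upper semicontinuous, the oscillation $c(\delta)$ of $u$ on $\Omega_\delta=\{{\rm dist}(\cdot,K)<\delta\}$ tends to $0$; the function $(u+1)/c(\delta)$ is then a competitor in the definition of $\Ca_m(K,\Omega_\delta)$, whence
\begin{equation*}
\Ca_m(K,\Omega_\delta)\ \ge\ \int_K H_m\Big(\frac{u+1}{c(\delta)}\Big)\ =\ \frac{1}{c(\delta)^m}\int_K H_m(u)\ \longrightarrow\ +\infty ,
\end{equation*}
contradicting the domain-independent bound $C(n,m)\Lambda_H(K)<+\infty$. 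This scaling-in-shrinking-domains trick is precisely the missing mechanism your proposal needed, and it requires nothing beyond the tools already quoted in your first paragraph.
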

\begin{proof}
We can assume that $E\Subset B=B(0,R).$ Suppose that $E$ is not $m$-polar. Then 
 $\Ca^*_{m,B}(E)>0$ and $u:=u^*_{m,E,B}\not \equiv 0.$ Set $E_1:=\{z\in E\ / \ u(z)=-1\}.$ Then by Theorem
  \ref{theorem: m-negligible and m-polar}, $E\setminus E_1$ is $m$-polar and hence 
 $$
 \int_{E\setminus E_1} H_m(u)=0.
 $$
Therefore,  as $H_m(u)$ is a regular Borel measure and $E_1$ is a Borel set, we can find a compact set $K\subset E_1$ such that $\int_K H_m(u)>0.$

We claim that for every bounded open set $\Omega\supset K$, $\Ca_m(K,\Omega)\leq C.\Lambda_H(K)$,
where $C>0$ is a constant independent of $\Omega.$ Indeed, let $\delta:={\rm dist}(K,\Omega)$ and fix $\epsilon \in (0,1)$ such that $\epsilon < \delta/4.$ We cover $K$ by open balls $B(z_j,r_j)$ such that $r_j<\epsilon.$ We may assume that $B(z_j,r_j)\subset \Omega.$ From (\ref{example: balls}) and after simple computations we get 
$$
\Ca_m(B(z_j,r_j),B(z_j,\delta/2))\leq C. r_j^{2n-2m},
$$
where $C>0$ is a constant depending only on $n,m.$
Using this and the monotonicity and subadditivity of the outer $m$-capacity we get 
\begin{eqnarray*}
\Ca^*_m(K,\Omega)&\leq& \sum_{j} \Ca_m^*(B(z_j,r_j),\Omega)\\
&\leq & \sum_{j} \Ca^*_m(B(z_j,r_j), B(z_j,\delta/2))\leq C\sum_{j} H(r_j).
\end{eqnarray*}
Now, the claim follows by taking the infimum over all such coverings and letting $\epsilon \to 0.$ 

For each $\delta>0$ set $\Omega_{\delta}: = \{z\in B\ / \ {\rm dist}(z,K)<\delta\}.$ Since $u$ is continuous at every point on $K$ and $u(x)=-1$ for every $x\in K$, we get
$$
c(\delta):={\rm osc}_{\Omega_{\delta}}(u)\rightarrow 0 \ \ {\rm as} \ \delta \to 0.
$$
For any $z\in \Omega_{\delta}$ we have 
$
0\leq \frac{u(z)+1}{c(\delta)}\leq 1. 
$
Thus, 
\begin{eqnarray*}
\Ca^*_m(K,\Omega_{\delta})&\geq & \Ca_m(K,\Omega_{\delta})\geq \int_K H_m\left(\frac{u+1}{c(\delta)}\right) =  \frac{1}{c(\delta)^m}\int_K H_m(u).
\end{eqnarray*}
We then get $\Ca^*_m(K,\Omega_{\delta})\to+\infty$ as $\delta \to 0$. The Claim yields $\Lambda_H(E)=+\infty.$ 
\end{proof}

\begin{examp} Assume that $1\leq m<n$ and let $E$ be the Cantor set constructed in \cite{Labutin} (see also \cite{AH}, \cite{Carleson}, \cite{Mattila}) and $E^n=E\times E\times...\times E$ ($n$ times). Since $\lim_{r\to 0}H(r)/[\log(1/r)]^{-n}=0$, we can choose the sequence $(\ell_j)$ defining $E$ such that $E^n$ is not pluripolar but $\Lambda_H(E^n)=0$. This implies that $E^n$ is $m$-polar in view of Theorem \ref{thm:hausdorff measure}. 
\end{examp}

\section{Finite energy classes}
In this section we study finite energy classes of $m$-subharmonic functions in $m$-hyperconvex domains. 
They are   generalizations of Cegrell's classes \cite{Ceg98,Ceg04} for plurisubharmonic functions. 

\subsection{Definitions and properties}
In  pluripotential theory one of the most important steps is to regularize singular plurisubharmonic functions. 
It can be easily done locally by convolution with a smooth kernel. 
The following theorem explains how to do it globally in a $m$-hyperconvex domain.
Let $\SH_m^-(\Omega)$ denote  the class of  non-positive functions in $\SH_m(\Omega).$
\begin{theorem}\label{thm: continuous global approximation}
For each $\f\in \SH_m^-(\Omega)$ there  exists a sequence $(\f_j)$ of $m$-sh functions verifying the following conditions:

(i) $\f_j$ is continuous on $\bar{\Omega}$ and $\f_j\equiv 0$ on $\pO;$

(ii) each $H_m(\f_j)$ has compact support,

(iii) $\f_j \downarrow \f$ on $\Omega.$
\end{theorem}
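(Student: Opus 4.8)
The plan is to build each $\f_j$ as a regularized upper envelope whose obstacle is smooth in the interior and forces the correct behaviour near $\pO$. Let $\rho\in\SH_m(\Omega)\cap\Cc(\Omega)$ be the continuous negative exhaustion provided by $m$-hyperconvexity, so that $K_j:=\{\rho\le -1/j\}\Subset\Omega$ and $K_j\uparrow\Omega$. For $j$ large I choose $\epsilon_j\downarrow 0$ with $\epsilon_j<\mathrm{dist}(K_j,\pO)$; by Proposition \ref{prop: basic property of m subharmonic functions}(iii) the convolution $g_j:=\f*\chi_{\epsilon_j}$ is smooth and $m$-subharmonic on a neighbourhood of $K_j$, it is $\le 0$, and $g_j\downarrow\f$ as $j\to\infty$. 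I then set
\[
\f_j:=\Big(\sup\{\,v\in\SH_m(\Omega)\ :\ v\le 0\ \text{on}\ \Omega,\ v\le g_j\ \text{on}\ K_j\,\}\Big)^{*}.
\]

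First I would record the elementary properties. By Proposition \ref{prop: basic property of m subharmonic functions}(iv), $\f_j\in\SH_m(\Omega)$ and $\f_j\le 0$. Since $\f\le 0$ and $\f\le\f*\chi_{\epsilon_j}=g_j$ on $K_j$, the function $\f$ is an admissible competitor, so $\f\le\f_j$; conversely the obstacle gives $\f_j\le g_j$ on $K_j$. Hence $\f\le\f_j\le g_j$ on $K_j$, and as $K_j\uparrow\Omega$ with $g_j\downarrow\f$ this forces $\f_j\downarrow\f$ pointwise (the choice $\epsilon_{j+1}\le\epsilon_j$ together with $K_j\subset K_{j+1}$ makes the admissible class shrink in $j$, so the sequence is decreasing). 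For the boundary values, note $g_j$ is bounded below on the compact set $K_j$, say by $-M_j$; then $C_j\rho$ with $C_j:=M_j\,j$ satisfies $C_j\rho\le -M_j\le g_j$ on $K_j$ and $C_j\rho\le 0$, so it is a competitor and $C_j\rho\le\f_j\le 0$. As $\rho\to 0$ on $\pO$ this yields $\lim_{z\to\pO}\f_j(z)=0$. This already covers a possibly unbounded $\f$ without a separate truncation step, since only the local boundedness of $g_j$ on the compact core is used.

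Next I would prove (ii), that $H_m(\f_j)$ is supported in $K_j$. On the open set $\Omega\setminus K_j$ the only constraint on competitors is $v\le 0$, and there $\f_j$ is maximal: given a ball $B\Subset\Omega\setminus K_j$, solving the homogeneous Dirichlet problem on $B$ with boundary data $\f_j|_{\partial B}$ (whose solvability and the required comparison rest on Lemma \ref{lem: m-extremal function of the ball}, Theorem \ref{thm: comparison principle 1} and the gluing Proposition \ref{prop: basic property of m subharmonic functions}(vi)) produces a competitor that must lie below $\f_j$, forcing $H_m(\f_j)=0$ on $B$. Thus $\mathrm{supp}\,H_m(\f_j)\subset K_j\Subset\Omega$. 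Throughout, the discrepancy between the envelope and its regularization is $m$-negligible, hence $m$-polar by Theorem \ref{theorem: m-negligible and m-polar}, so it is invisible to $H_m$ and to the almost-everywhere identities above.

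The main obstacle is the continuity of $\f_j$ on $\bar\Omega$ claimed in (i). Upper semicontinuity is built in; the difficulty is lower semicontinuity. I would obtain it by replacing the two-piece obstacle above with a single continuous global obstacle that is $0$ on $\pO$ — for instance $\tilde g_j:=\max(g_j,\lambda_j\rho)$ on $K_j$, continued by $\lambda_j\rho$ outside, with $\lambda_j$ large — so that the envelope has continuous data together with a hyperconvex barrier at every boundary point. Continuity of such an envelope is the analogue of Walsh's theorem for the Perron--Bremermann envelope: one compares $\f_j(z)$ and $\f_j(z')$ through the modulus of continuity of $\tilde g_j$ and the barrier $\rho$, exactly as in the plurisubharmonic case. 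I expect this semicontinuity estimate to be the only genuinely delicate point; the remaining checks that the modified envelope still decreases to $\f$ and keeps $H_m$ supported in a compact core are routine adjustments of the constants $\epsilon_j,\lambda_j$.
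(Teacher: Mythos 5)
Your first construction is sound as far as it goes, and it is in fact essentially the construction the paper relies on: the paper's proof consists of observing that the relative $m$-extremal functions of closed balls are continuous with compactly supported Hessian measure (Proposition \ref{prop: union of balls}) and then following Cegrell's argument in \cite[Theorem 2.1]{Ceg04}. Your verifications of (iii), of the boundary limit $\lim_{z\to\pO}\f_j(z)=0$ via the competitor $C_j\rho$, and of (ii) by lifting on balls in $\Omega\setminus K_j$ are all essentially correct. But property (i) asks for continuity of $\f_j$ on all of $\bar\Omega$, and this is the real content of the theorem: an upper envelope is only upper semicontinuous a priori, and nothing in your first construction addresses lower semicontinuity inside $\Omega$. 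Note that the implicit global obstacle there, namely $g_j$ on $K_j$ and $0$ off $K_j$, is itself discontinuous across $\partial K_j$, so no off-the-shelf Walsh-type theorem applies. The ingredient that makes continuity work --- and the one the paper's proof explicitly points to --- is the continuity of the $m$-extremal functions of unions of closed balls (Proposition \ref{prop: union of balls}, Lemma \ref{lem: m-extremal function of the ball}), which in Cegrell's scheme is combined with the modulus of continuity of the regularized obstacle to squeeze the envelope between continuous functions; your proposal never brings this to bear on (i).

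Moreover, the modification you propose in order to get continuity is not a "routine adjustment of constants"; it fails outright. If the obstacle is taken to be $\max(g_j,\lambda_j\rho)$ on $K_j$ and $\lambda_j\rho$ on $\Omega\setminus K_j$, then every competitor satisfies $v\le\lambda_j\rho$ on $\Omega\setminus K_j$, while $\lambda_j\rho$ is itself a competitor; hence the modified envelope equals $\lambda_j\rho$ identically on $\Omega\setminus K_j$. Consequently it is no longer $\geq\f$ there (that would require $\f\le\lambda_j\rho$ off $K_j$, which fails in general, e.g. already for bounded $\f$ once $\lambda_j$ is large), so the sequence cannot decrease to $\f$. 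In addition, the modified obstacle is discontinuous across $\partial K_j$ as soon as $\sup_{\partial K_j}g_j>-\lambda_j/j$, so for large $\lambda_j$ it does not even provide continuous data. The conceptual error is that in this scheme the exhaustion $\rho$ may only enter from below, as a competitor (exactly as in your own boundary argument with $C_j\rho$); using $\lambda_j\rho$ as an upper obstacle outside $K_j$ caps the envelope and destroys the approximation property, and no choice of $\epsilon_j,\lambda_j$ repairs this. So the continuity claim (i) remains unproved, and the route you sketch for it would fail; the correct route is the one through Proposition \ref{prop: union of balls} and \cite[Theorem 2.1]{Ceg04} that the paper indicates.
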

\begin{proof}
If $B$ is a closed ball in $\Omega$ then by Proposition \ref{prop: union of balls}, the $m$-extremal function
 $u_{m,B,\Omega}$ is continuous on $\bar{\Omega}$ 
 and   ${\rm supp} (H_m(u)) \Subset \Omega.$ We can follow the lines in  \cite[Theorem 2.1]{Ceg04}. 
\end{proof}


\begin{definition}
We let $\Ecoo$ denote the class of bounded functions in  $\SH_m^-(\Omega)$ 
such that 
$\underset{z\to \pO}{\lim} \f(z)=0$ and $\int_{\Omega}H_m(\f)<+\infty.$ 

For each $p>0$,  $\Ec_m^p(\Omega)$ denote the class of  functions   $\f\in \SH_m(\Omega)$ 
such that there exists a decreasing sequence  $(\f_j)\subset \Ec_m^0(\Omega)$ satisfying

(i) $\lim_j \f_j=\f,$ in $\Omega$ and

(ii) $\sup_j\int_{\Omega}(-\f_j)^pH_m(\f_j)<+\infty.$ 

If we require moreover that $\sup_j \into H_m(\f_j)<+\infty$ then, by definition,  
 $\f$ belongs to $\Fcpo.$
\end{definition}

\begin{definition}
We let $\Fco$ denote  the  class of functions  $u\in \SH_m^-(\Omega)$ such that there exists 
a sequence  $(u_j)\subset \Ecoo$ decreasing to  $u$ in $\Omega$  and
$$
\sup_j\into H_m(u_j)<+\infty.
$$
\end{definition}

\begin{definition}
We define the $p$-energy ($p>0$) of $\f\in \Ecoo$ by
$$
\E_p(\f):=\int_{\Omega}(-\f)^pH_m(\f).
$$
If $p=1$ we drop the index and denote by $\E(\varphi)=\E_1(\varphi)$.
\end{definition}

We generalize  H\"{o}lder inequality in the following lemma. 
When $m=n$ it is a result of  Persson \cite{Per99}. Our proof uses the same idea.

\begin{lemma}\label{lem: persson}
Let  $u,v_1,...,v_m\in \Ec_m^0(\Omega)$ and $p\geq 1.$ We have
\begin{equation*}
\int_{\Omega}(-u)^pdd^cv_1\wedge...\wedge dd^c v_m\wedge \beta^{n-m}
\leq  D_p(\E_p(u))^{\frac{p}{m+p}}\E_p(v_1)^{\frac{1}{m+p}}...\E_p(v_m)^{\frac{1}{m+p}},
\end{equation*}
here $D_1=1$ and for each $p>1$, $D_p:=p^{p\alpha(p,m)/(p-1)}$, where
$$
\alpha(p,m)=(p+2)\Big(\frac{p+1}{p}\Big)^{m-2}-p-1.
$$
\end{lemma}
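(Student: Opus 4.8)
The plan is to prove a generalized Hölder-type inequality for mixed Hessian measures by induction on $m$, following the strategy Persson used in the Monge-Amp\`ere case $m=n$. The base case and the inductive step both rest on an integration-by-parts manouvre together with a careful bookkeeping of the energy exponents, so the entire argument is driven by Theorem~\ref{theorem: integration by parts} and repeated application of the classical H\"{o}lder inequality for integrals. The normalization $D_1=1$ suggests treating $p=1$ as a clean separate case first, and then bootstrapping to $p>1$ where the constant $D_p$ absorbs the losses.

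First I would establish the $p=1$ estimate, which is the heart of the matter. Writing $I:=\into(-u)\,dd^cv_1\wedge\cdots\wedge dd^c v_m\wedge\beta^{n-m}$, I would integrate by parts (legitimate since all functions lie in $\Ecoo$ and hence vanish on $\pO$, so Theorem~\ref{theorem: integration by parts} applies) to move one $dd^c$ off of $v_1$ and onto $u$, obtaining $\into(-v_1)\,dd^c u\wedge dd^c v_2\wedge\cdots\wedge dd^c v_m\wedge\beta^{n-m}$. The key idea is then to interpolate: bound the mixed term by a geometric mean of "pure" energies using the Cauchy--Schwarz/H\"{o}lder inequality for the positive current $dd^c v_2\wedge\cdots\wedge dd^c v_m\wedge\beta^{n-m}$, which lets me write $\into(-v_1)\,dd^c u\wedge T\le\big(\into(-v_1)\,dd^c v_1\wedge T\big)^{1/2}\big(\into(-u)\,dd^c u\wedge T\big)^{1/2}$ after a further integration by parts symmetrizing the two factors. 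Iterating this symmetrization across all the indices $v_1,\dots,v_m$ and $u$, and keeping track of the exponents through the $m+1$ factors, produces the exponent pattern $\tfrac{1}{m+1}$ on each $\E(v_i)$ and $\tfrac{1}{m+1}$ (with multiplicity accounting for $u$ appearing $p$ times when $p=1$) matching the claimed powers $\tfrac{p}{m+p}$ and $\tfrac{1}{m+p}$.

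For the case $p>1$ the plan is to reduce to the already-proven estimate by a convexity argument. I would first control $\into(-u)^p\,dd^c v_1\wedge\cdots\wedge dd^c v_m\wedge\beta^{n-m}$ by comparing $(-u)^p$ against a suitable linear combination and exploiting that $(-u)^{p-1}$ can be paired off using H\"{o}lder with conjugate exponents $p/(p-1)$ and $p$; the factor $p^{\,p\alpha(p,m)/(p-1)}$ arises precisely from iterating this split $m-2$ times, which is the source of the $(p+1)/p$ powers inside $\alpha(p,m)$. Tracking the recursion $\alpha(p,m)=(p+2)\big(\tfrac{p+1}{p}\big)^{m-2}-p-1$ is a matter of solving the one-step recurrence that the inductive step on $m$ generates for the accumulated constant.

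The main obstacle I expect is the exponent bookkeeping in the inductive step: each integration by parts and each H\"{o}lder split redistributes the weights among the $m+1$ energies, and one must verify that after $m$ symmetrizations the powers land exactly on $\tfrac{p}{m+p}$ and $\tfrac{1}{m+p}$ with their sum equal to $1$, while simultaneously the multiplicative constants compose to give the closed form $D_p$. A secondary technical point is ensuring every integration by parts is justified, i.e.\ that all intermediate mixed energies are finite; this follows because $u,v_1,\dots,v_m\in\Ecoo$ are bounded with compactly supported Hessian data in the approximation, so finiteness is automatic and the limiting procedure via Theorem~\ref{thm: convergence decreasing} passes the inequality to the general case.
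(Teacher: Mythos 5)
Your route is the same one the paper takes --- both follow Persson's idea --- but with one structural difference and one concrete gap. The paper does not carry out your induction on $m$ at all: it defines the functional $F(u,v_1,\dots,v_m)=\into(-u)^p\,dd^cv_1\wedge\cdots\wedge dd^cv_m\wedge\beta^{n-m}$ and invokes Persson's abstract result \cite[Theorem 4.1]{Per99}, which says that once one has the single ``one-swap'' inequality $F(u,v,v_1,\dots,v_{m-1})\le a(p)\,F(u,u,v_1,\dots,v_{m-1})^{p/(p+1)}F(v,v,v_1,\dots,v_{m-1})^{1/(p+1)}$, the full product bound follows with the constant $D_p=a(p)^{\alpha(p,m)}$. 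So the exponent and constant bookkeeping that you yourself flag as your main unresolved obstacle is exactly the part the paper outsources to a citation; doing it by hand means re-proving Persson's Theorem 4.1, which is feasible but is genuinely the delicate combinatorial part, and your sketch does not supply it.

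The concrete gap is in your $p>1$ step. Integration by parts gives $\into(-u)^p\,dd^cv\wedge T\le p\into(-u)^{p-1}(-v)\,dd^cu\wedge T$ with $T=dd^cv_1\wedge\cdots\wedge dd^cv_{m-1}\wedge\beta^{n-m}$, and H\"{o}lder with exponents $p/(p-1)$ and $p$ against the measure $dd^cu\wedge T$ then yields $p\bigl(\into(-u)^p\,dd^cu\wedge T\bigr)^{(p-1)/p}\bigl(\into(-v)^p\,dd^cu\wedge T\bigr)^{1/p}$. The second factor is a \emph{mixed} term, $(-v)^p$ integrated against $dd^cu\wedge T$, which is neither of the two pure energies, so the estimate does not close as you describe it; nor can the case $p>1$ be ``reduced to the already-proven $p=1$ estimate by convexity.'' The paper's (Persson's) trick is to write the same chain of inequalities with $u$ and $v$ interchanged, which bounds the mixed term $\into(-v)^p\,dd^cu\wedge T$ by powers of $\into(-u)^p\,dd^cv\wedge T$ and $\into(-v)^p\,dd^cv\wedge T$, and then to combine the two estimates and solve for $\into(-u)^p\,dd^cv\wedge T$: this symmetrization is where the one-swap constant $a(p)=p^{p/(p-1)}$ comes from, and the subsequent iteration (Persson's theorem) converts it into $D_p=p^{p\alpha(p,m)/(p-1)}$ --- the constant does not arise from an ``$m-2$-fold H\"{o}lder split'' as you suggest. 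Once you add this interchange-and-combine step and either cite Persson's Theorem 4.1 or prove its analogue by hand, your argument coincides with the paper's.
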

\begin{proof} Let
$$
F(u,v_1,...,v_m)=\into(-u)^pdd^cv_1\wedge...\wedge dd^c v_m\wedge \beta^{n-m},\ u,v_1,...,v_m\in \Ec_m^0(\Omega).
$$
Thanks to \cite[Theorem 4.1]{Per99} it suffices to prove that 
\begin{equation}\label{eq: persson 1}
F(u,v,v_1,...,v_{m-1})\leq a(p) F(u,u,v_1,...,v_{m-1})^{\frac{p}{p+1}}F(v,v,v_1,...,v_{m-1})^{\frac{1}{p+1}}, 
\end{equation}
where $a(p)=1$ if $p=1$ and $a(p)=p^{\frac{p}{p-1}}$ if $p>1.$ 
Set $T=dd^c v_1\wedge...\wedge dd^c v_{m-1}\wedge \beta^{n-m}.$ When $p=1,$ (\ref{eq: persson 1}) becomes 
$$
\into (-u)dd^c v\wedge T\leq \Big(\into (-u)dd^c u\wedge T\Big)^{\frac{1}{2}}\Big(\into (-v) dd^c v \wedge T\Big)^{\frac{1}{2}},
$$
which is the Cauchy-Schwarz inequality.  In the case $p>1,$ integrating by parts we get
$$
\into (-u)^pdd^c v\wedge T\leq p\into (-u)^{p-1} (-v) dd^c u\wedge T.
$$
By using H\"{o}lder inequality we obtain
\begin{equation*}
\into (-u)^pdd^c v\wedge T\leq p\Big(\into (-u)^p dd^c u\wedge T\Big)^{\frac{p-1}{p}}
\Big(\into (-v)^pdd^c u\wedge T\Big)^{\frac{1}{p}}.
\end{equation*}
Now, interchanging $u$ and $v$ we get
\begin{equation*}
\into (-v)^pdd^c u\wedge T\leq p\Big(\into (-u)^p dd^c v\wedge T\Big)^{\frac{1}{p}}
\Big(\into (-v)^pdd^c v\wedge T\Big)^{\frac{p-1}{p}}.
\end{equation*}
Combining  the above two inequalities we obtain the result.

\end{proof}
Thanks to Lemma \ref{lem: persson} we can  bound  
$\into (u_0)^p dd^c u_1\wedge...\wedge dd^c u_m\wedge \beta^{n-m}$ by
  $\E_p(u_j), j=0,...,m$ if $p\geq 1.$ To get similar estimates when 
 $p\in (0,1)$ we can follow the lines in \cite{GZ07}: 
\begin{lemma}\label{lem: weighted GZ 1}
Let $u,v\in \Ecoo$ and $0<p<1$. If T is a closed  $m$-positive current of type  
$T=dd^cv_1\wedge...\wedge dd^cv_{m-k}\wedge\beta^{n-m},$ where 
$u_j\in \SH_m(\Omega)\cap L^{\infty}_{\rm loc}$,  then
$$
\into (-u)^p(dd^cv)^k\wedge T\leq 2\into (-u)^p(dd^cu)^k\wedge T+2\into (-v)^p(dd^cv)^k\wedge T.
$$
\end{lemma}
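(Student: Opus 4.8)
The plan is to follow the scheme of \cite{GZ07}, exactly as the statement suggests: reduce everything to a single integration-by-parts identity that exploits the concavity of the weight $t\mapsto(-t)^p$ for $0<p<1$, and then run a finite induction on the number $k$ of factors $dd^c v$. This is the $0<p<1$ counterpart of the H\"older-type estimate of Lemma \ref{lem: persson}, where the convexity of $(-t)^p$ ($p\ge1$) was used instead.

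All the manipulations below are justified for bounded $m$-subharmonic functions with vanishing boundary values by the integration by parts formula (Theorem \ref{theorem: integration by parts}) together with the Bedford--Taylor chain rule, which makes sense of $dd^c\big[(-u)^p\big]=-p(-u)^{p-1}dd^c u-p(1-p)(-u)^{p-2}\,du\wedge d^c u$ as a current against $m$-positive forms; alternatively one first replaces $u,v,v_1,\dots,v_{m-k}$ by smooth decreasing approximants (Theorem \ref{thm: continuous global approximation}) and passes to the limit using the convergence theorems (Theorems \ref{thm: convergence decreasing} and \ref{thm: convergence theorem for increasing sequences}), the weight $(-u)^p$ being bounded and continuous since $0<p<1$. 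The base case $k=0$ is trivial, as $\into(-u)^pT\le 2\into(-u)^pT+2\into(-v)^pT$, so it remains to carry out the inductive step.

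The engine is a single integration by parts. Peeling one factor and setting $S=(dd^c v)^{k-1}\wedge T$, a closed $m$-positive current, Stokes' theorem (boundary terms vanishing because $u,v\equiv0$ on $\pO$) gives
\[
\into (-u)^p (dd^c v)^k\wedge T=p\into (-u)^{p-1}\,du\wedge d^c v\wedge S .
\]
Applying the Cauchy--Schwarz inequality for the semi-positive pairing $(\f,\p)\mapsto d\f\wedge d^c\p\wedge S$ with the nonnegative weight $(-u)^{p-1}$,
\[
\into (-u)^{p-1}du\wedge d^c v\wedge S\le \Big(\into (-u)^{p-1}du\wedge d^c u\wedge S\Big)^{1/2}\Big(\into (-u)^{p-1}dv\wedge d^c v\wedge S\Big)^{1/2},
\]
and the first factor converts directly to a Hessian integral by a further integration by parts,
\[
\into (-u)^{p-1}du\wedge d^c u\wedge S=\tfrac1p\into (-u)^p\,dd^c u\wedge S ,
\]
which already carries the $\int(-u)^p(dd^c u)^k\wedge T$ flavour of the first term on the right-hand side.

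The hard part — and I expect the only genuinely delicate point — is the second Cauchy--Schwarz factor $\into (-u)^{p-1}dv\wedge d^c v\wedge S$: the gradient is that of $v$, but the weight is still $(-u)^{p-1}$, and there is no pointwise domination between $(-u)$ and $(-v)$. Resolving this weight mismatch is where the argument of \cite{GZ07} does its real work: one trades the gradient term $dv\wedge d^c v$ back for the Hessian $dd^c v$ by yet another integration by parts, absorbs the resulting cross terms using Young's inequality $ab\le\frac12a^2+\frac12b^2$ and the elementary subadditivity $(a+b)^p\le a^p+b^p$ valid for $0<p<1$, and feeds the remaining mixed integrals into the induction hypothesis. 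The whole bookkeeping must be organised so that the constant does not deteriorate as $k$ grows, and keeping it equal to $2$ throughout — rather than any individual inequality — is the main obstacle I anticipate.
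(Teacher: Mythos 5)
Your argument stops exactly where the real proof has to start. After the formal integration by parts and Cauchy--Schwarz you are left with the factor $\into (-u)^{p-1}dv\wedge d^c v\wedge S$, and at that point you concede the difficulty and substitute a guess for a proof. The plan you sketch for it cannot be carried out: integrating $\into (-u)^{p-1}dv\wedge d^cv\wedge S$ by parts produces terms weighted by $(-u)^{p-2}(-v)$ and $(-u)^{p-1}(-v)$, in which the mismatch between $u$ and $v$ is exactly as severe as before, so Young's inequality and the subadditivity of $t\mapsto t^p$ have nothing to absorb --- the obstruction reproduces itself with more singular weights. Note also that $(-u)^{p-1}$ and $(-u)^{p-2}$ blow up near $\pO$ (where $u\to 0$), so these intermediate integrals are not even obviously finite for $u,v\in\Ecoo$; your justification (``the weight $(-u)^p$ is bounded and continuous'') covers the weight itself but none of the unbounded factors your identities involve. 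Structurally, what you have done is transplant the Cauchy--Schwarz/H\"older scheme of Lemma \ref{lem: persson}; but that scheme is precisely what breaks down for $0<p<1$ (H\"older with exponent $p<1$ reverses), which is why this lemma is stated separately and proved by a completely different argument. Your worry about ``keeping the constant equal to $2$'' is also misplaced: the constant is not the delicate point.

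The proof the paper invokes (that of \cite[Proposition 2.5]{GZ07}) uses two ideas that never appear in your proposal, and no Cauchy--Schwarz at all. First, split $\Omega=\{u\geq v\}\cup\{u<v\}$. On $\{u\geq v\}$ one has the pointwise inequality $(-u)^p\leq (-v)^p$, so
$\int_{\{u\geq v\}}(-u)^p(dd^cv)^k\wedge T\leq \into (-v)^p(dd^cv)^k\wedge T$.
On $\{u<v\}$ the locality of the Hessian operator (the maximum principle, Theorem \ref{theorem: maximum principle}, applied to $v$ and $\max(u,v)$) gives $\ind_{\{u<v\}}(dd^cv)^k\wedge T=\ind_{\{u<v\}}(dd^c\max(u,v))^k\wedge T$, so this piece is at most $\into(-u)^p(dd^c\max(u,v))^k\wedge T$. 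Second, a ``fundamental inequality'' for \emph{ordered} functions: if $u\leq w\leq 0$ then $\into(-u)^p(dd^cw)^k\wedge T\leq \into(-u)^p(dd^cu)^k\wedge T$. This is where $0<p<1$ enters: $\chi(t)=-(-t)^p$ is convex increasing, so $\chi(u)=-(-u)^p$ is again $m$-subharmonic, bounded, with boundary values $0$; two applications of Theorem \ref{theorem: integration by parts} then give, for $S$ a closed $m$-positive current of the admissible type,
$$
\into (-u)^p\, dd^c w\wedge S=\into (-w)\, dd^c\bigl[\chi(u)\bigr]\wedge S\leq \into (-u)\, dd^c\bigl[\chi(u)\bigr]\wedge S=\into (-u)^p\, dd^c u\wedge S,
$$
the middle inequality because $dd^c[\chi(u)]\wedge S\geq 0$ and $0\leq -w\leq -u$; iterating over the $k$ factors (using the symmetry of the mixed products) yields the fundamental inequality. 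Applying it with $w=\max(u,v)\geq u$ and combining with the first step gives the lemma, in fact with constant $1$ on each term. The two missing ideas in your proposal are thus the $\max$/locality reduction and the $m$-subharmonicity of $-(-u)^p$; without them the weight mismatch you correctly identified cannot be resolved.
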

\begin{proof}
The same as in the proof of   \cite[Proposition 2.5]{GZ07}.  
\end{proof}
\begin{prop}\label{prop: weight GZ 2}
Let $0<p<1$. There exists  $C_p>0$ such that  
$$
0 \leq \into (-\f_0)^p dd^c\f_1 \wedge \cdots \wedge dd^c\f_m\wedge\beta^{n-m}
\leq C_p \max_{0 \leq j \leq m} \E_p(\f_j),
$$
for all $0 \geq  \f_0,\ldots,\f_m \in \Ecoo.$
\end{prop}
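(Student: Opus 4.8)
The lower bound is immediate: since each $\f_j\in\Ecoo$ is bounded, Lemma \ref{lem: symmetric} shows that $dd^c\f_1\wedge\cdots\wedge dd^c\f_m\wedge\beta^{n-m}$ is a positive measure, and $(-\f_0)^p\geq 0$, so the integral is nonnegative. For the upper bound the plan is first to reduce the mixed expression to the energy of a \emph{single} function, and then to prove a quasi-subadditivity estimate for that energy by a capacity argument. (A direct iteration of the two-function inequality Lemma \ref{lem: weighted GZ 1} seems not to close the recursion for genuinely mixed terms, since its symmetric factor-$2$ form reproduces the quantity being estimated; this is what motivates passing to capacities.)

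Set $\psi:=\f_0+\f_1+\cdots+\f_m\in\Ecoo$, the class $\Ecoo$ being stable under addition. Since all $\f_j\leq 0$ we have $\psi\leq\f_0$, whence $(-\f_0)^p\leq(-\psi)^p$ by monotonicity of $t\mapsto t^p$; and since $dd^c\psi-dd^c\f_i=dd^c\sum_{j\neq i}\f_j\geq 0$ for each $i$, the monotonicity of the mixed Hessian products gives $dd^c\f_1\wedge\cdots\wedge dd^c\f_m\wedge\beta^{n-m}\leq(dd^c\psi)^m\wedge\beta^{n-m}$ as measures. Combining these yields
$$
\into(-\f_0)^p\,dd^c\f_1\wedge\cdots\wedge dd^c\f_m\wedge\beta^{n-m}\leq\into(-\psi)^p(dd^c\psi)^m\wedge\beta^{n-m}=\E_p(\psi),
$$
so it suffices to prove the quasi-subadditivity $\E_p(\psi)\leq C_p\max_{0\leq j\leq m}\E_p(\f_j)$.

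For this I would pass to sublevel sets. Writing $\mu_u(t):=\int_{\{u<-t\}}H_m(u)$, Fubini gives the layer-cake identity $\E_p(u)=p\int_0^{\infty}t^{p-1}\mu_u(t)\,dt$ for $u\in\Ecoo$. The comparison principle (Theorem \ref{thm: comparison principle 1}), applied with suitable competitors (e.g. $t(\rho-1)$ with $0\leq\rho\leq 1$), yields the two-sided estimate $t^m\Ca_m(\{u<-t\},\Omega)\lesssim\mu_u(t)\lesssim t^m\Ca_m(\{u<-t/2\},\Omega)$, with implied constants depending only on $m,n$. Feeding this into the layer-cake formula shows that $\E_p(u)$ is comparable, up to constants depending on $p,m,n$, to the capacity integral $\int_0^{\infty}t^{m+p-1}\Ca_m(\{u<-t\},\Omega)\,dt$. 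I would then exploit that capacity behaves well under sums: from $\{\psi<-s\}\subset\bigcup_{j}\{\f_j<-s/(m+1)\}$ together with the monotonicity and subadditivity of the capacity (Proposition \ref{prop: properties of m-capacity}) one gets $\Ca_m(\{\psi<-s\},\Omega)\leq\sum_j\Ca_m(\{\f_j<-s/(m+1)\},\Omega)$. Inserting this, substituting $s=(m+1)r$ in each summand, and using the comparison in the reverse direction for each $\f_j$, leads to
$$
\E_p(\psi)\lesssim\int_0^{\infty}s^{m+p-1}\Ca_m(\{\psi<-s\},\Omega)\,ds\lesssim(m+1)^{m+p}\sum_{j}\E_p(\f_j)\leq C_p\max_{0\leq j\leq m}\E_p(\f_j),
$$
which is the desired estimate; the dependence of the constant on $p$ (through $(m+1)^{m+p}$ and through the layer-cake comparison) is why $C_p$ cannot be taken uniform.

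The step I expect to be the main obstacle is the two-sided sublevel--capacity comparison $\mu_u(t)\asymp t^m\Ca_m(\{u<-ct\},\Omega)$, and in particular the bound of the Hessian mass of a sublevel set by its capacity; this is where the comparison principle together with the extremal-function description of the capacity (Theorem \ref{theorem: m-capacity and m-extremal functions}) must be used carefully, with attention to the harmless shift in the level $t$. By contrast, the reduction to $\E_p(\psi)$ via domination and the final capacity bookkeeping are routine.
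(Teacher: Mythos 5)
Your overall strategy is correct, but be aware that it is genuinely different from the paper's: the paper disposes of this proposition by citing \cite[Proposition 2.10]{GZ07}, i.e.\ by the Guedj--Zeriahi weighted-energy machinery (integration by parts, concavity and subadditivity of $t\mapsto t^p$, locality of the Hessian operator), whereas you reduce everything to the quasi-subadditivity $\E_p(\psi)\leq C_p\max_j\E_p(\f_j)$ for $\psi:=\f_0+\cdots+\f_m$ and prove that through a capacity description of $\E_p$. Your reduction step is sound: $(-\f_0)^p\leq(-\psi)^p$, and $dd^c\f_1\wedge\cdots\wedge dd^c\f_m\wedge\beta^{n-m}$ is one of the positive terms in the multilinear expansion of $H_m(\psi)$. (You invoke stability of $\Ecoo$ under sums, which in the paper's ordering is only recorded later, in Theorem~\ref{thm: convexite cegrell}; there is no circularity, since Cegrell's proof of that fact is independent of the present proposition, and in any case your capacity argument only needs $\psi$ to be bounded, $m$-subharmonic and tending to $0$ on $\pO$.)

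The one genuine gap is that the pivotal two-sided comparison between $\mu_u(t):=\int_{\{u<-t\}}H_m(u)$ and $t^m\Ca_m$ of sublevel sets is asserted rather than proved; moreover, in the form you state it (same level $t$ on both sides of the left-hand inequality) it is not what the comparison principle yields --- the standard argument produces a level shift, which, as you note, is harmless after integration. The direction $t^m\Ca_m(\{u<-2t\},\Omega)\leq\mu_u(t)$ is exactly the computation in the proof of the paper's Lemma~\ref{lem: capacity} (comparison with the competitors $t\theta-t$). The hard direction can indeed be done with the tools you name: fix $t>0$, let $U:=\{u<-t/2\}\Subset\Omega$ and $h:=u^*_{m,U,\Omega}$, so that $h\equiv-1$ on the open set $U$ and $\into H_m(h)=\Ca_m(U,\Omega)$ by Theorem~\ref{theorem: m-capacity and m-extremal functions}; since $u$ and $th$ both tend to $0$ on $\pO$, Theorem~\ref{thm: comparison principle 1} gives
$$
\int_{\{th<u\}}H_m(u)\;\leq\;\int_{\{th<u\}}H_m(th)\;\leq\;t^m\,\Ca_m(U,\Omega),
$$
and $\{-t<u<-t/2\}\subset\{th<u\}$ because $th\equiv-t$ on $U$. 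Note this is an annulus estimate, not the pointwise bound $\mu_u(t)\lesssim t^m\Ca_m(\{u<-t/2\})$ you wrote (that bound is also true, but needs in addition the invariance of total mass under truncation, $\into H_m(\max(u,-t))=\into H_m(u)$, a Stokes-type argument, combined with Theorem~\ref{theorem: maximum principle}). The annulus version suffices: summing over the dyadic annuli $\{-2^{k+1}\leq u<-2^k\}$ gives $\E_p(u)\leq C(m,p)\int_0^{\infty}t^{m+p-1}\Ca_m(\{u<-t\},\Omega)\,dt$, which together with the easy direction yields the two-sided equivalence, and then your subadditivity bookkeeping closes the proof. So the plan works, but this paragraph of actual proof was the missing core.

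Finally, your stated reason for abandoning Lemma~\ref{lem: weighted GZ 1} --- that its symmetric factor-$2$ form reproduces the quantity being estimated --- only defeats the naive iteration. Since $\E_p$ is homogeneous of degree $m+p$, apply Lemma~\ref{lem: weighted GZ 1} to $\f_j$ and $\lambda\psi$ with $\lambda>0$: this gives $\into(-\f_j)^pH_m(\psi)\leq 2\lambda^{-m}\E_p(\f_j)+2\lambda^{p}\E_p(\psi)$; summing over $j$, using $(-\psi)^p\leq\sum_j(-\f_j)^p$, and choosing $\lambda$ with $2(m+1)\lambda^p=\tfrac14$, the term $\tfrac14\E_p(\psi)$ can be absorbed into the left-hand side (the a priori finiteness $\E_p(\psi)\leq\Vert\psi\Vert_{\infty}^{p}\into H_m(\psi)<+\infty$ is what you need $\psi\in\Ecoo$ for), giving $\E_p(\psi)\leq C_p\sum_j\E_p(\f_j)$ at once. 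That is in spirit the route the paper cites, and it is shorter. What your capacity approach buys instead is a proof that is self-contained relative to Section 2 of the paper and, as a by-product, the sublevel-set characterization $\E_p(u)\asymp\int_0^{\infty}t^{m+p-1}\Ca_m(\{u<-t\},\Omega)\,dt$ of the finite-energy condition, which is of independent interest.
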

\begin{proof}
See \cite[Proposition 2.10]{GZ07}
\end{proof}
From Lemma \ref{lem: persson} and Proposition \ref{prop: weight GZ 2} we easily get the following result.
\begin{corollary}\label{cor: sum of functions in ep}
Let $(u_j)$ be a sequence in $\Ecoo$ and  $p>0.$ Assume also that  
 $\sup_j \E_p(u_j)<+\infty.$ Then 
$$
u=\sum_{j=1}^{\infty}2^{-j}u_j \ \text{belongs to } \ \Ecpo.
$$
\end{corollary}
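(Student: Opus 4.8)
\textbf{The plan} is to realize $u$ as the decreasing limit of its partial sums and to bound their $p$-energies uniformly. Set $v_k:=\sum_{j=1}^k 2^{-j}u_j$. Each $v_k$ lies in $\Ecoo$ (this class is a convex cone: positive multiples and finite sums of functions in $\Ecoo$ are again bounded, non-positive, tend to $0$ at $\pO$, and have finite Hessian mass, the relevant mixed masses being finite). Moreover $v_{k+1}=v_k+2^{-(k+1)}u_{k+1}\leq v_k$, so $(v_k)$ decreases pointwise to $u$. Writing $M:=\sup_j\E_p(u_j)<+\infty$, the entire content of the statement is the uniform bound $\sup_k\E_p(v_k)<+\infty$: granting it, $(v_k)\subset\Ecoo$ is an admissible decreasing approximation of $u$ with uniformly bounded energy, which is exactly the defining property of $\Ecpo$.

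First I would expand $H_m(v_k)$ multilinearly,
$$
H_m(v_k)=\sum_{i_1,\dots,i_m=1}^{k}2^{-(i_1+\cdots+i_m)}\,dd^c u_{i_1}\wedge\cdots\wedge dd^c u_{i_m}\wedge\beta^{n-m},
$$
where $\sum_{i_1,\dots,i_m}2^{-(i_1+\cdots+i_m)}=\big(\sum_{i}2^{-i}\big)^m\leq 1$, and then treat the weight $(-v_k)^p$ according to the size of $p$. For $p\geq 1$ I would keep $v_k$ itself as the weight and apply Lemma \ref{lem: persson} with $u=v_k$ and $(v_1,\dots,v_m)=(u_{i_1},\dots,u_{i_m})$ to each term; summing and factoring the geometric series gives
$$
\E_p(v_k)\leq D_p\,\E_p(v_k)^{\frac{p}{m+p}}\Big(\sum_{i=1}^{k}2^{-i}\E_p(u_i)^{\frac{1}{m+p}}\Big)^m\leq D_p\,\E_p(v_k)^{\frac{p}{m+p}}M^{\frac{m}{m+p}}.
$$
Since $v_k\in\Ecoo$ is bounded, $\E_p(v_k)\leq\|v_k\|_\infty^{\,p}\into H_m(v_k)<+\infty$, so (the case $\E_p(v_k)=0$ being trivial) I may divide by $\E_p(v_k)^{p/(m+p)}$ and use $1-\tfrac{p}{m+p}=\tfrac{m}{m+p}$ to obtain $\E_p(v_k)\leq D_p^{(m+p)/m}M$, independently of $k$.

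For $0<p<1$ the weight no longer cooperates with Hölder, so instead I would exploit the elementary subadditivity $(\sum_i a_i)^p\leq\sum_i a_i^p$ to write $(-v_k)^p\leq\sum_{i=1}^{k}2^{-ip}(-u_i)^p$. Substituting this together with the expansion of $H_m(v_k)$ reduces $\E_p(v_k)$ to a sum of integrals $\into(-u_i)^p\,dd^c u_{i_1}\wedge\cdots\wedge dd^c u_{i_m}\wedge\beta^{n-m}$, each of which Proposition \ref{prop: weight GZ 2} bounds by $C_p\max\{\E_p(u_i),\E_p(u_{i_1}),\dots,\E_p(u_{i_m})\}\leq C_pM$. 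Collecting the two geometric series then yields $\E_p(v_k)\leq C_pM\sum_{i\geq1}2^{-ip}=C_pM/(2^p-1)$, again uniform in $k$.

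\textbf{The main obstacle} is precisely the weight $(-v_k)^p$, which is itself a sum and hence cannot be distributed directly when $p\geq 1$: the device of feeding $v_k$ back into Lemma \ref{lem: persson} and then dividing out the resulting power of $\E_p(v_k)$ is what makes the estimate self-improving, and it is legitimate only because each $v_k\in\Ecoo$ has finite (a priori $k$-dependent) energy. Apart from this, the convex-cone property of $\Ecoo$ and the fact that a uniform energy bound prevents the decreasing limit from being identically $-\infty$ (so that $u\in\SH_m(\Omega)$) are the standard ingredients I would invoke to conclude $u\in\Ecpo$.
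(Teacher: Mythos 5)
Your proof is correct and is essentially the paper's own argument: the paper proves this corollary by exactly the one-line observation that it follows from Lemma \ref{lem: persson} and Proposition \ref{prop: weight GZ 2}, and your treatment of the partial sums $v_k$ (Persson's inequality with the self-improving division when $p\geq 1$; subadditivity of $t\mapsto t^p$ together with the weighted estimate when $0<p<1$) is the natural way to fill in that line. The only point you handle more explicitly than the paper is the (standard) verification that the uniform energy bound forces $u\not\equiv-\infty$ and that $\Ecoo$ is stable under finite sums, both of which the paper leaves implicit.
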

From the above facts, we can prove the convexity the classes $\Ecpo, \Eco$ by the same way as in \cite{Ceg98,Ceg04}.
\begin{theorem}\label{thm: convexite cegrell}
By  $\Ec$ we denote one of the classes $\Ecoo,$  $\Fco$, $\Ecpo$, $\Fcpo$, $p>0.$ They are convex 
and moreover, if   $v\in \Ec$, $u\in \SH_m^-(\Omega)$, $u\geq v$, then  $u\in \Ec.$
\end{theorem}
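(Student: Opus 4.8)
The plan is to deduce both the convexity and the domination property of all four classes from two monotonicity principles together with closure under addition. The first principle is that the total Hessian mass is non-increasing for the pointwise order on $\Ecoo$: if $u,v\in\Ecoo$ and $u\geq v$, then $\into H_m(u)\leq \into H_m(v)$. The second is the analogous statement for the weighted energy, namely $\E_p(u)\leq \E_p(v)$ whenever $u\geq v$ in $\Ecoo$. Both follow from integration by parts (Theorem \ref{theorem: integration by parts}) and the comparison principle (Theorem \ref{thm: comparison principle 1}) by the same computation as in \cite{Ceg98,Ceg04,GZ07}; I will also use the elementary mixed-mass inequality $\into dd^c\f_1\wedge\cdots\wedge dd^c\f_m\wedge\beta^{n-m}\leq \prod_{i=1}^m\big(\into H_m(\f_i)\big)^{1/m}$ for $\f_i\in\Ecoo$, which comes from the same integration by parts and the Cauchy--Schwarz inequality.

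I would first treat the domination property. For $\Ecoo$ it is immediate: if $v\in\Ecoo$, $u\in\SH_m^-(\Omega)$ and $u\geq v$, then $v\leq u\leq 0$ forces $u$ to be bounded and, since $v\to 0$ at $\pO$, also $u\to 0$ at $\pO$; the first monotonicity principle then gives $\into H_m(u)\leq\into H_m(v)<+\infty$, so $u\in\Ecoo$. For the three remaining classes let $(v_j)\subset\Ecoo$ be a defining sequence for $v$ and let $(\tilde u_j)\subset\Ecoo$ be the global approximants of $u$ decreasing to $u$ provided by Theorem \ref{thm: continuous global approximation}. Set $w_j:=\max(\tilde u_j,v_j)$. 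Each $w_j$ lies in $\Ecoo$, the sequence $(w_j)$ is decreasing, and $w_j\downarrow\max(u,v)=u$ because $u\geq v$. Since $w_j\geq v_j$, the two monotonicity principles yield $\into H_m(w_j)\leq\into H_m(v_j)$ and $\E_p(w_j)\leq\E_p(v_j)$, so the relevant suprema for $(w_j)$ are dominated by those for $(v_j)$. This shows $u\in\Fco$, $u\in\Ecpo$, or $u\in\Fcpo$ according to the class of $v$.

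For convexity I would reduce to closure under addition. If $u,v$ lie in the class then so does $u+v$: for $\Ecoo$ this follows by expanding $H_m(u+v)$ into mixed terms and bounding each by the mixed-mass inequality; for $\Fco$ one applies the same expansion to the defining sequences $u_j+v_j\downarrow u+v$ and uses the mixed-mass inequality to bound $\sup_j\into H_m(u_j+v_j)$; for $\Ecpo$ and $\Fcpo$ one estimates $\E_p(u_j+v_j)$ by writing $(-(u_j+v_j))^p\leq 2^p((-u_j)^p+(-v_j)^p)$, expanding $H_m(u_j+v_j)$, and bounding every resulting integral by Lemma \ref{lem: persson} when $p\geq 1$ and by Proposition \ref{prop: weight GZ 2} when $0<p<1$. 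Once addition is available, convexity is immediate: for $\lambda\in[0,1]$ the function $\lambda u+(1-\lambda)v$ is $m$-subharmonic and non-positive, and since $u,v\leq 0$ one checks $\lambda u+(1-\lambda)v\geq u+v$; as $u+v$ belongs to the class, the domination property just proved forces $\lambda u+(1-\lambda)v$ into the class as well.

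The main obstacle is the verification of the two monotonicity principles, which are the only non-formal ingredients; once they are in place everything else is bookkeeping with the already established estimates. A secondary technical point is to check that the comparison functions $w_j=\max(\tilde u_j,v_j)$ genuinely remain in $\Ecoo$ (boundedness, vanishing boundary values, finite mass) and decrease to $u$, and that the energy bounds in the addition step are uniform in $j$; both reduce to the inequalities of Lemma \ref{lem: persson} and Proposition \ref{prop: weight GZ 2} and to the mixed-mass inequality.
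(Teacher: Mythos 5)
Much of your plan is sound and, in fact, follows the route the paper itself points to (it gives no detailed proof, only ``by the same way as in \cite{Ceg98,Ceg04}''): the domination property via $w_j=\max(\tilde u_j,v_j)$ with the global approximants of Theorem \ref{thm: continuous global approximation}, the mass/energy monotonicity for functions of $\Ecoo$ (provable by iterated integration by parts against a weight $h\in\Ecoo$ and letting $h\downarrow -1$; note only that for $p\neq 1$ one gets $\E_p(u)\leq C\,\E_p(v)$ with a constant, which is harmless), the reduction of convexity to closure under addition via $\lambda u+(1-\lambda)v\geq u+v$, and the treatment of the \emph{energy} conditions of $\Ecpo$ via Lemma \ref{lem: persson} and Proposition \ref{prop: weight GZ 2} are all correct.

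The genuine gap is your treatment of the \emph{total-mass} conditions, i.e.\ the classes $\Ecoo$, $\Fco$ and the mass half of $\Fcpo$. These rest entirely on the unweighted mixed-mass inequality
$$
\into dd^c\f_1\wedge\cdots\wedge dd^c\f_m\wedge\beta^{n-m}\leq \prod_{i=1}^m\Big(\into H_m(\f_i)\Big)^{1/m},
$$
which you dismiss as coming ``from the same integration by parts and the Cauchy--Schwarz inequality.'' It does not follow this way. The integration-by-parts identity (Theorem \ref{theorem: integration by parts}) requires a function factor vanishing on $\pO$; a total mass is not a weighted integral, and applying Stokes directly to it produces boundary terms that do not vanish. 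If instead you insert a weight $h\in\Ecoo$, $h\downarrow -1$, and apply Cauchy--Schwarz for the Dirichlet form, the resulting bound always retains a factor of the form $\into(-h)\,dd^ch\wedge dd^c\f_2\wedge\cdots$, which is again an uncontrolled mixed mass and in general blows up as $h\downarrow-1$; the estimate collapses. Nor can the paper's weighted inequalities substitute: functions in $\Fco$ may have infinite energy, so Lemma \ref{lem: persson} and Proposition \ref{prop: weight GZ 2} say nothing there, and even for $\Ecoo$ finite energy of $u,v$ does not control the mass of $H_m(u+v)$ near $\pO$ (already for $m=n=1$ one can have a bounded subharmonic function with zero boundary values, finite energy, and infinite Riesz mass). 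The inequality displayed above is true --- it is Cegrell's mixed-mass estimate, a standalone theorem in \cite{Ceg98,Ceg04} whose proof requires further ideas beyond integration by parts plus Cauchy--Schwarz, and it is precisely the ingredient on which Cegrell's convexity argument (hence the paper's citation) relies. To complete your proof you must either prove this inequality in the $m$-Hessian setting or import it explicitly as an external result; as written, the step bounding $\sup_j\into H_m(u_j+v_j)$ would fail.
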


\subsection{Definition of the complex Hessian operator and basic properties}
In this section  we prove that the complex Hessian operator  $H_m$ is well-defined for functions in 
$\Fco$ and in $\Ecpo, p>0.$  We follow the arguments in \cite{Ceg04}.

As in \cite{Ceg04}, continuous functions in $\Ecoo$ can be considered as
test functions. 
\begin{lemma}\label{lem: test functions}
$\Cc_0^{\infty}(\Omega)\subset \Ecoo\cap \Cc(\Omega) - \Ecoo\cap \Cc(\Omega).$
\end{lemma}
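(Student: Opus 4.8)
The plan is to reduce the statement to the following: given $\chi\in\Cc_0^\infty(\Omega)$ with compact support $K$, I must produce two functions $u,v\in\Ecoo\cap\Cc(\Omega)$ with $u-v=\chi$. The point is that $\chi$ fails to be $m$-subharmonic only by a bounded amount, so after adding a large plurisubharmonic quadratic it becomes $m$-subharmonic; the resulting functions do not have zero boundary values, so I then bend them down to $0$ near $\pO$ by taking a maximum with a large multiple of a continuous function of $\Ecoo$.

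Concretely, I would first fix $R$ with $\Omega\Subset B(0,R)$ and set $g:=A(\vert z\vert^2-R^2)$, where $A>0$ is chosen so large that $dd^c\chi\ge -A\beta$ (so that $g+\chi$ is plurisubharmonic, hence $m$-subharmonic by Proposition \ref{prop: basic property of m subharmonic functions}(v)) and that $g\le0$ and $g+\chi\le0$ on $\Omega$ (possible since $\sup_\Omega\vert z\vert<R$). Next I choose a finite union of closed balls $L$ with $K\Subset\operatorname{int}L$ and $L\Subset\Omega$, and put $\psi:=u_{m,L,\Omega}$. By Proposition \ref{prop: union of balls} this $\psi$ is continuous with $\psi\equiv-1$ on $L$; by the boundary-value proposition $\lim_{z\to w}\psi(z)=0$ for $w\in\pO$; and by Theorem \ref{theorem: m-capacity and m-extremal functions} one has $\into H_m(\psi)=\Ca_m(L,\Omega)<+\infty$. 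Hence $\psi\in\Ecoo\cap\Cc(\Omega)$, and a fortiori $B\psi\in\Ecoo\cap\Cc(\Omega)$ for every $B>0$. I then define
\begin{equation*}
u:=\max(g+\chi,\,B\psi),\qquad v:=\max(g,\,B\psi),
\end{equation*}
with $B$ so large that $B\psi<\min(g,\,g+\chi)$ on $K$ (possible since $\psi\equiv-1$ on $K\subset L$, while $g$ and $g+\chi$ are bounded below there). The identity $u-v=\chi$ is then checked by splitting $\Omega$: off $K$ one has $\chi\equiv0$, so $g+\chi=g$ and thus $u=v$, giving $u-v=0=\chi$; on $K$ the choice of $B$ forces $u=g+\chi$ and $v=g$, so again $u-v=\chi$.

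For the memberships I would observe that $u,v$ are continuous and, being maxima of two $m$-subharmonic functions that are $\le0$, lie in $\SH_m^-(\Omega)$ (Proposition \ref{prop: basic property of m subharmonic functions}). The key device, which lets me avoid computing $H_m$ of a maximum directly, is that $u\ge B\psi$ and $v\ge B\psi$ with $B\psi\in\Ecoo$; the domination property of Theorem \ref{thm: convexite cegrell} then yields $u,v\in\Ecoo$ at once, and membership in $\Ecoo$ carries with it both the finite total Hessian mass and the zero boundary values required. The only genuinely substantive step, and the one I expect to be the main obstacle, is securing the auxiliary $\psi$: a continuous element of $\Ecoo$ bounded away from $0$ on $K$ and tending to $0$ at the boundary. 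This is exactly where $m$-hyperconvexity enters, and I resolve it via the $m$-extremal function of a union of balls, whose continuity, vanishing boundary values and finite mass are already recorded in Proposition \ref{prop: union of balls} and Theorem \ref{theorem: m-capacity and m-extremal functions}; everything else reduces to the two-case check of $u-v=\chi$ and a single appeal to the domination property.
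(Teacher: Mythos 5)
Your proof is correct. The paper itself gives no argument for this lemma---it simply defers to Cegrell's work cited just above the statement---and your construction, writing $\chi=\max(g+\chi,B\psi)-\max(g,B\psi)$ with $g$ a large negative multiple of $\vert z\vert^2-R^2$ absorbing $dd^c\chi$ and $\psi$ the continuous $m$-extremal function of a compact neighbourhood of ${\rm supp}\,\chi$, is exactly the standard argument from that reference, with each membership claim correctly supported by results established before the lemma (Proposition \ref{prop: union of balls}, Theorem \ref{theorem: m-capacity and m-extremal functions}, and the domination property of Theorem \ref{thm: convexite cegrell}), so there is no circularity.
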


\begin{theorem}\label{thm: convergence E}
Let $u^p\in \Fco, p=1,...,m$ and $(g^p_j)_j\subset \Ecoo$ such that 
$g^p_j\downarrow u^p, \forall p$. Then  the sequence of  measures 
$$
dd^c g^1_j\wedge dd^c g^2_j\wedge...\wedge dd^cg^m_j\wedge \beta^{n-m}
$$
converges weakly to a positive Radon measure which does not depend on the choice of the sequences
 $(g^p_j).$ We then define 
$dd^cu^1\wedge...\wedge dd^cu^m\wedge \beta^{n-m}$ to be this weak limit.
\end{theorem}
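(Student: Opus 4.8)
The plan is to reduce the general statement to the case already understood for locally bounded functions, namely Theorem \ref{thm: convergence decreasing} and Theorem \ref{thm: convergence theorem for increasing sequences}, by exploiting the defining structure of $\Fco$ together with the uniform mass bound that it carries. First I would record the uniform bound on total masses. Since each $u^p\in\Fco$, by definition there is a sequence in $\Ecoo$ decreasing to $u^p$ with uniformly bounded $\int_\Omega H_m(\cdot)$; combining this with the comparison principle (Theorem \ref{thm: comparison principle 1}) and Lemma \ref{lem: persson} one gets, for any admissible choice $(g^p_j)\subset\Ecoo$ with $g^p_j\downarrow u^p$, a uniform estimate on the total mass of the mixed measures $dd^cg^1_j\wedge\cdots\wedge dd^cg^m_j\wedge\beta^{n-m}$ independent of $j$. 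This boundedness gives weak-$*$ relative compactness, so some subsequence converges to a positive Radon measure; the real work is to identify the limit and prove it is independent of the approximating sequences.

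The key step is a two-sequence comparison argument. Given two approximating families $(g^p_j)$ and $(\tilde g^p_j)$ in $\Ecoo$, both decreasing to $u^p$, I would show the two associated sequences of measures have the same weak limit. The standard device, following Cegrell, is to test against a function $\chi\in\Cc_0^\infty(\Omega)$, which by Lemma \ref{lem: test functions} can be written as a difference of continuous functions in $\Ecoo$, and then to estimate the difference of the two mixed-measure integrals by a telescoping sum in which one factor $dd^c g^p_j$ is replaced by $dd^c\tilde g^p_j$ at a time. Each term is controlled by integration by parts (Theorem \ref{theorem: integration by parts}) to move the $dd^c$ onto the test function, followed by the Cauchy--Schwarz / H\"older estimate of Lemma \ref{lem: persson} and Proposition \ref{prop: weight GZ 2} to bound it in terms of energies. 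Here the crucial point is that one can interlace the two sequences: replacing $g^p_j$ by $\max(g^p_j,\tilde g^p_j)$ or by $g^p_j+\tilde g^p_j$ produces again a decreasing sequence in $\Ecoo$, so it suffices to compare each sequence with the interlaced one, where monotonicity lets one apply the locally bounded convergence theorems on the region where all functions stay bounded.

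The main obstacle is precisely the unboundedness of the limit functions $u^p$, which is why Theorem \ref{thm: convergence theorem for increasing sequences} cannot be applied directly. I would handle this by a truncation-and-exhaustion argument: set $u^p_N=\max(u^p,-N)$ and $g^{p,N}_j=\max(g^p_j,-N)$, which are locally bounded and for which the convergence $H_m$ is already controlled, and then let $N\to\infty$. The delicate estimate is to show that the contribution of the set $\{u^p<-N\}$ to the mixed measures tends to $0$ uniformly in $j$; this is where the uniform total-mass bound from the first step, combined with the comparison principle to localize mass on sublevel sets, does the decisive work. Once the limit is shown to exist and to be independent of the sequence $(g^p_j)$, the positivity of the limit measure is automatic since each prelimit measure is $m$-positive, and the definition of $dd^cu^1\wedge\cdots\wedge dd^cu^m\wedge\beta^{n-m}$ as this common weak limit is then well-posed.
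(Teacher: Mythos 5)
Your opening steps (the mass bound for an arbitrary admissible sequence, hence weak-$*$ compactness) are sound, but the decisive step of your argument rests on a false claim. You propose to truncate, $g^{p,N}_j=\max(g^p_j,-N)$, and to show that the contribution of $\{u^p<-N\}$ to the mixed measures tends to $0$ \emph{uniformly in $j$} as $N\to\infty$. This fails for precisely the functions that distinguish $\Fco$ from its subclass $\Fcao$: the limit measure of a function in $\Fco$ may charge $m$-polar sets, so mass \emph{must} concentrate on sublevel sets. Concretely, for $m<n$ take $u(z)=-\Vert z\Vert^{2-2n/m}$ on the unit ball (cf.\ Lemma \ref{lem: m-extremal function of the ball}; for $m=n$ take $\log\Vert z\Vert$), so that $u\in\Fco$ and $H_m(u)=c\,\delta_0$, and let $g_j=\max(u,-j)$. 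Then $H_m(g_j)$ is supported on $\{u=-j\}\subset\{u<-N\}$ for every $j>N$ and has constant total mass $c$, so nothing tends to zero. For the same reason your appeal to the energy estimates of Lemma \ref{lem: persson} and Proposition \ref{prop: weight GZ 2} is misplaced: membership in $\Fco$ bounds the total masses $\into H_m(g^p_j)$ but not the energies, and indeed in the example above $\E(g_j)=\into(-g_j)H_m(g_j)=jc\to+\infty$. Any proof that implicitly forces the limit measure not to charge $m$-polar sets, or that needs finite energy, is proving a different (and false, respectively stronger) statement.

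The paper itself simply invokes \cite[Theorem 4.2]{Ceg04}, and Cegrell's mechanism is the one your ``interlacing'' remark gestures at but does not pin down: the crucial trick is to \emph{freeze one index}. For $h\in\Ecoo\cap\Cc(\Omega)$, repeated integration by parts (Theorem \ref{theorem: integration by parts}), replacing one factor at a time, shows that $j\mapsto\into h\,dd^cg^1_j\wedge\dots\wedge dd^cg^m_j\wedge\beta^{n-m}$ is decreasing and bounded, hence convergent. Given a second family $(\tilde g^p_j)$, fix $k$ and observe that $\max(\tilde g^p_j,g^p_k)\downarrow\max(u^p,g^p_k)=g^p_k$ as $j\to\infty$: the limit is \emph{bounded}, so Theorem \ref{thm: convergence decreasing} applies, and monotonicity (again via integration by parts, since $\tilde g^p_j\le\max(\tilde g^p_j,g^p_k)$) gives $\lim_j\into h\,dd^c\tilde g^1_j\wedge\dots\wedge dd^c\tilde g^m_j\wedge\beta^{n-m}\le\into h\,dd^cg^1_k\wedge\dots\wedge dd^cg^m_k\wedge\beta^{n-m}$. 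Letting $k\to\infty$ and using symmetry identifies the two limits, and Lemma \ref{lem: test functions} upgrades convergence against $h\in\Ecoo\cap\Cc(\Omega)$ to weak convergence. Note that your interlaced sequence $\max(g^p_j,\tilde g^p_j)$ does not achieve this reduction: it still decreases to the unbounded function $u^p$, so it never places you in the locally bounded setting.
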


\begin{proof}
See \cite[Theorem 4.2]{Ceg04}.
\end{proof}

It is convenient to use the notation $H_m(u_1,...,u_m):=dd^cu^1\wedge...\wedge dd^cu^m\wedge \beta^{n-m}$. When $u_1=...=u_m=u$ we simply write $H_m(u)$ instead of $H_m(u,...,u).$

\begin{definition}
A function $u$ belongs to the class $\Fcao$ if $u\in \Fco$ and $H_m(u)$ vanishes on $m$-polar sets.
\end{definition}
\begin{corollary}\label{cor: def  hes E}
Let $u_1,...,u_m\in \Fco$ and $u_1^j,...,u_m^j$  be sequences of  functions in
$\Ecoo\cap \Cc(\Omega)$ decreasing to $u_1,...,u_m$ respectively such that 
$$
\sup_{j,p} \into H_m(u_j^p)<+\infty.
$$ 
Then  for each  $\f\in \Ecoo\cap \Cc(\Omega)$ we have
$$
\lim_{j\to+\infty} \into \f dd^c u_1^j\wedge...\wedge dd^c u_m^j\wedge\beta^{n-m}=\into  \f dd^c u_1\wedge...\wedge dd^c u_m\wedge \beta^{n-m}.
$$
\end{corollary}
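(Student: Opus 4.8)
The plan is to obtain the statement from the weak convergence already granted by Theorem \ref{thm: convergence E}; the only real content is upgrading the class of admissible test functions from those with compact support to the function $\f \in \Ecoo \cap \Cc(\Omega)$, which merely vanishes on $\pO$. Write $\mu_j := dd^c u_1^j \wedge \cdots \wedge dd^c u_m^j \wedge \beta^{n-m}$ and $\mu := H_m(u_1,\dots,u_m)$. Since each $u_p^j \in \Ecoo$ decreases to $u_p \in \Fco$ and $\sup_{j,p}\into H_m(u_j^p) < +\infty$, Theorem \ref{thm: convergence E} yields $\mu_j \weak \mu$ as Radon measures, so that $\into \psi \, d\mu_j \to \into \psi \, d\mu$ for every $\psi \in \Cc(\Omega)$ with compact support (passing from smooth compactly supported $\psi$ to merely continuous compactly supported $\psi$ is harmless, since the $\mu_j$ have locally bounded mass). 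Moreover, by the standard Cauchy--Schwarz-type inequality bounding a mixed Hessian mass by the pure masses (in the spirit of Lemma \ref{lem: persson}), one gets a uniform bound
\[
\mu_j(\Omega) \le \prod_{p=1}^m \Big(\into H_m(u_p^j)\Big)^{1/m} \le C < +\infty,
\]
and then $\mu(\Omega) \le \liminf_j \mu_j(\Omega) \le C$ by lower semicontinuity of mass on the open set $\Omega$.

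With these two facts in hand I would run a cutoff argument on a boundary collar. Fix $\eta > 0$. Since $\f$ is bounded and $\lim_{z\to\pO}\f(z) = 0$, there is a compact $K \Subset \Omega$ with $|\f| < \eta$ on $\Omega \setminus K$. Pick $\chi \in \Cc(\Omega)$ with compact support, $0 \le \chi \le 1$, and $\chi \equiv 1$ on a neighbourhood of $K$, and decompose $\f = \chi\f + (1-\chi)\f$. The term $\chi\f$ has compact support, so $\into \chi\f \, d\mu_j \to \into \chi\f \, d\mu$ by weak convergence. The remainder is supported in $\{\chi < 1\} \subset \Omega \setminus K$, where $|\f| < \eta$, hence $|(1-\chi)\f| \le \eta$ everywhere and
\[
\Big|\into (1-\chi)\f \, d\mu_j\Big| \le \eta\,\mu_j(\Omega) \le \eta C, \qquad \Big|\into (1-\chi)\f \, d\mu\Big| \le \eta\,\mu(\Omega) \le \eta C.
\]
Combining these through the triangle inequality gives
\[
\limsup_{j\to\infty}\Big|\into \f \, d\mu_j - \into \f \, d\mu\Big| \le 2\eta C,
\]
and letting $\eta \to 0$ proves the corollary.

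I expect the only genuine obstacle to be precisely the point the corollary is designed to address: weak convergence of measures controls integration only against compactly supported functions, whereas $\f$ has support up to $\pO$. The two ingredients that bridge this gap are the uniform total-mass bound on the $\mu_j$ and the boundary decay of $\f$; securing the former (the mixed-mass inequality) is the one step that is not completely formal, but it is standard, and once it and the weak convergence of Theorem \ref{thm: convergence E} are available the collar estimate is routine.
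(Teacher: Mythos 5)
Your proposal is correct and takes essentially the same approach as the paper: decompose $\f$ into a compactly supported continuous piece (handled by the weak convergence of Theorem \ref{thm: convergence E}) plus a piece of uniformly small sup-norm (handled by the uniform bound on the total masses $\into dd^c u_1^j\wedge\dots\wedge dd^c u_m^j\wedge\beta^{n-m}$). The only cosmetic difference is the cutoff mechanism — the paper truncates with $\f_\epsilon=\max(\f,-\epsilon)$, so that $\f-\f_\epsilon$ is compactly supported and $\vert\f_\epsilon\vert\leq\epsilon$, while you use a multiplicative cutoff $\chi\f$ — and both treat the mixed-mass bound as standard, as the paper itself does.
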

\begin{proof}
It is clear that
\begin{equation}\label{eq: class E 1}
\sup_j \into dd^c u_1^j\wedge...\wedge dd^c u_m^j\wedge\beta^{n-m}<+\infty.
\end{equation}
Fix $\epsilon>0$ small enough and consider $\f_{\epsilon}=\max(\f,-\epsilon).$ 
The function $\f-\f_\epsilon$ is continuous and compactly supported in  $\Omega.$ 
It follows from Theorem \ref{thm: convergence E} that
$$
\lim_{j\to+\infty} \into (\f-\f_\epsilon) dd^c u_1^j\wedge...
\wedge dd^c u_m^j\wedge\beta^{n-m}=\into  (\f-\f_\epsilon) 
dd^c u_1\wedge...\wedge dd^c u_m\wedge \beta^{n-m}.
$$
Observe also that  $\vert \f_\epsilon\vert \leq \epsilon$. By using (\ref{eq: class E 1}), we get the result.
\end{proof}
\begin{corollary}\label{cor: strong convergence}
 Assume that $(u_j)\subset \Ecoo$ decreases to $u$ such that 
 $$
 \sup_{j} \into H_m(u_j)<+\infty
 $$
 Then for every  $h\in \Ecoo$ we have the weak convergence 
 $$
 h H_m(u_j)\weak h H_m(u).
 $$
\end{corollary}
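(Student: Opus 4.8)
The plan is to test the measures against nonnegative cut-off functions and to upgrade a soft lower-semicontinuity estimate into genuine convergence by controlling total masses. First note that $\sup_j\into H_m(u_j)<+\infty$ forces $u\in\Fco$, so $H_m(u)$ is well defined and, applying Theorem \ref{thm: convergence E} with all entries equal to $u_j$, one already has $H_m(u_j)\weak H_m(u)$. Writing $\lambda_j:=(-h)H_m(u_j)$ and $\lambda:=(-h)H_m(u)$, which are positive measures since $h\le 0$, it suffices to prove $\into\chi\,d\lambda_j\to\into\chi\,d\lambda$ for every $\chi\in\Cc_0^\infty(\Omega)$ with $\chi\ge 0$ (the general case follows by linearity, writing any real test function as a difference of two nonnegative ones).

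The lower bound is the easy half. By Theorem \ref{thm: continuous global approximation} pick $h_k\in\Ecoo\cap\Cc(\Omega)$ with $h_k\downarrow h$, so that $-h_k\uparrow -h$. For fixed $k$ the weight $\chi(-h_k)$ is continuous with compact support, hence $\into\chi(-h_k)H_m(u_j)\to\into\chi(-h_k)H_m(u)$ as $j\to\infty$ by the weak convergence above. Since $-h\ge -h_k$ and $\chi H_m(u_j)\ge 0$, this gives $\liminf_j\into\chi\,d\lambda_j\ge\into\chi(-h_k)H_m(u)$, and letting $k\to\infty$ (monotone convergence) yields the lower semicontinuity $\liminf_j\into\chi\,d\lambda_j\ge\into\chi\,d\lambda$.

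To turn this into convergence I would establish two further facts. The first is tightness: because $h\in\Ecoo$ vanishes on $\pO$ and $\sup_j\into H_m(u_j)<+\infty$, for every $\varepsilon>0$ there is $K\Subset\Omega$ with $\sup_j\lambda_j(\Omega\setminus K)\le\varepsilon$ and $\lambda(\Omega\setminus K)\le\varepsilon$; this is routine. The second, and the main obstacle, is the convergence of total masses $\into(-h)H_m(u_j)\to\into(-h)H_m(u)$. The difficulty is genuine: we control only $\sup_j\into H_m(u_j)$, not the energies $\into(-u_j)H_m(u_j)$, so the mass of $H_m(u_j)$ may concentrate, as $j\to\infty$, on the $m$-polar set $\{u=-\infty\}$ without becoming uniformly small there, and the convergence must come from a cancellation rather than from smallness. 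I would extract this cancellation by global integration by parts: since $u_j,h\in\Ecoo$ both vanish on $\pO$, Theorem \ref{theorem: integration by parts} together with the symmetry of the operator (Lemma \ref{lem: symmetric}) give
\begin{equation*}
\into(-h)(dd^cu_j)^m\wedge\beta^{n-m}=\into(-u_j)\,dd^c h\wedge(dd^cu_j)^{m-1}\wedge\beta^{n-m}.
\end{equation*}
On the right the weights $-u_j$ increase to $-u$, while by Theorem \ref{thm: convergence E} the mixed currents $dd^c h\wedge(dd^cu_j)^{m-1}\wedge\beta^{n-m}$ converge weakly to $dd^c h\wedge(dd^cu)^{m-1}\wedge\beta^{n-m}$; passing to the limit in this increasing-weight integral, via the truncations $\max(u_j,-s)$, the Maximum Principle (Theorem \ref{theorem: maximum principle}) and the bounded case, is precisely the scheme carried out for $m=n$ in \cite{Ceg04}, which transfers to the $m$-subharmonic setting since all the ingredients are available here. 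Note that this step genuinely uses $h\in\SH_m^-(\Omega)$.

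Finally, the three inputs combine to upgrade lower semicontinuity to convergence. Normalising $0\le\chi\le 1$ and given $\varepsilon>0$, fix $K$ from the tightness step and choose $\psi\in\Cc_0^\infty(\Omega)$ with $0\le\psi\le 1$, $\psi\ge\chi$ and $\psi\equiv 1$ on a neighbourhood of $K\cup\mathrm{supp}\,\chi$; applying the lower bound to $\psi-\chi\ge 0$ and using $\into\psi\,d\lambda_j\le\lambda_j(\Omega)\to\lambda(\Omega)$ gives
\begin{equation*}
\limsup_j\into\chi\,d\lambda_j\le\lambda(\Omega)-\into(\psi-\chi)\,d\lambda=\into\chi\,d\lambda+\into(1-\psi)\,d\lambda\le\into\chi\,d\lambda+\varepsilon ,
\end{equation*}
and letting $\varepsilon\to 0$ closes the argument, yielding $hH_m(u_j)\weak hH_m(u)$.
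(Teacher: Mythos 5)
Your overall architecture is sound and is in fact the same as the paper's: a lower bound $\liminf_j\into\chi(-h)H_m(u_j)\geq\into\chi(-h)H_m(u)$ for nonnegative test functions, convergence of the total masses $\into(-h)H_m(u_j)\to\into(-h)H_m(u)$, and a combination of the two (your tightness-plus-cutoff argument is a hands-on version of the paper's cluster-point argument, and it is correct, as are your liminf and tightness steps). The gap is in the step you yourself single out as the main obstacle: the convergence of total masses. After the integration by parts $\into(-h)H_m(u_j)=\into(-u_j)\,dd^ch\wedge(dd^cu_j)^{m-1}\wedge\beta^{n-m}$, the limit passage you describe does not work. Increasing lsc weights $-u_j\uparrow-u$ against weakly convergent measures $\nu_j:=dd^ch\wedge(dd^cu_j)^{m-1}\wedge\beta^{n-m}\weak\nu$ give only $\liminf_j\into(-u_j)\,d\nu_j\geq\into(-u)\,d\nu$, which is redundant (you already have it). What is needed is $\limsup_j\into(-u_j)\,d\nu_j\leq\into(-u)\,d\nu$, and truncation cannot produce it: in the decomposition $\into(-u_j)\,d\nu_j=\into(-\max(u_j,-s))\,d\nu_j+\int_{\{u_j<-s\}}(-u_j-s)\,d\nu_j$ the error term requires integrability of $-u_j$ against $\nu_j$ \emph{uniformly in} $j$, i.e.\ an energy-type bound, which is exactly what the hypothesis $\sup_j\into H_m(u_j)<+\infty$ does not provide. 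Nothing prevents $\nu_j$ from carrying mass of order $1/j$ on the region where $u_j\approx-j$ (truncations of singularities of type $-(-\log|z_1|)^{1/2}$ produce precisely such layers while all Hessian masses stay bounded); then every truncation error is of order $1$, and the truncated integrals converge to the wrong value, losing exactly the cancellation you had correctly identified as essential. The truncation-plus-maximum-principle scheme you invoke is the one the paper uses for the classes $\Ecpo$ (Corollary \ref{cor: def hes Ep}), where the $p$-energy bound makes the truncation error $O(k^{-p})$; it is not the mechanism that works in $\Fco$.

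The correct mechanism --- which is what the paper invokes through Corollary \ref{cor: def hes E}, and what Cegrell's scheme actually consists of --- is not a limit passage at all but a term-by-term comparison with the limit function. Iterated integration by parts shows that $u\leq v$ implies $\into(-h)H_m(v)\leq\into(-h)H_m(u)$: setting $S_k:=\into(-h)(dd^cu)^k\wedge(dd^cv)^{m-k}\wedge\beta^{n-m}$, one swaps $h$ against the factor $dd^cv$, uses $-v\leq-u$, and swaps back, obtaining $S_k\leq S_{k+1}$, hence $S_0\leq S_m$. Applied with $v=u_j$ and the limit $u\in\Fco$ (this is where Theorem \ref{thm: int by parts F}, integration by parts in $\Fco$, is needed rather than Theorem \ref{theorem: integration by parts}, since $u$ is unbounded), this gives $\into(-h)H_m(u_j)\leq\into(-h)H_m(u)$ for every $j$. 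Together with your own liminf bound (let $\chi\uparrow 1$ and use monotone convergence) this yields $\into(-h)H_m(u_j)\to\into(-h)H_m(u)$, after which the rest of your proof goes through verbatim. So the skeleton is right, but the one step carrying all the analytic content is asserted with a mechanism that fails, and it must be replaced by this monotone comparison argument.
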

\begin{proof}
 For every test function $\chi$ the function $h\chi$ is upper semicontinuous. Thus, 
 $$
\liminf_{j\to+\infty} \into (-h)\chi H_m(u_j)\geq \into (-h)\chi H_m(u).
 $$
 Let $\Theta$ be any cluster point of this the sequence $(-h) H_m(u_j)$. From the above inequality we infer that 
 $\Theta \geq (-h)H_m(u).$ Moreover, it follows from Corollary \ref{cor: def hes E}  that the sequence $\into (-h)H_m(u_j)$ increases to $\into (-h) H_m(u).$ This implies that the total mass of $\Theta$ is less than or equal to the total mass of $(-h) H_m(u)$ and hence these measures are equal.
\end{proof}

\begin{theorem}\label{thm: convergence Ep 1}
Let  $u_1,...,u_m\in \Ecpo, \ p>0$ and $(g^j_1), (g^j_2),...,(g^j_m)\subset \Ecoo$ 
be such that  $g^j_k\downarrow u_k, \forall k=1,...,m$ and
$$
\sup_{j,k} \E_p(g^j_k)<+\infty.
$$
Then the sequence of measures  $dd^c g^j_1\wedge dd^c g^j_2
\wedge...\wedge dd^cg^j_m\wedge \beta^{n-m}$
 converges  weakly to  a positive Radon measure which does not depend on the choice of the sequences
 $(g^j_k).$  We then define  $dd^cu_1\wedge...\wedge dd^cu_m\wedge \beta^{n-m}$
 to be this weak limit.
\end{theorem}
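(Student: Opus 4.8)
The plan is to follow the scheme of Theorem \ref{thm: convergence E}, which settles the analogous statement in $\Fco$, replacing the uniform control of total Hessian masses by the uniform control of $p$-energies. Since each $g^j_k$ is bounded, the measures $\mu_j:=dd^cg^j_1\wedge\cdots\wedge dd^cg^j_m\wedge\beta^{n-m}$ are genuine positive Radon measures by Lemma \ref{lem: symmetric}; the whole content is to show that $(\mu_j)$ converges weakly and that the limit is independent of the defining sequences. Because weak convergence of Radon measures may be tested against $\Cc_0^\infty(\Omega)$ and, by Lemma \ref{lem: test functions}, every such test function is a difference of two functions in $\Ecoo\cap\Cc(\Omega)$, it suffices to prove that $\into(-h)\,d\mu_j$ converges to a common value for every $h\in\Ecoo\cap\Cc(\Omega)$ with $h\le 0$, and for every admissible sequence.

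First I would record the a priori bound. Fix $K\Subset\Omega$ and, using Proposition \ref{prop: union of balls} together with Theorem \ref{thm: continuous global approximation}, choose $\rho\in\Ecoo$ with $-\rho\ge 1$ on $K$, so that $(-\rho)^p\ge 1$ there. Then $\mu_j(K)\le\into(-\rho)^p\,d\mu_j$, and the right-hand side is bounded, uniformly in $j$, by Lemma \ref{lem: persson} when $p\ge 1$ and by Proposition \ref{prop: weight GZ 2} when $0<p<1$, the bound depending only on $\E_p(\rho)$ and $\sup_{j,k}\E_p(g^j_k)$. Thus $(\mu_j)$ has uniformly bounded mass on compact subsets, so it admits weak cluster points, each a positive Radon measure; the same estimate applied to a second admissible sequence shows that all the measures involved share this local mass bound.

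The core is then a monotone diagonal argument identifying the cluster point. Let $(g^j_k)$ and $(h^j_k)$ both decrease to $u_k$, and set $\theta^{j,i}_k:=\max(g^j_k,h^i_k)\in\Ecoo$. For fixed $j$ one has $\theta^{j,i}_k\downarrow g^j_k$ as $i\to\infty$, and for fixed $i$ one has $\theta^{j,i}_k\downarrow h^i_k$ as $j\to\infty$. Since $\theta^{j,i}_k\ge h^i_k$, the comparison principle (Theorem \ref{thm: comparison principle 1}) gives $\into H_m(\theta^{j,i}_1,\dots,\theta^{j,i}_m)\le\into H_m(h^i_1,\dots,h^i_m)<+\infty$ for fixed $i$, and similarly $\le\into H_m(g^j_1,\dots,g^j_m)<+\infty$ for fixed $j$; hence along each fixed index the family has bounded total mass. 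Consequently Corollary \ref{cor: def hes E} and Corollary \ref{cor: strong convergence} apply on each slice and show that $a_{j,i}:=\into(-h)\,dd^c\theta^{j,i}_1\wedge\cdots\wedge dd^c\theta^{j,i}_m\wedge\beta^{n-m}$ increases, as $i\to\infty$ with $j$ fixed, to $\into(-h)\,H_m(g^j_1,\dots,g^j_m)$, and, as $j\to\infty$ with $i$ fixed, to $\into(-h)\,H_m(h^i_1,\dots,h^i_m)$. Thus $(a_{j,i})$ is nonnegative, nondecreasing in each index separately, and bounded above by the estimates of the previous step; for such a double array both iterated limits exist and equal $\sup_{j,i}a_{j,i}$. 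Therefore $\lim_j\into(-h)\,H_m(g^j_1,\dots,g^j_m)=\lim_i\into(-h)\,H_m(h^i_1,\dots,h^i_m)$, which (taking $h^i_k=g^i_k$) proves both that each limit exists and that its value does not depend on the chosen sequence. Combined with Lemma \ref{lem: test functions} and the local mass bound, this yields the weak convergence of $\mu_j$ to a sequence-independent positive Radon measure, which we take as the definition of $dd^cu_1\wedge\cdots\wedge dd^cu_m\wedge\beta^{n-m}$.

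The \textbf{main obstacle} is the energy bookkeeping behind the bound $\sup_{j,i}a_{j,i}<+\infty$: because the weight $-h$ is supported on all of $\Omega$, the crude estimate $a_{j,i}\le\|h\|_\infty\into H_m(\theta^{j,i}_1,\dots,\theta^{j,i}_m)$ is useless (the total masses need not stay bounded jointly in $j,i$), so one must genuinely exploit the decay of $-h$ towards $\pO$ and the uniform $p$-energy hypothesis. This forces one to reconcile the weight $(-h)$ appearing in the test integral with the weight $(-\cdot)^p$ that Lemma \ref{lem: persson} and Proposition \ref{prop: weight GZ 2} control. For $0<p<1$ the boundedness of $h$ lets one pass freely between $(-h)$ and $(-h)^p$, so Proposition \ref{prop: weight GZ 2} applies at once; for $p\ge 1$ the passage is delicate, and one must either work with the $p$-th powers throughout and unwind at the end, or combine the Cauchy--Schwarz form of Lemma \ref{lem: persson} (via integration by parts, Theorem \ref{theorem: integration by parts}) with a cutoff near $\pO$ that absorbs the boundary contribution. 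The monotonicity of the array itself, which rests on the semicontinuity argument underlying Corollary \ref{cor: strong convergence} applied to the locally bounded decreasing families $(\theta^{j,i}_k)$, is routine once these a priori bounds are secured.
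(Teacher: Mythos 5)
Your route is genuinely different from the paper's: you never localize, and instead run the doubly indexed interleaving $\theta^{j,i}_k=\max(g^j_k,h^i_k)$, whose monotone array yields existence of the limit and independence of the approximating sequences in one stroke (in effect you are unpacking the Cegrell argument that the paper only cites for the last statement). For $0<p\le 1$ your scheme can be completed: the finiteness $\sup_{j,i}a_{j,i}<+\infty$ follows from $(-h)\le\Vert h\Vert_\infty^{1-p}(-h)^p$ and Proposition \ref{prop: weight GZ 2} when $p<1$, and directly from Lemma \ref{lem: persson} when $p=1$, together with monotonicity of the $p$-energy under $\theta^{j,i}_k\ge g^j_k$. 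But for $p>1$ the step you flag as the ``main obstacle'' is a genuine gap, and it cannot be closed by better bookkeeping. Lemma \ref{lem: persson} controls $\into(-h)^p\,d\mu_j$, and for $p>1$ this does not control $\into(-h)\,d\mu_j$, because $(-h)^p$ is much smaller than $(-h)$ near $\pO$, which is exactly where you have no mass bound. Worse, the finiteness you need is false in general: if $m\ge 2$ and $p>m/(m-1)$, take $u(z)=-(-\log\Vert z\Vert)^a$ in the unit ball (truncated near the origin if necessary) with $m/(m+p)<a\le (m-1)/m$; near $\pO$ its Hessian measure has density comparable to $(1-\Vert z\Vert)^{am-m-1}$, so $\E_p(u)<+\infty$ while $\into(-h)H_m(u)=+\infty$ for any $h\in\Ecoo\cap\Cc(\Omega)$ with linear decay at $\pO$. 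The paper itself anticipates this possibility (the proof of Corollary \ref{cor: def hes Ep} explicitly allows $\into\f\,H_m(u_1,\dots,u_m)=-\infty$), and by Theorem \ref{thm: convergence Ep 2} the approximants of such a $u$ satisfy $\into(-h)\,d\mu_j\uparrow+\infty$. At that point your final step collapses: for $\chi=h_1-h_2$ with $h_1,h_2\in\Ecoo\cap\Cc(\Omega)$, knowing that both $\into(-h_1)\,d\mu_j$ and $\into(-h_2)\,d\mu_j$ increase to $+\infty$ says nothing about $\lim_j\into\chi\,d\mu_j$ (two nondecreasing sequences diverging to $+\infty$ with bounded difference need not have a convergent difference). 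Neither of your proposed remedies survives: $-(-h)^p$ is not $m$-subharmonic for $p>1$, so working with $p$-th powers destroys the integration-by-parts monotonicity on which the whole array rests; and a cutoff near $\pO$ must be paid for by a bound on $\mu_j$ near $\pO$, which is unavailable (indeed $\mu_j(\Omega)\to+\infty$ in the example above).

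The missing idea is the paper's opening move, which is not a convenience but the crux: \emph{localization by balayage}. Fix $K\Subset\Omega$ and replace $g^j_k$ by $h^j_k:=\sup\{v\in\SH_m(\Omega)\ :\ v\le g^j_k\ \text{on}\ K\}$. Then $H_m(h^j_k)$ is supported in $K$, the energy bounds persist since $h^j_k\ge g^j_k$, and consequently the total masses $\into H_m(h^j_k)$ are uniformly bounded (test the compactly supported measures against a fixed $\rho\in\Ecoo$ with $-\rho\ge 1$ on $K$ and apply Lemma \ref{lem: persson} or Proposition \ref{prop: weight GZ 2}); thus everything is reduced to $\Fcpo$. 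There the crude bound $\into(-h)\,d\mu_j\le\Vert h\Vert_\infty\,\mu_j(\Omega)$ is finally legitimate, the monotone limits are finite, and weak convergence plus sequence-independence follow --- by the paper's appeal to Cegrell, or equally well by your interleaving array, which then runs verbatim for every $p>0$. Since the Hessian operator is local on bounded functions, the localized measures agree with the original $\mu_j$ on the interior of $K$, and $K$ is arbitrary, so the theorem follows. Two minor repairs once this is in place: the mass comparison $\into H_m(\theta^{j,i}_1,\dots,\theta^{j,i}_m)\le\into H_m(h^i_1,\dots,h^i_m)$ should be justified by integration by parts (or via Theorem \ref{thm: principe de comparaison Ep}), not by Theorem \ref{thm: comparison principle 1}, which concerns a different statement; and Corollary \ref{cor: def hes E} is stated for continuous approximants while your $\theta^{j,i}_k$ need not be continuous (harmless, since its proof never uses continuity).
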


\begin{proof}
Since the problem is local we can assume that $u_1,...,u_m\in \Fcpo.$ Indeed, let $K$ be a compact  subset of $\Omega.$ For each $j\in \N, k=1,...,m$ consider
$$
h_k^j:=\sup\{u\in \SH_m(\Omega) \ / \ u\leq g_k^j\ {\rm on}\ K\}.
$$
Then by using a standard balayage argument we see that $H_m(h_k^j)$ is supported in $K$. It follows that  $h_k^j$ decreases to $v_k\in \Fcpo$. Observe also that $v_k=u_k$ on $K.$
\medskip
 
Now, fix $h\in \Ecoo.$ Then
$$
\into h dd^c g^j_1\wedge dd^c g^j_2\wedge...\wedge dd^c g^j_m\wedge \beta^{n-m}
$$
is  decreasing to a finite number.
Thus the limit
$\lim_j \into h dd^c g^j_1\wedge dd^c g^j_2\wedge...\wedge dd^c g^j_m\wedge \beta^{n-m}$
 exists for every  $h\in \Ecoo.$ In view of Lemma \ref{lem: test functions}, this implies the weak convergence of the sequence  
$$
dd^c g^j_1\wedge dd^c g^j_2\wedge...\wedge dd^c g^j_m\wedge \beta^{n-m} .
$$ 
To prove the last  statement it suffices to follow the lines in \cite[Theorem 4.2]{Ceg04}.
\end{proof}
\subsection{Integration by parts and the comparison principle}
In this section we prove that integration by parts is valid in the classes $\Ecpo, p>0$ and $\Fco.$
Following a classical argument of Cegrell in \cite{Ceg04} we prove that the comparison principle holds in $\Ecpo$ for $0<p\leq 1$. To prove the comparison principle  in $\Ecpo, p>1$  and in $\Fco$ we need strong convergence results which will be established in the next section. 
\medskip

From Theorem \ref{thm: convergence E} and Corollary  \ref{cor: def  hes E} we prove the integration
 by parts formula for functions in $\Fco$. 
\begin{theorem}\label{thm: int by parts F} Integration by parts is allowed in $\Fco$, more precisely
$$
\into udd^cv\wedge T=\into vdd^cu\wedge T,
$$
where $u,v, \f_1,...,\f_{m-1}\in \Fco$ and $T=dd^c \f_1\wedge ...\wedge dd^c \f_{m-1}\wedge \beta^{n-m}$ and the equality means that if one of the two terms are finite then they are equal. 
\end{theorem}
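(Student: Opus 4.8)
The plan is to start from the bounded case, where the identity is already available as Theorem \ref{theorem: integration by parts}, and then remove the boundedness of the factors one at a time, in a carefully chosen order, so that at each stage one side of the identity is governed by weak convergence of Hessian measures against a fixed continuous test function, and the other side by the elementary monotone convergence theorem against a now-fixed positive measure. Concretely, by Theorem \ref{thm: continuous global approximation} and the definition of $\Fco$ I would fix decreasing sequences $u_k,v_k,\varphi_1^k,\dots,\varphi_{m-1}^k\in\Ecoo\cap\Cc(\Omega)$ converging to $u,v,\varphi_1,\dots,\varphi_{m-1}$, all with uniformly bounded total Hessian masses.

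I then upgrade the factors in three preparatory steps. First, with $u,v$ bounded continuous, let $\varphi_i^k\downarrow\varphi_i\in\Fco$: by Theorem \ref{thm: convergence E} the measures $dd^cv\wedge dd^c\varphi_1^k\wedge\cdots\wedge\beta^{n-m}$ converge weakly to $dd^cv\wedge T$, and since $u$ is a fixed function in $\Ecoo\cap\Cc(\Omega)$, Corollary \ref{cor: def  hes E} gives convergence of $\into u\,dd^cv\wedge(\cdots)$; treating the right-hand side symmetrically, the identity holds for $u,v\in\Ecoo\cap\Cc(\Omega)$ and $\varphi_i\in\Fco$. Second, keeping $u$ bounded continuous and $T$ fixed, let $v_k\downarrow v\in\Fco$: the term $\into u\,dd^cv_k\wedge T$ converges by the same weak-convergence mechanism, while $\into v_k\,dd^cu\wedge T$ converges by monotone convergence, the measure $dd^cu\wedge T$ being fixed and $-v_k\uparrow -v$. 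This yields the identity for $u\in\Ecoo\cap\Cc(\Omega)$ and $v,\varphi_i\in\Fco$.

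The remaining and decisive step is to let $u_k\downarrow u\in\Fco$, and this is where the main obstacle lies. On the left, $dd^cv\wedge T$ is a fixed positive Radon measure, so monotone convergence gives $\into(-u_k)\,dd^cv\wedge T\uparrow A:=\into(-u)\,dd^cv\wedge T\in[0,+\infty]$. On the right I only have the one-factor weak convergence $dd^cu_k\wedge T\rightharpoonup dd^cu\wedge T$, while the integrand $-v$ is nonnegative and lower semicontinuous but not continuous, so I cannot integrate it directly against a weakly convergent sequence of measures. I would resolve this not by proving convergence but by an inequality: approximating $-v$ from below by $g\in\Cc_0(\Omega)$ with $0\le g\le -v$ and using weak convergence against each such $g$, one obtains the Portmanteau-type bound
\[
\liminf_{k}\into(-v)\,dd^cu_k\wedge T\ \ge\ \into(-v)\,dd^cu\wedge T\ =:\ B .
\]
Since the two sides of the $k$-th identity are equal and the left-hand side tends to $A$, the right-hand side tends to $A$ as well, and the displayed inequality gives $A\ge B$.

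Finally I would run the whole argument with the roles of $u$ and $v$ interchanged (extending $v$ last instead of $u$), which by the same reasoning produces $B\ge A$. Together these force $A=B$ in $[0,+\infty]$, which is exactly the asserted identity; the clause ``if one term is finite then they are equal'' then comes for free, since finiteness of either $A$ or $B$ forces the other to coincide with it. The only points demanding care are the legitimacy of the monotone-convergence steps against the fixed mixed Hessian measures $dd^cu\wedge T$ and $dd^cv\wedge T$, which are genuine positive Radon measures by Theorem \ref{thm: convergence E}, and the one-factor weak continuity $dd^cu_k\wedge T\rightharpoonup dd^cu\wedge T$, which I would justify by a diagonal argument based on Theorem \ref{thm: convergence E} and Corollary \ref{cor: def  hes E}.
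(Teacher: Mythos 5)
Your proof is correct, but its architecture is genuinely different from the paper's. The paper works with a single diagonal family $u_j, v_j, \f_1^j,\dots,\f_{m-1}^j \in \Ecoo\cap\Cc(\Omega)$ and uses the bounded-case formula (Theorem \ref{theorem: integration by parts}) to see that the quantity $\into v_j\,dd^c u_j\wedge T_j$, where $T_j=dd^c\f_1^j\wedge\cdots\wedge dd^c\f_{m-1}^j\wedge\beta^{n-m}$, is symmetric in all $m+1$ entries and decreases whenever any single entry decreases; consequently it decreases to a limit $a\in\Rinf$ that is symmetric in $u$ and $v$ by construction. A two-sided squeeze --- Corollary \ref{cor: def hes E} against the fixed continuous test function $v_k$ followed by monotone convergence for one inequality, and the reverse chain for the other --- identifies $\into v\,dd^cu\wedge T$ with $a$, and the built-in symmetry gives $\into u\,dd^cv\wedge T=a$ as well, with nothing to rerun. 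You instead relax one factor at a time, which has two costs: first, you must isolate a one-factor convergence statement $dd^cv_k\wedge T\weak dd^cv\wedge T$ with the remaining factors fixed in $\Fco$, which is not literally Theorem \ref{thm: convergence E} or Corollary \ref{cor: def hes E}; your deferred ``diagonal argument'' is exactly the missing lemma, and it does go through, but only because the double-indexed integrals $\into h\, dd^cv_k\wedge dd^c\f_1^j\wedge\cdots\wedge\beta^{n-m}$ are monotone in both indices (so iterated and diagonal limits agree) --- that is, the same joint monotonicity the paper exploits directly, and it is the point you should write out carefully. Second, your final stage only yields one inequality, since against the merely lower semicontinuous integrand $-v$ the weak convergence gives a Portmanteau-type liminf bound, so you must rerun the entire scheme with $u$ and $v$ interchanged to obtain the reverse inequality. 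What your version buys is modularity: the intermediate identities with some factors in $\Fco$, and the one-factor convergence lemma, are statements of independent use, and it is transparent at every stage which mechanism (weak convergence against a continuous test function, monotone convergence against a fixed positive measure, or semicontinuity) controls each side. The paper's version is shorter and never needs convergence statements beyond the literal scope of Corollary \ref{cor: def hes E}.
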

\begin{proof}
Let $u_j ,v_j,\f_1^j,..,\f_{m-1}^j$ be sequences in  $\Ecoo\cap \Cc(\Omega)$ 
decreasing to   $u, v, \f_1,...,\f_{m-1}$ respectively such that their total mass are uniformly bounded:
$$
\sup_j\into  dd^c v_j\wedge T_j<+\infty,\ \  \sup_j\into  dd^c u_j\wedge T_j<+\infty,
$$
where $T_j=dd^c \f_1^j\wedge ...\wedge dd^c \f_{m-1}^j\wedge \beta^{n-m}.$  
Theorem \ref{thm: convergence E} gives us that $dd^c u_j\wedge T_j\weak dd^c u\wedge T.$
For each fixed  $k\in \N$ and any $j>k$ we have
$$
\into v_kdd^cu_k\wedge T_k\geq \into v_kdd^c u_j\wedge T_j\geq \into v_jdd^cu_j\wedge T_j.
$$ 
We then infer that  the sequence of real numbers  $\into v_jdd^cu_j\wedge T$  decreases to some $a\in \Rinf$. 
By letting $j\to +\infty$ and using Corollary \ref{cor: def  hes E}  we get 
$$
\into v_k dd^c u\wedge T\geq a,
$$ 
from which we obtain  $\into v dd^c u\wedge T\geq a.$  For each  fixed  $k$ we also have
\begin{eqnarray*}
\into v dd^c u\wedge T&\leq &\into v_k dd^c u\wedge T=\lim_{j\to+\infty}\into v_k dd^c u_j\wedge T_j\\
&\leq &\into v_k dd^c u_k\wedge T_k.
\end{eqnarray*}
This implies that $\into vdd^cu\wedge T=a,$ from which the result follows.
\end{proof}

Let $u\in \Ecpo, p>0.$ It is clear form the definition that the $H_m(u)$ does not charge $m$-polar sets.  One expects that $H_m(u)$ is dominated by the Capacity. The following results tell us more about that. 

\begin{lemma}\label{lem: capacity Ep p>1}
Let $U$  be  an open subset of $\Omega$ and $\f\in \Ecoo, \ p\geq 1.$  Then
$$
\int_U H_m(\f)\leq \Ca_m(U)^{\frac{p}{p+m}}\E_p(\f)^{\frac{m}{p+m}}.
$$
\end{lemma}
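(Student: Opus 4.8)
The plan is to compare the measure $H_m(\varphi)$ on $U$ with the canonical test function attached to $U$, namely the relative $m$-extremal function, and then to feed the resulting weighted integral into the Persson-type H\"{o}lder inequality of Lemma \ref{lem: persson}. First I would reduce to the case $U\Subset\Omega$: if $\Ca_m(U)=+\infty$ there is nothing to prove, and by inner regularity of the Radon measure $H_m(\varphi)$ together with the monotonicity of capacity (Proposition \ref{prop: properties of m-capacity}), it suffices to bound $\int_{U'}H_m(\varphi)$ for relatively compact open $U'\subset U$, since $\Ca_m(U')\leq \Ca_m(U)$. Assuming $U\Subset\Omega$, set $w:=u_{m,U,\Omega}^*$. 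Then $-1\leq w\leq 0$, $w\in\Ecoo$ (it is bounded, $m$-subharmonic, has zero boundary values, and finite Hessian mass), and by Theorem \ref{theorem: m-capacity and m-extremal functions} one has $\int_\Omega H_m(w)=\Ca_m^*(U,\Omega)=\Ca_m(U)$.

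The key observation is that $w\equiv -1$ on $U$ outside an $m$-polar (equivalently, by Theorem \ref{theorem: m-negligible and m-polar}, $m$-negligible) set, while $H_m(\varphi)$ charges no $m$-polar set because $\varphi$ is bounded. Hence $(-w)^p=1$ holds $H_m(\varphi)$-almost everywhere on $U$, and since $(-w)^p\geq 0$ everywhere,
$$
\int_U H_m(\varphi)=\int_U (-w)^p H_m(\varphi)\leq \int_\Omega (-w)^p (dd^c\varphi)^m\wedge\beta^{n-m}.
$$
I would then apply Lemma \ref{lem: persson} with $u=w$ and $v_1=\cdots=v_m=\varphi$, which bounds the right-hand side by $D_p\,\E_p(w)^{p/(p+m)}\E_p(\varphi)^{m/(p+m)}$; these are exactly the exponents appearing in the claim. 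Finally, since $0\leq -w\leq 1$,
$$
\E_p(w)=\int_\Omega (-w)^p H_m(w)\leq \int_\Omega H_m(w)=\Ca_m(U),
$$
which, after combining, yields the asserted inequality.

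The main obstacle is the constant. Persson's inequality contributes the factor $D_p$, which equals $1$ precisely when $p=1$ (so the lemma is immediate in that case) but exceeds $1$ when $p>1$; obtaining the clean constant $1$ uniformly in $p\geq 1$ requires the sharp \emph{symmetric} form of the interpolation, i.e.\ the iterated Cauchy--Schwarz/H\"older argument underlying Lemma \ref{lem: persson} specialized to $m$ equal arguments $\varphi$, and verifying that the multiplicative constant genuinely collapses to $1$ there is the delicate point. A secondary technical step to carry out with care is the identity $\int_U (-w)^p H_m(\varphi)=\int_U H_m(\varphi)$, which rests on quasicontinuity (Theorem \ref{thm: m-quasicontinuity}) and on the fact that $H_m(\varphi)$ does not charge the negligible set $\{w>-1\}\cap U$.
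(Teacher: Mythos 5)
Your proof is essentially the paper's own: reduce to $U\Subset\Omega$, take the relative extremal function $u=u_{m,U,\Omega}$ (note that since $U$ is open one has $u\equiv -1$ on all of $U$, so the $m$-polar exceptional set and the quasicontinuity step you invoke are not actually needed), bound $\int_U H_m(\f)\leq \into (-u)^p H_m(\f)$, apply Lemma \ref{lem: persson} with $v_1=\cdots=v_m=\f$, and finish with $\E_p(u)\leq \into H_m(u)=\Ca_m(U)$. Concerning the obstacle you flag about the constant: it is genuine, but the paper does not overcome it either --- its proof quotes Lemma \ref{lem: persson} with the factor $D_p$ silently dropped, so what is actually established there (and by your argument) is the inequality with the extra constant $D_p$ (equal to $1$ only for $p=1$); this weaker form is harmless for every subsequent use of the lemma (e.g.\ in Theorem \ref{thm: principe du maximum Ep}), where only a bound of the type $\int_U H_m \leq C\,\Ca_m(U)^{q}$ with some $q>0$ is required.
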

\begin{proof}
We can suppose that $U$ is relatively compact in $\Omega.$ Denote by  $u=u_{m,U,\Omega}$  the $m$-extremal function of
 $U$ in $\Omega.$ Then  $u\in \Ecoo$ and $u=-1$ in $U.$  From Lemma \ref{lem: persson} we have
\begin{eqnarray*}
\int_U H_m(\f)&\leq & \into (-u)^pH_m(\f)\leq \E_p(u)^{\frac{p}{m+p}}\E_p(\f)^{\frac{m}{m+p}}\\
&\leq & \Big(\into H_m(u)\Big)^{\frac{p}{m+p}}\E_p(\f)^{\frac{m}{p+m}}= \Ca_m(U)^{\frac{p}{p+m}}\E_p(\f)^{\frac{m}{p+m}}.
\end{eqnarray*}
\end{proof}
\begin{lemma}\label{lem: capacity Ep 0<p<1}
Let $U\subset \Omega$ be an open subset and  $\f\in \Ecoo, \ 0<p\leq 1.$  Then for each   $\epsilon>0$  small enough  we have 
$$
\int_U H_m(\f)\leq 2(\Ca_m(U))^{1-m\epsilon}+2\Ca_m(U)^{p\epsilon} . \E_p(\f).
$$
\end{lemma}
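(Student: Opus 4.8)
The plan is to follow the same scheme as in the proof of Lemma~\ref{lem: capacity Ep p>1}, replacing the homogeneity argument of Persson's inequality by a one-parameter scaling that exploits the homogeneity of $H_m$ and produces the two competing powers of $\Ca_m(U)$. First I would reduce to the case $U\Subset\Omega$: exhausting $U$ by relatively compact open sets $U_j\uparrow U$ and using Proposition~\ref{prop: properties of m-capacity}(ii) together with the monotone convergence $\int_{U_j}H_m(\f)\uparrow\int_U H_m(\f)$ and the continuity of $x\mapsto x^{1-m\epsilon}$ and $x\mapsto x^{p\epsilon}$, it suffices to prove the estimate for such $U$. I may also assume $\Ca_m(U)>0$, since otherwise $U$ is $m$-polar by Theorem~\ref{theorem: m-negligible and m-polar} and $\int_U H_m(\f)=0$ because $H_m(\f)$ does not charge $m$-polar sets when $\f$ is bounded.

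For $U\Subset\Omega$ I set $u=u_{m,U,\Omega}$. As recalled in the proof of Lemma~\ref{lem: capacity Ep p>1}, one has $u\in\Ecoo$, $u\equiv-1$ on $U$, and $\int_\Omega H_m(u)=\Ca_m(U)$ by Theorem~\ref{theorem: m-capacity and m-extremal functions}. I then fix a parameter $t>0$ and put $w=tu$; by Proposition~\ref{prop: basic property of m subharmonic functions}(ii) and the definition of $\Ecoo$, $w\in\Ecoo$, with $H_m(w)=t^mH_m(u)$ and $w\equiv-t$ on $U$. Since $(-w)^p\equiv t^p$ on $U$, I would write
\[
t^p\int_U H_m(\f)=\int_U(-w)^pH_m(\f)\leq\into(-w)^pH_m(\f),
\]
and then apply Lemma~\ref{lem: weighted GZ 1} with $k=m$, $T=\beta^{n-m}$ (no intermediate factors) and the pair $(w,\f)$, obtaining
\[
\into(-w)^pH_m(\f)\leq 2\into(-w)^pH_m(w)+2\into(-\f)^pH_m(\f)=2t^{m+p}\E_p(u)+2\E_p(\f).
\]
Because $u$ takes values in $[-1,0]$ and $0<p\le 1$ we have $(-u)^p\le 1$, hence $\E_p(u)\le\int_\Omega H_m(u)=\Ca_m(U)$. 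Dividing by $t^p$ gives
\[
\int_U H_m(\f)\leq 2t^m\Ca_m(U)+2t^{-p}\E_p(\f).
\]
It then remains to choose $t=\Ca_m(U)^{-\epsilon}$, which is legitimate since $\Ca_m(U)>0$; this turns the two terms into $2\Ca_m(U)^{1-m\epsilon}$ and $2\Ca_m(U)^{p\epsilon}\E_p(\f)$, which is exactly the asserted bound. The hypothesis that $\epsilon$ be small serves only to guarantee $1-m\epsilon>0$, so that the estimate is genuinely small when $\Ca_m(U)$ is.

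The endpoint $p=1$ is not literally covered by Lemma~\ref{lem: weighted GZ 1}, but the identical argument applies once that lemma is replaced by its $p=1$ analogue, which follows from the Cauchy--Schwarz inequality (the case $p=1$ of Lemma~\ref{lem: persson}) together with the arithmetic--geometric mean inequality. I do not expect any serious obstacle here: the genuine content is the scaling idea itself, and the only points requiring verification are the admissibility of the scaling, namely that $w=tu$ again lies in $\Ecoo$ with $H_m(w)=t^mH_m(u)$, and the elementary bound $\E_p(u)\le\Ca_m(U)$ coming from $(-u)^p\le 1$. Once these are in place the estimate reduces to a one-parameter optimization in $t$, so the whole difficulty is simply tracking the exponents correctly.
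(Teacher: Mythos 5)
Your proof is correct and is essentially the paper's own argument: scaling the extremal function $u=u_{m,U,\Omega}$ by $t=\Ca_m(U)^{-\epsilon}$ and applying Lemma~\ref{lem: weighted GZ 1} with $\E_p(u)\le\Ca_m(U)$ is exactly the paper's substitution $u/a^{\epsilon}$ with $a=\Ca_m(U)$, just written as a one-parameter optimization. The extra details you supply (the exhaustion argument reducing to $U\Subset\Omega$, the trivial case $\Ca_m(U)=0$, and the $p=1$ endpoint via Lemma~\ref{lem: persson} plus the arithmetic--geometric mean inequality) are all sound and only make explicit what the paper leaves implicit.
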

\begin{proof}
Without loss of generality we can assume that  $U\Subset \Omega.$
Let  $u$ be  the $m$-extremal function of  $U$ with respect to $\Omega.$ 
Put $a=\Ca_m(U)=\into H_m(u).$ If $a=0$, we are done.  Thus, we can assume that 
$a>0.$ By applying  Lemma \ref{lem: weighted GZ 1} we obtain 
\begin{eqnarray*}
\int_U H_m(\f)&\leq& a^{p\epsilon}\into (-u/a^{\epsilon})^p H_m(\f)\\
&\leq& 2a^{p\epsilon}\E_p(u/a^{\epsilon})+ 2a^{p\epsilon}\E_p(\f)\\
&\leq& 2a^{1-m\epsilon}+2a^{p\epsilon}\E_p(\f).
\end{eqnarray*}
\end{proof}
The following result is the so-called {\it maximum principle}.
\begin{theorem}\label{thm: principe du maximum Ep}
Let  $u_1,...,u_m\in \Ecpo, \ p>0$ and $v\in \SH_m(\Omega)$. Then
$$
\ind_{A} H_m(u_1,...,u_m)=\ind_{A} H_m(\max(u_1,v),...,\max(u_m,v)),
$$
where $A=\cap_{j=1}^m \{u_j>v\}$ and $H_m(u_1,...,u_m)= dd^c u_1\wedge...\wedge dd^c u_m\wedge \beta^{n-m}.$
\end{theorem}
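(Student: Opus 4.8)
The plan is to prove this as a limiting version of the classical maximum principle, Theorem~\ref{theorem: maximum principle}: first for locally bounded potentials, then by passing to the limit along the decreasing sequences defining $\Ecpo$. Write $w_j=\max(u_j,v)$ and $A=\bigcap_{j=1}^m\{u_j>v\}$. Since $u_j\le0$ we have $A\subseteq\{v<0\}$, so the essential case is $v\le0$; I assume this throughout, which also guarantees $w_j\in\Ecpo$ by Theorem~\ref{thm: convexite cegrell} and makes both mixed measures well defined.

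Step~1 is the bounded mixed statement: for $a_1,\dots,a_m,b\in\SH_m(\Omega)\cap\loc(\Omega)$ and $B=\bigcap_j\{a_j>b\}$,
$$\ind_{B}\,dd^ca_1\wedge\cdots\wedge dd^ca_m\wedge\beta^{n-m}=\ind_{B}\,dd^c\max(a_1,b)\wedge\cdots\wedge dd^c\max(a_m,b)\wedge\beta^{n-m}.$$
I would prove this by induction, replacing the factors $a_r$ by $\max(a_r,b)$ one at a time. Freezing the positive current $S$ formed by the remaining (bounded) factors, the inductive step amounts to $\ind_{\{a_r>b\}}dd^ca_r\wedge S=\ind_{\{a_r>b\}}dd^c\max(a_r,b)\wedge S$, after which one restricts to $B\subseteq\{a_r>b\}$. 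This one-factor identity is Theorem~\ref{theorem: maximum principle} with a single distinguished factor and the frozen current $S$ in place of the remaining powers; its proof is local in that factor and goes through unchanged.

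Step~2 reduces the general case to Step~1. Pick $g^k_j\in\Ecoo\cap\Cc(\Omega)$ with $g^k_j\downarrow u_j$ and uniformly bounded energies (definition of $\Ecpo$), and put $v^l=\max(v,-l)$, so $v^l\downarrow v$ and each $v^l$ is locally bounded. On the left, Theorem~\ref{thm: convergence Ep 1} gives $H_m(g^k_1,\dots,g^k_m)\weak H_m(u_1,\dots,u_m)$; on the right, $\max(g^k_j,v^l)\downarrow\max(u_j,v^l)$ are locally bounded, so Theorem~\ref{thm: convergence decreasing} gives $H_m(\max(g^k_1,v^l),\dots)\weak H_m(\max(u_1,v^l),\dots)$. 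Applying Step~1 to $a_j=g^k_j$, $b=v^l$ yields the identity on $A^{k,l}=\bigcap_j\{g^k_j>v^l\}$. Since $g^k_j\ge u_j$, the fixed set $C^l:=\bigcap_j\{u_j>v^l\}$ is contained in $A^{k,l}$; multiplying the identity by $\ind_{C^l}$ (which absorbs $\ind_{A^{k,l}}$) transports it to $C^l$ for every $k$:
$$\ind_{C^l}\,H_m(g^k_1,\dots,g^k_m)=\ind_{C^l}\,H_m(\max(g^k_1,v^l),\dots,\max(g^k_m,v^l)).$$

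The main obstacle is the passage $k\to\infty$: weak convergence of the mixed Hessian measures does not let one pass a fixed Borel indicator through the limit, and $C^l$ is not open, so a priori the measures could concentrate on the contact sets $\{u_j=v^l\}$. The way out is to use that all the measures here are dominated by $\Ca_m$ (Lemmas~\ref{lem: capacity Ep p>1} and \ref{lem: capacity Ep 0<p<1} on the left, the Bedford--Taylor estimates on the right), together with quasicontinuity (Theorem~\ref{thm: m-quasicontinuity}): removing an open set $U$ of arbitrarily small capacity off which every $u_j,v^l,g^k_j$ is continuous makes $C^l\setminus U$ relatively open, and the mass carried by $U$ is uniformly small. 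Inserting an $\epsilon$-gap, i.e. working first with $\bigcap_j\{u_j>v^l+\epsilon\}$ whose closure is separated from the contact set and then letting $\epsilon\downarrow0$, turns the set genuinely open off $U$, so the portmanteau inequalities combine with the identity above to force the two limit measures to agree on $C^l$. Finally $l\to\infty$ is of the same nature: on the left $\ind_{C^l}H_m(u_1,\dots,u_m)\uparrow\ind_A H_m(u_1,\dots,u_m)$ by monotone convergence since $C^l\uparrow A$, while on the right the identical quasicontinuity and capacity argument identifies the limit as $\ind_A H_m(w_1,\dots,w_m)$, which gives the claim.
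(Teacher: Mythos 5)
Your proof is correct in substance, and its skeleton is the same as the paper's: approximate the $u_j$ by continuous $g^k_j\in\Ecoo\cap\Cc(\Omega)$ from the definition of $\Ecpo$, establish the identity for the approximants, and pass to the limit using quasicontinuity together with uniform domination of all the mixed measures by capacity (via $H_m(\f_1,\dots,\f_m)\le H_m(\f_1+\cdots+\f_m)$, convexity of $\Ecpo$, and Lemmas \ref{lem: capacity Ep p>1} and \ref{lem: capacity Ep 0<p<1} --- note that these, not ``Bedford--Taylor estimates'', are also what controls your right-hand measures, using the bound $\E_p(\max(g^k_j,v^l))\le C\,\E_p(g^k_j)$ recorded in the proof of Corollary \ref{cor: def hes Ep}). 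The differences are in implementation, and two of your choices create soft spots the paper avoids. First, your Step 1 is stated for arbitrary locally bounded functions with its proof waved through (``goes through unchanged''), but you only ever apply it with $a_j=g^k_j$ continuous and $b=v^l$ upper semicontinuous, where $B$ is \emph{open} and the identity is plain locality of the Hessian operator on open sets --- exactly the paper's observation (``since the set $A_j$ is open''); the general Step 1 is dead weight and should be replaced by that remark. Second, the truncation $v^l=\max(v,-l)$ is unnecessary and costs you a second limit, which is your thinnest step: for $l\to\infty$ you need $H_m(\max(u_1,v^l),\dots)\weak H_m(\max(u_1,v),\dots)$, a convergence along a decreasing sequence \emph{inside} $\Ecpo$ that is not literally covered by Theorem \ref{thm: convergence Ep 1} (which takes approximants in $\Ecoo$); this is repairable, e.g.\ by the diagonal sequence $\max(g^k_j,v^k)\in\Ecoo$, which decreases to $\max(u_j,v)$ with uniformly bounded energies, but the paper sidesteps the issue entirely: once one has reduced to $v\le 0$ (a reduction you rightly make explicit and the paper leaves implicit, although it is needed even to define the right-hand measure via the Cegrell classes), the functions $\max(g^k_j,v)$ already lie in $\Ecoo$ by Theorem \ref{thm: convexite cegrell}, are locally bounded, and decrease to $\max(u_j,v)\in\Ecpo$, so Theorem \ref{thm: convergence Ep 1} treats the right-hand side in the same single limit $k\to\infty$. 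Finally, for the limit passage itself, your device ($\epsilon$-gap sets and portmanteau inequalities off a small-capacity open set) and the paper's (multiplying by the quasi-continuous cutoffs $g_j=\p_j/(\p_j+\delta)$ with $\p_j=\max(\min_k u_k^j-v,0)$, then letting $\delta\to0$) are two packagings of the same argument, and yours does complete once the two points above are fixed; what your route buys is the explicit handling of general $v\in\SH_m(\Omega)$, while the paper's buys brevity: one limit and no auxiliary maximum principle.
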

\begin{proof}
Let $(u_1^j,...u_m^j)$ be  sequences in $\Ecoo$  decreasing to $u_1,...,u_m$ respectively as in the definition of  $\Ecpo.$ We can assume that they are continuous in $\Omega.$ Set $v_k^j:=\max(u_k^j,v), k=1,...,m.$ Then since the set $A_j:=\cap_{k=1}^m \{u_k^j>v\}$ is open, we get
\begin{equation}\label{eq: comparison principle in Ep 1}
\ind_{A_j} H_m(u_1^j,...,u_m^j)=\ind_{A_j} H_m(v_1^j,...,v_m^j).
\end{equation}
Set $u^j:=\min(u_1^j,...,u_m^j)$ and $u:= \min (u_1,...,u_m).$ Consider $\p_j:=\max(u^j-v,0)$.  Then  $\p_j\downarrow \p:=\max(u-v,0)$, 
all of them are  quasi-continuous. 

Fix  $\delta>0$ and set  $g_j:=\frac{\p_j}{\p_j+\delta},\ g=\frac{\p}{\p+\delta}.$ By multiplying 
  (\ref{eq: comparison principle in Ep 1}) with $g_j$ we obtain
\begin{equation}\label{eq: comparison principle in Ep 2}
g_jH_m(u_1^j,...,u_m^j)=g_j H_m(v_1^j,...,v_m^j).
\end{equation}

Now, let $\chi\in \Cc_0^{\infty}(\Omega)$ be a test function and fix $\epsilon>0.$ By Theorem \ref{thm: m-quasicontinuity}, there exists an open subset 
 $U\subset \Omega$ such that  $\Ca_m(U)<\epsilon,$ and there exist $\f_j,\f$  continuous functions in 
 $\Omega$ which coincide with $\p_j,\p$ respectively on $K:=\Omega\setminus U.$  The monotone convergence 
   $\p_j\downarrow \p$ implies that $\f_j$ converges uniformly to $\f$ on $K\cap {\rm Supp}\chi,$ which in turn  
   implies the uniform convergence of
    $h_j=\frac{\f_j}{\f_j+\delta}$ on $K\cap {\rm Supp} \chi$  to $h=\frac{\f}{\f+\delta}.$ 

In the next arguments,  we let $C$  denote a positive constant which does not depend on
$j, \epsilon.$  Since  $g_j, h_j$ are uniformly bounded , Lemma  
\ref{lem: capacity Ep p>1} and Lemma \ref{lem: capacity Ep 0<p<1} give us
\begin{equation}\label{eq: 12Nov 1}
\Big\vert\into \chi g_jH_m(u_1^j,...,u_m^j) - \into \chi h_jH_m(u_1^j,...,u_m^j) \Big\vert\leq C.\int_U H_m(u_1^j,...,u_m^j)\leq C.\epsilon^q,
\end{equation}
where $q$ is some positive constant. The last inequality follows since $H_m(u_1,...,u_m)\leq H_m(u_1+...+u_m)$ and since $\Ecpo$ is convex.
We also obtain
\begin{eqnarray}\label{eq: 12Nov 2}
\Big\vert\into \chi g\, H_m(u_1,...,u_m) - \into \chi h\, H_m(u_1,...,u_m) \Big\vert &\leq & C.\int_U H_m(u_1+...+u_m)\nonumber \\
&\leq & C.\epsilon^q.
\end{eqnarray}
Moreover, since $h$ is continuous on $\Omega$ and $H_m(u_1^j,...,u_m^j)\weak H_m(u_1,...,u_m)$, we get
\begin{equation*}
\lim_{j\to+\infty}\into \chi. h (H_m(u_1^j,...,u_m^j)-H_m(u_1,...,u_m))=0.
\end{equation*}
Hence, we obtain
\begin{eqnarray*}\label{eq: 12Nov 3}
&&\limsup_{j\to+\infty}\Big\vert\into \chi h_j\, H_m(u_1^j,...,u_m^j) - \into \chi h\, H_m(u_1,...,u_m) \Big\vert \\
& \leq &
 \limsup_{j\to+\infty} \into \chi.\vert h_j-h\vert H_m(u_1^j,...,u_m^j).
\end{eqnarray*}
Since $h_j$ converges uniformly to $h$ on $K\cap \rm{supp}\chi$, we have
\begin{eqnarray*}\label{eq: 12Nov 4}
\into \chi.\vert h_j-h\vert H_m(u_1^j,...,u_m^j)&= & \int_U \chi.\vert h_j-h\vert H_m(u_1^j,...,u_m^j) \\
&+& \int_K \chi.\vert h_j-h\vert H_m(u_1^j,...,u_m^j)\\
&\leq & C. \int_U H_m(u_1^j,...,u_m^j)\\
&+&\Vert h_j-h\Vert_{L^{\infty}(K\cap \rm{supp} \chi)}\into \chi H_m(u_1^j,...,u_m^j).\nonumber
\end{eqnarray*}
From the two inequalities above we get
\begin{eqnarray}\label{eq: 12Nov 5}
\limsup_{j\to+\infty}\Big\vert\into \chi h_j\, H_m(u_1^j,...,u_m^j) - \into \chi h\, H_m(u_1,...,u_m) \Big\vert & \leq &
 C. \epsilon^{q},
\end{eqnarray}
where $q$ is some positive constant. From  (\ref{eq: 12Nov 1}), (\ref{eq: 12Nov 2}) and  (\ref{eq: 12Nov 5}), we see that   
$$
\limsup_j \Big\vert\into \chi g_jH_m(u_1^j,...,u_m^j) - \into \chi g H_m(u_1,...,u_m) \Big\vert \leq C.\epsilon^q.
$$
We then see that $g_jH_m(u_1^j,...,u_m^j)\rightharpoonup g H_m(u_1,...,u_m)$. In the same way, we get 
$$
g_jH_m(v_1^j,...,v_m^j)\rightharpoonup g H_m(v_1,...,v_m),
$$
and hence  $ g H_m(u_1,...,u_m)=g H_m(v_1,...,v_m).$  The result follows by letting   $\delta$ go to zero.
\end{proof}

We now return to the integration by parts formula in the class $\Ecpo$, $p>0$. We first need the following convergence result.
\begin{corollary}\label{cor: def hes Ep}
Let $u_1,...,u_m\in \Ecpo, p>0$ and $u_1^j,...,u_m^j$ be sequences of functions in
$\Ecoo$  decreasing to  $u_1,...,u_m$ respectively such that 
$$
\sup_{j,k} \into (-u_k^j)^pH_m(u_k^j)<+\infty.
$$ 
Then for each  $\f\in \Ecoo$ we have
$$
\lim_{j\to+\infty} \into \f dd^c u_1^j\wedge...\wedge dd^c u_m^j\wedge\beta^{n-m}=\into  \f dd^c u_1\wedge...\wedge dd^c u_m\wedge \beta^{n-m}.
$$
\end{corollary}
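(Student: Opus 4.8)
The plan is to follow the proof of Corollary \ref{cor: def hes E}, replacing the uniform bound on the total masses (which is no longer available in $\Ecpo$) by the uniform energy bound $\sup_{j,k}\E_p(u_k^j)<+\infty$, and the continuity used there by the quasicontinuity machinery already exploited in Theorem \ref{thm: principe du maximum Ep}. Write $\mu_j:=H_m(u_1^j,\dots,u_m^j)$ and $\mu:=H_m(u_1,\dots,u_m)$. By Theorem \ref{thm: convergence Ep 1} the $\mu_j$ converge weakly to $\mu$, so the whole difficulty is to upgrade this convergence, which a priori only tests against $\Cc_0^{\infty}(\Omega)$, to the single (possibly unbounded, only quasicontinuous) test function $\f\in\Ecoo$. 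Fix $\epsilon>0$ and split $\f=\f_\epsilon+(\f-\f_\epsilon)$ with $\f_\epsilon:=\max(\f,-\epsilon)$; note $|\f_\epsilon|\le\epsilon$ and that $\f-\f_\epsilon=\min(\f+\epsilon,0)$ is supported in the compact set $\{\f\le-\epsilon\}\Subset\Omega$.

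For the compactly supported part I would show $\into(\f-\f_\epsilon)\,d\mu_j\to\into(\f-\f_\epsilon)\,d\mu$. Since $\f-\f_\epsilon$ is a difference of $m$-subharmonic functions it is quasicontinuous (Theorem \ref{thm: m-quasicontinuity}), so for $\delta>0$ one finds an open $U$ with $\Ca_m(U)<\delta$ outside of which it agrees with a continuous compactly supported function. On $\Omega\setminus U$ the weak convergence $\mu_j\weak\mu$ supplies the limit, while the contribution of $U$ is controlled uniformly in $j$ by $\int_U d\mu_j\le\int_U H_m(u_1^j+\dots+u_m^j)$ (using $H_m(u_1^j,\dots,u_m^j)\le H_m(u_1^j+\dots+u_m^j)$ and the convexity of $\Ecpo$, Theorem \ref{thm: convexite cegrell}), which Lemma \ref{lem: capacity Ep p>1} and Lemma \ref{lem: capacity Ep 0<p<1} bound by a positive power of $\Ca_m(U)<\delta$. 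This is exactly the mechanism in the proof of Theorem \ref{thm: principe du maximum Ep}; letting $\delta\to0$ gives the convergence of this piece for each fixed $\epsilon$.

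For the small-norm part the key point is that $H_m(\f_\epsilon)$ is carried by $\{\f\ge-\epsilon\}$, where $-\f_\epsilon\le\epsilon$, and that $\into H_m(\f_\epsilon)\le\into H_m(\f)<+\infty$ by the comparison principle (as $\f_\epsilon\ge\f$ with the same zero boundary values); hence $\E_p(\f_\epsilon)\le\epsilon^p\into H_m(\f)\to0$. Feeding this into Lemma \ref{lem: persson} (for $p\ge1$) or Proposition \ref{prop: weight GZ 2} (for $0<p<1$), with weight $\f_\epsilon$ and the $u_k^j$, yields $\into(-\f_\epsilon)^p\,d\mu_j\le D_p\,\E_p(\f_\epsilon)^{p/(m+p)}\big(\sup_{j,k}\E_p(u_k^j)\big)^{m/(m+p)}\to0$ uniformly in $j$. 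When $0<p<1$ the bound $-\f_\epsilon\le\epsilon$ turns this into $\into(-\f_\epsilon)\,d\mu_j\le C\epsilon^{1-p}\to0$ directly (splitting according to $\{\f\le-\epsilon\}$ and $\{-\epsilon<\f<0\}$), which together with the first piece settles the range $0<p<1$; the range $p\ge1$ needs the extra input discussed next.

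The main obstacle is precisely this small-norm estimate when $p\ge1$: Lemma \ref{lem: persson} controls $\into(-\f_\epsilon)^p\,d\mu_j$, whereas the statement involves the first power $\into(-\f_\epsilon)\,d\mu_j$, and for $p>1$ a lower moment cannot be dominated by a higher one without a bound on the (here unbounded) total mass of $\mu_j$. I expect to circumvent this through monotonicity: because the $u_k^j$ decrease, $j\mapsto\into\f\,d\mu_j$ is monotone (this is the mechanism already underlying the proof of Theorem \ref{thm: convergence Ep 1}), so its limit $L$ exists and it suffices to prove two one-sided inequalities. The bound $\limsup_j\into\f\,d\mu_j\le\into\f\,d\mu$ follows at once from the lower semicontinuity of $-\f$ tested against the vaguely convergent $\mu_j$; the reverse inequality is then obtained from the truncation estimates above once one establishes a mixed inequality of the form $\into(-\f_\epsilon)\,d\mu_j\le C\,\E_p(\f_\epsilon)^{1/(m+p)}\prod_k\E_p(u_k^j)^{1/(m+p)}$, i.e. the weight appearing to the first power but with the energies measured in the $p$-norm. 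This scale-consistent refinement of Lemma \ref{lem: persson}, which forces $\into(-\f_\epsilon)\,d\mu_j\to0$ uniformly in $j$ for every $p>0$, is where I anticipate the real work to lie.
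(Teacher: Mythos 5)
Your decomposition $\f=\f_\epsilon+(\f-\f_\epsilon)$, the quasicontinuity/capacity treatment of the compactly supported part, and the small-norm estimates do give a complete argument for $0<p\le 1$ (the case $p=1$ being Lemma \ref{lem: persson} itself, the case $p<1$ your $\epsilon^{1-p}$ bound). But for $p>1$ your proof rests entirely on the conjectured ``mixed inequality''
$$
\into(-u_0)\,dd^c u_1\wedge\cdots\wedge dd^c u_m\wedge\beta^{n-m}\;\le\; C\,\E_p(u_0)^{\frac{1}{m+p}}\prod_{k=1}^m\E_p(u_k)^{\frac{1}{m+p}},
$$
and this inequality is \emph{false} when $p>1$. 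Take $\Omega=B(R)$ and $u_0=u_1=\cdots=u_m=v:=u_{m,B(r),B(R)}\in\Ecoo$, the relative $m$-extremal function of Lemma \ref{lem: m-extremal function of the ball}. Its Hessian measure is carried by the sphere $\{\Vert z\Vert=r\}$, on which $v\equiv-1$, so $\E_q(v)=\into H_m(v)=\Ca_m(B(r),B(R))=:c$ for every $q>0$. The left-hand side then equals $c$ while the right-hand side equals $C\,c^{(m+1)/(m+p)}$, and since $(m+1)/(m+p)<1$ for $p>1$, the inequality fails once $c$ is large — which happens as $r\to R$, by the explicit formula for the capacity of concentric balls. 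So no constant $C$ can work: scale-consistency does not save the estimate, and indeed no universal energy inequality can dominate a first moment by $p$-th energies alone, precisely because measures $H_m(u_1^j,\dots,u_m^j)$ with $u_k^j$ of bounded $p$-energy can have arbitrarily large (even unbounded, in the limit) total mass. This is a genuine gap, not a technical detail to be filled later.

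The paper closes exactly this gap by truncating the $u_l^j$ rather than $\f$: it sets $\psi:=-(-\f)^q$ with $q=\min(1,1/p)$ and replaces $u_l^j$ by $u_{l,k}^j:=\max(u_l^j,k\psi)$. The maximum principle (Theorem \ref{thm: principe du maximum Ep}) shows the measures $H_m(u_{1,k}^j,\dots,u_{m,k}^j)$ and $H_m(u_1^j,\dots,u_m^j)$ agree off the sets $\{u_l^j\le k\psi\}$, and on those sets one has, by the very choice of $\psi$, $(-\f)\le(-u_{l,k}^j)^p/k^p$; thus the discrepancy is a genuine $p$-th moment, which Lemma \ref{lem: persson} (resp. Proposition \ref{prop: weight GZ 2}) bounds by $C_1/k^p$ \emph{uniformly in} $j$, using that $\E_p(u_{l,k}^j)\le C_2\E_p(u_l^j)$ since $u_{l,k}^j\ge u_l^j$. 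For the truncated (bounded) functions the convergence is elementary: lower semicontinuity of $-\f$ gives one inequality and integration by parts (Theorem \ref{theorem: integration by parts}) the other; letting $k\to\infty$ finishes. In other words, the scale-consistent control you were hoping for does exist, but it is obtained by matching the weight $(-\f)$ pointwise to the $p$-th power of the truncated functions on the bad set, not by strengthening the H\"older-type inequality — which, as the example above shows, cannot be strengthened.
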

\begin{proof}
Without loss of generality we can assume that the right hand side is finite. Since if it is $-\infty$ then the equality is obvious. 
We can also assume that $-1\leq \varphi\leq 0.$
We will use a truncation argument. For each $k, j\in \N$ set $u_{l,k}^{j}:=\max(u_l^j, k\psi)$, and  $u_{l,k}:=\max(u_l,k\psi)$,  $l=1,...,m.$ Here we set $\psi:=-(-\varphi)^q$, where $q=\min(1,1/p).$

We claim that, for any $k$,
$$
\lim_{j\to+\infty} \into (-\varphi) H_m(u_{1,k}^j,...,u_{m,k}^j) = \into (-\varphi) H_m(u_{1,k},...,u_{m,k}).
$$
Indeed, the inequality "$\geq$" follows from the fact that the sequence of Hessian measures converges  and $-\varphi$ is lower semi continuous. Moreover, it follows from Theorem \ref{theorem: integration by parts} that we can integrate by parts in the right hand side, which implies the inequality $"\leq"$. Thus, the Claim is proved.

Thus, it is enough to prove that 

$$
\Big\vert \into (-\varphi) H_m(u_{1,k}^j,...,u_{m,k}^j) -\into (-\varphi) H_m(u_{1}^j,...,u_{m}^j) \Big\vert \leq \epsilon(k),
$$
where $0<\epsilon(k)\to 0$ as $k\to+\infty$ and (of course) $\epsilon(k)$ does not depend on $j.$ If we can prove it then the same estimate holds for the limit functions $u_1,...,u_m$ and we are done. \\
In the following arguments we use $C_1,C_2,...$ to denote positive constants that do not depend on $j,k.$ By Theorem \ref{thm:  principe du maximum Ep} and since $\varphi$ is bounded it suffices to estimate
$$
\int_{\{u_l^j\leq k\psi\}} (-\varphi) H_m(u_{1,k}^j,...,u_{m,k}^j), \ \ l=1,...,m.
$$
But we can bound this term by using Lemma \ref{lem: persson} (for $p\geq 1$), Proposition \ref{prop: weight GZ 2} (for $0<p<1$) and  the fact that the $p$-energy of these functions are uniformly bounded: 
\begin{eqnarray*}
\int_{\{u_l^j\leq k\psi\}} (-\varphi) H_m(u_{1,k}^j,...,u_{m,k}^j)&\leq& \frac{1}{k^p} \into (-u_{l,k}^j)^p H_m(u_{1,k}^j,...,u_{m,k}^j)
\leq  \frac{C_1}{k^p}.
\end{eqnarray*}
In the last step the constant $C_1$ depends on the $p$-energy of $u_{l,k}^j, l=1,...,m$. Note also that  $u_{l,k}^j \geq u_l^j$.
Now, for functions $u,v\in \Ecoo$ such that $u\leq v$ we always have $E_p(v)\leq C_2.E_p(u)$ (where $C_2$ does not depend on $u,v$). To see this we can use integration by parts (if $p<1$) or use Lemma \ref{lem: persson}. This explains why the constant $C_1$ in the above estimate does not depend on $k.$ Thus, the proof is complete. 
\end{proof}

\begin{theorem}\label{thm: integration par parties Ep p>0}
Integration by parts is allowed in  $\Ecpo, p>0.$  More precisely, assume that $u,v\in \Ecpo$ and $T$
is  a closed $m$-positive
current of type  $T=dd^c \f_1\wedge...dd^c\f_{m-1}\wedge \beta^{n-m}$, where
 $\f_j\in \Ecpo, \forall j.$ Then  
$$
\int_{\Omega}udd^c v\wedge T=\int_{\Omega}vdd^c u\wedge T,
$$
where the equality means that if one of the two terms is finite then so is the other and they are equal.
\end{theorem}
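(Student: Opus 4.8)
The plan is to mimic the proof of Theorem~\ref{thm: int by parts F}, reducing everything to the locally bounded case and then passing to the limit by a monotonicity--squeeze argument; the only new feature is that the convergence of the mixed Hessian measures must now be controlled through $p$-energies via Corollary~\ref{cor: def hes Ep} rather than through total masses as in Corollary~\ref{cor: def hes E}.

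First I would use the definition of $\Ecpo$ to choose decreasing sequences $(u_j),(v_j),(\f_1^j),\ldots,(\f_{m-1}^j)\subset\Ecoo$ converging to $u,v,\f_1,\ldots,\f_{m-1}$, with uniformly bounded $p$-energies, so that $\sup_j\E_p(u_j)$, $\sup_j\E_p(v_j)$ and $\sup_{i,j}\E_p(\f_i^j)$ are all finite; this uniform control is precisely the hypothesis needed to invoke Corollary~\ref{cor: def hes Ep} later, and by Theorem~\ref{thm: convergence Ep 1} the limiting mixed measures $dd^c u\wedge T$ and $dd^c v\wedge T$ are well defined. Writing $T_j:=dd^c\f_1^j\wedge\cdots\wedge dd^c\f_{m-1}^j\wedge\beta^{n-m}$, the functions involved are bounded, $m$-subharmonic and vanish on $\pO$, so Theorem~\ref{theorem: integration by parts} applies and yields
\[
a_j:=\into u_j\,dd^c v_j\wedge T_j=\into v_j\,dd^c u_j\wedge T_j .
\]

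The heart of the argument is the monotonicity chain, for $j>k$,
\[
\into v_k\,dd^c u_k\wedge T_k\ \ge\ \into v_k\,dd^c u_j\wedge T_j\ \ge\ \into v_j\,dd^c u_j\wedge T_j .
\]
The second inequality is immediate from $v_k\ge v_j$ and positivity of the measure $dd^c u_j\wedge T_j$. For the first I would replace $u_k,\f_i^k$ by $u_j,\f_i^j$ one factor at a time: each replacement changes the integral by a term $\into(-v_k)\,dd^c(\text{nonnegative difference})\wedge(\text{positive current})$ which, after integrating by parts twice (Theorem~\ref{theorem: integration by parts}), equals $\into(\text{nonnegative difference})\,dd^c(-v_k)\wedge(\text{positive current})\le 0$, because $dd^c(-v_k)$ wedged with a positive current is a nonpositive measure. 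Hence $a_j$ decreases to some $a\in\Rinf$, and the chain gives, for each $k$,
\[
a_k\ \ge\ \into v_k\,dd^c u\wedge T\ \ge\ a ,
\]
where the middle limit $\lim_j\into v_k\,dd^c u_j\wedge T_j=\into v_k\,dd^c u\wedge T$ is supplied by Corollary~\ref{cor: def hes Ep}, applied with the fixed test function $v_k\in\Ecoo$ and the energy-bounded sequences $(u_j),(\f_i^j)$. Letting $k\to\infty$ and using monotone convergence of $v_k\downarrow v$ against the fixed positive measure $dd^c u\wedge T$ squeezes the middle term and forces $\into v\,dd^c u\wedge T=a$; running the symmetric computation with $u$ and $v$ interchanged gives $\into u\,dd^c v\wedge T=a$ as well, which is the claimed equality together with the finiteness clause.

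I expect the main obstacle to be the bookkeeping in the first monotonicity inequality: one must check that every integration by parts in the telescoping replacement is legitimate (all differentiated factors lie in $\Ecoo$ and vanish on $\pO$, so Theorem~\ref{theorem: integration by parts} does apply), and that the uniform $p$-energy bound propagates to each truncated/compared function so that Corollary~\ref{cor: def hes Ep} may be invoked with the same constant throughout. The convexity of $\Ecpo$ (Theorem~\ref{thm: convexite cegrell}), together with Lemma~\ref{lem: persson} for $p\ge1$ and Proposition~\ref{prop: weight GZ 2} for $0<p<1$, are exactly what guarantee these energy bounds.
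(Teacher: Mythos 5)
Your proposal is correct and follows essentially the same route as the paper: the paper's own proof of Theorem~\ref{thm: integration par parties Ep p>0} simply says to repeat the argument of Theorem~\ref{thm: int by parts F} with Corollary~\ref{cor: def hes Ep} replacing Corollary~\ref{cor: def hes E}, which is precisely the monotonicity chain, limit via the energy-controlled convergence result, squeeze, and symmetry scheme you describe. Your telescoping integration-by-parts justification of the inequality $\into v_k\,dd^c u_k\wedge T_k\geq \into v_k\,dd^c u_j\wedge T_j$ is a detail the paper leaves implicit, but it is the standard and correct verification.
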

\begin{proof}
 Thanks to Corollary \ref{cor: def hes Ep} the same arguments as in the proof of Theorem \ref{thm: int by parts F} can be used here.
\end{proof}
\begin{theorem}\label{thm: principe de comparaison Ep}
Let $u,v\in \Ecpo, p>0$ (or $\Fco$) such that  $u\leq v$ on $\Omega.$ Then
$$
\into H_m(u)\geq \into H_m(v).
$$
\end{theorem}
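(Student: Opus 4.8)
The plan is to first settle the statement for bounded functions, $u,v\in\Ecoo$ with $u\le v$, and then pass to the general case by approximation.

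For the bounded case I would argue as follows. If $u\equiv 0$ then $u\le v\le 0$ forces $v\equiv 0$ and there is nothing to prove, so assume $u\not\equiv 0$; since $u$ is subharmonic, $\le 0$, and tends to $0$ on $\pO$, the strong maximum principle gives $u<0$ throughout the (connected) domain $\Omega$. Fix $\delta\in(0,1)$ and consider $(1-\delta)v\in\Ecoo$. Because $v\le 0$ we have $(1-\delta)v\ge v\ge u$, and in fact $u<(1-\delta)v$ \emph{everywhere}: where $v<0$ this reads $u\le v<(1-\delta)v$, and where $v=0$ it reads $u<0=(1-\delta)v$. Hence $\{u<(1-\delta)v\}=\Omega$, and since $\liminf_{z\to\pO}\big(u-(1-\delta)v\big)=0\ge0$, the comparison principle (Theorem \ref{thm: comparison principle 1}) applied on all of $\Omega$ yields
\[
(1-\delta)^m\into H_m(v)=\into H_m\big((1-\delta)v\big)\le\into H_m(u).
\]
Letting $\delta\to 0^+$ gives $\into H_m(v)\le\into H_m(u)$.

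For general $u\le v$ in $\Fco$ (resp.\ $\Ecpo$) I would pick decreasing sequences $u_j\downarrow u$ and $v_j\downarrow v$ in $\Ecoo$ as in the definitions and set $\tilde v_j:=\max(u_j,v_j)$. Since $u_j+v_j\le\tilde v_j\le 0$ and $u_j+v_j\in\Ecoo$, Theorem \ref{thm: convexite cegrell} gives $\tilde v_j\in\Ecoo$; moreover $\tilde v_j\downarrow\max(u,v)=v$ and $u_j\le\tilde v_j$. As $\tilde v_j\ge u_j$, the bounded case forces $\sup_j\into H_m(\tilde v_j)\le\sup_j\into H_m(u_j)<\infty$ (resp.\ $\sup_j\E_p(\tilde v_j)\le C\sup_j\E_p(u_j)<\infty$, using the monotonicity of the $p$-energy under $\ge$), so Theorem \ref{thm: convergence E} (resp.\ Theorem \ref{thm: convergence Ep 1}) applies and $H_m(u_j)\weak H_m(u)$, $H_m(\tilde v_j)\weak H_m(v)$. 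The bounded case gives $\into H_m(u_j)\ge\into H_m(\tilde v_j)$ for every $j$, and letting $j\to\infty$ produces the desired inequality \emph{provided} the total masses along these decreasing sequences converge to the total masses of the limits.

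The main obstacle is exactly this last point. Weak convergence of $H_m(u_j)$ to $H_m(u)$ on the \emph{open} set $\Omega$ only gives the lower bound $\into H_m(u)\le\liminf_j\into H_m(u_j)$, so a priori mass could escape to $\pO$ in the limit, which would break the passage to the limit on the $u$-side. I would therefore need the structural fact that no mass escapes along the canonical decreasing approximations of functions in $\Fco$ and $\Ecpo$, namely $\into H_m(w)=\lim_j\into H_m(w_j)$ for $w_j\downarrow w$; this is precisely where the defining properties of these classes (uniformly bounded total mass, resp.\ bounded $p$-energy) must be invoked. By contrast the bounded base case, handled cleanly by the comparison principle together with the strong maximum principle and the scaling $H_m((1-\delta)v)=(1-\delta)^mH_m(v)$, is the easy part.
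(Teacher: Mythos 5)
Your bounded case is correct, and it is in fact a different argument from the paper's: the paper never isolates an unweighted bounded case, but instead proves, by repeated integration by parts (Theorem \ref{theorem: integration by parts}), the \emph{weighted} inequality $\into(-h)H_m(v_j)\leq\into(-h)H_m(u_j)$ for a fixed $h\in\Ecoo\cap\Cc(\Omega)$ and bounded approximants with $u_j\leq v_j$. However, your reduction of the general case to the bounded one has a genuine gap, and it is exactly the one you flag: you need $\into H_m(u_j)\to\into H_m(u)$ and $\into H_m(\max(u_j,v_j))\to\into H_m(v)$ along the decreasing approximating sequences. This does \emph{not} follow from "the defining properties of these classes": the uniform bounds in the definitions of $\Fco$ and $\Ecpo$ only give weak convergence of the Hessian measures (Theorems \ref{thm: convergence E} and \ref{thm: convergence Ep 1}), hence the one-sided bound $\into H_m(u)\leq\liminf_j\into H_m(u_j)$; nothing in the definitions rules out loss of mass at $\pO$. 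Worse, the missing fact is equivalent to the theorem you are trying to prove: granted the theorem, $u\leq u_j$ gives $\into H_m(u_j)\leq\into H_m(u)$, which combined with lower semicontinuity yields the mass convergence; conversely, your argument derives the theorem from it. So invoking it as a known structural fact would be circular — it is the content of the theorem, not an input to it.

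The paper closes precisely this gap by never working with total masses until the very last step. Fix $h\in\Ecoo\cap\Cc(\Omega)$ and arrange $u_j\leq v_j$. Integration by parts gives $\into(-h)H_m(v_j)\leq\into(-h)H_m(u_j)$; since the weight $-h$ vanishes on $\pO$, there is no boundary-mass issue, and Corollaries \ref{cor: def hes E} and \ref{cor: def hes Ep} — which are proved independently, by Cegrell-type arguments and a truncation argument respectively — allow passing to the limit in $j$ with $h$ fixed, yielding $\into(-h)H_m(v)\leq\into(-h)H_m(u)$. Only then does one let $h\downarrow-1$, and monotone convergence upgrades the weighted inequality to the inequality between total masses. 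Note that your unweighted bounded-case inequality cannot be fed into this machinery: the corollaries control $\into(-h)H_m(u_j)$, not $\into H_m(u_j)$, so the weighted form of the inequality (hence integration by parts, or some substitute for it) is indispensable. Repairing your proof means replacing the scaling argument by the weighted one, at which point it becomes the paper's proof.
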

\begin{proof}
Let  $(u_j), (v_j)$ be two sequences in  $\Ecoo$ decreasing to  $u, v$ as in the definition of $\Ecpo.$ Fix $h\in \Ecoo\cap \Cc(\Omega).$  We can suppose that 
 $u_j\leq v_j,\ \forall j.$  Integrating by parts we get
$$
\into (-h)H_m(v_j)\leq \into (-h)H_m(u_j).
$$
Corollary \ref{cor: def hes E} and Corollary \ref{cor: def hes Ep} then yield
$$
\lim_{j\to \infty}  \into (-h)H_m(v_j)=\into (-h) H_m(v),\ \ \ {\rm and} \ \ \ \lim_{j\to \infty}  \into (-h)H_m(u_j)=\into (-h) H_m(u).
$$
Combining them we obtain
$$
\into (-h)H_m(v)\leq \into (-h)H_m(u).
$$
The result follows by letting $h$ decrease to $-1.$
\end{proof}
\begin{theorem}\label{thm: convergence Ep 2}
If $u\in \Ecpo$, $p>0$, then $\E_p(u):=\into (-u)^pH_m(u)<+\infty.$
If $(u_0^j),...,(u_m^j)$ are sequences in $\Ecoo$ decreasing to $u_0,...,u_m\in \Ecpo$ respectively  then
$$
\into (-u_0^j)dd^c u_1^j\wedge...\wedge dd^cu_m^j\wedge\beta^{n-m}
\nearrow \into (-u_0)dd^c u_1\wedge...\wedge dd^cu_m\wedge \beta^{n-m}.
$$
\end{theorem}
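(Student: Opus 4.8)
The plan is to treat the two assertions separately: first the finiteness of the weighted energy, then the convergence statement, relying on the weak convergence of Theorem \ref{thm: convergence Ep 1}, the convergence of Corollary \ref{cor: def hes Ep}, and the integration by parts available in $\Ecpo$ (Theorem \ref{thm: integration par parties Ep p>0}).

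For the finiteness of $\E_p(u)$, I fix a decreasing sequence $(u_j)\subset\Ecoo$ with $u_j\downarrow u$ and $M:=\sup_j\E_p(u_j)<+\infty$, which exists by the very definition of $\Ecpo$. Theorem \ref{thm: convergence Ep 1}, applied with all entries equal to $u$, gives $H_m(u_j)\weak H_m(u)$. For each fixed $k$ the function $(-u_k)^p$ is nonnegative, lower semicontinuous and vanishes on $\pO$, hence it is an increasing pointwise limit of functions of $\Cc_0(\Omega)$; testing the weak convergence against these and passing to the supremum yields
$$
\into(-u_k)^pH_m(u)\leq\liminf_{j\to+\infty}\into(-u_k)^pH_m(u_j).
$$
Since $(-u_k)^p\leq(-u_j)^p$ for $j\geq k$, the right-hand side is bounded by $\sup_j\E_p(u_j)=M$. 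Letting $k\to+\infty$ and invoking monotone convergence (as $(-u_k)^p\uparrow(-u)^p$) gives $\E_p(u)\leq M<+\infty$.

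For the convergence I write $I_j:=\into(-u_0^j)H_m(u_1^j,\dots,u_m^j)$ and $I:=\into(-u_0)H_m(u_1,\dots,u_m)$, all integrals being elements of $[0,+\infty]$. A preliminary observation is that the hypotheses already force $\sup_j\E_p(u_k^j)<+\infty$ for each $k$: comparing $u_k^j$ with a defining sequence $w^l\downarrow u_k$ of bounded energy through the functions $\max(u_k^j,w^l)$ and using the stability estimate $\E_p(b)\leq C\E_p(a)$ for $a\leq b$ in $\Ecoo$ (established in the proof of Corollary \ref{cor: def hes Ep}) together with the locally bounded convergence of Theorem \ref{thm: convergence decreasing} bounds the energies uniformly; in particular Theorem \ref{thm: convergence Ep 1} and Corollary \ref{cor: def hes Ep} apply to the sequences $(u_k^j)_j$. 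The basic monotonicity tool is that, for $v\leq v'\leq 0$ and $w\leq 0$ and a positive mixed current $T=dd^c\f_1\wedge\cdots\wedge dd^c\f_{m-1}\wedge\beta^{n-m}$ with all potentials in $\Ecpo$, integration by parts gives
$$
\into(-w)dd^cv\wedge T-\into(-w)dd^cv'\wedge T=\into(v'-v)\,dd^cw\wedge T\geq 0,
$$
so that decreasing any potential can only increase such an integral. Applying this inside $\Ecoo$ to the sequences themselves gives $I_j\leq I_{j+1}$, so $(I_j)$ increases; applying it with $w=u_0^j$ fixed and lowering the current potentials one at a time from $(u_1^j,\dots,u_m^j)$ to $(u_1,\dots,u_m)$ gives $I_j\leq\into(-u_0^j)H_m(u_1,\dots,u_m)\leq I$, the last step because $-u_0^j\leq-u_0$. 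For the matching lower bound I fix $k$; then $-u_0^k\leq-u_0^j$ for $j\geq k$ gives $I_j\geq\into(-u_0^k)H_m(u_1^j,\dots,u_m^j)$, and since $u_0^k\in\Ecoo$ is an admissible test function, Corollary \ref{cor: def hes Ep} yields $\into(-u_0^k)H_m(u_1^j,\dots,u_m^j)\to\into(-u_0^k)H_m(u_1,\dots,u_m)$ as $j\to+\infty$. Hence $\liminf_jI_j\geq\into(-u_0^k)H_m(u_1,\dots,u_m)$, and letting $k\to+\infty$ by monotone convergence gives $\liminf_jI_j\geq I$. Together with $I_j\uparrow$ and $I_j\leq I$ this proves $I_j\nearrow I$.

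The crux is the upper bound $I_j\leq I$. Because the weights $-u_0^j$ increase, upper semicontinuity against the weakly converging measures $H_m(u_1^j,\dots,u_m^j)$ is unavailable, and one must instead rely on the integration by parts identity above being valid for the unbounded functions of $\Ecpo$. This is exactly where the delicate bookkeeping lies: Theorem \ref{thm: integration par parties Ep p>0} is stated with the proviso that one of the two sides be finite, so when the limit potentials are unbounded I expect to run the comparison through the truncations $\max(u_k,k\psi)$ as in the proof of Corollary \ref{cor: def hes Ep}, controlling the error terms by the estimates of Lemma \ref{lem: persson} (for $p\geq1$) and Proposition \ref{prop: weight GZ 2} (for $0<p<1$) and the uniform energy bound, and only then letting the truncation level tend to infinity.
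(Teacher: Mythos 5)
Your proposal is correct and takes essentially the same route as the paper: lower semicontinuity of the increasing weights against the weakly convergent mixed Hessian measures (Theorem \ref{thm: convergence Ep 1}) gives the finiteness of $\E_p(u)$ and the lower bound $\liminf_j I_j\geq I$, while the upper bound $I_j\leq \into(-u_0)H_m(u_1,\dots,u_m)$ comes from integration by parts in $\Ecpo$ (Theorem \ref{thm: integration par parties Ep p>0}), swapping the weight and the current potentials one at a time exactly as the paper does (only in the opposite order), and you additionally justify the uniform energy bound and the monotonicity of $I_j$ that the paper merely asserts. Your closing worry about the finiteness proviso does not require truncations: the upper bound is vacuous when $I=+\infty$, and when $I<+\infty$ one simply runs the chain of swaps starting from the finite side, so that in each application of integration by parts one of the two terms is already known to be finite.
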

\begin{proof}
Let $(u_j)$ be a sequence in $\Ecoo$ decreasing to $u$ and having  uniformly bounded $p$-energy. Then
$$
\into (-u)H_m(u)\leq \liminf_{j\to+\infty} \into(-u_j)H_m(u_j) <+\infty.
$$
We now prove the second statement.  We can assume that the sequences have uniformly bounded $p$-energy.
 It follows from Theorem  \ref{thm: convergence Ep 1} that
$$
T_j:=dd^cu_1^j\wedge...\wedge dd^cu_m^j\wedge \beta^{n-m}\rightharpoonup 
T:=dd^cu_1\wedge...\wedge dd^cu_m\wedge \beta^{n-m}.
$$
Furthermore since $(-u_0^j)\uparrow (-u_0)$ and since all of them are lower semicontinuous, we have
$$
\liminf_j\into (-u_0^j)T_j\geq \into (-u_0^j)T.
$$
Thus, it suffices to prove that
$$
\into (-u_0)T_j\leq \into (-u_0)T, \forall j.
$$
But it can be easily seen by integrating by parts thanks to Theorem \ref{thm: integration par parties Ep p>0}. The proof is thus complete.
\end{proof}
\begin{theorem}\label{thm: principe de comparaison  Ep 2}
If $0<p\leq 1$ and $u,v\in \Ecpo$  then
$$
\int_{\{u>v\}} H_m(u)\leq \int_{\{u>v\}}H_m(v).
$$
\end{theorem}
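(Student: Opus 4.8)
The plan is to compare $u$ and $v$ through $w:=\max(u,v)$, which again belongs to $\Ecpo$ by Theorem~\ref{thm: convexite cegrell}, and thereby to trade the comparison \emph{on} the set $\{u>v\}$ for a comparison of \emph{total masses}, the latter being stable under the convergence results already established. Applying the maximum principle (Theorem~\ref{thm: principe du maximum Ep}) first with all entries equal to $u$ and then with all entries equal to $v$ gives the two localizations
\begin{equation*}
\ind_{\{u>v\}}H_m(w)=\ind_{\{u>v\}}H_m(u),\qquad \ind_{\{v>u\}}H_m(w)=\ind_{\{v>u\}}H_m(v),
\end{equation*}
while $w\geq v$ together with Theorem~\ref{thm: principe de comparaison Ep} gives $\into H_m(w)\leq\into H_m(v)$. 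Before using these, I would reduce to the case of finite total mass by localizing to a relatively compact subset and replacing $u,v$ by functions of $\Fcpo$ that coincide with them there, exactly as in the proof of Theorem~\ref{thm: convergence Ep 1}; this makes all the masses below finite and the cancellations legitimate.

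Next I would combine the three facts. Splitting $\into H_m(w)$ and $\into H_m(v)$ over $\{u>v\}$, $\{u=v\}$ and $\{v>u\}$, inserting the two localizations, and cancelling the common term carried by $\{v>u\}$, the mass inequality reduces to
\begin{equation*}
\int_{\{u>v\}}H_m(u)+\int_{\{u=v\}}H_m(w)\leq \int_{\{u>v\}}H_m(v)+\int_{\{u=v\}}H_m(v).
\end{equation*}
Consequently the theorem follows at once from the contact-set estimate $\int_{\{u=v\}}H_m(w)\geq\int_{\{u=v\}}H_m(v)$.

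The contact-set estimate is where the real work lies, and I expect it to be the main obstacle, since the maximum principle only controls $H_m(w)$ on the \emph{open} sets $\{u>v\}$ and $\{v>u\}$. The natural tool is the Bedford--Taylor type inequality $H_m(\max(u,v))\geq \ind_{\{v\geq u\}}H_m(v)+\ind_{\{v<u\}}H_m(u)$, whose restriction to $\{u=v\}\subset\{v\geq u\}$ is precisely what is needed. For bounded $u,v$ this is classical and can be recovered from the bounded maximum principle (Theorem~\ref{theorem: maximum principle}) applied to $u$ and $v+\varepsilon$ on the open sets $\{v+\varepsilon>u\}\downarrow\{v\geq u\}$, letting $\varepsilon\downarrow0$ with the monotone convergence Theorem~\ref{thm: convergence decreasing}. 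To pass from bounded functions to $\Ecpo$, I would approximate $u$ and $v$ from above by decreasing sequences in $\Ecoo$ of uniformly bounded $p$-energy and take limits via Theorem~\ref{thm: convergence Ep 1} and Corollary~\ref{cor: def hes Ep}; the restriction $0<p\leq1$ enters here, as it is what keeps the energies of the auxiliary extremal functions under control through Lemma~\ref{lem: capacity Ep 0<p<1}. The delicate point in this last step is that $\{u_j=v_j\}$ need not converge to $\{u=v\}$, so rather than passing to the limit on the contact set directly I would work on the strict sublevel sets $\{v+\varepsilon>u\}$ and use quasicontinuity (Theorem~\ref{thm: m-quasicontinuity}) to transfer the inequality to the limit. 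For the bounded approximants one may alternatively invoke the comparison principle of Theorem~\ref{thm: comparison principle 1} directly, so that the entire difficulty is concentrated in this limit passage.
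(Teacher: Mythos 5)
Your algebraic reduction of the theorem to the contact-set estimate $\int_{\{u=v\}}H_m(\max(u,v))\geq\int_{\{u=v\}}H_m(v)$ is sound, and your localizations via Theorem~\ref{thm: principe du maximum Ep} and the mass monotonicity from Theorem~\ref{thm: principe de comparaison Ep} are used correctly. But the two pillars the reduction rests on are precisely the points you do not prove, and both are genuine gaps. First, the finiteness-by-localization step: the balayage reduction in the proof of Theorem~\ref{thm: convergence Ep 1} is legitimate there because the statement being proved is local (weak convergence of measures near a compact set). Your statement is global: if you replace $u,v$ by $\tilde u,\tilde v\in\Fcpo$ coinciding with them on a compact $K$, the conclusion for $\tilde u,\tilde v$ bounds $\int_{\{\tilde u>\tilde v\}}H_m(\tilde u)$ by $\int_{\{\tilde u>\tilde v\}}H_m(\tilde v)$; the left side does dominate $\int_{\{u>v\}\cap\,{\rm int}K}H_m(u)$, but the right side carries the mass that balayage sweeps onto $\partial K$ together with the part of $\{\tilde u>\tilde v\}$ lying outside $K$, and neither can be related back to $\int_{\{u>v\}}H_m(v)$. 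That this reduction is not routine is visible in the paper itself: for $p>1$ the analogous reduction (Theorem~\ref{thm: principe de comparaison Ep 3}) is achieved only by solving Dirichlet problems via Lemma~\ref{lem: preparing lemma 2}, i.e.\ it needs the existence theory of the later sections. Second, and more seriously, the Demailly-type inequality $H_m(\max(u,v))\geq\ind_{\{v\geq u\}}H_m(v)$ for \emph{unbounded} $u,v\in\Ecpo$: your proof for bounded functions is fine, but the passage to $\Ecpo$ is the entire difficulty, and ``work on the sublevel sets $\{v+\varepsilon>u\}$ and use quasicontinuity'' is a statement of intent, not an argument --- weak convergence of Hessian measures along decreasing sequences gives no control of restrictions to the non-open sets $\{v_j\geq u_j\}$, which is exactly the obstruction you yourself identify. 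This inequality is true, but in the Cegrell-class setting it is an independent nontrivial theorem; nothing in the paper supplies it.

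The paper's proof avoids the contact set altogether, and the comparison is instructive. It fixes $h\in\Ecoo\cap\Cc(\Omega)$ and works throughout with the weighted masses $\into(-h)H_m(\cdot)$, which are finite for every function of $\Ecpo$ when $0<p\leq1$ because $(-h)\leq\Vert h\Vert_{\infty}^{1-p}(-h)^p$ together with Remark~\ref{rmk: equiv Ep}; this, not Lemma~\ref{lem: capacity Ep 0<p<1}, is where the restriction on $p$ enters (note that if your localization did work, your argument would prove the case $p>1$ with no additional input, which should have been a warning sign). Finiteness therefore needs no localization. Then comes the key trick you are missing: since the sets $\{v=ru\}$, $0<r<1$, are pairwise disjoint, one has $\int_{\{v=ru\}}(-h)H_m(v)=0$ for all but countably many $r$; replacing $u$ by $ru$ and letting $r\uparrow1$ at the very end, one may assume from the start that
\begin{equation*}
\int_{\{u=v\}}(-h)H_m(v)=0,
\end{equation*}
so the contact set simply never contributes. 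After that, your own cancellation scheme (maximum principle on $\{u>v\}$ and $\{u<v\}$, monotonicity of the weighted masses, then $h\downarrow-1$) closes the proof with no estimate on $\{u=v\}$ needed. If you want to salvage your route, replace both unproved steps by this scaling argument.
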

\begin{proof}
Fix $h\in \Ecoo\cap \Cc(\Omega).$ The measure  $H_m(v)$ 
does not charge $m$-polar sets. We can easily show that for almost every $r,$
$$
\int_{\{v=ru\}}(-h)H_m(v)=0 .
$$
This allows us  to restrict ourself to the case $\int_{\{u=v\}}(-h)H_m(v)=0.$
From Theorem \ref{thm: principe du maximum Ep},  we get
$$
\ind_{\{u>v\}}H_m(u)=\ind_{\{u>v\}} H_m(\max(u,v)), \ \text{and}\  \ind_{\{u<v\}}H_m(v)=\ind_{\{u<v\}} H_m(\max(u,v)).
$$
Furthermore, as in the proof of Theorem \ref{thm: principe de comparaison Ep}, we can prove that
$$
\into (-h)H_m(\max(u,v))\leq \into (-h)H_m(u).
$$
From this we get
\begin{eqnarray*}
\int_{\{u>v\}}(-h)H_m(u) &=& \int_{\{u>v\}}(-h)H_m(\max(u,v))\\
&\leq &\into (-h)H_m(\max(u,v))+\int_{\{u< v\}}hH_m(\max(u,v))\\
&\leq &\into (-h)H_m(v)+\int_{\{u<v\}}hH_m(v)=\int_{\{u>v\}}(-h)H_m(v).
\end{eqnarray*}
The above arguments hold since all terms are finite. This is no longer true if $p>1.$
Now, letting  $h\downarrow -1$ we obtain the result. 
\end{proof}
\begin{rmk}\label{rmk: comparion 1} We proved in the above arguments that  
$$
\int_{\{u>v\}}(-h)H_m(v)\leq \int_{\{u>v\}}(-h)H_m(u)
$$
if $u,v\in \Ecpo, 0< p\leq 1$ and $h\in \Ecoo\cap \Cc(\Omega).$  
Thanks to the regularization theorem   (Theorem \ref{thm: continuous global approximation})
it also holds for every  $h\in \SH_m^-(\Omega).$
\end{rmk}
\begin{theorem}\label{thm: principe de comparaison 3}
Let  $u,v\in \Ecpo$, $0<p\leq 1$, such that  $H_m(u)\geq H_m(v).$ 
Then  $u\leq v$ in $\Omega$.
\end{theorem}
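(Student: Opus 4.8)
The plan is to argue by a perturbation that forces the set $\{u>v\}$ to be Lebesgue-negligible, and then to upgrade this to a genuine pointwise inequality. First I would fix a strictly $m$-subharmonic weight: since $\Omega$ is $m$-hyperconvex and $\beta^n$ has bounded density, the Dirichlet problem $H_m(\rho)=\beta^n$ with zero boundary values has a solution $\rho\in\Ecoo\cap\Cc(\Omega)$ with $\rho<0$ on $\Omega$ (by \cite{Bl05,DK11}); after rescaling I may assume $0\le-\rho\le1$. For $\epsilon>0$ set $u_\epsilon:=u+\epsilon\rho$. Because $\Ecpo$ is a convex cone (Theorem~\ref{thm: convexite cegrell} together with the homogeneity of $\E_p$) and $\rho\in\Ecoo\subset\Ecpo$, the function $u_\epsilon$ again lies in $\Ecpo$ and is $\le0$.

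The key pointwise estimate is $H_m(u_\epsilon)\ge H_m(u)+\epsilon^m H_m(\rho)\ge H_m(v)+\epsilon^m\beta^n$. This follows by expanding $(dd^c u+\epsilon\,dd^c\rho)^m\wedge\beta^{n-m}$ multilinearly—valid in $\Ecpo$ by approximating $u,\rho$ from above by $\Ecoo$ functions and passing to the limit via Theorem~\ref{thm: convergence Ep 1}—and discarding the nonnegative mixed terms, keeping only the $k=0$ and $k=m$ ones; the hypothesis $H_m(u)\ge H_m(v)$ gives the last inequality. Now I would apply the weighted comparison principle established inside the proof of Theorem~\ref{thm: principe de comparaison Ep 2}, namely $\int_{\{a>b\}}(-h)H_m(a)\le\int_{\{a>b\}}(-h)H_m(b)$ for $a,b\in\Ecpo$ and $h\in\Ecoo\cap\Cc(\Omega)$, with $a=u_\epsilon$, $b=v$ and weight $h=\rho$. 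Combined with the pointwise estimate this yields
\[
\int_{\{u_\epsilon>v\}}(-\rho)H_m(v)+\epsilon^m\int_{\{u_\epsilon>v\}}(-\rho)\,\beta^n\le\int_{\{u_\epsilon>v\}}(-\rho)H_m(u_\epsilon)\le\int_{\{u_\epsilon>v\}}(-\rho)H_m(v).
\]

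Here I would invoke finiteness of $\into(-\rho)H_m(v)$: since $0\le-\rho\le1$ one has $(-\rho)\le(-\rho)^p$ when $0<p<1$, so Proposition~\ref{prop: weight GZ 2} (and Lemma~\ref{lem: persson} when $p=1$), applied to a decreasing $\Ecoo$ approximation of $v$ and passed to the limit, bounds it by the finite energies $\E_p(\rho)$ and $\E_p(v)$. Cancelling the finite term $\int_{\{u_\epsilon>v\}}(-\rho)H_m(v)$ leaves $\epsilon^m\int_{\{u_\epsilon>v\}}(-\rho)\,\beta^n\le0$; as $-\rho>0$ throughout $\Omega$, the set $\{u_\epsilon>v\}$ is $\beta^n$-null, hence Lebesgue-negligible. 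Since $u_\epsilon$ and $v$ are subharmonic, $u_\epsilon\le v$ a.e. forces $u_\epsilon\le v$ everywhere (compare the sub-mean value of $u_\epsilon$ at a point with the decreasing ball-averages of $v$). Finally, letting $\epsilon\to0$ and using that $\rho$ is bounded gives $u\le v$ on $\Omega$.

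The main obstacle I anticipate is the bookkeeping of finiteness, so that the strict gain $\epsilon^m\beta^n$ can actually be isolated: a function of $\Ecpo$ need not have finite total Hessian mass, which is exactly why I run the comparison against the bounded continuous weight $-\rho$ and lean on the Persson/Guedj--Zeriahi energy estimates rather than on the unweighted form of Theorem~\ref{thm: principe de comparaison Ep 2}. A secondary point to get right is the production of the weight $\rho$ with $H_m(\rho)$ comparable to Lebesgue measure, together with the verification that $u_\epsilon\in\Ecpo$ and the multilinear lower bound for $H_m(u_\epsilon)$.
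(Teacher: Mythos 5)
Your overall strategy is sound and, modulo one step, complete: perturbing $u$ to $u_\epsilon=u+\epsilon\rho$, invoking the weighted inequality $\int_{\{a>b\}}(-h)H_m(a)\leq\int_{\{a>b\}}(-h)H_m(b)$ from the proof of Theorem \ref{thm: principe de comparaison Ep 2} (you quote the orientation actually derived there, which is the right one --- note that Remark \ref{rmk: comparion 1} prints it reversed, apparently a typo), cancelling the term $\int_{\{u_\epsilon>v\}}(-\rho)H_m(v)$ after checking its finiteness via Proposition \ref{prop: weight GZ 2} and Remark \ref{rmk: equiv Ep}, and upgrading ``a.e.'' to ``everywhere'' by sub-mean values. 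The genuine gap is your very first step: the existence of $\rho\in\Ecoo\cap\Cc(\Omega)$ with $H_m(\rho)=\beta^n$. Neither \cite{Bl05} nor \cite{DK11} provides this on a general $m$-hyperconvex domain: B{\l}ocki solves the homogeneous Dirichlet problem, and Dinew--Ko{\l}odziej work in smoothly bounded, \emph{strongly} $m$-pseudoconvex domains, while $\Omega$ here carries no boundary regularity at all. Inside the paper, the results that would hand you such a $\rho$ (Theorem \ref{thm: first main theorem}, Lemma \ref{lem: preparing lemma 2}, Theorem \ref{thm: solution Ep}) appear \emph{after} the present theorem and their proofs use exactly the domination principle being proved (for uniqueness, and to get monotonicity of the approximating solutions), so quoting them would be circular.

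The gap is repairable with material already available, because you only need $H_m(\rho)\geq c\,\beta^n$ on an arbitrary ball rather than on all of $\Omega$. Fix $B=B(z_0,r)\Subset\Omega$, let $w:=u_{m,\bar B,\Omega}\in\Ecoo\cap\Cc(\bar\Omega)$ (Proposition \ref{prop: union of balls} and Theorem \ref{theorem: m-capacity and m-extremal functions}), choose $\delta>0$ with $\delta\,\sup_\Omega\vert z-z_0\vert^2\leq 1/2$, and set
\begin{equation*}
\rho_B:=\max\Big(2w,\ \delta\vert z-z_0\vert^2-\tfrac32\Big).
\end{equation*}
Near $\pO$ one has $2w\to 0$ while the second function is $\leq -1$, so $\rho_B=2w\to 0$; on $\bar B$ one has $2w=-2<-\tfrac32$, so $\rho_B=\delta\vert z-z_0\vert^2-\tfrac32$ there, whence $H_m(\rho_B)=\delta^m\beta^n$ and $-\rho_B\geq 1$ on $B$. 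Moreover $\rho_B\geq 2w\in\Ecoo$ and $\rho_B\in\SH_m^-(\Omega)$, so $\rho_B\in\Ecoo$ by Theorem \ref{thm: convexite cegrell}, and it is continuous. Running your argument with $\rho_B$ (the multilinear lower bound via Theorem \ref{thm: convergence Ep 1} goes through verbatim) gives that $\{u+\epsilon\rho_B>v\}\cap B$ is Lebesgue-null; since $\rho_B\geq -2$, letting $\epsilon\to 0$ shows $\{u>v\}\cap B$ is null, and covering $\Omega$ by countably many such balls yields $u\leq v$ a.e., hence everywhere. With this fix your proof is correct and self-contained, whereas the paper itself disposes of the statement by citing \cite[Theorem 4.5]{Ceg98}, whose argument is of the same perturbation-plus-comparison type.
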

\begin{proof}
See \cite[Theorem 4.5]{Ceg98}. 
\end{proof}


\section{The variational approach}
In this section we use a variational method to solve the equation 
$
H_m(u)=\mu,
$
where $\mu$  is a positive Radon measure. We characterize the range of  $H_m(u)$ when $u$ runs in $\Ecpo.$ 

Our results  are direct generalizations of the classical case
of plurisubharmonic functions (see \cite{ACC10}, \cite{Ceg98,Ceg04}). The variational approach for the complex
Monge-Amp\`ere equation was first introduced in \cite{BBGZ13}.

\subsection{The energy functional}

We recall some useful results obtained from previous sections.
For $\varphi\in \Ecto$, we define its energy by $\E(\varphi)=\into (-\varphi)H_m(\varphi).$
\begin{itemize}
\item  If $0\geq u_j\downarrow u$ and $u\in \Ecto$,  then  by Theorem \ref{thm: convergence Ep 2}, we have
 $\E(u_j)\uparrow \E(u).$
\item  If $u,v\in \Ecto$ and $u\leq v$ then $\E(u)\geq \E(v).$
\end{itemize}
\begin{lemma}\label{lem: compact E1}
(i) If $(u_j)\subset \Ecto$ then $(\sup_j u_j)^{*}\in \Ecto.$ 

(ii) If $(u_j)\in \Ecto$ such that  $\sup_j \E(u_j)<+\infty$ and $u_j\downarrow u,$ 
then $u\in \Ecto.$

(iii) For each $C>0$,  $\Ec_m^{1,C}:=\{u \in \Ecto \ / \  \E (u)\leq C \}$ is convex and compact in 
 $\SH_m(\Omega).$
\end{lemma}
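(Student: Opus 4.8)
The plan is to handle the three parts in the order (i), (ii), (iii), since (iii) will be assembled from the first two. Part (i) should be immediate: each $u_j$ is non-positive, so $v:=(\sup_j u_j)^*$ is non-positive and, being the regularized supremum of a family bounded above by $0$, lies in $\SH_m^-(\Omega)$ by Proposition \ref{prop: basic property of m subharmonic functions}(iv). Since $v\ge u_1$ and $u_1\in\Ecto$, the domination property in Theorem \ref{thm: convexite cegrell} (``$w\in\Ec$, $w'\in\SH_m^-$, $w'\ge w$ $\Rightarrow$ $w'\in\Ec$'') gives $v\in\Ecto$ at once; no energy bound is needed.

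Part (ii) is the heart of the lemma, and the obstacle is structural: $\Ecto$ is defined through \emph{decreasing} sequences of \emph{bounded} functions, whereas the naive way to approximate a decreasing limit from above — taking pointwise infima of approximants — destroys $m$-subharmonicity. I would circumvent this with regularized tail suprema. Using the continuity of energy along decreasing sequences (Theorem \ref{thm: convergence Ep 2}, which gives $\E(\phi^{(k)})\uparrow\E(u_j)$ for any $\Ecoo\ni\phi^{(k)}\downarrow u_j$), choose for each $j$ a function $\phi_j\in\Ecoo$ with $\phi_j\ge u_j$, with $\E(\phi_j)\le C:=\sup_i\E(u_i)$, and close enough to $u_j$ that $\int_\Omega(\phi_j-u_j)\,dV\le 2^{-j}$. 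Then set
$$
g_N:=\Big(\sup_{j\ge N}\phi_j\Big)^*.
$$
The sequence $(g_N)$ is decreasing in $N$. The key point is that $g_N\ge\phi_N$, so $g_N$ is bounded below by the \emph{bounded} function $\phi_N$; together with $g_N\le 0$, the vanishing boundary values inherited from $\phi_N$, and finite Hessian mass (from $g_N\ge\phi_N$ and the comparison principle, Theorem \ref{thm: comparison principle 1}), this places each $g_N$ in $\Ecoo$. The pointwise estimate $0\le g_N-u_N\le\sum_{j\ge N}(\phi_j-u_j)$ forces $\int_\Omega(g_N-u_N)\,dV\le 2^{1-N}$, so $g_N\to u$ in $L^1_{\rm loc}$ and, being decreasing, $g_N\downarrow u$; finally $g_N\ge\phi_N$ yields $\E(g_N)\le\E(\phi_N)\le C$ by the monotonicity of energy recalled above. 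Thus $(g_N)$ is an admissible defining sequence and $u\in\Ecto$. I expect this construction to be the delicate step, since one must simultaneously keep the approximants bounded, make them decrease, force convergence to the correct limit $u$, and retain the uniform energy bound; dominating the single bounded $\phi_N$ is exactly what keeps $g_N$ bounded, while the summable defects pin the limit to $u$.

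Part (iii) then follows from (i) and (ii). For convexity I would expand, for $w=tu+(1-t)v$ with $u,v\in\Ec_m^{1,C}$, the measure $H_m(w)$ multinomially and bound each mixed term $\int_\Omega(-u)\,(dd^cu)^i\wedge(dd^cv)^{m-i}\wedge\beta^{n-m}$ (and its analogue with $-v$) by $C$ via Persson's inequality (Lemma \ref{lem: persson}), since the exponents there sum to $1$ and each energy is $\le C$; collecting the binomial weights gives $\E(w)\le C$, while $w\in\Ecto$ by Theorem \ref{thm: convexite cegrell}. For compactness, the energy bound first yields a uniform $L^1_{\rm loc}$ bound: fixing $\rho\in\Ecoo$ with $H_m(\rho)\ge\mathbf{1}_K\,dV$ and applying Lemma \ref{lem: persson} bounds $\int_K(-u)\,dV$ by a constant depending only on $C$ and $\rho$. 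Standard compactness of $L^1_{\rm loc}$-bounded families of $m$-subharmonic functions then extracts, from any sequence in $\Ec_m^{1,C}$, a subsequence $u^{(l)}\to u$ in $L^1_{\rm loc}$ with $u\in\SH_m^-(\Omega)$. Applying part (i) to $\{u^{(l)}:l\ge N\}$ gives $v_N:=(\sup_{l\ge N}u^{(l)})^*\in\Ecto$ with $\E(v_N)\le C$ and $v_N\downarrow u$; part (ii) yields $u\in\Ecto$, and Theorem \ref{thm: convergence Ep 2} gives $\E(u)=\lim_N\E(v_N)\le C$, so $u\in\Ec_m^{1,C}$ and the set is sequentially compact.
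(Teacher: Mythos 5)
Your proof is correct in substance, but it takes a genuinely different route from the paper's, most notably in part (ii). For (i), the paper does not invoke the domination property: it takes continuous functions $\f_j\in\Ecoo$ decreasing to $(\sup_j u_j)^{*}$ (via Theorem \ref{thm: continuous global approximation}) and bounds their energies; your one-line appeal to Theorem \ref{thm: convexite cegrell} is simpler and equally valid. For (ii), the paper again leans on the global regularization theorem: it chooses $\f_j\in\Ecoo\cap\Cc(\Omega)$ decreasing to $u$ \emph{itself} and sets $\p_j:=\max(u_j,\f_j)$; these lie in $\Ecoo$, decrease to $u$, and satisfy $\E(\p_j)\le\E(u_j)$ by monotonicity, so the definition of $\Ecto$ applies at once. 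This max trick is exactly what resolves the structural obstacle you describe (infima destroying $m$-subharmonicity), in three lines. Your tail-suprema construction $g_N=(\sup_{j\ge N}\phi_j)^{*}$ with summable defects achieves the same thing without using Theorem \ref{thm: continuous global approximation} --- it needs only the defining sequences of the individual $u_j$ --- at the cost of being considerably heavier. For (iii), your compactness argument and the paper's coincide (an $L^1_{\rm loc}$ limit, regularized tail sups, then (i), (ii), and convergence of energies); your multinomial-plus-Persson proof of convexity is sound and is actually a supplement: the paper's proof of (iii) only addresses compactness, convexity being in effect Lemma \ref{lem: convexity of Fcmu}, proved later by the same use of Lemma \ref{lem: persson}.

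Two slips, both repairable. First, in (ii) you require $\into(\phi_j-u_j)\,dV\le 2^{-j}$, but it is neither established nor needed that functions of $\Ecto$ belong to $L^1(\Omega,dV)$ globally; if $u_j\notin L^1(\Omega,dV)$ this requirement is unachievable (every candidate $\phi_j$ gives an infinite integral). Replace it by $\int_{K_j}(\phi_j-u_j)\,dV\le 2^{-j}$ for a compact exhaustion $(K_j)$ of $\Omega$; the same summation then pins the decreasing limit of $(g_N)$ to $u$ in $L^1_{\rm loc}$, which suffices. Second, two citations need adjusting: the finiteness of $\into H_m(g_N)$ should come from the total-mass monotonicity of Theorem \ref{thm: principe de comparaison Ep} (after noting $g_N\ge\phi_N$ places $g_N\in\Ecto$ by Theorem \ref{thm: convexite cegrell}), not from Theorem \ref{thm: comparison principle 1}, whose conclusion concerns only the set $\{u<v\}$; and the auxiliary $\rho\in\Ecoo$ with $H_m(\rho)\ge \ind_K\,dV$ is asserted without construction --- one works, e.g. $\rho=\max\bigl(A(\Vert z\Vert^2-R^2),\,B\,u_{m,K_\epsilon,\Omega}\bigr)$ with $R^2>\sup_\Omega\Vert z\Vert^2$, $K_\epsilon$ as in Proposition \ref{prop: union of balls}, and $A,B$ suitable constants --- but you should either give it or rule out locally uniform convergence to $-\infty$ directly by a capacity estimate as in Lemma \ref{lem: capacity} (the paper itself asserts this point without proof).
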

\begin{proof}
(i) Let $(\f_j)$ be  a sequence of continuous functions in  $\Ecoo$ decreasing to 
 $\f:=(\sup_j u_j)^{*}.$  Since $u_j\leq \f_j$,  we have $\sup_j \E (\f_j)<+\infty,$ 
 which implies that $\f\in \Ecto.$

(ii) Let   $(\f_j)$ be a sequence in $\Ecoo\cap \Cc(\Omega)$ decreasing to $u.$ 
Set $\p_j:=\max(u_j,\f_j)$.  Then $\p_j \in \Ecto, \forall j$ and  $\E(\p_j)\leq \E(u_j)$. 
Thus, $u\in \Ecto.$

(iii) Let $(u_j)$ be a sequence in $\Ec_m^{1,C}$.  Since $\sup_{j} \E(u_j)<+\infty,$ $(u_j)$  can not go 
uniformly to  $-\infty$ in $\Omega.$  Thus, there exists a subsequence (still denoted by $(u_j)$)  converging 
to  $u\in \SH_m(\Omega)$ in $L^1_{\rm loc}(\Omega)$. Set
$$
\f_j:= (\sup_{k\geq j} u_k)^{*} \in \Ecto, \forall j.
$$
Then $\f_j\downarrow u$ and $\sup_j \E(\f_j)\leq C.$  In view of  (ii), we have  $u\in \Ecto,$ and since 
$(-\f_j)\uparrow (-u)$, all of them being lower semicontinuous  we get, for each fixed $k\in \N$, 
$$
C\geq \liminf_{j\to\infty} \into (-\f_j) H_m(\f_j) \geq  \liminf_{j\to\infty} \into (-\f_k) H_m(\f_j)\geq \into (-\f_k) H_m(u).
$$
By monotone convergence Theorem we see that $E_1(u)\leq C.$
This means $u\in \Ec_m^{1,C}.$
\end{proof}
\begin{lemma}\label{lem: change measure}
Let $\mu$ be a positive Radon measure  in $\Omega$ such that  $\mu(\Omega)<+\infty$ and
$\mu$ does not charge $m$-polar sets. Let  $(u_j)$ be a sequence in  $\SH_m^-(\Omega)$
which converges in $L^1_{\rm loc}$ to $u\in \SH_m^-(\Omega).$ 
 If $\sup_j \into (-u_j)^2d\mu <+\infty$ then  $\into u_jd\mu\to \into ud\mu.$
\end{lemma}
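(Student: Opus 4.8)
The plan is to prove the equivalent statement $\into(-u_j)\,d\mu\to\into(-u)\,d\mu$, in which all the integrands $-u_j,-u$ are nonnegative. First I would record the easy inequality. By the Hartogs-type convergence for $m$-subharmonic functions, the decreasing regularized envelopes $w_j:=(\sup_{k\ge j}u_k)^*$ satisfy $w_j\downarrow u$ on $\Omega$; in particular $u_j\le w_j$ and $\limsup_j u_j\le u$ outside the $m$-polar set $\{u=-\infty\}$. Since $\mu$ charges no $m$-polar set, this holds $\mu$-a.e., so Fatou's lemma applied to $-u_j\ge 0$ gives both $\into(-u)^2\,d\mu\le\sup_j\into(-u_j)^2\,d\mu=:M<+\infty$ and
\[
\into(-u)\,d\mu\le\liminf_{j\to\infty}\into(-u_j)\,d\mu,
\]
which is the upper semicontinuity $\limsup_j\into u_j\,d\mu\le\into u\,d\mu$.

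For the reverse inequality I would show that $u_j\to u$ in $\mu$-measure and then upgrade this to $L^1(\mu)$-convergence by uniform integrability. The $L^2(\mu)$-bound furnishes the uniform integrability at once: by Cauchy--Schwarz, $\int_E(-u_j)\,d\mu\le M^{1/2}\mu(E)^{1/2}$ for every Borel set $E$ and all $j$, so Vitali's theorem will close the argument once convergence in $\mu$-measure is established. The upper deviations are immediate: $\{u_j>u+\epsilon\}\subset\{w_j>u+\epsilon\}$, and since $w_j\downarrow u$ these latter sets decrease to a subset of $\{u=-\infty\}$; as $\mu$ is finite and charges no $m$-polar set, $\mu(\{u_j>u+\epsilon\})\to 0$.

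The heart of the matter is the lower deviations $A_j:=\{|u_j-u|>\epsilon\}$, i.e.\ showing $\mu(A_j)\to 0$, and this is where both hypotheses (the $L^2(\mu)$-bound and $\mu$ charging no $m$-polar set) are essential; I expect this to be the main obstacle. My plan is to first establish that $L^1_{\rm loc}$-convergence of the $m$-subharmonic functions $u_j$ to $u$ forces convergence in $m$-capacity, $\Ca_m(A_j)\to 0$ --- the delicate point, which I would prove by adapting the standard pluripotential arguments (quasi-continuity combined with the decreasing envelopes $w_j\downarrow u$ and the capacity estimates of Lemmas~\ref{lem: capacity Ep p>1}--\ref{lem: capacity Ep 0<p<1}). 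Granting this, I would transfer capacity smallness to $\mu$-smallness by a Borel--Cantelli argument: along a subsequence with $\Ca_m(A_{j_k})\le 2^{-k}$, subadditivity of $\Ca_m$ (Proposition~\ref{prop: properties of m-capacity}) gives $\Ca_m(\limsup_k A_{j_k})=0$, so this $\limsup$-set is $m$-polar (Theorem~\ref{theorem: m-negligible and m-polar}) and hence $\mu$-null. The reverse Fatou inequality for the finite measure $\mu$, namely $\limsup_k\mu(A_{j_k})\le\mu(\limsup_k A_{j_k})$, then yields $\mu(A_{j_k})\to 0$, and the usual subsequence principle upgrades this to convergence of the full sequence. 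Combining the two deviation estimates gives $u_j\to u$ in $\mu$-measure, and Vitali's theorem then gives $\into(-u_j)\,d\mu\to\into(-u)\,d\mu$, completing the proof.
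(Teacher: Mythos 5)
Your outer scaffolding is sound---the Fatou upper bound, the uniform integrability from Cauchy--Schwarz, the treatment of the upper deviations via $w_j=(\sup_{k\ge j}u_k)^*\downarrow u$, and the Borel--Cantelli transfer from capacity to $\mu$ are all correct---but the step you yourself flag as the heart of the matter is false: $L^1_{\rm loc}$-convergence of $m$-subharmonic functions does \emph{not} imply convergence in $m$-capacity, so no adaptation of quasi-continuity/envelope arguments can produce it. Already for $n=m=1$, take $\Omega=D(0,2)\subset\C$ and
\begin{equation*}
u_j(z)=\frac1j\log|z^j-1|-\log 4 ,\qquad u(z)=\max(\log|z|,0)-\log 4 .
\end{equation*}
Then $u_j,u\in\SH_1^-(\Omega)$ and $u_j\to u$ in $L^1_{\rm loc}(\Omega)$, but for fixed small $\epsilon>0$ the lower-deviation set $\{u_j<u-\epsilon\}$ contains the compact set $F_j:=\{|z^j-1|\le e^{-2\epsilon j}\}$ (on $F_j$ one has $u_j\le-2\epsilon-\log 4$ while $u\ge-\log 4$ everywhere), and $\Ca_1(F_j,\Omega)$ does \emph{not} tend to $0$: the function
\begin{equation*}
\f_j:=1+\max\left(\frac{\log|z^j-1|-(j+1)\log 2}{(j+1)\log 2+2\epsilon j}\, ,\,-1\right)
\end{equation*}
is subharmonic on $\Omega$ with $0\le\f_j\le 1$, it vanishes exactly on $F_j$, its Riesz measure inside $D(0,3/2)$ is carried by $F_j$, and by Stokes' theorem (using $\Delta\log|z^j-1|=2\pi\sum_{k=1}^{j}\delta_{\omega_k}$, $\omega_k$ the $j$-th roots of unity) this measure has total mass $2\pi j/((j+1)\log 2+2\epsilon j)$, which is bounded below by a positive constant independent of $j$; hence $\Ca_1(\{u_j<u-\epsilon\},\Omega)\ge c(\epsilon)>0$ for all large $j$. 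The same functions, read in the variable $z_1$ in $\C^n$, are plurisubharmonic, hence $m$-subharmonic, and defeat the claim for every $1\le m\le n$; moreover, taking $\mu$ to be normalized arclength on the unit circle, \emph{all} hypotheses of the Lemma are satisfied by this sequence, so the gap cannot be closed by feeding the $L^2(\mu)$-bound into the capacity step. Note finally that, given your (correct) easy steps, your intermediate target ``$u_j\to u$ in $\mu$-measure'' is essentially equivalent to the Lemma itself (it gives the Lemma by your Vitali argument, and conversely the Lemma plus your upper-deviation estimate gives $L^1(\mu)$-convergence), so it cannot be reached by any soft argument that bypasses the real difficulty.

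The paper's proof avoids exactly this obstruction by never asking the original sequence to converge in capacity or in $\mu$-measure. Since $\into u_j\,d\mu$ is bounded, one may assume it converges; since $(u_j)$ is bounded in $L^2(\mu)$, the Banach--Saks theorem yields a subsequence whose Ces\`aro means $\f_N=\frac1N\sum_{j=1}^N u_j$ converge in $L^2(\mu)$ and $\mu$-a.e.\ to some $\f$. These means still converge to $u$ in $L^1_{\rm loc}$, so the envelopes $\p_j:=(\sup_{k\ge j}\f_k)^*$ decrease to $u$ everywhere, while $\p_j=\sup_{k\ge j}\f_k$ off an $m$-negligible, hence $m$-polar, hence $\mu$-null set (Theorem \ref{theorem: m-negligible and m-polar}); letting $j\to\infty$ identifies $u=\f$ $\mu$-a.e., whence $\lim_j\into u_j\,d\mu=\lim_N\into\f_N\,d\mu=\into u\,d\mu$. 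The point is that the polarity hypothesis on $\mu$ is applied only to \emph{monotone} envelopes of the \emph{averaged} sequence, where the only loss is ``$\sup$ versus $\sup^*$''; the oscillation that kills your capacity claim is precisely what the averaging absorbs. To repair your proof you would have to replace the capacity step by an argument of this kind.
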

\begin{proof}
Since  $\into u_jd\mu$ is bounded it suffices to prove that  every cluster point is $\into u d\mu.$  
Without loss of generality we can assume that $\into u_j d\mu $ converges.  Since the sequence 
  $u_j$ is bounded in $L^2(\mu)$,  one can apply Banach-Saks theorem  to extract a subsequence
   (still denoted by  $u_j$) such that
$$
\f_N:=\frac{1}{N}\sum_{j=1}^Nu_j
$$
converges in $L^2(\mu)$ and  $\mu$-almost everywhere to $\f.$ Observe also that
 $\f_N\to u$ in $L^1_{\rm loc}$.  For each   $j\in\N$  set
$$
\p_j:=(\sup_{k\geq j}\f_k)^{*}.
$$
Then $\p_j\downarrow u$ in $\Omega.$  But $\mu$ does not charge the $m$-polar set
$
\{(\sup_{k\geq j}\f_k)^{*}>\sup_{k\geq j}\f_k\}.
$
We thus get
$\p_j=\sup_{k\geq j}\f_k$ $\mu$-almost everywhere. Therefore, 
 $\p_j$ converges to $\f$ $\mu$-almost everywhere hence $u=\f$ $\mu$-almost everywhere. This yields
$$
\lim_j \into u_jd\mu = \lim_j \into \f_jd\mu =\into u d\mu.
$$
\end{proof}
\begin{lemma}\label{lem: e1 usc}
The functional  $\E: \Ecto\rightarrow \R$ is lower semicontinuous.
\end{lemma}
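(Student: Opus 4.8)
The plan is to establish lower semicontinuity for convergence in $L^1_{\rm loc}(\Omega)$, the topology that $\SH_m(\Omega)$ carries and in which the sublevel sets $\Ec_m^{1,C}$ of Lemma \ref{lem: compact E1}(iii) are compact. The idea, following the variational scheme, is to approximate the limit from above by the regularized upper envelopes of the tails of the sequence, for which the monotonicity and monotone-convergence properties of $\E$ recorded above apply directly. So fix $(u_j)\subset\Ecto$ converging to $u\in\Ecto$ in $L^1_{\rm loc}$; I must show $\E(u)\le\liminf_j\E(u_j)$, and there is nothing to prove when the right-hand side is $+\infty$.

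First I set $\psi_j:=(\sup_{k\ge j}u_k)^*$. By Lemma \ref{lem: compact E1}(i) each $\psi_j$ belongs to $\Ecto$, and the sequence decreases to some $\psi\in\SH_m^-(\Omega)$. Since $\psi_j\ge u_k$ pointwise for every $k\ge j$, the monotonicity of the energy recalled above (the second bullet) gives $\E(\psi_j)\le\E(u_k)$ for all $k\ge j$, hence
$$
\E(\psi_j)\le\inf_{k\ge j}\E(u_k).
$$

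The heart of the matter, which I expect to be the main obstacle, is the identity $\psi=u$. The inequality $\psi\ge u$ is immediate: integrating $\psi_j\ge u_k$ over a compact set and letting $k\to\infty$ gives $\int_K\psi_j\ge\int_K u$ for every $j$, and the monotone convergence $\psi_j\downarrow\psi$ forces $\psi\ge u$ almost everywhere, hence everywhere by upper semicontinuity. For the reverse inequality I use that $m$-subharmonic functions are subharmonic, so they obey the sub-mean value inequality over Euclidean balls: for $B(z,r)\Subset\Omega$,
$$
\psi_j(z)\le\frac{1}{|B(z,r)|}\int_{B(z,r)}\psi_j\,dV,
$$
while $\psi_j=\sup_{k\ge j}u_k$ off a Lebesgue-null ($m$-polar) set and $\sup_{k\ge j}u_k\downarrow\limsup_k u_k$. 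The classical fact that $L^1_{\rm loc}$-convergence of subharmonic functions yields $(\limsup_k u_k)^*=u$, so $\limsup_k u_k\le u$, then gives (upon letting $j\to\infty$, then $r\to 0$, and using that the averages of $u$ tend to $u(z)$) the bound $\psi(z)\le u(z)$. Thus $\psi=u$; equivalently the regularized tail-envelopes decrease to the limit, a standard property that may alternatively be quoted from the literature.

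With $\psi=u\in\Ecto$, the monotone convergence of the energy (the first bullet, namely Theorem \ref{thm: convergence Ep 2}) gives $\E(\psi_j)\uparrow\E(u)$. Letting $j\to\infty$ in the displayed inequality, both sides being monotone limits, then yields
$$
\E(u)=\lim_j\E(\psi_j)\le\liminf_k\E(u_k),
$$
which is the assertion. Everything except the identification $\psi=u$ is a formal consequence of the two recalled properties of $\E$, so the only genuine work is that one envelope computation.
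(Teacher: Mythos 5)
Your proof is correct and takes essentially the same route as the paper's: both form the regularized tail envelopes $(\sup_{k\ge j}u_k)^*$, bound their energy by $\inf_{k\ge j}\E(u_k)$ via monotonicity, and conclude with the monotone convergence $\E\bigl((\sup_{k\ge j}u_k)^*\bigr)\uparrow\E(u)$. The only difference is that the paper asserts the decrease of these envelopes to $u$ without comment, while you supply the standard verification.
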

\begin{proof}
Suppose that $u, u_j\in \Ecto$ and $u_j$ converges to $u$ in $L^1_{\rm loc}(\Omega).$   
We are  to prove that $\liminf _j \E(u_j)\geq \E(u).$ For each $j\in\N$, the function
$$
\f_j:=(\sup_{k\geq j}u_k)^*
$$
belongs to $\Ecto$ and $\f_j\downarrow u.$  Hence  $\E(\f_j)\uparrow \E(u)$. 
We also have  $\E(u_j)\geq \E(\f_j)$ from which the result follows.
\end{proof}
\begin{definition}
Let $\mu$ be a positive Radon measure in $\Omega.$  The functional  $\Fcmu: \Ecto\rightarrow \R$  is defined by
$$
\Fcmu(u)=\frac{1}{m+1} \E(u) +\Lcm(u),
$$
where $\Lcm(u)= \into u d\mu.$
We say that $\Fcmu$ is proper  (with respect to $\E$) if $\Fcmu\to +\infty$ whenever $\E\to +\infty.$   
\end{definition}

\begin{definition}
For each $p>0$, let $\Mcc_p$ denote the set of all positive Radon measures $\mu$ in $\Omega$ such that $\Ecpo\subset L^p(\Omega,\mu).$
\end{definition}

\begin{prop}\label{prop: equiv ep}
Let $\mu$ be a positive Radon measure on  $\Omega$ and $p>0.$  Then $\mu\in \Mcc_p$  if and only if  there exists a positive constant $C=C(p)>0$ such that
$$
\into (-u)^pd\mu \leq C. \E_p(u)^{\frac{p}{m+p}},\ \forall u\in \Ecpo.
$$
\end{prop}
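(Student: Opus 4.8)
The plan is to prove the two implications separately, the forward one being immediate and the reverse one resting on a scaling-and-summation argument.

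For the \emph{if} direction, assume the stated inequality holds. Given any $u\in\Ecpo$, Theorem \ref{thm: convergence Ep 2} guarantees $\E_p(u)<+\infty$, so the inequality forces $\into(-u)^p\,d\mu\le C\,\E_p(u)^{p/(m+p)}<+\infty$; hence $u\in L^p(\Omega,\mu)$, and since $u$ was arbitrary, $\Ecpo\subset L^p(\Omega,\mu)$, i.e. $\mu\in\Mcc_p$.

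For the \emph{only if} direction I would argue by contradiction. Both sides of the desired inequality are homogeneous of degree $p$ under the scaling $u\mapsto\lambda u$, $\lambda>0$, since $\E_p(\lambda u)=\lambda^{m+p}\E_p(u)$ while $\into(-\lambda u)^p\,d\mu=\lambda^p\into(-u)^p\,d\mu$. The relevant scale-invariant quantity is therefore $\rho(u):=\E_p(u)^{-p/(m+p)}\into(-u)^p\,d\mu$, defined for $u\not\equiv0$ (by the comparison principle one has $\E_p(u)=0\Rightarrow u\equiv0$, so such $u$ are harmless). If no constant $C$ works, then $\sup_u\rho(u)=+\infty$, so for each $j$ I can choose $\phi_j\in\Ecpo$ with $\rho(\phi_j)>2^{(p+1)j}$ and, rescaling without changing $\rho$, normalize $\E_p(\phi_j)=1$, whence $\into(-\phi_j)^p\,d\mu>2^{(p+1)j}$. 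I would then set $v:=\sum_{j\ge1}2^{-j}\phi_j$. Because $-v\ge2^{-j}(-\phi_j)$ pointwise for every $j$, one gets $\into(-v)^p\,d\mu\ge2^{-jp}\into(-\phi_j)^p\,d\mu>2^{j}$ for all $j$, so $v\notin L^p(\Omega,\mu)$; if at the same time $v\in\Ecpo$ via Corollary \ref{cor: sum of functions in ep}, this contradicts $\mu\in\Mcc_p$ and finishes the proof.

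The delicate point, which I expect to be the main obstacle, is this last step: Corollary \ref{cor: sum of functions in ep} is stated for summands in $\Ecoo$, whereas the $\phi_j$ produced above lie only in $\Ecpo$. To bridge the gap I would replace each normalized $\phi_j$ by a member of $\Ecoo$ sharing its two relevant features, a $\mu$-mass exceeding $2^{(p+1)j}$ and energy at most $1$. Concretely, pick a decreasing sequence $\phi_j^{(l)}\in\Ecoo$ with $\phi_j^{(l)}\downarrow\phi_j$; then $\into(-\phi_j^{(l)})^p\,d\mu\uparrow\into(-\phi_j)^p\,d\mu$ by monotone convergence, so some $\phi_j^{(l)}$ still has $\mu$-mass above $2^{(p+1)j}$. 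The real work is to keep $\E_p(\phi_j^{(l)})$ bounded by a constant independent of $j$; this follows from the monotonicity of the $p$-energy (a larger function has smaller energy, so $\E_p(\phi_j^{(l)})\le\E_p(\phi_j)=1$ since $\phi_j^{(l)}\ge\phi_j$), which I would establish from integration by parts in $\Ecpo$ (Theorem \ref{thm: integration par parties Ep p>0}) together with the convergence statements of Theorems \ref{thm: convergence Ep 1} and \ref{thm: convergence Ep 2}. With this uniform energy bound, Corollary \ref{cor: sum of functions in ep} applies to the chosen $\phi_j^{(l)}$ and closes the contradiction. Equivalently, one may first prove the inequality for $u\in\Ecoo$ by the summation argument above and then upgrade it to all of $\Ecpo$ by applying the same monotonicity of $\E_p$ along the defining decreasing sequences.
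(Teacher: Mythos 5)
Your argument is, in outline, exactly the paper's proof: the ``if'' direction is immediate, and the ``only if'' direction proceeds by contradiction, normalizing $\E_p(\phi_j)=1$ (the paper uses the threshold $4^{jp}$ where you use $2^{(p+1)j}$; both give the contradiction) and summing $v=\sum_j 2^{-j}\phi_j$ via Corollary \ref{cor: sum of functions in ep}. You have also correctly spotted the point the paper passes over in silence: Corollary \ref{cor: sum of functions in ep} is stated for summands in $\Ecoo$, whereas the contradiction hypothesis only produces functions in $\Ecpo$, and the paper applies the corollary to them without comment.

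However, your proposed bridge contains a genuine gap when $p\neq 1$. The step ``$\phi_j^{(l)}\geq\phi_j$ implies $\E_p(\phi_j^{(l)})\leq\E_p(\phi_j)=1$'' is precisely the hard point, and it does not follow from the results you cite. For $p=1$ it is true: Theorem \ref{thm: convergence Ep 2} gives $\E(u^{(l)})\nearrow\E(u)$ along any defining sequence, so for $p=1$ your argument is complete. For $p\neq 1$, the monotonicity-up-to-constant that the paper asserts (in the proof of Corollary \ref{cor: def hes Ep}) is only for pairs of functions \emph{both} in $\Ecoo$; here the smaller function $\phi_j$ lies merely in $\Ecpo$ and its energy is the intrinsic quantity $\into(-\phi_j)^p H_m(\phi_j)$. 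If you try to run Lemma \ref{lem: persson} across the two classes by approximating $\phi_j$ by its defining sequence $(\phi_j^{(k)})$, you only obtain $\E_p(\phi_j^{(l)})\leq C\liminf_k\E_p(\phi_j^{(k)})$, i.e.\ a bound by the \emph{defining-sequence} energies; the missing inequality $\liminf_k\E_p(\phi_j^{(k)})\leq C\,\E_p(\phi_j)$ (``no energy jump in the limit'') is exactly what must be proved, and lower semicontinuity gives only the opposite direction. Within the paper, that statement is essentially Proposition \ref{prop: convergence Ep 3}, which is established only in Section 6 using Theorem \ref{thm: solution Ep} and the decomposition theorem — results that rest on the present proposition (through Theorem \ref{thm: first main theorem} and Lemma \ref{lem: convexity of Fcmu}) — so invoking it here would be circular; Remark \ref{rmk: equiv Ep} explicitly acknowledges this difficulty. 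The same objection applies to your alternative formulation (prove the inequality on $\Ecoo$ first, then ``upgrade'' along defining sequences): the upgrade needs the identical energy control. So for $p\neq1$ your repair begs the question; closing it honestly requires proving a no-energy-jump statement at this stage of the theory, something neither your proposal nor the paper's own one-line application of Corollary \ref{cor: sum of functions in ep} to $\Ecpo$ summands actually supplies.
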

\begin{proof}
The "if" statement is  evident. To prove the "only if",  suppose by contradiction that
  $\mu\in \Mcc_p$  and 
  there exists a sequence  $(u_j)\subset \Ecpo$ such that 
$$
\into (-u_j)^pd\mu \geq 4^{jp}\E_p(u_j)^{\frac{p}{m+p}}.
$$ 
For simplicity we can assume that  $\E_p(u_j)=1, \forall j.$  
By Corollary   \ref{cor: sum of functions in ep},
$v=\sum_{j=1}^{\infty} 2^{-j}v_j$ belongs to $\Ecpo.$ But
$$
\into (-v)^pd\mu \geq \into(-2^{-j}v_j)^pd\mu \geq 2^{jp}\to +\infty,
$$
which contradicts $\Ecpo \subset L^p(\mu).$
\end{proof}
\begin{rmk}\label{rmk: equiv Ep}
If $u,v\in \Ecpo$ and $(u_j), (v_j)\subset \Ecoo$  decrease to
$u,v$ respectively, then by Lemma \ref{lem: persson} and Proposition
 \ref{prop: weight GZ 2} we have
$$
\into (-u)^pH_m(v)\leq \liminf_j \into (-u_j)^pH_m(v_j)<+\infty.
$$
Thus, $\Ecpo\subset L^p(H_m(v))$  and by Proposition \ref{prop: equiv ep} 
there exists  $C_v>0$ such that
$$
\into (-u)^pH_m(v)\leq C_v\E_p(u)^{\frac{p}{p+m}}, \ \forall u\in \Ecpo.
$$
It is not clear how to obtain this inequality directly by using  H\"{o}lder inequality.
\end{rmk}

\begin{lemma} \label{lem: convexity of Fcmu}
If $u,v\in \Ecto$ then
$
\E(u+v)^{\frac{1}{m+1}} \leq \, \E(u)^{\frac{1}{m+1}} + \, \E(v)^{\frac{1}{m+1}}\, .
$
Moreover, if $\mu\in \Mcc_1$ then $\Fcmu$ is convex and proper.
\end{lemma}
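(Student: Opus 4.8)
The plan is to prove the energy inequality first and then read off convexity and properness of $\Fcmu$ from it. Before anything else I would record that $\Ecto$ is a convex cone: it is convex by Theorem \ref{thm: convexite cegrell}, and it is a cone because $\lambda u_j\downarrow\lambda u$ with $\lambda u_j\in\Ecoo$ and $\E(\lambda u_j)=\lambda^{m+1}\E(u_j)$ whenever $u_j\downarrow u$ realizes $u\in\Ecto$. Writing $u+v=2\cdot\tfrac12(u+v)$ then shows $u+v\in\Ecto$, so $\E(u+v)$ is well defined. By multilinearity of the mixed Hessian operator (Theorem \ref{thm: convergence Ep 1}) and $-(u+v)=(-u)+(-v)$, I would expand
$$\E(u+v)=\sum_{k=0}^m\binom{m}{k}\Big[I_k(u)+I_k(v)\Big],\qquad I_k(w):=\into(-w)(dd^cu)^k\wedge(dd^cv)^{m-k}\wedge\beta^{n-m}.$$
Applying Lemma \ref{lem: persson} with $p=1$ (so $D_1=1$) to each mixed term, and setting $a=\E(u)^{1/(m+1)}$, $b=\E(v)^{1/(m+1)}$, gives $I_k(u)\le a^{k+1}b^{m-k}$ and $I_k(v)\le a^{k}b^{m-k+1}$. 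Summing the two binomial series yields $\sum_k\binom{m}{k}a^{k+1}b^{m-k}=a(a+b)^m$ and $\sum_k\binom{m}{k}a^{k}b^{m-k+1}=b(a+b)^m$, hence $\E(u+v)\le(a+b)^{m+1}$, which is exactly the asserted inequality after taking $(m+1)$-th roots.

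Since Lemma \ref{lem: persson} is stated in $\Ecoo$, the step that needs care is the passage to $\Ecto$. I would apply the inequality to decreasing approximants $u_j,v_j\in\Ecoo$ of $u,v$ and pass to the limit: the mixed energies $I_k(\cdot)$ converge to their limiting values by the monotone convergence of mixed Hessian integrals in Theorem \ref{thm: convergence Ep 2}, so each estimate survives the limit. This bookkeeping (the sum $u+v\in\Ecto$ together with the simultaneous convergence of all mixed energies of the approximants) is the only genuinely delicate point; once it is settled, the binomial computation is purely algebraic.

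For the convexity of $\Fcmu$, set $N(w):=\E(w)^{1/(m+1)}$ on $\Ecto$. The inequality just proved is the subadditivity $N(u+v)\le N(u)+N(v)$, and $N$ is positively $1$-homogeneous, so for $\lambda\in[0,1]$ one gets $N(\lambda u+(1-\lambda)v)\le N(\lambda u)+N((1-\lambda)v)=\lambda N(u)+(1-\lambda)N(v)$; thus $N$ is convex. As $t\mapsto t^{m+1}$ is convex and increasing on $[0,\infty)$ and $N\ge0$, the composition $\E=N^{m+1}$ is convex, and adding the linear functional $\Lcm(u)=\into u\,d\mu$ keeps $\Fcmu=\frac{1}{m+1}\E+\Lcm$ convex.

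Finally, properness is where the hypothesis $\mu\in\Mcc_1$ is used. By Proposition \ref{prop: equiv ep} with $p=1$ there is $C>0$ with $\into(-u)\,d\mu\le C\,\E(u)^{1/(m+1)}$ for all $u\in\Ecto$, so
$$\Fcmu(u)=\frac{1}{m+1}\E(u)-\into(-u)\,d\mu\ \ge\ \frac{1}{m+1}\E(u)-C\,\E(u)^{1/(m+1)}.$$
Writing $t=\E(u)^{1/(m+1)}$, the right-hand side is $\frac{1}{m+1}t^{m+1}-Ct$, which tends to $+\infty$ as $t\to\infty$ (since $m+1\ge2$). Hence $\Fcmu(u)\to+\infty$ whenever $\E(u)\to+\infty$, i.e.\ $\Fcmu$ is proper, completing the proof.
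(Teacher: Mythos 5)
Your proof is correct, and its skeleton coincides with the paper's: both rest on Lemma \ref{lem: persson} with $p=1$ (so $D_1=1$), both deduce convexity of $\E$ from subadditivity of $\E^{1/(m+1)}$ combined with its positive $1$-homogeneity, and both obtain properness from the bound $\into(-u)\,d\mu\le C\,\E(u)^{1/(m+1)}$, i.e. Proposition \ref{prop: equiv ep} with $p=1$. The one place where you genuinely diverge is the derivation of the subadditivity. The paper splits only the weight, writing $\E(u+v)=\into(-u)H_m(u+v)+\into(-v)H_m(u+v)$, applies Lemma \ref{lem: persson} to each of these two terms to get $\E(u+v)\le\bigl(\E(u)^{1/(m+1)}+\E(v)^{1/(m+1)}\bigr)\E(u+v)^{m/(m+1)}$, and then divides through by $\E(u+v)^{m/(m+1)}$; you instead expand $(dd^c(u+v))^m$ binomially, estimate all $2(m+1)$ mixed terms by Lemma \ref{lem: persson}, and resum the binomial series. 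Your route is longer, but it avoids the division step, which in the paper's version tacitly requires $0<\E(u+v)<+\infty$ (finiteness holds because $\Ecto$ is a convex cone, Theorem \ref{thm: convexite cegrell}, and the case $\E(u+v)=0$ is trivial); in exchange you need multilinearity of the mixed Hessian operator on $\Ecto$, which is legitimate since the mixed operators are defined by approximation in Theorem \ref{thm: convergence Ep 1}. You are also more careful than the paper about extending Lemma \ref{lem: persson} from $\Ecoo$ to $\Ecto$ via the monotone convergence of mixed energies (Theorem \ref{thm: convergence Ep 2}), a point the paper leaves implicit (compare Remark \ref{rmk: equiv Ep}). Both arguments are sound; the paper's is shorter, yours is more self-contained at the approximation step.
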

\begin{proof} 
It follows from Lemma  \ref{lem: persson} that
$$
\E(u+v) \leq  \E(u)^{\frac{1}{m+1}}\, \E(u+v)^{\frac{m}{m+1}} +
 \E(v)^{\frac{1}{m+1}}\, \E(u+v)^{\frac{m}{m+1}}\ , 
$$
which implies that  $\E^{\frac{1}{m+1}}$ is convex since it is homogeneous of degree $1$. So,  $\E$ is also convex. 
If $\mu$ belongs to  $\mathcal M_1$, there exists $A>0$ such that
$$
\|u\|_{L^1(\mu)}\leq A. \E(u)^{\frac{1}{1+m}}\, ,\qquad \text{for every } u\in \Ecto\, .
$$
We thus obtain
$$
\Fcmu(u_j)=\frac {1}{m+1}\E(u_j)-\|u_j\|\Lmu\geq \frac {1}{m+1}\E(u_j)-A.\E(u_j)^{\frac {1}{m+1}}\to \infty.
$$
\end{proof}
Let $u:\Omega\rightarrow \R\cup \{-\infty\}$ be an upper semicontinuous function.
Suppose that there exists  $w\in \Ecto$ such that  $w\leq u.$ We define the projection 
of  $u$ on $\Ecto$ by
$$
P(u):=\sup\{v\in \Ecto\ / \ v\leq u\}.
$$

Using  a standard balayage argument, we obtain the following result.
\begin{lemma}\label{lem: balayage}
Let $u:\Omega\rightarrow \R$ be a continuous function. Suppose that there exists 
 $w\in \Ecto$ such that $w\leq u.$ Then
\begin{equation*}\label{eq: balayage 1}
\int_{\{P(u)<u\}} H_m(P(u))=0.
\end{equation*} 
\end{lemma}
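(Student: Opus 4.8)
The plan is to first check that $\varphi:=P(u)$ is a genuine element of $\Ecto$ equal to its own upper semicontinuous regularization, and then to prove the free boundary relation by a balayage argument on the open set $D:=\{\varphi<u\}$. For the first point, since $u$ is continuous and every admissible $v$ satisfies $v\le u$, the regularization $\varphi^*$ still obeys $\varphi^*\le u^*=u$, so $\varphi^*$ is itself admissible and $\varphi=\varphi^*\in\SH_m(\Omega)$ by Proposition \ref{prop: basic property of m subharmonic functions}(iv). Choquet's lemma produces a countable family of admissible functions whose regularized supremum is $\varphi$, and Lemma \ref{lem: compact E1}(i) then gives $\varphi\in\Ecto$; in particular $H_m(\varphi)$ is a well-defined Radon measure. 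Since $u-\varphi$ is lower semicontinuous, $D=\{u-\varphi>0\}$ is open, so $\C^n$ being second countable it suffices to show that $\int_B H_m(\varphi)=0$ for every small ball $B$ contained in $D$ and to cover $D$ by countably many such balls.

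The key idea is to perform balayage against a well-chosen \emph{constant} cap. Fix $z_0\in D$. Because $u-\varphi$ is lower semicontinuous and positive at $z_0$, there is $\eta>0$ and a ball $B=B(z_0,r)\Subset D$ on which $u-\varphi\ge 2\eta$ and $|u-u(z_0)|\le\eta$. Put $c:=u(z_0)-\eta$, so that $\varphi\le u-2\eta\le c$ on $\bar B$ while $c\le u$ on $B$. I then set
$$
\psi:=\Big(\sup\{v\in\SH_m(\Omega)\ /\ v\le\varphi\ \text{on}\ \Omega\setminus B,\ v\le c\ \text{on}\ B\}\Big)^{*}.
$$
On the one hand $\varphi$ is itself a candidate in this supremum, so $\psi\ge\varphi$ and, by Theorem \ref{thm: convexite cegrell}, $\psi\in\Ecto$. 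On the other hand $\psi\le c\le u$ on $B$ and $\psi\le\varphi\le u$ on $\Omega\setminus B$, hence $\psi\le u$ everywhere; thus $\psi$ is admissible for the projection and $\psi\le P(u)=\varphi$. Combining the two inequalities yields $\psi=\varphi$. Finally, the standard balayage property (the potential theory of \cite{Bl05}) gives $H_m(\psi)=0$ on $B$: the envelope is $m$-maximal on the free region $\{\psi<c\}\cap B$, while on the coincidence set $\{\psi=c\}$ the obstacle is the constant $c$, so $H_m(\psi)=H_m(c)=0$ there as well. Therefore $H_m(\varphi)=H_m(\psi)=0$ on $B$, and summing over a countable cover of $D$ gives $\int_{\{P(u)<u\}}H_m(P(u))=0$.

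The step I expect to be most delicate, and the reason continuity of $u$ is used, is precisely the choice of the cap. The naive competitor, namely the maximal ($m$-harmonic) extension of $\varphi|_{\partial B}$, only satisfies $\psi\le\max_{\partial B}\varphi\le u+\mathrm{osc}_{\bar B}\,u$ by the maximum principle (Theorem \ref{theorem: maximum principle}), so it \emph{overshoots} the obstacle by the oscillation of $u$ and is not admissible. Capping by the constant $c$ slightly below $u(z_0)$ removes this overshoot for free, because constants are $m$-maximal: this simultaneously forces $\psi\le u$ and makes the coincidence-set contribution to $H_m(\psi)$ vanish. The only genuinely external ingredient is the maximality of the balayage envelope on the free region together with the location of its Hessian mass on the coincidence set; once these standard facts are granted, the competitor argument against $P(u)$ and the comparison principle (Theorem \ref{thm: principe de comparaison Ep}) close the proof without any quantitative estimate.
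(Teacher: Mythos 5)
Your strategy is built on the same key idea as the paper's proof: use the continuity of $u$ to insert a constant cap $c=u(z_0)-\eta$ strictly between $P(u)$ and $u$ on a small ball $B\Subset\{P(u)<u\}$, exploit that constants are $m$-maximal, and then cover the open set $\{P(u)<u\}$ by countably many such balls. Your preliminary step ($P(u)=P(u)^*\in\Ecto$ via Choquet's lemma and Lemma \ref{lem: compact E1}(i)) is correct and worth making explicit, and your competitor argument showing $\psi=P(u)$ is sound (modulo the small check that every competitor is automatically nonpositive by the maximum principle, since it is $\le P(u)\le 0$ on $\partial B$, so that $\psi\in\SH_m^-(\Omega)$ and Theorem \ref{thm: convexite cegrell} applies).

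The gap is the step $H_m(\psi)=0$ on $B$, which is the entire analytic content of the lemma and which you outsource to ``standard balayage (the potential theory of \cite{Bl05})'' in the form: maximality on the free region plus ``$H_m(\psi)=H_m(c)=0$ on the coincidence set.'' Two problems. First, the identity ``Hessian of the envelope $=$ Hessian of the obstacle on the contact set'' is not an elementary standard fact: it is not in \cite{Bl05} nor in this paper, and for general obstacles it is a genuinely hard theorem (what is elementary is only that the envelope is maximal where it lies strictly below the obstacle). Second, and more structurally, $\psi\in\Ecto$ need only be bounded above (by $c$) on $B$; it may be unbounded below, whereas the potential theory of \cite{Bl05} --- maximality, the Dirichlet problem, the equivalence of maximality with $H_m=0$ --- is developed for locally bounded $m$-subharmonic functions, and for elements of $\Ecto$ the measure $H_m$ is only defined through the Cegrell-type limit procedure of Section 3, so none of those statements can be quoted directly for $\psi$. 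Closing this gap requires precisely the approximation scheme that constitutes the paper's proof: take, by Choquet's lemma, bounded $u_j\in\Ecto\cap L^{\infty}(\Omega)$ increasing to $P(u)$ (the paper's reduction to bounded $w$ guarantees local boundedness of the limit); replace each $u_j$ inside $B$ by the solution of the homogeneous Dirichlet problem with boundary data $u_j|_{\partial B}$, obtained from \cite[Theorem 2.10]{DK11} after approximating the data from above by continuous functions; glue via Proposition \ref{prop: basic property of m subharmonic functions}(vi); check with your constant-cap argument that the glued functions $\psi_j$ stay $\le u$, so that $(\lim_j\psi_j)^*=P(u)$; and finally pass to the limit with Theorem \ref{thm: convergence theorem for increasing sequences} and the portmanteau inequality $H_m(P(u))(B)\le\liminf_j H_m(\psi_j)(B)=0$. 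Your competitor formalism can absorb part of this (for any $B'\Subset B$, every competitor for the lift of $\psi$ in $B'$ stays $\le c$ by the maximum principle, so the lift equals $\psi$), but proving that the lift carries no Hessian mass in $B'$ still requires the bounded approximation machinery above, so the missing step is the same.
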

\begin{proof}
Without loss of generality we can assume that $w$  is bounded.  From Choquet's lemma,
there exists an increasing sequence  $(u_j)\subset \Ecto\cap L^{\infty}(\Omega)$ such that 
$$
(\lim_j u_j)^*=P(u).
$$
Let $x_0\in \{P(u)<u\}.$  Since $u$ is continuous, there exists $\epsilon>0, r>0$ such that 
$$
P(u)(x)<u(x_0)-\epsilon<u(x),\ \ \forall x\in B=B(x_0,r).
$$
For each fixed $j$, by approximating $u_j\vert_{\partial B}$ from above by a sequence of continuous functions
on  $\partial B$ and by using \cite[Theorem 2.10]{DK11}, we can find a function $\f_j\in \SH_m(B)$ such 
that $\f_j=u_j$ on $\partial B$ and $H_m(\f_j)=0$ in $B.$   The comparison principle gives us that $\f_j\geq u_j$ in $B$. 
The function $\psi_j$, defined by $\psi_j=\f_j$ in  $B$ and $\psi_j=u_j$ in $\Omega\setminus B,$  belongs to
 $\Ecto\cap L^{\infty}(\Omega).$
For each  $x\in \partial B$ we have
$\f_j(x)=u_j(x)\leq P(u)(x)\leq u(x_0)-\epsilon.$ 
It then follows that   $\f_j\leq u(x_0)-\epsilon$ in $B$ since $u(x_0)-\epsilon$  is a constant and 
 $\f_j$ is $m$-sh. Hence, $u_j\leq \psi_j\leq u$ in $\Omega.$ This implies that
$$
(\lim \psi_j)^*=P(u).
$$
It follows from Theorem \ref{thm: convergence theorem for increasing sequences} that
 $H_m(\psi_j)\weak H_m(P(u)).$ Therefore, 
 $$
 H_m(P(u))(B)\leq \liminf_{j\to+\infty} H_m(\psi_j)(B)=0,
 $$
 from which the result follows.
\end{proof}
\begin{lemma}\label{lem: comparison principle} Let $u,v\in \Ecto$ and suppose that $v$ is continuous.
For each  $t<0,$ we define 
$$
h_t=\frac{P(u+tv)-tv-u}{t}.
$$
Then for each  $0\leq k \leq m,$ 
\begin{equation}\label{eq: cp_1}
\lim_{t\nearrow 0}\into h_t(dd^cu)^k\wedge (dd^cP(u+tv))^{m-k}\wedge\beta^{n-m} = 0\, .
\end{equation}
In particular,
\begin{equation}\label{eq: cp2}
\lim_{t\nearrow 0}\into \frac{P(u+tv)  -u}{t}(dd^cu)^k\wedge (dd^cP(u+tv))^{m-k}\wedge\beta^{n-m} = \into vH_m(u)\, .
\end{equation}
\end{lemma}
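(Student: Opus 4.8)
\medskip
\noindent\textbf{Proof proposal.} The plan is to reduce the whole family of integrals to its two endpoint terms ($k=0$ and $k=m$) and to squeeze the intermediate ones by a monotonicity in $k$ obtained from integration by parts. Throughout write $w_t:=P(u+tv)$ and $g_t:=w_t-u$.

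\emph{Preliminary bounds.} Since $v\le 0$ and $t<0$ we have $tv\ge 0$, so $u\le u+tv$ and $u$ itself competes in the projection; hence $u\le w_t\le u+tv$ and, by Theorem \ref{thm: convexite cegrell}, $w_t\in\Ecto$. Thus $0\le g_t\le tv$, and dividing by $t<0$ gives $v\le g_t/t\le 0$, so that $h_t=g_t/t-v\in[0,-v]$. For $t_1<t_2<0$ the monotonicity of $P$ yields $w_{t_1}\ge w_{t_2}$, and the sandwich $u\le w_t\le u+tv$ forces $w_t\downarrow u$ as $t\nearrow 0$; in particular $\E(w_t)\le\E(u)$ by Theorem \ref{thm: principe de comparaison Ep}. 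By Lemma \ref{lem: persson} all the mixed energies $\into(-v)(dd^cu)^k\wedge(dd^cw_t)^{m-k}\wedge\beta^{n-m}$ are finite and bounded uniformly in $t$, so $v$ is integrable against every measure that appears below.

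\emph{Reduction in $k$.} Put $b_k(t):=\into g_t(dd^cu)^k\wedge(dd^cw_t)^{m-k}\wedge\beta^{n-m}\ge 0$ and $c_k(t):=\into v(dd^cu)^k\wedge(dd^cw_t)^{m-k}\wedge\beta^{n-m}$, so that the integral in \eqref{eq: cp_1} is $a_k(t)=\tfrac1t b_k(t)-c_k(t)$. Writing $S_k:=(dd^cu)^k\wedge(dd^cw_t)^{m-k-1}\wedge\beta^{n-m}$ and integrating by parts in $\Ecto$ (Theorem \ref{thm: integration par parties Ep p>0}) gives $b_{k+1}(t)-b_k(t)=-\into g_t\,dd^c g_t\wedge S_k=\into dg_t\wedge d^cg_t\wedge S_k\ge 0$. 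Hence $b_0\le b_1\le\cdots\le b_m$, and dividing by $t<0$ we obtain $\tfrac1t b_0\ge\tfrac1t b_k\ge\tfrac1t b_m$.

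\emph{Endpoints and conclusion.} For $k=0$, the balayage Lemma \ref{lem: balayage} shows that $H_m(w_t)$ is carried by the contact set $\{w_t=u+tv\}$, where $g_t=tv$; therefore $b_0(t)=t\into v\,H_m(w_t)$, so $\tfrac1t b_0(t)=\into v\,H_m(w_t)\to\into v\,H_m(u)=:L$ by the convergence of Hessian measures along $w_t\downarrow u$ (Theorem \ref{thm: convergence Ep 2}). For $k=m$, the pointwise bound $g_t/t\ge v$ gives $\tfrac1t b_m(t)=\into(g_t/t)H_m(u)\ge L$, while the monotonicity above gives $\tfrac1t b_m\le\tfrac1t b_0\to L$; hence $\tfrac1t b_m(t)\to L$. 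Squeezing, $\tfrac1t b_k(t)\to L$ for every $k$, and the same convergence theorem yields $c_k(t)\to L$. Therefore $a_k(t)=\tfrac1t b_k(t)-c_k(t)\to 0$, which is \eqref{eq: cp_1}. The ``in particular'' claim \eqref{eq: cp2} then follows at once from $g_t/t=h_t+v$, since $\into(g_t/t)(dd^cu)^k\wedge(dd^cw_t)^{m-k}\wedge\beta^{n-m}=a_k(t)+c_k(t)\to L=\into v\,H_m(u)$.

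\emph{Main obstacle.} The delicate point is the $k=0$ orthogonality used above: Lemma \ref{lem: balayage} is stated for a \emph{continuous} obstacle, while here $u+tv$ is merely upper semicontinuous when $u$ is not continuous. I would remove this by approximating $u$ from above by $u_j\in\Ecto\cap\Cc(\bar\Omega)$ with $u_j\downarrow u$ (Theorem \ref{thm: continuous global approximation}), establishing $b_0=t\into v\,H_m(P(u_j+tv))$ for each continuous $u_j$, and passing to the limit via $P(u_j+tv)\downarrow P(u+tv)$ together with Theorem \ref{thm: convergence Ep 2}. A secondary care point is the justification of the integration‑by‑parts identity and of the weak convergence of the mixed measures in the (finite‑energy, non locally bounded) class $\Ecto$, which rests on the finite‑energy convergence theorems and on the uniform bound $\E(w_t)\le\E(u)$ noted above.
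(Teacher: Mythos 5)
Your proof is correct in substance, but it takes a genuinely different route from the paper's. The paper exploits monotonicity of $h_t$ in $t$: it fixes $s<0$, bounds $h_t\le h_s$ for $s\le t<0$, lets $t\nearrow 0$ so that for every $k$ the mixed measures converge to $H_m(u)$, and then estimates $\into h_s\, H_m(u)\le \int_{\{P(u+sv)-sv<u\}}(-v)H_m(u)$ by combining the sublevel-set comparison inequality (Remark \ref{rmk: comparion 1}) with Lemma \ref{lem: balayage} applied to the \emph{continuous} obstacle $u_k+sv$, where $u_k\downarrow u$ is continuous; this yields a bound of the form $-sM\to 0$. You instead prove monotonicity in $k$ of the $b_k(t)$ by integration by parts, settle the two endpoints ($k=0$ by orthogonality of $H_m(P(u+tv))$ to the non-contact set, $k=m$ by the sandwich $v\le g_t/t\le 0$), and squeeze; this avoids the double limit in $(t,s)$ and the sublevel-set comparison principle altogether, and it isolates where the real content lies, at the price of the energy-monotonicity step and a limiting argument for the orthogonality. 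Both arguments collide with the same obstruction, namely that Lemma \ref{lem: balayage} requires a continuous obstacle, and your proposed fix does work: with $u_j\in\Ecoo\cap\Cc(\bar{\Omega})$ decreasing to $u$ one has $P(u_j+tv)\downarrow P(u+tv)$, the identity $\into \bigl(u_j+tv-P(u_j+tv)\bigr)H_m\bigl(P(u_j+tv)\bigr)=0$ holds by Lemma \ref{lem: balayage}, all three terms pass to the limit by Theorem \ref{thm: convergence Ep 2}, and since $u+tv-P(u+tv)\ge 0$ the limiting identity gives the carrying property; this is essentially the same approximation device the paper uses, deployed at a different point. Two repairs are needed when writing this up. First, the expression $\into dg_t\wedge d^cg_t\wedge S_k$ is not defined in this framework for $\Ecto$ functions; the inequality $b_{k+1}\ge b_k$ should instead be derived from the symmetry of integration by parts (Theorem \ref{thm: integration par parties Ep p>0}) together with the $p=1$ (Cauchy--Schwarz) case of Lemma \ref{lem: persson}: writing $A=\into(-w_t)\,dd^cw_t\wedge S_k$, $B=\into(-u)\,dd^cu\wedge S_k$, $C=\into(-u)\,dd^cw_t\wedge S_k$, one gets $b_{k+1}-b_k=A+B-2C\ge\bigl(\sqrt{A}-\sqrt{B}\bigr)^2\ge 0$. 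Second, the bound $\E(P(u+tv))\le\E(u)$ follows from the energy monotonicity recalled at the start of Section 4.1, not from Theorem \ref{thm: principe de comparaison Ep}, which compares total masses; similarly, the convergence statements you invoke are stated for sequences in $\Ecoo$, so a word is needed to extend them to decreasing families in $\Ecto$ --- a routine extension that the paper's own proof also uses tacitly.
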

\begin{proof}
An easy computation shows that $h_t$ is decreasing in  $t$ and $0\leq h_t \leq -v.$
 For each fixed $s<0$ we have
\begin{eqnarray*}
&&\lim_{t\nearrow 0}\into h_t(dd^cu)^k\wedge (dd^cP(u+tv))^{m-k}\wedge\beta^{n-m}\\
&&\leq \lim_{t\nearrow 0}\into h_s(dd^cu)^k\wedge (dd^cP(u+tv))^{m-k}\wedge\beta^{n-m}\\ 
&&=\into h_s(dd^cu)^m\wedge\beta^{n-m} \leq \int\limits_{\{P(u+sv) - sv<u \}}(-v) (dd^cu)^m\wedge\beta^{n-m}.
\end{eqnarray*}
Let  $u_k\in \Ecoo\cap C(\Omega)$ be a sequence decreasing to  $u$ such that
$$
\int\limits_{\{P(u+sv)-sv<u \}} (-v)(dd^cu)^m\wedge\beta^{n-m} \leq 2\int\limits_{\{P(u_k+sv)-sv < u \}}
(-v)(dd^cu)^m\wedge\beta^{n-m}\, .
$$
Taking into account Remark \ref{rmk: comparion 1}  and Lemma \ref{lem: balayage} we can conclude that
\begin{eqnarray*}
&&\int\limits_{\{P(u_k+sv) - sv < u \}}(-v) (dd^cu)^m\wedge\beta^{n-m}\\
&&\leq \int\limits_{\{P(u_k+sv) - sv < u_k \}}(-v) (dd^c(P(u_k+sv) - sv))^m\wedge\beta^{n-m}\\
&& \leq -s M \to 0\, , \qquad \text{as } s \to 0.
\end{eqnarray*}
Here,  $M$  is a positive constant which depends only on
 $m$, $\|v\|$,  and $\into v\, H_m (u+v)$.  
 Equality (\ref{eq: cp2}) follows from equality (\ref{eq: cp_1}). The proof is thus complete.
\end{proof}
\begin{lemma}\label{lem: derivative}
Let $u,v\in \Ecto$, and assume that  $v$ is continuous. Then
$$
\frac{d}{dt}\Big\vert_{t=0}\E(P(u+tv))=\into(-v)H_m(u).
$$
\end{lemma}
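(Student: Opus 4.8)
The plan is to obtain the derivative by a convexity (squeezing) argument that feeds directly into Lemma \ref{lem: comparison principle}. Write $u_t := P(u+tv)$ and note $u_0 = P(u) = u$. For $t \le 0$ one has $u + tv \ge u$, so by monotonicity of the projection $u_t \ge u$ and $u_t \downarrow u$ as $t \nearrow 0$; thus it suffices to compute $\lim_{t\nearrow 0} t^{-1}(\E(u_t) - \E(u))$, the situation exactly covered by Lemma \ref{lem: comparison principle} (the opposite one‑sided limit being treated by the same scheme). The idea is to reduce everything to the two endpoint integrals $\into \frac{u_t - u}{t} H_m(u)$ and $\into \frac{u_t - u}{t} H_m(u_t)$, whose limits Lemma \ref{lem: comparison principle} supplies.

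The mechanism is the convexity of $\E$ on $\Ecto$ (Lemma \ref{lem: convexity of Fcmu}). Comparing $\E$ at $u_t$ with its tangent at $u$ and with its tangent at $u_t$, and using integration by parts in the finite‑energy class (Theorem \ref{thm: integration par parties Ep p>0}) to identify the first variation with the Hessian measure, produces the two‑sided estimate
$$\into (u - u_t)\, H_m(u) \ \le\ \E(u_t) - \E(u) \ \le\ \into (u - u_t)\, H_m(u_t).$$
Dividing by $t < 0$ reverses the inequalities, so that $t^{-1}(\E(u_t) - \E(u))$ is squeezed between $\into \frac{u - u_t}{t}\, H_m(u)$ and $\into \frac{u - u_t}{t}\, H_m(u_t)$.

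Next I would pass to the limit using Lemma \ref{lem: comparison principle}. Writing $\frac{u - u_t}{t} = -\frac{P(u+tv) - u}{t}$, equation (\ref{eq: cp2}) with $k = m$ gives $\into \frac{u - u_t}{t}\, H_m(u) \to \into (-v)\, H_m(u)$, while the same equation with $k = 0$ (so that $(dd^c u_t)^{m}\wedge\beta^{n-m} = H_m(u_t)$) gives $\into \frac{u - u_t}{t}\, H_m(u_t) \to \into (-v)\, H_m(u)$. Both outer terms share the common limit $\into (-v)\, H_m(u)$, so the squeeze yields
$$\frac{d}{dt}\Big|_{t=0}\E(P(u+tv)) = \into (-v)\, H_m(u).$$

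The delicate point, where all the work sits, is the convexity sandwich of the second paragraph: in $\Ecto$ the functions may be unbounded, so the tangent inequalities have to be established by approximating $u$ and $u_t$ from above by continuous functions of $\Ecoo$, integrating by parts there, and passing to the limit while keeping the mixed Hessian masses under control uniformly in $t$. The structural fact that makes the two bounds collapse to one and the same limit is that $H_m(u_t)$ is carried by the contact set $\{u_t = u+tv\}$, i.e. the balayage identity $\int_{\{P(w)<w\}} H_m(P(w)) = 0$ of Lemma \ref{lem: balayage}; this is precisely the input that Lemma \ref{lem: comparison principle} packages, and once it is in hand the remaining estimates are routine.
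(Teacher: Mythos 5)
Your plan has the same skeleton as the paper's proof---everything is made to rest on Lemma \ref{lem: comparison principle}---but the step carrying all the weight, the convexity sandwich, is misstated, and as written the argument does not close. The energy $\E(\varphi)=\into(-\varphi)H_m(\varphi)$ is homogeneous of degree $m+1$, so its first variation at $\varphi$ in a direction $h$ is $(m+1)\into(-h)H_m(\varphi)$, not $\into(-h)H_m(\varphi)$ (for bounded functions, expand $\E(\varphi+sh)$ and integrate by parts once). Hence both tangent inequalities must carry the factor $(m+1)$. Worse, your lower bound is false in general: integration by parts in the finite energy class gives the exact identity
$$
\E(u_t)-\E(u)=\sum_{k=0}^m\into (u-u_t)\,(dd^cu)^k\wedge(dd^cu_t)^{m-k}\wedge\beta^{n-m},
\qquad u_t:=P(u+tv),
$$
and for $t<0$ every one of these $m+1$ terms is $\leq 0$ (since $u\leq u_t$), so in fact $\E(u_t)-\E(u)\leq \into(u-u_t)H_m(u)$, the \emph{reverse} of your claimed lower bound. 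With the correct factor, your squeeze produces $(m+1)\into(-v)H_m(u)$. You are not alone in dropping this constant: the paper's own proof is exactly the displayed identity above followed by (\ref{eq: cp2}) applied to each of the $m+1$ terms, which also yields $(m+1)\into(-v)H_m(u)$; the lemma's stated right-hand side is consistent only if $\E$ is read as the normalized energy $\frac{1}{m+1}\into(-\varphi)H_m(\varphi)$, which is the quantity actually entering $\Fcmu$ and the Dirichlet principle. Still, inside your own argument the two sides of the sandwich must at least be mutually consistent, and as written they are not.

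The second problem is that the point you defer as ``the delicate point, where all the work sits'' is, in fact, the entire proof. To justify the tangent inequalities in $\Ecto$ you must show that $s\mapsto\E\bigl((1-s)u+su_t\bigr)$ is differentiable at the endpoints with derivative $(m+1)\into-(u_t-u)H_m(\cdot)$, and the only route to this is the integration-by-parts expansion displayed above, i.e.\ precisely the telescoping identity on which the paper's proof rests (made rigorous in $\Ecto$ by Theorem \ref{thm: integration par parties Ep p>0} and the finiteness of mixed energies). Once that identity is in hand, convexity buys nothing: (\ref{eq: cp2}) is available for \emph{every} $k=0,\dots,m$, not only $k=0$ and $k=m$, so one passes to the limit term by term and sums. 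Filling your gap therefore collapses the proposal into the paper's argument, with the convexity step as an extra, unnecessary layer. Finally, the right derivative cannot be ``treated by the same scheme,'' since Lemma \ref{lem: comparison principle} concerns only $t\nearrow 0$; the paper's (and the simplest) observation is that for $t>0$ one has $u+tv\in\Ecto$, hence $P(u+tv)=u+tv$, and that one-sided derivative is computed directly.
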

\begin{proof}
If $t>0$, $P(u+tv)=u+tv.$  It is easy to see that
$$
\frac{d}{dt}\Big\vert_{t=0^+}\E(P(u+tv))=\into(-v)H_m(u).
$$

To compute the left-derivative observe that
$$
\begin{aligned}
&\frac 1t\left (\into(-P(u+tv))(dd^cP(u+tv))^m\wedge\beta^{n-m}-\into(-u)(dd^cu)^m\wedge\beta^{n-m}\right )\\
&=\sum_{k=0}^m\into \frac{u-P(u+tv)}{t}(dd^cu)^k\wedge (dd^cP(u+tv))^{m-k}\wedge\beta^{n-m}.
\end{aligned}
$$
It suffices to  apply Lemma \ref{lem: comparison principle}.
\end{proof}
%
%
%
%
%
%
\subsection{Resolution}
In this section we use the variational formula established above to solve the equation
 $H_m(u)=\mu$ in finite energy calsses of Cegrell type,  where $\mu$ is a positive Radon measure.  Our main results represented in the introduction follow from these theorems. The following lemma is important for the sequel.
\begin{lemma}\label{lem: preparing lemma} Let $\mu$ be a positive Radon measure such that $\Fcmu$ is proper and lower semicontinuous on $\Ecto$. Then there exists $\varphi \in \Ecto$ such that
$$
\Fcmu(\varphi)=\inf_{\psi\in \Ecto} \Fcmu(\psi).
$$
\end{lemma}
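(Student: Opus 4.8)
The plan is to apply the direct method of the calculus of variations, with all the analytic content already packaged into the compactness statement of Lemma~\ref{lem: compact E1} together with the standing hypotheses of properness and lower semicontinuity. First I would record that the infimum is finite: since the zero function lies in $\Ecoo\subset\Ecto$ with $\E(0)=0$ and $\Lcm(0)=0$, we have $\Fcmu(0)=0$, and hence
$$
I:=\inf_{\psi\in\Ecto}\Fcmu(\psi)\leq 0<+\infty .
$$
In particular $I\neq+\infty$, so genuine minimizing sequences exist.

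Next I would choose a minimizing sequence $(\psi_j)\subset\Ecto$ with $\Fcmu(\psi_j)\to I$, and may assume $\Fcmu(\psi_j)\leq 1$ for all $j$. The key point is coercivity: because $\Fcmu$ is proper with respect to $\E$, the uniform bound $\Fcmu(\psi_j)\leq 1$ forces $\sup_j\E(\psi_j)<+\infty$. Indeed, if some subsequence had $\E\to+\infty$, then properness would give $\Fcmu\to+\infty$ along it, contradicting the bound. Thus there is $C>0$ with $\psi_j\in\Ec_m^{1,C}:=\{u\in\Ecto\ / \ \E(u)\leq C\}$ for every $j$.

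Then I would invoke the compactness in Lemma~\ref{lem: compact E1}(iii): the set $\Ec_m^{1,C}$ is compact in $\SH_m(\Omega)$. Hence, after passing to a subsequence, $\psi_j$ converges in $L^1_{\rm loc}(\Omega)$ to some $\varphi\in\Ec_m^{1,C}\subset\Ecto$. Finally, the assumed lower semicontinuity of $\Fcmu$ (with respect to $L^1_{\rm loc}$ convergence, as for $\E$ in Lemma~\ref{lem: e1 usc}) yields
$$
\Fcmu(\varphi)\leq\liminf_{j\to+\infty}\Fcmu(\psi_j)=I .
$$
Since $\varphi\in\Ecto$, the reverse inequality $\Fcmu(\varphi)\geq I$ is immediate, so $\Fcmu(\varphi)=I$, which is the assertion. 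As a byproduct this shows $I>-\infty$, consistent with $\Fcmu$ being real-valued on $\Ecto$.

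There is no serious obstacle here once the preceding results are in place: the whole difficulty of the statement has been front-loaded into the compactness of the sublevel sets $\Ec_m^{1,C}$ (which itself rests on the convergence and energy-monotonicity results of this section) and into the two standing hypotheses on $\Fcmu$. The only point demanding a little care is the passage from the scalar bound on $\Fcmu$ along the minimizing sequence to a uniform energy bound, i.e.\ the precise use of properness; everything else is the routine extraction-plus-semicontinuity scheme.
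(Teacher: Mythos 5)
Your proof is correct and follows essentially the same route as the paper: take a minimizing sequence, use properness to get a uniform bound on $\E$, apply the compactness of Lemma \ref{lem: compact E1} to extract an $L^1_{\rm loc}$-convergent subsequence with limit in $\Ecto$, and conclude by the assumed lower semicontinuity. Your write-up is just slightly more explicit (finiteness of the infimum via $\Fcmu(0)=0$ and the contradiction argument for coercivity), but there is no difference in substance.
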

\begin{proof} 
Let  $(\varphi_j)\subset \Ecto$ be such that 
$$
\lim_j \Fcmu(\varphi_j)=\inf_{\psi\in \Ecto} \Fcmu(\psi)\leq 0.
$$
From the properness of the  functional $\Fcmu$, we obtain
  $\sup_j \E(\varphi_j)<+\infty.$  It follows that the sequence $(\varphi_j)$ forms a compact subset of $\Ecto.$ 
 Hence there exists a subsequence  (still denoted by $(\varphi_j)$)  such that $\varphi_j$  converges to $\varphi$ in
 $L^1_{\rm loc}(\Omega)$. Since $\Fcmu$ is lower semicontinuous we have   
$$
\liminf_{j\to\infty}\Fcmu(\varphi_j)\geq \Fcmu(\varphi).
$$
We then deduce that  $\varphi$  is a minimum point of  $\Fcmu$ on $\Ecto.$ 
\end{proof}
We now prove a Dirichlet principle.
\begin{theorem}\label{thm: Dirichlet principle}
 Let $\varphi\in \Ecto$ and $\mu\in \Mcc_1$. Then
$
H_m(\varphi)=\mu \Leftrightarrow \Fcmu (\varphi)=\inf_{\psi\in \Ecto} \Fcmu (\psi).
$
\end{theorem}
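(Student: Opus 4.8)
The plan is to read this identity as the Euler--Lagrange characterization of the convex variational problem $\inf_{\Ecto}\Fcmu$. Since $\mu\in\Mcc_1$, Lemma~\ref{lem: convexity of Fcmu} guarantees that $\Fcmu=\tfrac1{m+1}\E+\Lcm$ is convex and proper on $\Ecto$, so the whole statement reduces to computing the first variation of $\Fcmu$ and checking that it vanishes in every admissible direction exactly when $H_m(\varphi)=\mu$. I would treat the two implications with two dual variational devices: affine segments for the easy direction and the projection $P$ for the hard one.

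For $H_m(\varphi)=\mu\Rightarrow\Fcmu(\varphi)=\inf\Fcmu$, I argue by convexity along affine segments. Fix $\psi\in\Ecto$ and set $u_t:=(1-t)\varphi+t\psi\in\Ecto$ for $t\in[0,1]$, which is legitimate since the class is convex. Then $g(t):=\Fcmu(u_t)$ is convex, so it suffices to show $g'(0^+)\ge 0$, because convexity then forces $\Fcmu(\psi)=g(1)\ge g(0)+g'(0^+)=\Fcmu(\varphi)$. The linear term contributes $\tfrac{d}{dt}\big|_{0}\Lcm(u_t)=\into(\psi-\varphi)\,d\mu$. For the energy I expand $\E(u_t)=\into(-u_t)(dd^cu_t)^m\wedge\beta^{n-m}$ as a polynomial in $t$ with finite coefficients (finiteness of the mixed terms is guaranteed by Lemma~\ref{lem: persson}), differentiate at $t=0$, and integrate by parts via Theorem~\ref{thm: integration par parties Ep p>0}; the $m+1$ resulting terms collapse, giving $\tfrac{1}{m+1}\tfrac{d}{dt}\big|_0\E(u_t)=\into(\varphi-\psi)H_m(\varphi)$. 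Using $H_m(\varphi)=\mu$ this cancels the linear term exactly, so $g'(0^+)=0\ge0$ and the implication follows.

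For the converse, assume $\varphi$ minimizes $\Fcmu$. Here I test with \emph{continuous} directions $v\in\Ecto\cap\Cc(\Omega)$ and use the admissible \emph{projected} path $t\mapsto P(\varphi+tv)\in\Ecto$. Minimality makes $t=0$ a minimum of $t\mapsto\Fcmu(P(\varphi+tv))$, so its derivative there vanishes. By Lemma~\ref{lem: derivative} the energy contributes $\into(-v)H_m(\varphi)$ to $\tfrac{d}{dt}\big|_{0}\Fcmu(P(\varphi+tv))$. For the linear part, $P(\varphi+tv)=\varphi+tv$ when $t>0$ gives the right derivative $\into v\,d\mu$ at once, while for $t<0$ the quotient $\tfrac1t(P(\varphi+tv)-\varphi)$ is squeezed between $v$ and $0$ and decreases pointwise to $v$, so dominated convergence (valid as $v\in\Ecto\subset L^1(\mu)$) again yields $\into v\,d\mu$. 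Setting the total derivative to zero gives $\into v\,H_m(\varphi)=\into v\,d\mu$ for every $v\in\Ecto\cap\Cc(\Omega)$; by Lemma~\ref{lem: test functions} differences of such functions exhaust $\Cc_0^\infty(\Omega)$, so the two measures coincide and $H_m(\varphi)=\mu$.

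I expect the genuine difficulty to lie entirely in the converse, precisely in the differentiability of the energy along the projected path $P(\varphi+tv)$ at $t=0$. The $t>0$ side is trivial because the projection is inactive, but the left derivative requires controlling how far $P(\varphi+tv)$ detaches from the obstacle $\varphi+tv$; this is exactly the content of Lemma~\ref{lem: comparison principle} (resting on the balayage estimate of Lemma~\ref{lem: balayage}), and matching it against the right derivative is what actually makes $t\mapsto\tfrac{1}{m+1}\E(P(\varphi+tv))$ differentiable at $0$. The remaining ingredients---convexity, integration by parts, and the passage from continuous test directions to all of $\Cc_0^\infty(\Omega)$---are comparatively routine.
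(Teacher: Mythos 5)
Your forward implication is correct, and it takes a mildly different route from the paper: you differentiate $\Fcmu$ along affine segments and use convexity, whereas the paper gets $\Fcmu(\psi)\geq\Fcmu(\varphi)$ in one line from Lemma \ref{lem: persson} together with Young's inequality, $\into(-\psi)H_m(\varphi)\leq \E(\psi)^{1/(m+1)}\E(\varphi)^{m/(m+1)}\leq\frac{1}{m+1}\E(\psi)+\frac{m}{m+1}\E(\varphi)$. Both are fine. The genuine gap is in the converse, and it sits exactly at the step you dismissed as routine: the left derivative of the \emph{linear} term. Because you minimize $t\mapsto \Fcmu(P(\varphi+tv))$, you must compute $\lim_{t\nearrow 0}\frac{1}{t}\into\bigl(P(\varphi+tv)-\varphi\bigr)\,d\mu$, and you assert that the quotient $\frac{1}{t}\bigl(P(\varphi+tv)-\varphi\bigr)$ decreases pointwise to $v$. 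The squeeze only gives $v\leq \frac{1}{t}\bigl(P(\varphi+tv)-\varphi\bigr)\leq 0$ and monotonicity gives \emph{some} pointwise limit $v+h_0$ with $0\leq h_0\leq -v$; nothing forces $h_0=0$, and in fact the claim is false. Take $n=m=1$, $\Omega$ the unit disc, $\varphi=\max(\log|z|,\log r)/(-\log r)\in\Ecoo$ and $v=|z|^2-1$. For small $t<0$ the obstacle $\varphi+tv$ is radial, decreasing then increasing and concave in $s=\log|z|$ on $\{|z|\geq r\}$, and one computes that $P(\varphi+tv)$ equals the constant $-1+|t|(1-r^2)$ on $\{|z|\leq r\}$ and the linear function of $\log|z|$ joining this value to $0$ on $\{r\leq|z|\leq 1\}$. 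Hence on the open set $\{|z|<r\}$ the quotient equals $-(1-r^2)$ for \emph{every} $t<0$, which is strictly larger than $v(z)=-(1-|z|^2)$ there: the pointwise limit is not $v$. Note also that Lemma \ref{lem: comparison principle} cannot repair this: it gives $\into h_t\,dS_t\to 0$ only against the mixed Hessian measures of $\varphi$ and $P(\varphi+tv)$, never against $\mu$.

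There is also a structural reason why no almost-everywhere patch can work: in the example the failure set $\{|z|<r\}$ is exactly where $H_m(\varphi)$ puts no mass, so a statement of the form ``$h_t\to 0$ $\mu$-a.e.'' already presupposes a relation between $\mu$ and $H_m(\varphi)$ --- which is precisely what the converse is supposed to prove; for an arbitrary $\mu\in\Mcc_1$ it is unavailable. The correct device (and what the paper does) is to \emph{not} project inside the linear term: set $g(t)=\frac{1}{m+1}\E(P(\varphi+tv))+\Lcm(\varphi+tv)$. Since $P(\varphi+tv)\leq\varphi+tv$ one still has $g(t)\geq\Fcmu(P(\varphi+tv))\geq\Fcmu(\varphi)=g(0)$, so $t=0$ remains a minimum of $g$; now the linear part is affine in $t$, the energy part is differentiable by Lemma \ref{lem: derivative}, and $g'(0)=0$ gives $\into v\,H_m(\varphi)=\into v\,d\mu$. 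Alternatively, your own squeeze suffices if you argue one-sidedly: for $t<0$, $\frac{1}{t}\into\bigl(P(\varphi+tv)-\varphi\bigr)d\mu\geq\into v\,d\mu$, so dividing the minimality inequality by $t<0$ and letting $t\nearrow 0$ gives $\into(-v)H_m(\varphi)+\into v\,d\mu\leq 0$, while the trivial $t>0$ side gives the reverse; equality follows, and the passage to all test functions via Lemma \ref{lem: test functions} is then fine as you wrote it.
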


\begin{proof}
Assume first that $H_m(\varphi)=\mu.$  Let $\psi\in \Ecto.$ By Lemma \ref{lem: persson} and H\"{o}lder inequality we get
$$
\into (-\psi)H_m(\varphi)\leq \E(\psi)^{1/(1+m)}. \E(\varphi)^{m/(1+m)} \leq \frac{1}{m+1}\E(\psi)+\frac{m}{m+1}\E(\varphi).
$$
We then easily obtain  $\Fcmu(\psi)\geq \Fcmu(\varphi).$

Now, assume that $\varphi$ minimizes $\Fcmu$ on $\Ecto.$ Let $\psi$ be a continuous function in $\Ecto$ and consider the function $g(t)=\E(P(\varphi+t.\psi))+\Lcm(\varphi+t.\psi), \ \ t\in \R.$  Since $P(\varphi+t\psi)\leq \varphi+t\psi$, we have that 
$$
g(t)\geq \Fcmu(P(\varphi+t\psi))\geq \Fcmu(\varphi)=g(0), \ \forall t.
$$
It follows that $g$ attains its minimum at $t=0$, hence $g'(0)=0$. Since $\mu\in \Mcc_1$, $\Lcm$ is finite on $\Ecto$ which implies that 
$$
\frac{d}{dt}\Lcm(\varphi+t.\psi)=\Lcm(\varphi).
$$
This coupled with Lemma \ref{lem: derivative} yields
$$
\into \psi H_m(\varphi)=\into \psi d\mu.
$$
The test function $\psi$ is taken arbitrarily, so it follows that $\mu=H_m(\varphi).$
\end{proof}

\begin{theorem}\label{thm: first main theorem}
Let $\mu$ be a positive Radon measure such that $\Ecto\subset L^1(\Omega,\mu).$ Then there exists a unique  $u\in \Ecto$ such that $H_m(u)=\mu.$
\end{theorem}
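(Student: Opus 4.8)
The plan is to realize the solution as the minimizer of the functional $\Fcmu$ over $\Ecto$ and then to invoke the Dirichlet principle. Uniqueness is the easy half, so I would dispose of it first: if $u,v\in\Ecto$ both solve $H_m(\cdot)=\mu$, then $H_m(u)\ge H_m(v)$ and $H_m(v)\ge H_m(u)$, so the comparison principle in $\Ecto$ (Theorem \ref{thm: principe de comparaison 3} with $p=1$) yields $u\le v$ and $v\le u$, whence $u=v$.

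For existence, the starting point is that the hypothesis $\Ecto\subset L^1(\Omega,\mu)$ is exactly the statement $\mu\in\Mcc_1$. By Lemma \ref{lem: convexity of Fcmu}, $\Fcmu$ is then convex and proper with respect to $\E$. To apply the minimization result Lemma \ref{lem: preparing lemma}, it remains to verify that $\Fcmu$ is lower semicontinuous on $\Ecto$ for $L^1_{\rm loc}$ convergence; the compactness of the sublevel sets $\Ec_m^{1,C}$ from Lemma \ref{lem: compact E1}(iii) then provides a convergent minimizing sequence, Lemma \ref{lem: preparing lemma} produces a minimizer $\varphi\in\Ecto$, and the Dirichlet principle (Theorem \ref{thm: Dirichlet principle}) identifies it as the solution $H_m(\varphi)=\mu$.

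The heart of the matter, and the step I expect to be the main obstacle, is the lower semicontinuity of $\Fcmu=\frac{1}{m+1}\E+\Lcm$. The energy part is already lower semicontinuous by Lemma \ref{lem: e1 usc}, so the entire difficulty lies in the linear term $\Lcm(u)=\into u\,d\mu$. The subtlety is that $L^1_{\rm loc}$ (i.e. Lebesgue) convergence of an energy-bounded sequence $u_j\to u$ does not transfer to $\mu$-almost everywhere convergence when $\mu$ is singular, so one cannot naively pass to the limit in $\into u_j\,d\mu$; this is precisely the change-of-measure phenomenon of Lemma \ref{lem: change measure}. I would handle it as follows. First, testing the extremal functions $u_{m,U,\Omega}$ against the defining inequality of $\Mcc_1$ (Proposition \ref{prop: equiv ep}) gives $\mu(U)\le C\,\Ca_m(U,\Omega)^{1/(m+1)}$, so that $\mu$ does not charge $m$-polar sets; this is what lets the negligible-set trick of Lemma \ref{lem: change measure} work, namely that $\mu$ ignores $\{(\sup_{k\ge j}\varphi_k)^{*}>\sup_{k\ge j}\varphi_k\}$. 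Second, one extracts Ces\`aro means of the minimizing sequence and forces them to converge $\mu$-almost everywhere, which pins down $\Lcm(u_j)\to\Lcm(u)$ and hence the lower semicontinuity. The genuinely delicate point is that $\Ecto\subset L^1(\mu)$ is the \emph{borderline} integrability hypothesis: the uniform integrability of $\{-u_j\}$ in $L^1(\mu)$ that drives this convergence sits exactly at the threshold (a finite-energy function need not lie in $L^2(\mu)$), so the $L^2(\mu)$ bound literally required by Lemma \ref{lem: change measure} must either be secured along the chosen sequence or replaced by a Koml\'os-type $L^1$ extraction; this is where the exponent $1/(m+1)$ and the convexity of $\E^{1/(m+1)}$ must be used carefully.

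Once lower semicontinuity is established the proof closes immediately: Lemma \ref{lem: preparing lemma} gives $\varphi\in\Ecto$ with $\Fcmu(\varphi)=\inf_{\psi\in\Ecto}\Fcmu(\psi)$, and Theorem \ref{thm: Dirichlet principle} converts this into $H_m(\varphi)=\mu$, which together with the uniqueness above finishes the argument. As a robustness check on the scheme, I note that the $\Mcc_1$ inequality also yields, for any dominated approximation $\mu_j\le\mu$ with $H_m(u_j)=\mu_j$, the estimate $\E(u_j)=\into(-u_j)\,d\mu_j\le\into(-u_j)\,d\mu\le C\,\E(u_j)^{1/(m+1)}$, hence $\E(u_j)\le C'$; this uniform energy bound opens an alternative monotone route to the solution for $\mu_j\uparrow\mu$, passing to the limit by the comparison principle and the convergence theorem \ref{thm: convergence Ep 1}.
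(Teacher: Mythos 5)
Your skeleton (uniqueness by comparison, existence by minimizing $\Fcmu$ and invoking the Dirichlet principle) matches the paper's, and you have correctly located the crux: under the borderline hypothesis $\Ecto\subset L^1(\Omega,\mu)$ one cannot invoke Lemma \ref{lem: change measure}, whose Banach--Saks argument genuinely needs a uniform $L^2(\mu)$ bound. But your proposed repairs do not close this gap. A Koml\'os-type extraction from an $L^1(\mu)$-bounded minimizing sequence only yields $\mu$-a.e.\ convergence of Ces\`aro means; to conclude $\Lcm(u_j)\to\Lcm(u)$ --- or even the one-sided bound $\limsup_j\into(-u_j)\,d\mu\leq\into(-u)\,d\mu$ that lower semicontinuity of $\Fcmu$ requires --- you still need uniform integrability of $(-u_j)$ in $L^1(\mu)$, which is exactly what $\mu\in\Mcc_1$ fails to provide; note that the soft envelope argument ($u_j\leq(\sup_{k\geq j}u_k)^*\downarrow u$ plus monotone convergence) gives only the opposite inequality, i.e.\ \emph{upper} semicontinuity of $\Lcm$. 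Nor is there any mechanism to ``secure'' an $L^2(\mu)$ bound along the minimizing sequence. So as written, the existence half of your proof does not go through for a general $\mu\in\Mcc_1$.

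The paper never proves lower semicontinuity of $\Fcmu$ for $\mu$ itself; the missing idea is a decomposition that manufactures solvable approximants. After reducing to $\mu$ with compact support $K$, one introduces the convex, weakly compact set $\mathcal{M}'$ of probability measures $\nu$ on $K$ satisfying $\into(-\f)^2\,d\nu\leq C'\E(\f)^{2/(m+1)}$ for all $\f\in\Ecto$ (nonempty since $H_m(h_L)$ qualifies for every compact $L\subset K$), and applies Rainwater's generalized Radon--Nikodym theorem to write $\mu=f\,d\nu+\nu_s$ with $\nu\in\mathcal{M}'$, $f\in L^1(\nu)$, and $\nu_s$ carried by an $m$-polar set, hence $\nu_s=0$ --- your capacity estimate $\mu(U)\leq C\,\Ca_m(U,\Omega)^{1/(m+1)}$ is precisely what shows $\mu$ ignores $m$-polar sets, so that observation of yours is used, just elsewhere. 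For the truncations $\mu_j=\min(f,j)\,\nu\leq j\nu$ the $L^2$ bound \emph{does} hold, so Lemma \ref{lem: change measure} makes $\Fc_{\mu_j}$ proper and lower semicontinuous, and Lemma \ref{lem: preparing lemma} together with Theorem \ref{thm: Dirichlet principle} solves $H_m(u_j)=\mu_j$. At that point your final ``robustness check'' ceases to be a side remark and becomes the actual conclusion of the proof: $\mu_j\uparrow\mu$, the $u_j$ decrease by the comparison principle, your estimate $\E(u_j)\leq\into(-u_j)\,d\mu\leq A\,\E(u_j)^{1/(m+1)}$ (Proposition \ref{prop: equiv ep}) bounds the energies uniformly, and the monotone limit $u\in\Ecto$ solves $H_m(u)=\mu$; an exhaustion by compacts then removes the compact-support assumption. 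In short: you identified the obstruction and even wrote down the limiting argument, but without the Rainwater decomposition you have no supply of measures $\mu_j\uparrow\mu$ for which the variational problem is actually solvable, and that is the step your proposal is missing.
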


\begin{proof}
The uniqueness follows from the comparison principle.
\medskip

We prove the existence.  Suppose first that  $\mu$  has compact support
 $K\Subset\Omega$, and let $h_K:=h_{m,K,\Omega}^{*}$ denote the 
  $m$-extremal function of $K$ with respect to  $\Omega.$ Set
$$
\mathcal{M}=\left\{\nu\geq 0\ / \  {\rm supp} (\nu)\subset K,\;
\int_{\Omega}(-\varphi)^2 d\nu\leq C.\E(\varphi)^{\frac{2}{m+1}}\text{ for every } \varphi\in\Ecto\right\},
$$
where  $C$ is a fixed constant such that $C>2 \E(h_K)^{\frac{m-1}{m+1}}.$
For each  compact  $L\subset K$ , we have $h_K\leq h_L$. We deduce that  $\E(h_L)\leq \E(h_K).$ 
Therefore, for every  $\varphi\in\Ecto$, we have
\begin{eqnarray*}
\into (-\varphi)^2H_m(h_L)&\leq& 2\|h_L\|\into (-\varphi)(dd^c\varphi)\wedge (dd^c h_L)^{m-1}\wedge\omega^{n-m}\\ 
&\leq& 2\left(\int_{\Omega} (-\varphi)H_m(\f)\right)^{\frac{2}{m+1}}
\left(\int_{\Omega} (-h_L)H_m(h_L)\right)^{\frac{m-1}{m+1}}\\
&\leq& C. \E(\varphi)^{\frac{2}{m+1}}.
\end{eqnarray*}
This implies that  $H_m(h_L)\in\mathcal{M}$ for every  compact $L\subset K$.
\medskip

Put $T=\sup\{\nu(\Omega)\ / \ \nu\in\mathcal{M}\}.$  We claim that $T<+\infty.$
In fact, since $\Omega$ is $m$-hyperconvex, there exists 
 $h\in\SH_m^-(\Omega)\cap \Cc(\bar{\Omega})$ such that 
$
K\Subset \{h<-1\}\Subset \Omega.
$
For each  $\nu\in \mathcal{M}$, we have
$$
\nu(K)\leq \int_K (-h)d\nu \leq C. \E(h)^{\frac{2}{m+1}},
$$
from which the claim follows.
\medskip

Fix $\nu_0\in \mathcal{M}$ such that $\nu_0(\Omega)>0$. Let $\mathcal{M}'$ denote the set of all probability measures $\nu$ in $\Omega$ supported in $K$ such that
$$
\into(-\varphi)^2 d\nu\leq \left(\frac{C}{T} + \frac{C}{\nu_0(\Omega)}\right)
\E(\varphi)^{\frac{2}{m+1}} , \ \ \forall \varphi\in\Ecto \, .
$$
Then, for each   $\nu\in \mathcal M$ and  $\varphi\in\Ecto,$
\begin{eqnarray*}
\into (-\varphi)^2\frac{(T-\nu(\Omega))d\nu_0 +\nu_0(\Omega)d\nu}{T\nu_0(\Omega)} &\leq&
\frac{T-\nu(\Omega)}{T\nu_0(\Omega)}\into (-\varphi)^2d\nu_0+\frac1{T}\into(-\varphi)^2d\nu\\ 
&\leq &
\left(C\frac{T-\nu(\Omega)}{T\nu_0(\Omega)} + \frac{C}{T}\right)\E(\varphi)^{\frac{2}{m+1}}\\
&\leq&
\left(\frac{C}{\nu_0(\Omega)} + \frac{C}{T}\right)\E(\varphi)^{\frac{2}{m+1}}.
\end{eqnarray*}
From this we infer that
$$
\frac{(T-\nu(\Omega))\nu_0 +\nu_0(\Omega)\nu}{T\nu_0(\Omega)}\in\mathcal{M}^\prime, \text{ for every } \nu\in\mathcal{M}\, .
$$
We conclude that $\mathcal{M}^\prime$ is (non empty)
 convex and weakly compact in the space of probability  measures.
 It follows from a generalized Radon-Nykodim Theorem  \cite{Rai69} that 
 there exists a positive measure $\nu\in\mathcal{M}^\prime$ and a positive  function $f\in L^1(\nu)$ 
 such that  $\mu=f\,d\nu+\nu_s$, where  $\nu_s$ is orthogonal to $\mathcal{M}^\prime$. 
 Observe also that every measures orthogonal to $\mathcal{M}^\prime$ is supported in some $m$-polar set
 since $H_m(h_L)\in \mathcal M$ for each $L\Subset K.$ 
 We then deduce that  $\nu_s\equiv 0$ since $\mu$ does not charge $m$-polar sets. 
 \medskip
 
From Lemma \ref{lem: change measure}, Lemma \ref{lem: convexity of Fcmu} we see that for each $\lambda\in \mathcal{M}^\prime,$ the functional $\Fc_{\lambda}$ is proper and lower semicontinuous. 
For each $j\in \N$ set $\mu_j=\min(f,j)\nu$. Then   $\mathcal{L}_{\mu_j}$ is also continuous on $\Ecto$ and $\Fc_{\mu_j}$ is proper  since $\mu_j\leq j.\nu.$  
Therefore, by Lemma \ref{lem: preparing lemma} and Theorem \ref{thm: Dirichlet principle}, there exists   $u_j\in\Ecto$ such that $H_m(u_j)=\mu_j$. 
It is clear from the comparison principle that $\{u_j\}$ decreases to a  function $u\in \Ecto$  which solves  $H_m(u)=\mu$.
\medskip

It remains to treat the case when $\mu$ does not have compact support. 
Let  $\{K_j\}$ be an exhaustive sequence of compact subsets of $\Omega$ and consider
$\mu_j=\chi_{K_j}d\mu.$  Let $u_j\in \Ecto$ solve $H_m(u_j)= \mu_j.$ Observe also that $(u_j)$ decreases to $u\in \SH_m^-(\Omega).$ 
 It suffices to prove that $\sup_j \E(u_j)<+\infty$.  Since $\mu\in \Mcc_1$, we have
$$
 \E(u_j)=\into (-u_j)H_m(u_j)=\int_{K_j} (-u_j)d\mu\leq \into (-u_j)d\mu \leq A.\E(u_j)^{\frac{1}{m+1}}.
$$
This implies that $\E(u_j)$ is uniformly bounded, hence  $u$ belongs to $\Ecto$ and the result follows. 
\end{proof}
\section{Some applications}
\begin{lemma}\label{lem: preparing lemma 2}
Let $\mu$ be a positive Radon measure having finite mass $\mu(\Omega)<+\infty.$
Assume that  $\mu\leq H_m(\p)$, where $\p$ is a bounded $m$-sh function in $\Omega.$
Then there exists a unique function $\f\in \Ecoo$ such that $\mu=H_m(\f).$
\end{lemma}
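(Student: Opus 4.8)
The plan is to combine the existence statement of Theorem \ref{thm: first main theorem} with a comparison argument that upgrades the resulting finite--energy solution to a bounded one, the subsolution $\psi$ entering only at the last step. Uniqueness is immediate: if $\varphi_1,\varphi_2\in\Ecoo$ both solve $H_m=\mu$, then $H_m(\varphi_1)\geq H_m(\varphi_2)$ and $H_m(\varphi_2)\geq H_m(\varphi_1)$, so Theorem \ref{thm: principe de comparaison 3} (with $p=1$, since $\Ecoo\subset\Ecto$) yields $\varphi_1=\varphi_2$. For existence I first normalize: replacing $\psi$ by $\psi-\sup_\Omega\psi$ leaves $H_m(\psi)$ unchanged, so I may assume $-M\leq\psi\leq 0$ for some $M>0$ (the case $M=0$ forces $\mu=0$). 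Then $\psi/M+1$ is $m$-subharmonic with $0\leq \psi/M+1\leq 1$ and $H_m(\psi/M+1)=M^{-m}H_m(\psi)$, so for every Borel set $E$ one gets $\mu(E)\leq\int_E H_m(\psi)=M^m\int_E H_m(\psi/M+1)\leq M^m\,\Ca_m(E,\Omega)$ by Definition \ref{def: capacity}; thus $\mu$ is dominated by capacity.

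The first key step is to verify $\Ecto\subset L^1(\Omega,\mu)$, so that Theorem \ref{thm: first main theorem} applies. The mechanism is a sublevel--set estimate. For $u\in\Ecoo$, $t,s>0$ and an $m$-subharmonic competitor $0\leq\phi\leq 1$, compare $u$ with $v=-s-t+t\phi$: since $\liminf_{z\to\partial\Omega}(u-v)\geq s>0$, the comparison principle (Theorem \ref{thm: comparison principle 1}) gives $t^m\int_{\{u<v\}}H_m(\phi)\leq\int_{\{u<v\}}H_m(u)$, and because $\{u<-s-t\}\subset\{u<v\}\subset\{u<-s\}$, taking the supremum over $\phi$ and setting $s=t$ yields $\Ca_m(\{u<-2t\},\Omega)\leq t^{-(m+1)}\E(u)$. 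Given $u\in\Ecto$ with approximants $u_k\downarrow u$ in $\Ecoo$ one has $\E(u_k)\leq\E(u)$, so using $\mu\leq M^m\Ca_m$ and the layer--cake formula, $\into(-u_k)\,d\mu=\int_0^\infty\mu(\{u_k<-t\})\,dt\leq\mu(\Omega)+C M^m\E(u)$ uniformly in $k$; letting $k\to\infty$ shows $u\in L^1(\mu)$. Hence $\Ecto\subset L^1(\Omega,\mu)$, and Theorem \ref{thm: first main theorem} furnishes a unique $\varphi\in\Ecto$ with $H_m(\varphi)=\mu$.

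It remains to prove that $\varphi$ is bounded, i.e. $\varphi\in\Ecoo$; this is the main obstacle, and is where $\psi$ is genuinely used. I set $w=\max(\varphi,\psi)$; since $0\geq w\geq\varphi\in\Ecto$, Theorem \ref{thm: convexite cegrell} gives $w\in\Ecto$. On the open set $\{\varphi>\psi\}$ the maximum principle (Theorem \ref{thm: principe du maximum Ep}, with all entries equal to $\varphi$ and $v=\psi$) gives $\ind_{\{\varphi>\psi\}}H_m(w)=\ind_{\{\varphi>\psi\}}H_m(\varphi)=\ind_{\{\varphi>\psi\}}\mu$, while on $\{\psi>\varphi\}$ the contribution of $w$ is $H_m(\psi)\geq\mu$; combined with the fact that the maximum only adds nonnegative mass on the contact set, this yields the inequality $H_m(w)\geq\ind_{\{\varphi\geq\psi\}}H_m(\varphi)+\ind_{\{\psi>\varphi\}}H_m(\psi)\geq\mu=H_m(\varphi)$ on all of $\Omega$. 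Theorem \ref{thm: principe de comparaison 3} then forces $w\leq\varphi$, and since $w\geq\varphi$ we get $w=\varphi$, that is $\psi\leq\varphi\leq 0$. Hence $\varphi$ is bounded and lies in $\Ecoo$. The two delicate points are the behaviour on the contact set $\{\varphi=\psi\}$, handled by the standard inequality for $H_m(\max(\varphi,\psi))$ recorded above, and the fact that $\psi\notin\Ecto$ a priori, which is circumvented precisely by comparing the $\Ecto$ function $w$ (rather than $\psi$ itself) with $\varphi$.
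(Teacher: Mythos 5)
Your proposal is correct in substance, but it takes a genuinely different route from the paper's. The paper never solves for the full measure directly: it exhausts $\mu$ by the compactly supported pieces $\ind_{A_j}\mu$, where $A_j=\{jh<-1\}$ for an exhaustion function $h\in\Ecoo$, solves $H_m(\f_j)=\ind_{A_j}\mu$ by Theorem \ref{thm: first main theorem}, and compares with $h_j=\max(\psi,jh)\in\Ecoo$ (noting $\ind_{A_j}\mu\leq\ind_{A_j}H_m(\psi)=\ind_{A_j}H_m(h_j)\leq H_m(h_j)$) to get $0\geq\f_j\geq h_j\geq\psi$; the solution is then the decreasing limit $\f_j\downarrow\f$. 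You instead verify the hypothesis $\Ecto\subset L^1(\Omega,\mu)$ for $\mu$ itself, via the domination $\mu\leq M^m\Ca_m$ and a sublevel-set capacity estimate (which is essentially the paper's own Lemma \ref{lem: capacity}), apply Theorem \ref{thm: first main theorem} once, and then upgrade the $\Ecto$-solution to a bounded one a posteriori through $w=\max(\f,\psi)$ and Theorem \ref{thm: principe de comparaison 3}. Your route buys a single application of the variational theorem and an explicit subsolution estimate $\f\geq\psi$ for the solution; the paper's route buys that every approximant lies in $\Ecoo$ by construction and that no maximum-type inequality beyond the theorems stated in the paper is ever needed.

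The one step you cannot wave through is $H_m(w)\geq\ind_{\{\f\geq\psi\}}H_m(\f)+\ind_{\{\psi>\f\}}H_m(\psi)$: this inequality is not ``recorded above'', and neither of the paper's maximum principles applies verbatim, since Theorem \ref{theorem: maximum principle} requires both functions to be locally bounded (your $\f$ is not, a priori), while Theorem \ref{thm: principe du maximum Ep} requires the function being maximized to belong to $\Ecpo$ (your $\psi$ need not, as it need not vanish on $\pO$). Both halves are, however, provable with the paper's tools. For the part on $\{\psi>\f\}$: since $w\geq\psi\geq-M$, for $j>M$ one has $w=\max\bigl(\psi,\max(\f,-j)\bigr)$ and $\{\psi>\f\}=\{\psi>\max(\f,-j)\}$, so Theorem \ref{theorem: maximum principle} applied to the bounded pair $\psi$, $\max(\f,-j)$ gives $\ind_{\{\psi>\f\}}H_m(w)=\ind_{\{\psi>\f\}}H_m(\psi)$. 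For the contact set: apply Theorem \ref{thm: principe du maximum Ep} to $\f$ and $v=\psi-\epsilon$, use $\{\f\geq\psi\}\subset\{\f>\psi-\epsilon\}$ to get $H_m(\max(\f,\psi-\epsilon))\geq\ind_{\{\f\geq\psi\}}H_m(\f)$, and let $\epsilon\downarrow 0$ using Theorem \ref{thm: convergence theorem for increasing sequences} (the functions $\max(\f,\psi-\epsilon)$ are uniformly bounded and increase to $w$), which preserves the lower bound against nonnegative test functions. Since the two minorants are carried by disjoint Borel sets, they add up, and your argument is then complete. One last remark: your closing inference (bounded, in $\Ecto$, finite mass, hence in $\Ecoo$) silently skips the boundary requirement $\lim_{z\to\pO}\f=0$ in the definition of $\Ecoo$; but the paper's proof makes exactly the same jump when it asserts $\f_j\downarrow\f\in\Ecoo$ from the bound $\f_j\geq h_j\geq\psi$, so you are on equal footing with the paper on that point.
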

\begin{proof}
Without loss of generality, we can assume that $-1\leq \p\leq 0.$ Consider $h_j=\max(\p,jh)$, 
where $h\in \Ecoo$ is  an exhaustion function of $\Omega.$ Let
$ A_j:=\{z\in \Omega \ / \  jh<-1\}.$
From Theorem  \ref{thm: first main theorem},  there exists $(\f_j)_j\subset \Ecoo$ such that
 $H_m(\f_j)=\ind_{A_j}\mu, \ \forall j.$ Thus,
$$
0\geq \f_j\geq h_j\geq \p,\ \text{and}\ \f_j\downarrow \f\in \Ecoo.
$$
\end{proof}
Using this lemma we can prove the comparison principle for the classes $\Ecpo$ with $p>1.$ 
\begin{theorem}\label{thm: principe de comparaison  Ep 3}
If $p> 1$ and $u,v\in \Ecpo$  then
$$
\int_{\{u>v\}} H_m(u)\leq \int_{\{u>v\}}H_m(v).
$$
\end{theorem}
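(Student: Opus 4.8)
The plan is to reduce the case $p>1$ to the already-established case $0<p\le 1$ (Theorem \ref{thm: principe de comparaison  Ep 2}) by a truncation and approximation argument, using Lemma \ref{lem: preparing lemma 2} as the bridge. First I would note that the comparison principle for $0<p\le 1$ in particular gives the conclusion for all functions in $\Ecoo\subset \Ec_m^1(\Omega)$, so the heart of the matter is to approximate $u,v\in \Ecpo$ from below by bounded functions whose Hessian measures we can control, while keeping the set $\{u>v\}$ under control.

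My first step would be to reduce to bounded functions via truncation. Fix an exhaustion function $h\in\Ecoo$ and set $u_k=\max(u,kh)$, $v_k=\max(v,kh)$ for $k\in\N$; these are bounded, lie in $\Ecpo$ (by convexity, Theorem \ref{thm: convexite cegrell}), and decrease to $u,v$ respectively. On the open set where $u_k>v_k$ the maximum principle (Theorem \ref{thm: principe du maximum Ep}) identifies the Hessian measures of $u_k,v_k$ with those of $u,v$ off a controlled region, and since $\{u>v\}$ and $\{u_k>v_k\}$ agree outside the region $\{kh>\min(u,v)\}$ which exhausts $\Omega$, I expect to recover the inequality on $\{u>v\}$ in the limit $k\to\infty$. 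The key point is that $H_m(u_k)\to H_m(u)$ and $H_m(v_k)\to H_m(v)$ weakly (Theorem \ref{thm: convergence Ep 1}), and the inequality for the truncated functions can be obtained because they are bounded.

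The core estimate for bounded functions I would obtain as follows. For bounded $u,v$, the measures $H_m(u)$ and $H_m(v)$ have finite total mass and are dominated by $H_m$ of a bounded function, so Lemma \ref{lem: preparing lemma 2} applies: I can write $\ind_{\{u>v\}}H_m(v)$ (suitably regularized) as the Hessian measure of a function in $\Ecoo$, and similarly compare against $u$. More directly, for bounded functions the classical comparison principle (Theorem \ref{thm: comparison principle 1}) together with the maximum principle already yields
$$
\int_{\{u>v\}}H_m(u)\le \int_{\{u>v\}}H_m(v),
$$
because on $\{u>v\}$ one has $H_m(\max(u,v))=H_m(u)$ and the global comparison principle controls the masses. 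The passage from bounded $u,v$ to general $u,v\in\Ecpo$ is then the limiting argument of the previous paragraph.

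The main obstacle will be the boundary behavior of the level set $\{u>v\}$ under truncation: the sets $\{u_k>v_k\}$ do not coincide with $\{u>v\}$, and the discrepancy occurs exactly where truncation is active. Handling the limit requires showing that the ``error'' mass $\int_{\{u_k>v_k\}\triangle\{u>v\}}H_m(\cdot)$ tends to zero, which I would control using the fact that the measures $H_m(u),H_m(v)$ do not charge $m$-polar sets (being in $\Ecpo$) together with the weak convergence and an argument analogous to the one used inside the proof of Theorem \ref{thm: principe de comparaison  Ep 2} to reduce to the case $\int_{\{u=v\}}H_m(v)=0$. This is precisely the step where the extra integrability afforded by Lemma \ref{lem: preparing lemma 2} — guaranteeing a bounded solution whose energy can be estimated uniformly — becomes essential, since for $p>1$ the direct finiteness arguments that worked for $p\le 1$ are no longer available.
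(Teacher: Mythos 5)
There is a genuine gap, and it sits exactly at the step you yourself flag as ``the main obstacle.'' Your truncation $u_k=\max(u,kh)$, $v_k=\max(v,kh)$ does put you in the range of the bounded comparison principle (Theorem \ref{thm: comparison principle 1}), giving $\int_{\{u_k>v_k\}}H_m(u_k)\le\int_{\{u_k>v_k\}}H_m(v_k)$, and the left-hand side passes to the limit correctly (restrict to $\{u>kh\}\cap\{v>kh\}$, where the maximum principle identifies $H_m(u_k)$ with $H_m(u)$, and use monotone convergence plus the fact that $H_m(u)$ does not charge $m$-polar sets). The problem is the right-hand side: the part of $\int_{\{u_k>v_k\}}H_m(v_k)$ over the region where the truncation is active is \emph{not} controlled by $H_m(v)$. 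Indeed, by Theorem \ref{thm: principe du maximum Ep} applied to $kh$ and $v$, on the set $\{kh>v\}$ one has $H_m(v_k)=k^m H_m(h)$, a measure whose mass there is of order $k^m$. The two tools you propose for killing this error are beside the point: non-charging of $m$-polar sets is a property of the limit measures $H_m(u),H_m(v)$, not of the measures $H_m(v_k)$ that actually carry the error, and weak convergence $H_m(v_k)\weak H_m(v)$ gives no control of masses on a $k$-dependent shrinking region. To show $k^m H_m(h)(\{kh>v\})\to 0$ one needs a quantitative decay of capacity in $t$, of the type $\Ca_m(\{v<-t\})\le C\,\E_p(v)\,t^{-m-p}$ (this is Lemma \ref{lem: capacity}, which the paper only proves later), together with a choice of $h$ whose Hessian measure stays away from $\partial\Omega$. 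Nothing of this sort appears in your proposal; and Lemma \ref{lem: preparing lemma 2}, which you invoke at this point, provides nothing like ``a bounded solution whose energy can be estimated uniformly'' --- it is a solvability statement for measures dominated by the Hessian of a bounded function.

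The paper resolves precisely this difficulty by a different and sharper truncation, applied to $v$ only: it sets $\varphi_k=\max\bigl(v,-k(-h)^{1/p}\bigr)$ with $h\in\Ecoo\cap \Cc(\Omega)$ fixed. Then on $B_k=\{v\le -k(-h)^{1/p}\}$ one has $(-h)\le(-\varphi_k)^p/k^p$, so the error term satisfies $\int_{B_k}(-h)H_m(\varphi_k)\le k^{-p}\E_p(\varphi_k)\le C k^{-p}\E_p(v)\to 0$ --- a one-line estimate that replaces the missing capacity analysis --- while on $A_k=\{v>-k(-h)^{1/p}\}$ the maximum principle gives $H_m(\varphi_k)=H_m(v)$. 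Note the exponent $1/p$ is exactly what makes the error scale like $k^{-p}$; with your truncation by $kh$ the analogous bound degenerates. Moreover, since the paper never truncates $u$ (which stays unbounded), it cannot quote Theorem \ref{thm: comparison principle 1} for the pair as you do; instead, for bounded $v$ vanishing on $\partial\Omega$ it approximates $v$ from above by $v_j\in\Ecoo$ solving $H_m(v_j)=\ind_{K_j}H_m(v)$ --- this is the actual role of Lemma \ref{lem: preparing lemma 2} --- and reruns the argument of Theorem \ref{thm: principe de comparaison  Ep 2}, whose only failure for $p>1$ was the loss of finiteness of the weighted masses; everything is integrated against the weight $(-h)$, with $h\downarrow -1$ only at the very end. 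In short, your reduction-to-bounded strategy is workable in outline, but as written the truncation-error step fails, and the missing idea is the weighted truncation by $-k(-h)^{1/p}$ (or, alternatively, a capacity decay estimate that you would first have to prove).
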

\begin{proof}
 Fix $0\not\equiv h\in \Ecoo\cap \Cc(\Omega)$. Assume first that $v$ is bounded and vanishes on the boundary. Let $K_j$ be an exhaustion sequence of compact subsets of $\Omega.$ Use Lemma \ref{lem: preparing lemma 2} to solve $H_m(v_j)=\ind_{K_j}H_m(v).$ Then $v_j\downarrow v.$ Now, the arguments of the proof of Theorem \ref{thm: principe de comparaison  Ep 3} can be  applied to yield
 $$
 \int_{\{u>v_j\}} (-h) H_m(u) \leq \int_{\{u>v_j\}} (-h) H_m(v_j) =\int_{\{u>v_j\}\cap K_j} (-h) H_m(v)  .
 $$
 Letting $j\to+\infty$ we get  
 $$
 \int_{\{u>v\}} (-h) H_m(u) \leq \int_{\{u>v\}} (-h) H_m(v) . 
 $$
 It remains to remove the assumption on $v.$ For each $k\in \N$ set 
 $$
 \varphi_k:=\max\left(v, -k(-h)^{1/p}\right) .
 $$
 Since $\varphi_k$ is bounded and vanishes on $\partial \Omega$, by the above arguments we get
  $$
 \int_{\{u>\varphi_k\}} (-h) H_m(u) \leq \int_{\{u>\varphi_k\}} (-h) H_m(\varphi_k) . 
 $$
 Set $A_k:=\{v>-k(-h)^{1/p}\}$ and $B_k:=\{v\leq -k(-h)^{1/p}\}.$ On $B_k$ we have $(-h)\leq \frac{(-\varphi_k)^p}{k^p}.$ It then follows that
 $$
 \int_{B_k} (-h)H_m(\varphi_k) \leq \frac{1}{k^p}E_p(\varphi_k)\leq  \frac{C}{k^p}E_p(v),
 $$
 where $C>0$ does not depend on $k.$ \\
 It follows from Theorem \ref{thm: principe du maximum Ep} that $H_m(\varphi_k)=H_m(v)$ on $A_k$. We thus get
 $$
 \int_{\{u>\varphi_k\}} (-h) H_m(u) \leq \int_{\{u>\varphi_k\}\cap A_k} (-h) H_m(v) + \frac{C}{k^p}E_p(v) . 
 $$
It suffices now to let $k\to+\infty.$
\end{proof}
Now we prove a decomposition theorem of Cegrell type.  
\begin{theorem}\label{thm: Cegrell's decomposition}
Let $\mu$ be  a positive measure in $\Omega$ which does not charge $m$-polar sets.  Then there exists 
  $\f\in \Ecoo$ and $0\leq f\in L^1_{\rm loc}(H_m(\f))$  such that $\mu=f.H_m(\f).$
\end{theorem}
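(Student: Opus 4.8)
The plan is to follow Cegrell's scheme: represent $\mu$ as a countable sum of measures, each dominated by the Hessian measure of a \emph{bounded} $m$-subharmonic function, solve each piece by Lemma \ref{lem: preparing lemma 2}, and glue the bounded solutions into a single $\f\in\Ecoo$ whose Hessian measure dominates $\mu$ in the sense of absolute continuity. First I would reduce to the case where $\mu$ has finite mass and compact support, exhausting $\Omega$ by compacts $K_l$ and splitting $\mu=\sum_l \ind_{K_l\setminus K_{l-1}}\mu$; the pieces are reassembled at the end.

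For such a finite, non-polar-charging $\mu$ one cannot solve $H_m(w)=\mu$ directly, since $\Ecto\subset L^1(\mu)$ may fail. Instead I would invoke the auxiliary set $\mathcal M$ and the generalized Radon--Nikodym theorem \cite{Rai69} exactly as in the proof of Theorem \ref{thm: first main theorem}, obtaining $\mu=f\nu$, where $\nu$ is a probability measure satisfying $\into(-\varphi)^2\,d\nu\le C\,\E(\varphi)^{2/(m+1)}$ for all $\varphi\in\Ecto$, the singular part vanishing because $\mu$ charges no $m$-polar set. By Cauchy--Schwarz this energy bound gives $\into(-\varphi)\,d\nu\le C'\E(\varphi)^{1/(m+1)}$, so $\nu\in\Mcc_1$ and Theorem \ref{thm: first main theorem} produces $w\in\Ecto$ with $H_m(w)=\nu$. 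The key domination step comes next: writing $w_k=\max(w,-k)\in\Ecoo$, the maximum principle (Theorem \ref{theorem: maximum principle}) yields $\ind_{\{w>-k\}}H_m(w_k)=\ind_{\{w>-k\}}\nu$, so the truncations $\mu_k:=\ind_{\{w>-k\}\cap\{f\le k\}}\mu\le k\,\ind_{\{w>-k\}}\nu\le H_m(k^{1/m}w_k)$ are each dominated by the Hessian measure of a bounded function and increase to $\mu$ (since $\{w=-\infty\}$ is $m$-polar, hence $\mu$-null, and $f<+\infty$ $\mu$-almost everywhere). Setting $\sigma_i:=\mu_i-\mu_{i-1}\ge 0$ and applying Lemma \ref{lem: preparing lemma 2}, I obtain $u_i\in\Ecoo$ with $H_m(u_i)=\sigma_i$ and $\into H_m(u_i)=\sigma_i(\Omega)<+\infty$.

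Finally I would glue: choose $c_i>0$ with $\sum_i c_i\|u_i\|_{L^\infty(\Omega)}<\infty$ and $\sum_i c_i\,\sigma_i(\Omega)^{1/m}<\infty$, and set $\f=\sum_i c_i u_i$. The generalized Minkowski inequality deduced from Lemma \ref{lem: persson} bounds $\into H_m\big(\sum_{i\le N}c_iu_i\big)\le\big(\sum_i c_i\,\sigma_i(\Omega)^{1/m}\big)^m$ uniformly in $N$, so the decreasing partial sums force $\f\in\Ecoo$ (bounded, vanishing on $\pO$, of finite total mass), using Theorem \ref{thm: convergence E} for the convergence of the Hessian measures and Theorem \ref{thm: convexite cegrell} for membership in the class. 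Expanding the Hessian product and discarding the nonnegative mixed terms gives the superadditivity $H_m(\f)\ge\sum_i c_i^m H_m(u_i)=\sum_i c_i^m\sigma_i$, so each $\sigma_i$, and hence $\mu=\sum_i\sigma_i$, is absolutely continuous with respect to $H_m(\f)$. Putting $f:=d\mu/dH_m(\f)$ and noting $\int_K f\,dH_m(\f)=\mu(K)<+\infty$ for every $K\Subset\Omega$ gives $f\in L^1_{\rm loc}(H_m(\f))$ and $\mu=f\,H_m(\f)$.

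The main obstacle is precisely the domination step: producing, for a measure that need not lie in the range of $H_m$ on $\Ecto$, the bounded subsolutions required by Lemma \ref{lem: preparing lemma 2}. This is where the combination of the Radon--Nikodym decomposition (to replace $\mu$ by the tractable $\nu\in\Mcc_1$) with the maximum principle (to pass from the possibly unbounded potential $w$ to its bounded truncations $w_k$) is essential; the concluding gluing is routine once the Minkowski-type mass bound and the superadditivity of $H_m$ are in hand.
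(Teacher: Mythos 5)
Your proposal is correct, and it shares the paper's overall skeleton: both extract from the machinery in the proof of Theorem \ref{thm: first main theorem} a representation $\mu=f\,\nu$ with $\nu$ a compactly supported measure in $\Mcc_1$ and $\nu=H_m(w)$ for some $w\in\Ecto$, both then feed suitable pieces into Lemma \ref{lem: preparing lemma 2}, and both finish with a series $\f=\sum_i c_iu_i$ and the superadditivity of $H_m$. The genuine difference is the device used to pass from the possibly unbounded potential $w$ to bounded data. The paper uses Cegrell's change-of-variables trick: $\p:=(-w)^{-1}$ is $m$-subharmonic and locally bounded, with $H_m(\p)\geq(-w)^{-2m}H_m(w)$; since $H_m(w)=\nu$ is supported in a compact $K$ on which $-w$ is bounded below away from $0$, the measure $(-w)^{-2m}H_m(w)$ is finite, $\p$ can be modified near $\pO$ to lie in $\Ecoo$, and Lemma \ref{lem: preparing lemma 2} then solves $H_m(\f)=(-w)^{-2m}H_m(w)$ \emph{exactly}, producing the explicit density $f\cdot(-w)^{2m}$ in one stroke for the compactly supported case. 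You instead truncate, $w_k=\max(w,-k)$, use the maximum principle to identify $\ind_{\{w>-k\}}\nu=\ind_{\{w>-k\}}H_m(w_k)$, and dominate $\ind_{\{w>-k\}\cap\{f\leq k\}}\mu\leq H_m(k^{1/m}w_k)$; this costs you a countable decomposition $\mu=\sum_i\sigma_i$ and a second gluing even in the compact case, and you recover only absolute continuity (the density coming from abstract Radon--Nikodym rather than an explicit formula), but it avoids verifying the $m$-subharmonicity of $(-w)^{-1}$ and the boundary modification, so it is arguably more elementary.

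Three small corrections. First, cite Theorem \ref{thm: principe du maximum Ep} rather than Theorem \ref{theorem: maximum principle} for the truncation identity: $w\in\Ecto$ need not be locally bounded, and Theorem \ref{theorem: maximum principle} is stated only for functions in $\SH_m(\Omega)\cap\loc(\Omega)$, whereas Theorem \ref{thm: principe du maximum Ep} with $u_1=\dots=u_m=w$ and $v\equiv-k$ gives exactly what you need. Second, your claim $w_k\in\Ecoo$ is neither needed nor automatic (truncations of $\Ecto$ functions may fail to have boundary limit $0$); Lemma \ref{lem: preparing lemma 2} only requires the dominating function $k^{1/m}w_k$ to be bounded and $m$-subharmonic, which it is. Third, the mass inequality
$$
\Big(\into H_m\Big(\sum_{i\leq N}c_iu_i\Big)\Big)^{1/m}\leq\sum_{i\leq N}c_i\Big(\into H_m(u_i)\Big)^{1/m}
$$
does not follow from Lemma \ref{lem: persson}, which is a weighted energy estimate; it is the standard Cegrell mixed-mass inequality, proved by iterating Cauchy--Schwarz for the form $(u,v)\mapsto\into dd^cu\wedge dd^cv\wedge T$ via integration by parts (Theorem \ref{theorem: integration by parts}). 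The paper's own final step (``take $(a_j)$ with $\sum_ja_ju_j\in\Ecoo$'') tacitly relies on the same inequality, so this is a shared omission rather than a flaw peculiar to your argument.
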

\begin{proof}
We first assume that $\mu$ has compact support. By applying Theorem \ref{thm: first main theorem} 
we can find  $u\in\Ecto$ and $0\leq f\in L^1(H_m(u))$  such that $\mu=f.H_m(u),$ and 
$\text{supp}(H_m(u))\Subset \Omega.$ Consider
$$
\p=(-u)^{-1}\in \SH_m(\Omega)\cap L^{\infty}_{\rm loc}(\Omega).
$$
Then  $(-u)^{-2m}H_m(u)\leq H_m(\p).$  Since   $H_m(u)$ has compact support in   $\Omega,$ 
we can modify  $\p$ in a neighborhood of $\pO$ such that $\p\in \Ecoo.$  It follows from Lemma
\ref{lem: preparing lemma 2} that 
$$
(-u)^{-2m}H_m(u)=H_m(\f), \ \f\in \Ecoo.
$$ 
This gives us $\mu=f(-u)^{2m}.H_m(\f).$

It remains to consider the case  $\mu$  does not have compact support. Let $(K_j)$ be an exhaustive 
 sequence of  compact subsets of  $\Omega.$  From previous arguments  there exists $u_j\in \Ecoo$ 
 and $f_j\in L^1(H_m(u_j))$  such that $\ind_{K_j}\mu=f_jH_m(u_j).$  Take  a sequence of positive numbers
   $(a_j)$ satisfying   $\f:=\sum_{j=1}^{\infty}a_ju_j\in \Ecoo.$  The measure
   $\mu$ is absolutely continuous with respect to $H_m(\f).$ Thus,  
$$
\mu=gH_m(\f)\ \text{and}\ g\in L^1_{\rm loc}(H_m(\f)).
$$
\end{proof}

\begin{theorem}\label{thm: solution Ep}
Let  $\mu$ be a positive Radon measure on $\Omega$ such that 
 $\Ecpo\subset L^p(\mu), p>0.$ Then there exists a unique 
  $\f\in \Ecpo$ such that $H_m(\f)=\mu.$
\end{theorem}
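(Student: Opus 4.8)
The plan is to produce the solution as a decreasing limit of bounded solutions whose $p$-energies are controlled uniformly, after which the comparison principle disposes of uniqueness. First I would record that uniqueness is immediate: two solutions $\f,\p\in\Ecpo$ with $H_m(\f)=H_m(\p)$ satisfy $H_m(\f)\geq H_m(\p)$ and conversely, so the domination principle (Theorem~\ref{thm: principe de comparaison 3} for $0<p\leq1$, and the analogous statement deduced from Theorem~\ref{thm: principe de comparaison Ep 3} for $p>1$) forces $\f=\p$. For existence I would begin by noting that $\mu$ charges no $m$-polar set: any such set lies inside $\{v=-\infty\}$ for some $v\in\Ecpo$, and $\into(-v)^p\,d\mu<+\infty$ is incompatible with $\mu$ giving it positive mass. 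Hence Theorem~\ref{thm: Cegrell's decomposition} provides $w\in\Ecoo$ and $0\leq f\in L^1_{\rm loc}(H_m(w))$ with $\mu=f\,H_m(w)$.

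Next I would fix an exhaustion $K_j\uparrow\Omega$ by compact sets and set $\mu_j:=\min(f,j)\,\ind_{K_j}H_m(w)$. Each $\mu_j$ has finite mass, since $\mu_j(\Omega)\leq j\into H_m(w)<+\infty$, and $\mu_j\leq j\,H_m(w)=H_m(j^{1/m}w)$ with $j^{1/m}w$ a bounded $m$-subharmonic function. Lemma~\ref{lem: preparing lemma 2} then yields $\f_j\in\Ecoo$ solving $H_m(\f_j)=\mu_j$, and since $\mu_j\uparrow\mu$ the comparison principle shows that $(\f_j)$ is decreasing. The heart of the argument, and the step I expect to be the main obstacle, is a uniform bound on $\E_p(\f_j)$. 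Because $\f_j\in\Ecoo$ is bounded we have $\E_p(\f_j)<+\infty$ and $\f_j\in\Ecpo$, so Proposition~\ref{prop: equiv ep} applies to $\f_j$ and gives
$$
\E_p(\f_j)=\into(-\f_j)^pH_m(\f_j)=\into(-\f_j)^p\,d\mu_j\leq\into(-\f_j)^p\,d\mu\leq C\,\E_p(\f_j)^{\frac{p}{m+p}}.
$$
Since $\E_p(\f_j)$ is finite one may divide by $\E_p(\f_j)^{p/(m+p)}$ to obtain $\E_p(\f_j)^{m/(m+p)}\leq C$, whence $\sup_j\E_p(\f_j)\leq C^{(m+p)/m}$. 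The finiteness of $\E_p(\f_j)$ is precisely what keeps this from being circular: the constant $C$ from Proposition~\ref{prop: equiv ep} is independent of $j$, while the quantity it bounds is already known to be finite for each bounded $\f_j$.

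Finally, $(\f_j)\subset\Ecoo$ decreases to $\f:=\lim_j\f_j$ with $\sup_j\E_p(\f_j)<+\infty$, so by the very definition of the class $\f\in\Ecpo$. Applying Theorem~\ref{thm: convergence Ep 1} with all entries equal to $\f_j$ gives $H_m(\f_j)\weak H_m(\f)$, while $\mu_j\uparrow\mu$ gives $\mu_j\weak\mu$ when tested against continuous compactly supported functions; uniqueness of the weak limit yields $H_m(\f)=\mu$, completing the proof. The only remaining routine points are the monotonicity of $(\f_j)$ coming from the comparison principle and this last weak convergence, both of which are standard.
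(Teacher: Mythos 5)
Your proof is correct and follows essentially the same route as the paper: decompose $\mu=f\,H_m(w)$ via Theorem~\ref{thm: Cegrell's decomposition}, solve the truncated equations with Lemma~\ref{lem: preparing lemma 2}, obtain the uniform bound $\sup_j\E_p(\f_j)<+\infty$ from Proposition~\ref{prop: equiv ep}, and pass to the decreasing limit using the comparison principle and Theorem~\ref{thm: convergence Ep 1}. The only differences are cosmetic (the extra cutoff $\ind_{K_j}$, which is unnecessary since $H_m(w)$ already has finite mass for $w\in\Ecoo$, and your spelling out of the bootstrap inequality and of the fact that $\mu$ charges no $m$-polar set, both of which the paper leaves implicit).
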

\begin{proof}
The uniqueness follows from the comparison principle. Let us prove the existence result. 
Since  $\mu$  does not charge $m$-polar sets, applying the decomposition theorem 
 (Theorem \ref{thm: Cegrell's decomposition}) we get
$$
\mu=fH_m(u),\ \ u\in \Ecoo, \ 0\leq f\in L^1_{\rm loc}(H_m(u)).
$$
For each  $j$, use Lemma \ref{lem: preparing lemma 2} to find  $\f_j\in \Ecoo$ such that
$$
H_m(\f_j)=\min(f,j)H_m(u).
$$
By Proposition \ref{prop: equiv ep}, $\sup_j \E_p(\f_j)<+\infty.$ Thus, the comparison principle 
gives us that $\f_j\downarrow \f\in \Ecpo$  which solves 
 $H_m(\f)=\mu.$ 
\end{proof}

\section{Proof of the main results}
\subsection{Proof of Theorem 1}
 \begin{prop}\label{prop: convergence Ep 3}
Let $u,v\in \Ecpo, \ p>0.$  There exist two sequences  $(u_j), (v_j)\subset \Ecoo$  decreasing to
$u, v$ respectively such that  
$$
\lim_{j\to+\infty} \into (-u_j)^pH_m(v_j)=\into (-u)^pH_m(v).
$$
In particular, if $\f\in \Ecpo$ then there exists  $(\f_j)\subset \Ecoo$ decreasing to   $\f$  such that 
$$
\E_p(\f_j)\to \E_p(\f).
$$
\end{prop}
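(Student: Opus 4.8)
The plan is to prove the bilinear convergence statement first and then obtain the energy statement as the diagonal case $u=v=\f$. Using the definition of $\Ecpo$ together with the global regularization theorem (Theorem \ref{thm: continuous global approximation}), I would start from arbitrary sequences $(u_j),(v_j)\subset\Ecoo\cap\Cc(\Omega)$ that decrease to $u,v$ and satisfy $\sup_j\E_p(u_j)<+\infty$ and $\sup_j\E_p(v_j)<+\infty$; the task is then to show that, after possibly replacing them by a better (diagonal) choice, the limit $\into(-u_j)^pH_m(v_j)\to\into(-u)^pH_m(v)$ holds. The proof naturally splits into a lower and an upper estimate.

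The lower estimate $\liminf_j\into(-u_j)^pH_m(v_j)\ge\into(-u)^pH_m(v)$ is the easy half and works for every admissible choice of sequences. By Theorem \ref{thm: convergence Ep 1} the Hessian measures satisfy $H_m(v_j)\weak H_m(v)$, while $u_j\downarrow u$ gives $(-u_j)^p\uparrow(-u)^p$. Fixing $k$ and testing against the bounded continuous weight $(-u_k)^p$, for $j\ge k$ one has $\into(-u_j)^pH_m(v_j)\ge\into(-u_k)^pH_m(v_j)\to\into(-u_k)^pH_m(v)$; letting $k\to\infty$ and using monotone convergence yields the claim. This is the same mechanism that underlies the finiteness assertion in Theorem \ref{thm: convergence Ep 2}.

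The upper estimate is the crux, and the step I expect to be the main obstacle: weak convergence only furnishes the lower inequality, and the unbounded weight $(-u_j)^p$ concentrates precisely on the deep sublevel sets of $u$, so one cannot pass to the limit naively. I would proceed by truncation, setting $u_j^{(N)}=\max(u_j,-N)$ and $v_j^{(N)}=\max(v_j,-N)$ and comparing $\into(-u_j)^pH_m(v_j)$ with the bounded quantity $\into(-u_j^{(N)})^pH_m(v_j^{(N)})$. For the bounded pieces the quasicontinuity of $m$-subharmonic functions (Theorem \ref{thm: m-quasicontinuity}) together with the maximum principle (Theorem \ref{thm: principe du maximum Ep}), handled exactly as in the proof of the latter, gives $\into(-u_j^{(N)})^pH_m(v_j^{(N)})\to\into(-u^{(N)})^pH_m(v^{(N)})$ as $j\to\infty$ for each fixed $N$, and a further passage $N\to\infty$ (monotone convergence combined with Theorem \ref{thm: convergence Ep 1}) sends this to $\into(-u)^pH_m(v)$.

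What remains, and is the genuinely delicate point, is to control the tail errors produced by the two truncations uniformly in $j$. Concretely I would establish the uniform integrability statement $\lim_{N\to\infty}\sup_j\int_{\{u_j<-N\}}(-u_j)^pH_m(v_j)=0$, together with the analogous bound over $\{v_j<-N\}$. This I would derive from the capacity estimates of Lemma \ref{lem: capacity Ep p>1} (for $p\ge1$) and Lemma \ref{lem: capacity Ep 0<p<1} (for $0<p<1$), combined with the Persson and Guedj--Zeriahi inequalities (Lemma \ref{lem: persson} and Proposition \ref{prop: weight GZ 2}): a layer-cake decomposition reduces the tail to the decay in $s$ of $H_m(v_j)(\{u_j<-s\})$, and the uniform bound on the $p$-energies makes this decay uniform in $j$. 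Combining the two estimates yields a diagonal sequence along which the full convergence holds, and specializing to $u=v=\f$ delivers $\E_p(\f_j)\to\E_p(\f)$.
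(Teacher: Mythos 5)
The proposition is \emph{existential} --- it asks you to produce \emph{some} sequences $(u_j),(v_j)$, not to prove convergence along every admissible pair --- and this is precisely what your proposal overlooks. The paper's proof chooses the sequences so that the problem trivializes: since $H_m(v)$ charges no $m$-polar set, the decomposition theorem (Theorem \ref{thm: Cegrell's decomposition}) writes $H_m(v)=f\,H_m(\p)$ with $\p\in\Ecoo$, and Lemma \ref{lem: preparing lemma 2} provides $v_j\in\Ecoo$ with $H_m(v_j)=\min(f,j)\,H_m(\p)$; by the comparison principle $v_j$ decreases to a function of $\Ecpo$ with the same Hessian measure as $v$, hence to $v$ itself. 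The point is that the \emph{measures} $H_m(v_j)$ then increase setwise to $H_m(v)$, so for any $u_j\downarrow u$ the integrand $(-u_j)^p\min(f,j)$ increases pointwise to $(-u)^p f$ and the conclusion is a single application of the monotone convergence theorem: no truncation, no uniform integrability, no passage through weak convergence.

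Your route, by contrast, hinges on the uniform tail estimate $\lim_{N\to\infty}\sup_j\int_{\{u_j<-N\}}(-u_j)^p\,H_m(v_j)=0$, and this step cannot be extracted from the estimates you invoke: all of them (Chebyshev with Lemma \ref{lem: persson} or Proposition \ref{prop: weight GZ 2}, or Lemma \ref{lem: capacity Ep p>1} combined with the sublevel capacity bound $\Ca_m(\{u_j<-s\})\le C\,\E_p(u_j)\,s^{-(m+p)}$) yield exactly the critical decay
$$
H_m(v_j)\bigl(\{u_j<-s\}\bigr)\;\le\; C\,\E_p(u_j)^{\frac{p}{p+m}}\,\E_p(v_j)^{\frac{m}{p+m}}\,s^{-p},
$$
and plugging this into the layer-cake identity
$$
\int_{\{u_j<-N\}}(-u_j)^p\,H_m(v_j)\;=\;N^p\,H_m(v_j)(\{u_j<-N\})\;+\;\int_N^{\infty}p\,s^{p-1}H_m(v_j)(\{u_j<-s\})\,ds
$$
makes the first term merely bounded (not small) and the second integral logarithmically divergent. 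Decay strictly better than $s^{-p}$ would require control of an energy of order higher than $p$, which the hypotheses do not provide. The same critical-rate obstruction silently reappears in your step ``$N\to\infty$ for the limit functions'', where weak convergence and monotonicity again give only the lower bound. (Your lower estimate is fine, and the fixed-$N$ step is fixable, though you would also need the mass monotonicity of Theorem \ref{thm: principe de comparaison Ep} to rule out escape of mass to $\pO$, since the quasicontinuity arguments you cite only give convergence against compactly supported test functions.) So as written the proof does not close; moreover, the stronger statement you are implicitly aiming at --- convergence for \emph{arbitrary} bounded-energy decreasing sequences --- is neither needed for the proposition nor established by anything in the paper.
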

\begin{proof}
Let $(u_j)$  be a sequence in  $\Ecoo$  decreasing to  $u$  such that 
 $\sup_j \into (-u_j)^pH_m(u_j)<+\infty.$ Since  $H_m(v)$ vanishes on $m$-polar sets
 Theorem \ref{thm: Cegrell's decomposition} gives
$$
H_m(v)=fH_m(\p),\ \ \p\in \Ecoo, \ 0\leq f\in L^1_{\rm loc}(H_m(\p)).
$$
For each $j$, use Lemma \ref{lem: preparing lemma 2} to find  $v_j\in \Ecoo$ such that
$$
H_m(v_j)=\min(f,j)H_m(\p).
$$
By the comparison principle $v_j\downarrow \f\in \Ecpo$  which solves $H_m(\f)=H_m(v).$
It implies that $\f\equiv v.$  We then have
$$
\into (-u)^pH_m(v)=\lim_j \into (-u_j)^p\min(f,j)H_m(\p)=\lim_j\into (-u_j)^pH_m(v_j).
$$
\end{proof}

Theorem 1 is a consequence of the following result.

\begin{theorem}\label{thm: main 2}
 Let $\mu$ be a positive Radon measure in $\Omega$ and $p>0.$ Then we have
$$
\mu=H_m(\varphi) \  {\rm with}\  \varphi\in \Ecpo\Leftrightarrow  \Ecpo \subset L^p(\Omega,\mu).
$$
\end{theorem}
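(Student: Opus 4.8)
The plan is to treat the two directions of the equivalence separately, since both are already essentially in place from the preceding sections and the theorem amounts to assembling them.

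For the implication $\mu = H_m(\varphi)$ (with $\varphi \in \Ecpo$) $\Rightarrow$ $\Ecpo \subset L^p(\Omega,\mu)$, I would simply invoke Remark \ref{rmk: equiv Ep} with the choice $v=\varphi$. That remark asserts precisely that for any $v\in\Ecpo$ one has $\Ecpo\subset L^p(H_m(v))$, the decisive point being the finiteness $\into(-u)^pH_m(v)<+\infty$ for every $u\in\Ecpo$. This finiteness is obtained by approximating $u$ and $v$ from above by decreasing sequences in $\Ecoo$ and passing to the limit in the weighted energy estimates of Lemma \ref{lem: persson} (for $p\geq 1$) or Proposition \ref{prop: weight GZ 2} (for $0<p<1$), using lower semicontinuity of $(-u)^p$. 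Substituting $v=\varphi$ yields $\Ecpo\subset L^p(H_m(\varphi))=L^p(\Omega,\mu)$, which is the desired inclusion.

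For the reverse implication $\Ecpo \subset L^p(\Omega,\mu)$ $\Rightarrow$ $\mu=H_m(\varphi)$ for some $\varphi\in\Ecpo$, this is exactly the content of Theorem \ref{thm: solution Ep}. I would recall the structure of its proof: the integrability hypothesis forces $\mu$ not to charge $m$-polar sets, so the Cegrell-type decomposition (Theorem \ref{thm: Cegrell's decomposition}) applies and writes $\mu=fH_m(u)$ with $u\in\Ecoo$ and $0\leq f\in L^1_{\rm loc}(H_m(u))$. One then solves the truncated problems $H_m(\varphi_j)=\min(f,j)H_m(u)$ by Lemma \ref{lem: preparing lemma 2}, extracts a uniform $p$-energy bound $\sup_j\E_p(\varphi_j)<+\infty$ from Proposition \ref{prop: equiv ep}, and passes to the decreasing limit via the comparison principle to obtain $\varphi\in\Ecpo$ with $H_m(\varphi)=\mu$. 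Uniqueness of $\varphi$ follows once more from the comparison principle.

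The genuine difficulty of the statement therefore lies entirely in this second implication, and it has already been dispatched: the whole apparatus of Sections 3--5 --- the finite energy classes, the Dirichlet-principle minimization, the decomposition theorem, and the uniform energy estimates --- is what makes the existence possible. At the level of this theorem there is no further obstacle, and the only remaining task is to combine Remark \ref{rmk: equiv Ep} and Theorem \ref{thm: solution Ep} into the stated equivalence.
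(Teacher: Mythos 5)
Your proposal is correct, and for the implication $\Leftarrow$ it coincides with the paper, which likewise disposes of that direction by citing Theorem \ref{thm: first main theorem} and Theorem \ref{thm: solution Ep}. The difference lies in the forward implication. You invoke Remark \ref{rmk: equiv Ep} with $v=\varphi$: finiteness of $\into(-\psi)^pH_m(\varphi)$ comes from the one-sided inequality $\into (-\psi)^pH_m(\varphi)\leq \liminf_j \into (-\psi_j)^pH_m(\varphi_j)$ along arbitrary defining sequences (weak convergence of the Hessian measures plus lower semicontinuity of $(-\psi_k)^p$ and monotone convergence), combined with the energy estimates of Lemma \ref{lem: persson} and Proposition \ref{prop: weight GZ 2}. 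The paper instead proves and uses Proposition \ref{prop: convergence Ep 3}: via the decomposition theorem (Theorem \ref{thm: Cegrell's decomposition}) and Lemma \ref{lem: preparing lemma 2} it constructs \emph{special} sequences $(\varphi_j),(\psi_j)\subset\Ecoo$ along which the mixed integral converges \emph{exactly} to $\into(-\psi)^pH_m(\varphi)$ and the energies converge to $\E_p(\varphi)$, $\E_p(\psi)$, and then it applies Lemma \ref{lem: persson}. Both routes are valid. Yours is more economical, since the liminf inequality is all that is needed for the inclusion $\Ecpo\subset L^p(\Omega,\mu)$, and it correctly covers $0<p<1$ via Proposition \ref{prop: weight GZ 2}, a case where the paper's closing citation of Lemma \ref{lem: persson} alone (stated only for $p\geq 1$) is slightly imprecise. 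What the paper's route buys is more quantitative information: the exact convergence yields the H\"older-type bound $\into(-\psi)^pH_m(\varphi)\leq D_p\,\E_p(\psi)^{\frac{p}{m+p}}\E_p(\varphi)^{\frac{m}{m+p}}$ with the same constant as in $\Ecoo$ --- precisely the estimate that Remark \ref{rmk: equiv Ep} observes cannot be obtained by a direct H\"older argument --- and it avoids resting the main theorem on a remark whose own justification is only sketched in the paper.
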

\begin{proof}
 The implication $\Leftarrow$ has been proved in Theorem \ref{thm: first main theorem} and Theorem \ref{thm: solution Ep}. Now, assume that $\mu=H_m(\varphi)$ with $\varphi\in \Ecpo$. Let  $\psi$ be another function in $\Ecpo$. By Proposition \ref{prop: convergence Ep 3} there exist sequences $(\varphi_j), (\psi_j)$ in $\Ecoo$ having uniformly bounded energy such that
 $$
 \into (-\psi)^pH_m(\varphi)=\lim_j\into (-\psi_j)^pH_m(\varphi_j),\ \ \lim_j\E_p(\varphi_j) =\E_p(\varphi), \ \lim_j\E_p(\psi_j)=\E_p(\psi).
 $$
 It suffices now to apply Lemma \ref{lem: persson}.
\end{proof}

\subsection{Proof of Theorem 2}
\begin{proof} 
We first prove the existence result. 
Since $\mu$ does not charge $m$-polar sets the decomposition theorem yields
$$
\mu=fH_m(u),\ \ u\in \Ecoo, \ 0\leq f\in L^1_{\rm loc}(H_m(u)).
$$
For each $j$ use Lemma \ref{lem: preparing lemma 2} to find $\f_j\in \Ecoo$ such that
$$
H_m(\f_j)=\min(f,j)H_m(u).
$$
Besides, $\sup_j \into H_m(\f_j)\leq \mu(\Omega)<+\infty.$ Thus, 
 $\f_j\downarrow \f\in \Fco$ in view of the comparison principle. The 
 limit function $\f$ solves   $H_m(\f)=\mu$ as required.
 
The uniqueness  can be proved by the same ways as in   \cite[Lemma 5.14]{Ceg04}. 
Assume that $\psi\in \Fco$ solves $H_m(\psi)=\mu.$ We are to prove that $\f=\psi.$ Let $(K_j)$ be an exhaustive  sequence of compact subsets of $\Omega$ such that  $h_j=h_{m,K_j,\Omega}$ is continuous.  For each $j$, the function $\psi_j:=\max(\psi,j.h_j)$ belongs to $\Ecoo$, and $\psi_j\downarrow \psi.$ Set $d_j:=\frac{\psi_j}{j}-h_j=\max(\frac{\psi}{j}-h_j,0).$ Then $d_j\leq \ind_{\{\p>j.h_j\}}$ and $1-d_j\downarrow 0.$ For $s>j$, by the comparison principle we get
\begin{eqnarray*}
 0\leq d_j H_m(\max(\p,s.h_j)) &\leq & \ind_{\{\p>j.h_j\}} H_m(\max(\p,s.h_j))\\
 &=&  \ind_{\{\p>j.h_j\}} H_m(\max(\p,j.h_j))\\
 &=& H_m(\max(\p, s.h_j)).
 \end{eqnarray*}
 Letting $s$ tend to $+\infty$  and using Corollary \ref{cor: strong convergence} we get 
 \begin{equation}\label{eq: uniqueness 1}
 d_j. H_m(\p)\leq \ind_{\{\p>j.h_j\}} H_m(\max(\p,j.h_j)) \leq H_m(\p).
 \end{equation}
 Recall that from the first part we have
 $$
\mu=fH_m(u),\ \ u\in \Ecoo, \ 0\leq f\in L^1_{\rm loc}(H_m(u)).
$$
and $H_m(\f_p)=\min(f,p)H_m(u)$ for each $p\in \N.$ For each $p,j$ we can find $v_j^p \in \Ecoo$ such that
$$
H_m(v_j^p)=(1-d_j)H_m(\f_p).
$$ 
Using (\ref{eq: uniqueness 1}) we get 
\begin{eqnarray}\label{eq: uniqueness 2}
H_m(\f_p)&=& d_j.H_m(\f_p)+(1-d_j)H_m(\f_p)\\
&\leq &  d_j H_m(\psi)+(1-d_j)H_m(\f_p)\nonumber\\
&\leq & \ind_{\{\psi>jh_j\}} H_m(\psi_j)+H_m(v_j^p)\nonumber\\
&\leq & H_m(\psi_j)+H_m(v_j^p).\nonumber
\end{eqnarray}
This couped with the comparison principle yield  $\f_p\geq v_j^p+\psi_j$. Letting $p\to+\infty$ we obtain
$\f\geq v_j+\psi_j$, where $v_j\in \Fco$ solves $H_m(v_j)=(1-d_j)H_m(\f).$ Since $H_m(\f)$ does not charge 
$m$-polar sets, by monotone convergence theorem the total mass of $H_m(v_j)$ goes to $0$ as $j\to+\infty.$
This implies that $v_j$ increases to $0$ and hence $\f\geq \psi.$

Now, we prove that $\f\leq \psi.$ Let $\psi_j, t_j \in \Ecoo$ such that  $H_m(w_j)=d_j H_m(\psi_j)$ and $H_m(t_j)=(1-d_j) H_m(\psi_j).$ Since $H_m(\f_p)$ increases to $H_m(\f)$, the comparison principle can be applied for $\f$ and $w_j$ which implies that $w_j\geq \f.$ But, applying again the comparison principle for $t_j+w_j$ and $\psi_j$ we get $t_j+w_j\leq \psi_j.$ Furthermore, the total mass of $H_m(t_j)$ can be estimated as follows
\begin{eqnarray*}
\into H_m(t_j)&=&\into H_m(\psi_j)-\into d_j H_m(\psi_j)\\
&\leq & \into H_m(\psi)-\into d_j^2 H_m(\psi)\\
&\leq &2 \into (1-d_j)H_m(\psi) \to 0.
\end{eqnarray*}
This implies that $t_j$ converges in $m$-capacity to $0.$ Indeed, for every $\epsilon>0$ and $m$-subharmonic
function $-1\leq \theta\leq 0$, by the comparison principle we have
\begin{eqnarray*}
\epsilon ^m\int_{\{t_j< -\epsilon\}} H_m(\theta)&\leq &\int_{\{t_j< -\epsilon \theta\}} H_m(\epsilon \theta)\\
&\leq & \int_{\{t_j< -\epsilon \theta\}} H_m(t_j)\leq \into H_m(t_j)\to 0.
\end{eqnarray*}
Thus, we can deduce that $\f\leq \psi$ which implies the equality. 
 \end{proof}

 We prove in the above uniqueness theorem that every $\p\in \Fc^a_m(\Omega)$ can be approximated from above by a sequence  $\p_j\in \Ecoo$ such that $H_m(\p_j)$ increases to $H_m(\psi).$ This type of convergence is strong enough to prove the comparison principle for the class $\Fc^a_m(\Omega).$ We thus get

\begin{theorem}
The comparison principle is valid for functions in $\Fc^a_m(\Omega).$
\end{theorem}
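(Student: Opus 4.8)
The aim is to establish that for all $u,v\in\Fcao$,
\begin{equation*}
\int_{\{u>v\}}H_m(u)\leq\int_{\{u>v\}}H_m(v),
\end{equation*}
and the strategy is to transport the comparison principle for bounded $m$-subharmonic functions to $\Fcao$ along the strong approximation isolated in the remark preceding the statement. I would first record that approximation for both functions: since $u,v\in\Fcao$ have finite Hessian mass and charge no $m$-polar set, Theorem~\ref{thm: Cegrell's decomposition} followed by Lemma~\ref{lem: preparing lemma 2} furnishes sequences $(u_j),(v_j)\subset\Ecoo$ with $u_j\downarrow u$, $v_j\downarrow v$ and, decisively, with $H_m(u_j)$ and $H_m(v_j)$ \emph{increasing as measures} to $H_m(u)$ and $H_m(v)$ (writing $H_m(u)=fH_m(w)$ from the decomposition and $H_m(u_j)=\min(f,j)H_m(w)$). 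By monotone convergence this gives $\int_A H_m(u_j)\uparrow\int_A H_m(u)$ and $\int_A H_m(v_j)\uparrow\int_A H_m(v)$ for \emph{every} Borel set $A$ — a control unavailable from the mere weak convergence one has in $\Fco$, and which here plays the role of the energy and capacity estimates used in the $\Ecpo$ case.

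Next I would fix a test weight $h\in\Ecoo\cap\Cc(\Omega)$ and neutralise the contact set. As in the proof of Theorem~\ref{thm: principe de comparaison  Ep 2}, the level sets $\{v=ru\}$, $r\in(0,1)$, are pairwise disjoint while $(-h)H_m(v)$ is a finite measure, so $\int_{\{v=ru\}}(-h)H_m(v)=0$ for all but countably many $r$; working with $ru$ in place of $u$ and letting $r\uparrow1$ at the very end, I may assume $\int_{\{u=v\}}(-h)H_m(v)=0$. For the bounded approximants $u_j,v_j\in\Ecoo$ the maximum principle (Theorem~\ref{theorem: maximum principle}) gives
\begin{equation*}
\ind_{\{u_j>v_j\}}H_m(u_j)=\ind_{\{u_j>v_j\}}H_m(\max(u_j,v_j)),\qquad \ind_{\{u_j<v_j\}}H_m(v_j)=\ind_{\{u_j<v_j\}}H_m(\max(u_j,v_j)),
\end{equation*}
and, since $\max(u_j,v_j)\geq v_j$, the weighted monotonicity established within the proof of Theorem~\ref{thm: principe de comparaison Ep} yields $\into(-h)H_m(\max(u_j,v_j))\leq\into(-h)H_m(v_j)$. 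Rearranging these exactly as in Theorem~\ref{thm: principe de comparaison  Ep 2} produces the $(-h)$-weighted comparison inequality at the level of the bounded approximants.

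The crux is the passage to the limit $j\to\infty$, where the sublevel sets $\{u_j>v_j\}$ vary with $j$ while the measures converge, and this is where I expect the real difficulty. I would handle it by the quasi-continuity technique of Theorem~\ref{thm: principe de comparaison  Ep 2}: introduce $\p=\max(u-v,0)$ and the continuous cut-offs $g=\p/(\p+\delta)$, which are quasi-continuous and bounded, increase to $\ind_{\{u>v\}}$ as $\delta\downarrow0$, and allow integration against the Hessian measures over all of $\Omega$ rather than over the moving sets. The Borel-wise convergence $\int_A H_m(\cdot_j)\uparrow\int_A H_m(\cdot)$ secured in the first step, together with quasi-continuity (Theorem~\ref{thm: m-quasicontinuity}), yields $gH_m(\max(u_j,v_j))\weak gH_m(\max(u,v))$ and the analogous limits for $u$ and $v$; inserting these into the bounded-level inequality and then letting $\delta\downarrow0$, $h\downarrow-1$ and $r\uparrow1$ delivers the claim, the contact set $\{u=v\}$ contributing nothing because $H_m(u)$ and $H_m(v)$ put no mass on $m$-polar sets and because of the normalisation fixed above. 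The main obstacle is precisely this reconciliation of the $j$-dependent level sets with the converging measures: unlike in the finite-energy classes there is no capacity domination of $H_m$ to fall back on, so the whole limiting argument must be driven by the monotone increase of the Hessian masses — which is exactly the strength that the class $\Fcao$ provides.
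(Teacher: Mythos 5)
Your first step is sound and is exactly what the paper's one-line proof rests on: the uniqueness argument for Theorem 2 shows that every $u\in\Fcao$ admits approximants $u_j\in\Ecoo$, $u_j\downarrow u$, with $H_m(u_j)=\min(f,j)H_m(w)$ increasing \emph{setwise} (Borel set by Borel set) to $H_m(u)=fH_m(w)$. The gap is in your limiting step. The setwise convergence you secured concerns $H_m(u_j)$ and $H_m(v_j)$ only; it tells you nothing about $H_m(\max(u_j,v_j))$, for which no representation with increasing densities exists. For that sequence you only have weak convergence to $H_m(\max(u,v))$ via Theorem \ref{thm: convergence E}, and upgrading weak convergence to convergence against a bounded merely \emph{quasi-continuous} weight $g$ is precisely what required the capacity domination of Lemmas \ref{lem: capacity Ep p>1} and \ref{lem: capacity Ep 0<p<1} inside the proof of Theorem \ref{thm: principe du maximum Ep} --- an estimate which, as you yourself observe, has no analogue in $\Fcao$. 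So the asserted convergence $gH_m(\max(u_j,v_j))\weak gH_m(\max(u,v))$ is unsupported, and the whole passage to the limit collapses with it. (A secondary defect: your rearrangement at the approximant level needs $\int_{\{u_j=v_j\}}(-h)H_m(v_j)=0$, but the a.e.-$r$ normalisation was performed for the pair $u,v$, not for the moving pairs $u_j,v_j$.)

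The remedy is to exploit the strong approximation directly and discard the maxima, the weights and the quasi-continuity altogether; this is what ``strong enough'' means here. Fix $\epsilon>0$ and apply the bounded comparison principle (Theorem \ref{thm: comparison principle 1}) to the pair $(v_j+\epsilon,\,u_k)$, whose boundary hypothesis holds trivially:
\begin{equation*}
\int_{\{v_j+\epsilon<u_k\}}H_m(u_k)\leq \int_{\{v_j+\epsilon<u_k\}}H_m(v_j).
\end{equation*}
Let $j\to\infty$: the sets increase to $\{v+\epsilon<u_k\}$, while $H_m(v_j)\leq H_m(v)$ setwise, so
$\int_{\{v+\epsilon<u_k\}}H_m(u_k)\leq\int_{\{v+\epsilon<u_k\}}H_m(v)$.
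Let $k\to\infty$: the left side dominates $\int_{\{v+\epsilon<u\}}H_m(u_k)$, which tends to $\int_{\{v+\epsilon<u\}}H_m(u)$ by setwise convergence, while the right side decreases to at most $\int_{\{v+\epsilon\leq u\}}H_m(v)\leq\int_{\{u>v\}}H_m(v)$ because $H_m(v)$ is finite. Finally let $\epsilon\downarrow0$ to obtain
$\int_{\{u>v\}}H_m(u)\leq\int_{\{u>v\}}H_m(v)$.
Note that the $\epsilon$-shift also disposes of the contact set, so even the a.e.-$r$ normalisation becomes unnecessary: all moving sets are handled by monotone convergence alone, which is exactly the strength that the class $\Fcao$ provides.
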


\section{Examples}
\begin{lemma}\label{lem: capacity}
If $\f\in \Ecpo$, $p>0$ then 
$
\Ca_m(\f<-t)\leq C.\E_p(\f).\frac{1}{t^{m+p}},
$
where $C>0$ is a constant depending only on $m.$
\end{lemma}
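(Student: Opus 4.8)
The plan is to first establish the bound for bounded $\f\in\Ecoo$ by a two–level comparison argument, and then to remove the boundedness hypothesis by approximation, passing to the limit at the level of capacities. For the reduction, given $\f\in\Ecpo$ I would choose, using Proposition~\ref{prop: convergence Ep 3}, a sequence $(\f_j)\subset\Ecoo$ decreasing to $\f$ with $\E_p(\f_j)\to\E_p(\f)$. Since $\f_j\downarrow\f$, the open sublevel sets satisfy $\{\f_j<-t\}\uparrow\{\f<-t\}$, so Proposition~\ref{prop: properties of m-capacity}(ii) gives $\Ca_m(\{\f<-t\})=\lim_j\Ca_m(\{\f_j<-t\})$. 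Thus it is enough to prove the estimate for each $\f_j$ and let $j\to\infty$.

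So fix $\f\in\Ecoo$, a level $t>0$, and an auxiliary parameter $s>0$. For any test function $\p\in\SH_m(\Omega)$ with $0\le\p\le1$ I would set \[ w:=s\p-t-s,\] a bounded $m$-subharmonic function with $-t-s\le w\le-t$ and $H_m(w)=s^mH_m(\p)$. From $\p\le1$ one gets the chain of inclusions $\{\f<-t-s\}\subset\{\f<w\}\subset\{\f<-t\}$. Near $\pO$ one has $\f\to0$ while $w\le-t<0$, so $\liminf_{z\to\pO}(\f-w)\ge t>0$ and the comparison principle (Theorem~\ref{thm: comparison principle 1}) applies to $u=\f$ and $v=w$, yielding \[ \int_{\{\f<w\}}H_m(w)\le\int_{\{\f<w\}}H_m(\f)\le\int_{\{\f<-t\}}H_m(\f).\] Using $H_m(w)=s^mH_m(\p)$ together with the inclusion $\{\f<-t-s\}\subset\{\f<w\}$ and taking the supremum over all admissible $\p$ gives \[ s^m\,\Ca_m(\{\f<-t-s\})\le\int_{\{\f<-t\}}H_m(\f).\]

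To finish I would bound the right–hand side by the energy: on $\{\f<-t\}$ we have $(-\f)^p>t^p$, hence $\int_{\{\f<-t\}}H_m(\f)\le t^{-p}\into(-\f)^pH_m(\f)=t^{-p}\E_p(\f)$, which is finite by Theorem~\ref{thm: convergence Ep 2}. Choosing $s=t$ and then replacing $t$ by $t/2$ converts the two levels into one and produces \[ \Ca_m(\{\f<-t\})\le 2^{m+p}\,t^{-(m+p)}\,\E_p(\f)\] for $\f\in\Ecoo$. Applying this to the approximants $\f_j$ and letting $j\to\infty$ as in the first step yields the stated inequality, with $C=2^{m+p}$ (for $p$ fixed, a constant depending only on $m$).

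The main obstacle is the compatibility of function classes in the comparison step: the capacity test functions $\p$ need neither vanish on $\pO$ nor belong to $\Ecpo$, and a general $\f\in\Ecpo$ need not be bounded, so Theorem~\ref{thm: comparison principle 1} cannot be invoked directly. The point of reducing to $\f\in\Ecoo$ first is precisely that then $\f$ is bounded and tends to $0$ on $\pO$, which is what secures the boundary hypothesis $\liminf_{z\to\pO}(\f-w)\ge0$; and carrying the subsequent limit at the level of $\Ca_m$ via Proposition~\ref{prop: properties of m-capacity}(ii), rather than at the level of the measures $H_m(\f_j)$, sidesteps any question of convergence of $H_m(\f_j)$ on the (non-closed) sublevel sets.
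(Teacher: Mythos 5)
Your proof is correct and follows essentially the same route as the paper: reduce to $\f\in\Ecoo$, compare $\f$ with a shifted and rescaled capacity test function via Theorem~\ref{thm: comparison principle 1}, use the two-level trick $\{\f<-2t\}\subset\{\f<tu-t\}\subset\{\f<-t\}$, and bound $\int_{\{\f<-t\}}H_m(\f)$ by $t^{-p}\E_p(\f)$. The only difference is that you justify the reduction (which the paper dismisses as ``without loss of generality'') carefully via Proposition~\ref{prop: convergence Ep 3} and the continuity of $\Ca_m$ along increasing sets, which is a welcome clarification rather than a deviation.
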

\begin{proof}
Without loss of generality we can assume that $\f\in \Ecoo.$ 
Fix $u\in \SH^-_m(\Omega)$ such that $-1\leq u\leq 0.$ Observe that, for any $t>0,$
$$
(\f<-2t)\subset (\f<tu-t)\subset (\f<-t).
$$
Thus, by the comparison principle (Theorem \ref{thm: comparison principle 1}) we have
\begin{eqnarray*}
\int_{\{\f<-2t\}}H_m(u)\leq \frac{1}{t^m}\int_{\{\f<tu-t\}}H_m(tu-t)\leq \frac{1}{t^m}\int_{\{\f<tu-t\}}H_m(\f)\\
\leq \frac{1}{t^m}\int_{\{\f<-t\}}H_m(\f)\leq\frac{1}{t^{m+p}}\into (-\f)^pH_m(\f).
\end{eqnarray*}
\end{proof}
\begin{prop}\label{prop: measure with density in Lp}
Let $\mu=fdV,$ where $0\leq f\in L^p(\Omega,dV), \frac{n}{m}>p>1.$ Then 
$$
\mu=H_m(\f), \ \f\in \Fc_m^q(\Omega), \ \forall q<\frac{nm(p-1)}{n-mp}.
$$
\end{prop}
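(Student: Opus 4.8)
The plan is to reduce the statement to the solvability criterion already proved. By Theorem~\ref{thm: main 2} (equivalently Theorem~\ref{thm: solution Ep}) it suffices to check that $\mu\in\Mcc_q$, that is $\Ec_m^q(\Omega)\subset L^q(\Omega,\mu)$, for each $q<\frac{nm(p-1)}{n-mp}$; then Theorem~\ref{thm: solution Ep} yields $\f\in\Ec_m^q(\Omega)$ with $H_m(\f)=\mu$. To upgrade to $\f\in\Fc_m^q(\Omega)$, observe first that $\mu$ has finite mass: since $\Omega$ is bounded and $p>1$, H\"older's inequality gives $\mu(\Omega)=\into f\,dV\le\|f\|_{L^p}\,V(\Omega)^{1/p'}<+\infty$, and $\mu$ charges no $m$-polar set because such sets are Lebesgue-null. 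In the construction behind Theorem~\ref{thm: solution Ep} the approximants $\f_j\in\Ecoo$ satisfy $\into H_m(\f_j)=\into\min(f,j)\,H_m(u)\le\mu(\Omega)$, so the same decreasing sequence simultaneously witnesses the finite $q$-energy and the finite total mass demanded in the definition of $\Fc_m^q(\Omega)$; hence $\f\in\Fc_m^q(\Omega)$.

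By Proposition~\ref{prop: equiv ep}, establishing $\mu\in\Mcc_q$ is equivalent to the a~priori bound $\into(-u)^q\,d\mu\le C\,\E_q(u)^{q/(m+q)}$ for every $u\in\Ec_m^q(\Omega)$. The driving input is a volume--capacity comparison: with $\gamma:=\frac{n}{n-m}>1$,
$$
V(E)\le C\,\Ca_m(E,\Omega)^{\gamma}
$$
for all Borel $E\Subset\Omega$, $C$ being independent of $E$. This is exactly the scaling displayed by the computation of $\Ca_m(B(r),B(R))$ against $V(B(r))=c_n r^{2n}$, and in this form it is available from the a~priori estimates of \cite{DK11}. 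Coupling it with the sublevel-set capacity decay of Lemma~\ref{lem: capacity}, $\Ca_m(\{u<-t\})\le C\,\E_q(u)\,t^{-(m+q)}$, gives the distributional estimate
$$
V(\{u<-t\})\le C\,\E_q(u)^{\gamma}\,t^{-(m+q)\gamma},\qquad t>0.
$$

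Next I would integrate by layers. Let $p'=\frac{p}{p-1}$ and $s:=qp'$. Writing $\into(-u)^{s}\,dV=s\int_0^{\infty}t^{s-1}V(\{u<-t\})\,dt$, splitting the integral at a level $T$ (using $V(\{u<-t\})\le V(\Omega)$ for $t\le T$ and the displayed decay for $t>T$) and optimising in $T\sim\E_q(u)^{1/(m+q)}$, one gets $\into(-u)^{s}\,dV\le C\,\E_q(u)^{s/(m+q)}$, valid as soon as the tail converges, i.e. $s<(m+q)\gamma$. An elementary rearrangement shows that $qp'<(m+q)\gamma$ is equivalent to $q(n-mp)<mn(p-1)$, that is to $q<\frac{nm(p-1)}{n-mp}$, the hypothesis $p<n/m$ ensuring $n-mp>0$. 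Finally H\"older's inequality with exponents $p,p'$ yields
$$
\into(-u)^q\,d\mu=\into(-u)^q f\,dV\le\|f\|_{L^p}\Big(\into(-u)^{s}\,dV\Big)^{1/p'}\le C\,\E_q(u)^{s/(p'(m+q))}=C\,\E_q(u)^{q/(m+q)},
$$
which is precisely the bound sought, so $\mu\in\Mcc_q$ and the proposition follows.

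The one genuinely delicate point is the volume--capacity inequality $V(E)\le C\,\Ca_m(E,\Omega)^{\gamma}$: although its sharp exponent $\gamma=\frac{n}{n-m}$ is dictated by the ball computation, the passage from balls to arbitrary Borel sets cannot be obtained by naive subadditivity of $\Ca_m$ (which points the wrong way), so I expect this to be the main obstacle and would quote it from \cite{DK11} rather than re-derive it. Everything else is the layer-cake bookkeeping above together with the two existence/criterion theorems.
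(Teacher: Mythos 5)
Your strategy coincides with the paper's: reduce to the inclusion $\Ec_m^q(\Omega)\subset L^q(\Omega,\mu)$, obtain it from a volume--capacity inequality combined with the sublevel-set capacity decay of Lemma \ref{lem: capacity} via a layer-cake integration and H\"older's inequality with exponents $(p,p')$, and then conclude by Theorem \ref{thm: solution Ep}. Your observation that the finiteness of $\mu(\Omega)$ and the uniform mass bound $\into H_m(\f_j)\leq\mu(\Omega)$ on the approximants upgrade the solution from $\Ec_m^q(\Omega)$ to $\Fc_m^q(\Omega)$ is correct, and it fills in a detail the paper leaves implicit.

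There is, however, one genuine flaw, located exactly at the point you flagged as delicate: the endpoint inequality $V(E)\leq C\,\Ca_m(E,\Omega)^{n/(n-m)}$ is \emph{not} available from \cite{DK11}. What \cite[Proposition 2.1]{DK11} provides is the family of inequalities $V(K)\leq C(r)\,\Ca_m(K,\Omega)^{r}$ for each fixed exponent $r<\frac{n}{n-m}$, with a constant depending on $r$; the ball computation only shows that no exponent \emph{larger} than $\frac{n}{n-m}$ can work, and the endpoint case itself is not established there. So, as written, your proof quotes an estimate stronger than the one in the cited reference. Fortunately, your own bookkeeping shows the gap is harmless: the hypothesis $q<\frac{nm(p-1)}{n-mp}$ is equivalent to the \emph{strict} inequality $qp'<(m+q)\frac{n}{n-m}$, so you may fix $r<\frac{n}{n-m}$ close enough to the endpoint that $qp'<(m+q)r$ still holds. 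Running your layer-cake argument with the bound $V(\{u<-t\})\leq C\,\E_q(u)^{r}\,t^{-(m+q)r}$ and optimising at $T=\E_q(u)^{1/(m+q)}$ yields the same conclusion $\into(-u)^{qp'}\,dV\leq C\,\E_q(u)^{qp'/(m+q)}$, and the remainder of your argument goes through verbatim. This is precisely how the paper proceeds: it fixes $0<r<n/(n-m)$ at the outset and never invokes the endpoint exponent.
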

\begin{proof}
Fix $0<r<n/(n-m).$ By H\"{o}lder's inequality and   \cite[Proposition 2.1]{DK11}, there exists $C>0$ depending only on $p,r, \Vert f\Vert_p$ such that
\begin{equation}\label{eq: application 1}
\mu(K)\leq C.{\rm Vol} (K)^{\frac{p-1}{p}}\leq C. \Ca_m(K)^{\frac{r(p-1)}{p}}.
\end{equation}
Take $0<q<\frac{nm(p-1)}{n-mp}$ and $u\in \Ec_m^q(\Omega).$ 
By Theorem \ref{thm: solution Ep} it suffices to show that $u\in L^q(\mu)$ which is, in turn, equivalent to showing that
$$
\int_1^{+\infty} \mu(u<-t^{1/q})dt<+\infty. 
$$
The latter follows easily from (\ref{eq: application 1}) and Lemma \ref{lem: capacity}, which completes the proof.
\end{proof}
The exponent $q(p)=\frac{nm(p-1)}{n-mp}$ is sharp in view of the following example.
\begin{examp}\label{example: radial functions}
Consider $\f_{\alpha}=1-\Vert z\Vert^{-2\alpha},$ where $\alpha$ is a constant in $(0,\frac{n-m}{m}).$ 
An easy computation shows that $\f_{\alpha}\in \Fco$ and 
$$
H_m(\f_{\alpha})=C.\Vert z\Vert^{-2m(\alpha+1)}dV=f_{\alpha}dV.
$$ 
Then 
$$
\f_{\alpha}\in \Fc_m^q(\Omega) \Longleftrightarrow q<\frac{n-m}{\alpha}-m,
$$
while 
$$
f_{\alpha}\in L^p(\Omega,dV) \Longleftrightarrow p<\frac{n}{m(\alpha+1)}.
$$
\end{examp}
\medskip

\noindent{\bf Acknowledgements.} {
The paper is taken from my Ph.D Thesis defended on 30th November 2012. It is a great pleasure to express my deep gratitude to my advisor Ahmed Zeriahi for inspirational discussions and enlightening suggestions.  I am also indebted to Vincent Guedj for constant help and encouragements. I also would like to thank Urban Cegrell for very useful discussions on the decomposition theorem. I also thank the reviewer for his/her thorough review and highly appreciate the comments and suggestions, which significantly contributed to improving the quality of the paper. 
}

\bigskip

\noindent \address{Lu Hoang Chinh\\ Chalmers University of Technology\\ Mathematical Sciences\\
SE- 412 96 Gothenburg, \\
Sweden}

\noindent \email{chinh@chalmers.se}
\bigskip




\end{document}